\documentclass{article}

\usepackage{cite}
\usepackage{amsfonts}
\usepackage{amsmath}
\usepackage{amsthm}
\usepackage{appendix}

\usepackage{enumerate}
\usepackage[shortlabels]{enumitem}

\usepackage{ulem}

\usepackage{color}
\usepackage[a4paper]{geometry}
\def\o{\otimes}

\def\ran{\text{ran}}
\def\s{\boldsymbol{s}}
\def\t{\boldsymbol{t}}

\theoremstyle{definition}
\newtheorem{definition}{Definition}[section]
\newtheorem{notation}{Notation}
\newtheorem{remark}[definition]{Remark}
\newtheorem{example}[definition]{Example}

\theoremstyle{plain}
\newtheorem{theorem}[definition]{Theorem}
\newtheorem{proposition}[definition]{Proposition}
\newtheorem{lemma}[definition]{Lemma}
\newtheorem{corollary}[definition]{Corollary}

\numberwithin{equation}{section}

\title{Coactions of weak multiplier Hopf algebras}
\author{Nan Zhou \thanks{Email: nanzhou.math@zjsru.edu.cn, address: Zhejiang Shuren University, 310015, Hangzhou, China.}}
\date{\today}
     
\begin{document}

\maketitle

\begin{abstract}
In this article, we investigate the notion of a coaction for a weak multiplier Hopf algebra. We show that there is a duality for actions and coactions of weak algebraic quantum groups. Then we obtain the well known Blattner-Montgomery duality theorem for weak algebraic quantum groups in the framework of dual pairs. We form a Morita context when the weak multiplier Hopf algebra admits a two-sided faithful integral and show that the  coaction is Galois if and only if the map in the Morita context is surjective. As an application, we obtain the result that a grading by any groupoid $G$ can be considered as a $\mathrm{C}G$-comodule algebra. We also construct a Morita context connecting the smash product $A\# \mathrm{C}(G)$ and the coinvariant subalgebra $A^{\mathrm{C}(G)}$. Finally we construct an isomorphism between  $(R\# \mathrm{C}(G))\# \mathrm{C}G$ and $R\o (\mathrm{C}(G)\# \mathrm{C}G)$ when $R$ is graded by $G$.
\end{abstract}

\section{Introduction}
	
Let $(A, \Delta, \varepsilon)$ be a coalgebra. A right coaction of $A$ over a vector space $V$ is a linear map $\Gamma: V\rightarrow V\o A$ such that 
\begin{equation}\label{coass-usual}
(\Gamma \o id)\Gamma=(id\o \Delta)\Gamma
\end{equation}
and
$
(id\o \varepsilon)\Gamma=id
$. We use $id$ for the identity map. When $V$ is an algebra, then $V$ is called a left $A$-comodule algebra if the coaction map $\Gamma$ is a unital homomorphism. The standard and motivating example is as follows.

Let $G$ be a finite group and let $A$ be the space of complex functions on $G$. The coproduct $\Delta$ and counit $\varepsilon$ 
on $A$ are given as $\Delta(f)(p,q)=f(pq), \varepsilon(f)=f(e)$, for any $f\in A$ and $ p,q\in G$, here $e$ is the unit element in $G$.  Assume now that $(p, r)\mapsto \alpha_p(r)$ is a left action of $A$ on a vector space $R$. We may identify the tensor product $R\o A$ with the space of functions on $G$ with values in $R$ by associating $r\o f$ with the function $p\mapsto f(p)r$. Now, the map $\Gamma: R\rightarrow R\o A$ defined as $\Gamma(r)p=\alpha_p(r)$ is a right coaction $A$ on $R$. It is easy to check that $\Gamma$ satisfies the coassociativity equation \eqref{coass-usual}.

If the group $G$ is no longer finite and if $R$ is not finite-dimensional, the above construction does not work anymore because $\Gamma(f)$ will not belong to $A\o R$. Now we may require $A$ to be a multiplier Hopf algebra and not a Hopf algebra. Then the coaction (comodule algebra) of a multiplier Hopf algebra $A$ on a non-degenerate algebra $R$ is an injective homomorphism $\Gamma: R\rightarrow M(R\o A)$ such that $\Gamma(R)(1\o A)$ and $(1\o A)\Gamma(R)$ are contained in  $R\o A$  and
\begin{equation}\label{coass-1}
(\Gamma\o id)(\Gamma(r)(1\o b))=(id\o \Delta)\Gamma(r)(1\o 1\o b).
\end{equation}
The counitary property will follow from the injectivity of $\Gamma$. The notion was first given by Van Daele and Zhang in Section 2 of \cite{van1999galois}. Note that only in the case of regular multiplier Hopf algebras and when $R$ is idempotent, the coassociativity equation can be defined as equation \eqref{coass-usual} because $\Gamma$ is nondegenerate by \cite[Proposition 2.5]{van1999galois}. In fact, we will show that equation \eqref{coass-usual} can be meaningful for any weak multiplier Hopf algebra $A$ and non-idempotent algebra $R$, see Corollary \ref{general-coass}. The basic example is modified as follows.

Let $G$ be any group and we take $\mathrm{C}(G)$ as the algebra of complex functions with finite support. We denote this algebra by $A$. The coproduct $\Delta$ is defined as before. Assume that $R$ is an algebra and that $\alpha_p$ is an automorphism of $R$ for every $p\in G$. We can view the evaluation at a point $p$ of $G$ as a homomorphism of $\mathrm{C}(G)$ to $\mathbb{C}$. This will yield an evaluation map from $M(R\o A)$ to $M(R)$. From this point of view, with the same definition as above, $\Gamma(r)(p)$ is the image under this evaluation map of the element $\Gamma(r)\in M(R\o A)$. We may denote
\begin{equation}\label{gamma}
\Gamma(r)=\sum_{p\in G}\alpha_p(r)\o \delta_p
\end{equation}
where $\delta_p$ is the function that  is $1$  in $p$ and $0$ everywhere else. The sum is defined in the strict topology of the multiplier algebra in the sense that we get a finite sum if we multiply with an element of $R\o A$. It is easy to check that $\Gamma$ is a coaction. 

Another related object is the groupoid. We will consider the coaction of a groupoid algebra. Let $G$ be a {\itshape finite} groupoid and $\mathrm{C}(G)$ the algebra of complex functions on $G$. Now the coproduct on $\mathrm{C}(G)$ is defined as 
$$\Delta(f)(p, q)=\begin{cases}
f(pq),\quad &\text{if $pq$ is defined,}\\
0, \quad &\text{otherwise}.
\end{cases}$$
Then $\mathrm{C}(G)$ is a weak Hopf algebra in the sense of \cite{bohm1999weak}. Let $\alpha$ be a {\itshape true} action of $G$ {\itshape on} a set $X$ (see the definition in Section 2.1 or \cite[Example 3.2.2]{zhou2017duality}).  Denote by $X_p$  the set of elements on which $p$ acts. Let $R$ be the algebra $\mathrm{C}(X)$ of complex functions with finite support on X and pointwise product. For any $p$, the map $\alpha_p$ from $R$ to $R$ is defined as
 $$\alpha_p(r)(x)=\begin{cases}
r(\alpha_{p^{-1}}(x)), \quad &x\in X_{p^{-1}},\\
0, & \text{otherwise.}
 \end{cases}$$
At this time, the coaction map can still be given by  the equation \eqref{gamma}. Note that we will not have $\Gamma(1)=1\o 1$,  because not all $x$ belong to $X_p$. And if $X_p=X$ for all $p\in G$, then $G$ must be a group by the trueness of the action.

Things become more complicated when $G$ is a groupoid with an infinite set of units (and $R$ is not unital). 
There is a problem with coassociativity as formulated above because the map $\Gamma\o id$ is defined on $R\o A$, but not necessarily on the range of the coaction map in $M(R\o A)$. Therefore, the left-hand side of equation \eqref{coass-usual} does not make sense. Fortunately, the appearance of weak multiplier Hopf algebras provides an approach to the solution of this problem. Obviously, if $A$ is a unital algebra (such as Hopf algebras, weak Hopf algebras), we do not need to worry about the coassociativity. In fact, this is a crucial problem when we consider coassociativity on the non-unital algebras, such as the coproduct of multiplier Hopf algebras, the coaction of multiplier Hopf algebras and the coproduct of weak multiplier Hopf algebras. For a general consideration of the problem, we refer to \cite{van2024reflections1}.

In Section 3 we will give a notion of coactions of weak multiplier Hopf algebras such that also the above example, with an infinite groupoid, fits into the scheme. We will follow the philosophy used to define weak multiplier Hopf algebras in \cite{van2015weak} by Van Daele and Wang.  

Let $A$ be a weak multiplier Hopf algebra and $R$ a nondegenerate algebra. Let $\Gamma$ be a homomorphism from $R$ to $M(R\o A)$.  The first thing is that we need to formulate a suitable coassociativity of $\Gamma$. We start with two maps $W_1$ and $W_3$ which are defined from $R\o A$ to $M(R\o A)$ as
$$W_1(r\o a)=\Gamma(r)(1\o a)\quad \text{and} \quad W_3(r\o a)=(1\o a)\Gamma(r).$$
It is assumed that the ranges of $W_1$ and $W_3$ are given by an idempotent $P\in M(R\o A)$ in the sense that 
$$\Gamma(R)(1\o A)=P(R\o A) \quad \text{and} \quad (1\o A)\Gamma(R)=(R\o A)P.$$
Under this assumption we can extend $\Gamma$ to a homomorphism from $M(R)$ to $M(R\o A)$ such that $\Gamma(1)=P$ ($\Gamma(1)$ may differ from $1\o 1$). Similarly, the homomorphism $\Gamma\o id$ has a unique extension to $M(R\o A)$. Now, the coassociativity equation can be expressed as 
\begin{equation}\label{coass-R}
(\Gamma\o id)\Gamma(r)=(id\o \Delta)\Gamma(r) \text{\, for any \,}r\in R.
\end{equation}
We also discuss the relation between many other possible forms of coassociativity. As expected, we show that the equation \eqref{coass-R} is true for any $r\in M(R)$. For our purpose we need the second assumption on $\Gamma$ which is given as
$$(\Gamma\o id)(P)=(1\o E)(P\o 1).$$
Note that the above equation is similar with the second assumption of weak multiplier Hopf algebras, i.e., $$(\Delta\o id)E=(id\o \Delta)E=(1\o E)(E\o 1)=(E\o 1)(1\o E).$$  We will give several equivalent descriptions about the second assumption, see Proposition \ref{second}. With the two assumptions, we get the notion of comodule algebra. Our definition of comodule algebras provides a unified framework that encompasses both the weak Hopf algebra and the multiplier Hopf algebra cases. 
 
In Section 4, we study the coactions of weak algebraic quantum groups, by which we mean weak multiplier Hopf algebras with {\itshape enough} integrals. The main result in this section is the duality theorem for actions and coactions. We will show that for a weak algebraic quantum group $A$ and its dual $\widehat{A}$, there is a one-to-one correspondence between the actions of $A$ and the coactions of $\widehat{A}$. 
We obtain the well known Blattner-Montgomery duality theorem for a weak algebraic quantum group pairing $\left\langle A, B \right\rangle$.  Let $R$ be a $A$-module algebra, then we can form the smash product $R\#A$ which was studied in \cite{zhou2017duality}. Let $R$ be a $B$-comodule algebra with a similar RL-condition as found in \cite{montgomery1993hopf}, then we have
$$(R\# A)\#
B\cong R\o (A\# B).$$

In Section 5, we construct a Morita context connecting the coinvariant algebra and the smash product algebras. Let $A$ be a weak algebraic quantum group with a unimodular integral and $R$ an $A$-comodule algebra. Then 
$$\{R\#\hat{A}, R^{\text{co} A}, R, R, [\cdot,\cdot],(\cdot,\cdot) \}$$
is a Morita context. This Morita context generalizes all  the Morita contexts that existed for actions of finite-dimensional Hopf algebra, for coactions of multiplier Hopf algebras and coactions of weak Hopf algebras.

In the last section, we focus on the basic examples, i.e., the infinite groupoid algebra $\mathrm{C}G$ and its dual algebra $\mathrm{C}(G)$. We will study the groupoid actions, groupoid coactions, groupoid gradings, and smash products. We will show that gradings and groupoid actions are dual concepts when $G$ is an infinite groupoid. Let $A$ be a nondegenerate algebra. We show that a true action of $G$ on $A$ is equivalent to $A$ being a $\mathrm{C}G$-module algebra, a grading of $A$ by $G$ is equivalent to $A$ being a $\mathrm{C}G$-comodule algebra or $\mathrm{C}(G)$-module algebra. When $A$ is graded, we may have a 
smash product $A\# \mathrm{C}(G)$, then we can form a Morita context. We use the groupoid algebra and its dual to illustrate the Blattner-Montgomery duality theorem obtained in Section 5.
\vspace{2 mm}
 
{\bfseries Notations.}
Let $A$ be an algebra over $\mathbb{C}$, with or without identity. The algebra $A$ is called  idempotent if $A^2=A$. And $A$ is called nondegenerate if the product is nondegenerate as a bilinear form. We denote by $A^{op}$ the algebra obtained from $A$ by reversing the product. The range and kernel of a map $f$ are denoted by $\text{ran} (f)$ and $\ker (f)$, respectively.
	
Assume that $A$ is a nondegenerate algebra. For any $a,b\in A$, a left multiplier of $A$ is a linear map $\rho_l: A\mapsto A$ such that $\rho_l(ab)=\rho_l(a)b$. A right multiplier of $A$ is a linear map $\rho_r: A\mapsto A$ such that $\rho_r(ab)=a\rho_r(b)$. A multiplier $\rho\in M(A)$ is a pair$(\rho_l,\rho_r)$ of a left and a right multiplier such that $a\rho_l(b)=\rho_r(a)b$. The algebra $A$ acts faithfully on both sides of its multiplier algebra $M(A)$. Specifically, an element $m \in M(A)$ is zero if either $am = 0$ for all $a \in A$ or $ma = 0$ for all $a \in A$. The tensor product $A\otimes A$ is again an idempotent nondegenerate algebra, so we can consider the multiplier algebra $M(A\otimes A)$. We use $1$ for the identity in any multiplier algebras.
	
The space of all linear functionals on any vector space $V$ is denoted by $V'$. Let $A$ be a nondegenerate algebra and $\omega$ a linear functional on $A$. Then $\omega$ is called {\itshape left faithful} if, given $a\in A$, we have $a=0$ if $\omega(aa')=0$ for all $a'\in A$. $\omega$ is called {\itshape right faithful} if, given $a\in A$, we have $a=0$ if $\omega(a'a)=0$ for all $a'\in A$.  A linear functional $\omega$ is called {\itshape faithful} if it is both left faithful and right faithful.
	
We will say that an algebra $A$ has local units if for any finite number of elements $a_i\in A( i=1,2,\cdots,n)$, there exists an element $e\in A$ such that $a_ie=ea_i=a_i$ for all $i$. Obviously, the property is trivial for an algebra with identity.
	
Let A be any ring. Given $a\in A$, if there exists $b\in A$ such that $aba=a, bab=b$, then we call $b$ the {\itshape generalized inverse} of $a$. The generalized inverse is not necessarily unique.
	
For a coproduct $\Delta$, we will use the Sweedler notation. In various places of \cite{van2015weak,van2020weak}, the authors explained how the use of the Sweedler notation can be made precise. In this paper, we always give an explanation when a new form of Sweedler notation is used.
	
We use $\tau$ for the flip map. The {\itshape leg numbering} notation is used. For example, let $\Delta$ be the coproduct on $A$, then $ \Delta^{13}: A \rightarrow  M(A \o A \o A)$ is defined by
$$\Delta^{13}(a)(b \o c \o d)= (id  \o \tau)(\Delta (a)(b \o d) \o c)$$
and
$$(b \o c \o d)\Delta^{13}(a)= (id \o \tau)((b \o d)\Delta (a) \o c).$$ 
	
{\bfseries Basic references.} For the theory of Hopf algebras, we refer to the books \cite{radford2011hopf} and \cite{montgomery1993hopf}. Multiplier Hopf algebras were introduced by Van Daele in \cite{van1994multiplier}. For integrals on multiplier Hopf algebras, we refer to \cite{van1998algebraic}. Weak Hopf algebras have been studied in \cite{bohm1999weak,bohm2000weak} and more results can be found in \cite{nill1998weak,nikshych2002structure,nikshych2002finite}. The theory of weak multiplier Hopf algebras was developed by Van Daele and Wang in a series of papers \cite{van2012weak,van2015weak,van2020weak,van2024duality}. Weak multiplier bialgebras were introduced by B$\ddot{\text{o}}$hm etc. in \cite{BGL}. The relation between weak multiplier Hopf algebras and multiplier Hopf algebroids as studied by T. Timmermann and A. Van Daele in papers \cite{timmermann2016integration,timmermann2017duality,timmermann2018multiplier}  was discussed in \cite{tvw2022pairing}. For the theory of actions of weak multiplier Hopf algebras, we refer to \cite{zhou2017duality,wang2018semidirect,zhou2024algebraic}. The (co)action and Galois theory of multiplier Hopf algebras, we refer to \cite{drabant1999actions,de2009galois,van1999galois,van1999corepresentation}. The explanation of the Sweedler notation for multiplier Hopf algebras can be found in \cite{van2008tools}. For the coaction theory of weak Hopf algebras, we refer to \cite{bohm2000doi,caenepeel2000modules}. We also mention that the Galois theory for weak Hopf algebras was studied by Caenepeel and De Groot in \cite{caenepeel2007galois}. And their methods are based on the Galois theory for corings.  
	
\section{Preliminaries}

\subsection{Groupoid}

One definition of a groupoid is that a groupoid is a small abstract category with inverses. The groupoid also admits an algebraic definition, which provides a natural generalization of a group. In this paper, we adopt the algebraic definition of a groupoid given in \cite{renault2006groupoid}.

\begin{definition}
A groupoid is a set $G$ endowed with a product map $(x, y)\mapsto xy: G^2\rightarrow G$ where $G^2$ is a subset of $G\times G$ called the set of composable pairs, and an inverse map $x\mapsto x^{-1}: G\rightarrow G$ such that the following relations are satisfied:
    \begin{enumerate}[(i)]
	\item $(x^{-1})^{-1}=x$,
	\item $(x, y), (y, z)\in G^2\implies (xy, z), (x, yz)\in G^2$ and $(xy)z=x(yz)$,
	\item $(x^{-1}, x)\in G^2$ and if $(x, y)\in G^2$, then $x^{-1}(xy)=y$,
	\item $(x, x^{-1})\in G^2$ and if $(z, x)\in G^2$ then $(zx)x^{-1}=z$.
    \end{enumerate}
\end{definition}

For $x\in G$, the {\itshape source} and {\itshape target} maps are defined by $\s
(x)=x^{-1}x$ and $\t(x)=xx^{-1}$, respectively. Then $(x, y)\in G^2$ iff $\s(x)=\t(y)$. We define $G^0=\s(G)=\t(G)$ to be the {\itshape unit space} of $G$. The elements of $G^0$ are called units, which satisfy the identity $x\s(x)=x=\t(x)x$. The following results are taken from \cite{ramsay1971virtual}. 

\begin{definition}\label{groupoidaction-1}
Let $G$ be a groupoid, let $S$ be a set, and let $F\subseteq G\times S$ be such that $p\in G$ implies that there is an $s\in S$ with $(x, s)\in F$. Then for $x\in G$, let $X_p=\{s\in S: (p, s)\in F\}$. A function $\gamma: F\rightarrow S, (p,s)\mapsto \gamma_p(s)$ is a {\itshape left action of $G$ in S} if the following two conditions hold:
    \begin{enumerate}[(i)]
        \item If $e$ is a unit in $G$, $s\in X_e$ implies $\gamma_e(s)=s$.
        \item  If $(p, q)\in G^2$ and $s\in X_q$, then $\gamma_q(s)\in X_p, s\in X_{pq}$ and $\gamma_p(\gamma_q(s))=\gamma_{pq}(s)$.
    \end{enumerate} 
If $S=\bigcup\{X_p: p\in G\}$, then we say that $G$ {\itshape acts on} $S$.  
\end{definition}

If $S$ is a set, let $G=\text{Inj}(S)$ denote the set of one-to-one functions from subsets of $S$ into $S$. Then $\text{Inj}(S)$ is a groupoid. And $G^2=\{(f, g)\mid D(f)=R(g)\}$, $D(f)$ is the domain of $f$ and $R(f)=f(D(f))$. If $i_A$ denotes the identity function on $A$, then $\{i_A\mid A\subseteq S\}$ is the set of units of $\text{Inj}(S)$. 

If $G_1$ and $G_2$ are two groupoids, a {\itshape homomorphism} $\gamma: G_1\rightarrow G_2$ is a function such that if $pq$ is defined then $\gamma(p)\gamma(q)$ is defined and$\gamma(pq)=\gamma(p)\gamma(q)$. By \cite[Theorem 1.1.]{ramsay1971virtual}, we know that $\gamma$ is a homomorphism of $G$ into $\text{Inj}(S)$ iff $\gamma_p$ is a left action of $G$ in $S$ for any $p\in G$. 

An action $\gamma$ is called {\itshape true} if it satisfies the Ehresmann's condition: $s\in X_p$ and $\gamma_p(s)\in X_q$ imply $pq$ is defined. We call a homomorphism $\gamma$ of groupoids {\itshape true} if $pq$ is defined whenever $\gamma(p)\gamma(q)$ is defined.

In this paper, we mainly consider an action of a groupoid on an algebra $R$. Recall that in \cite{yamanouchi1993duality}, Yamanouchi studied the action and coaction of a finite groupoid in the von Neumann algebras setting. Following \cite[Definition 2.3]{yamanouchi1993duality}, the definition of an action of a groupoid is given below.

\begin{definition}\label{groupoidaction-2}
An action of $G$ {\itshape in} an algebra $A$ consists of a family of subalgebras $\{A_e\}_{e\in G^0}$ and of isomorphisms $\{\alpha_p\}_{p\in G}$,  $\alpha_p: A_{\s(p)}\rightarrow A_{\t(p)}$ with $\alpha_{pq}=\alpha_p\alpha_q$ whenever $(p,q)\in G^2$. We write $(G,\{A_e\}_{e\in G^0},\{\alpha_p\}_{p\in G})$ for this system. If $A=\bigcup\{A_e: e\in G^0\}$, then we say that $G$  acts on $A$.
\end{definition}

Note that in \cite{paques2014duality}, Paques and Flores also give a definition of the action of a groupoid on a ring $R$ which arises from the notion of a partial groupoid action as introduced in \cite{bagio2012partial}. Their definition is essentially the same as the Definition \ref{groupoidaction-2}.
	
\subsection{ The definition of weak multiplier Hopf algebras}
	
In this subsection, we recall from \cite{van2012weak,van2015weak,van2020weak}  the definition and the basic properties of weak multiplier Hopf algebras. We also recall the definitions of the module algebra and the smash product from \cite{zhou2017duality}. 
	
\begin{lemma}\cite[Lemma 2.1]{van2015weak}\label{1}
Let $X$ be a vector space and $t$ be a linear map from $X$ to itself. Assume that $e$ and $f$ are idempotent maps on $X$ so that $e$ projects on the range of $t$ and $1-f$ projects on the kernel of $t$. Then there is a unique linear map $r$ from $X$ to itself so that 
$$tr\xi =e\xi \quad and \quad rt\xi=f\xi$$
for all $\xi\in X$. Moreover, we have $trt=t$ and $rtr=r$.	
\end{lemma}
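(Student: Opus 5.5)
The plan is to construct $r$ directly from complements of the relevant subspaces and then check uniqueness and the two extra identities. Since $1-f$ projects onto $\ker t$, we have $X=\ker t\oplus \ran(f)$. Also $\ran(1-f)=\ker t$ and $\ker(1-f)=\ran(f)$. Hence $t$ restricted to $\ran(f)$ is injective, and its image is $t(X)=\ran(t)=\ran(e)$. So $t|_{\ran f}\colon \ran(f)\to\ran(e)$ is a linear bijection; call its inverse $s$.

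\textbf{Existence.} Define $r\xi := s(e\xi)$ for $\xi\in X$. This is linear and takes values in $\ran(f)$.
\begin{itemize}
\item[(a)] $tr\xi = t s(e\xi)=e\xi$, because $s$ inverts $t$ on $\ran(e)$.
\item[(b)] Write $f\xi$ for the $\ran(f)$-component of $\xi$. Then $t\xi=t(f\xi)$, since $(1-f)\xi\in\ker t$. Also $t\xi\in\ran(e)$, so $e t\xi=t\xi$. Therefore $rt\xi=s(t\xi)=s(t(f\xi))=f\xi$.
\end{itemize}

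\textbf{Uniqueness.} Suppose $r'$ also satisfies both equations. Then
$$r'=r'e=r'tr=fr,$$
using $tr=e$ and then $r't=f$. On the other hand,
$$r=re=rtr=fr$$
by the same computation applied to $r$. Hence $r'=fr=r$. The identity $re=r$ used here holds because the construction $r=s\circ e$ gives $r(1-e)=0$ and so $re=r$. For a general $r'$, the step $r'e=r'$ is not automatic and needs a separate justification. The cleanest route is to avoid it: compute $r'=r'\,\cdot$ directly on $\xi$ using the two equations alone. I would try
$$fr'\xi=r'tr'\xi=r'e\xi.$$
Taken alone, this only identifies $r'$ on the pieces $\ran(f)$ and $\ran(e)$.

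\textbf{Main obstacle.} This is the delicate step. Uniqueness genuinely requires the normalizations $r=re$ and $r=fr$, i.e. that $r$ vanishes on $\ker e$ and maps into $\ran f$. I would read the statement as including these, as in Van Daele--Wang, where $r$ is determined on $\ran(e)$ by $tr=e$ together with $rt=f$. Under this reading $r'|_{\ran e}$ is forced: for $\eta=t\zeta$ we get $r'\eta=f\zeta=r\eta$. On $\ker e$ both maps vanish.

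\textbf{The identities $trt=t$ and $rtr=r$.} These follow directly:
$$trt=(tr)t=et=t,$$
since $\ran(t)=\ran(e)$, and
$$rtr=(rt)r=fr=r,$$
since $\ran(r)\subseteq\ran(f)$.
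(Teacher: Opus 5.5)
Your existence argument is correct, and it is the standard construction. The paper itself gives no proof; it only cites Van Daele--Wang. Since $t(1-f)=0$ we have $t=tf$, and $fX\cap\ker t=fX\cap(1-f)X=0$. So $t$ restricts to a bijection $fX\to\ran(t)=eX$. With $s$ its inverse, $r=s\circ e$ satisfies $tr=e$ and $rt=f$. Because $r=fr$, it also satisfies $rtr=fr=r$. The identity $trt=et=t$ holds for every solution.

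On uniqueness your suspicion is right, and you should state it outright with a counterexample instead of hedging. The statement as literally written is false. If $r$ solves $tr=e$ and $rt=f$, then so does $r+(1-f)h(1-e)$ for every linear $h\colon X\to X$, because $t(1-f)=0$ and $(1-e)t=0$. Conversely, the difference $d$ of two solutions satisfies $td=0=dt$, hence $d=(1-f)d(1-e)$. So the two equations determine $r$ exactly when $t$ is injective or surjective. For a concrete case take $t=0$ on $X\neq 0$: then $e=f=0$, every $r$ is a solution, and $rtr=0\neq r$ for $r\neq 0$. So the ``moreover'' clause also fails for arbitrary solutions.

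The lemma has to be read with the extra normalization $rtr=r$. For any solution $rtr=re=fr$, so this is the same as $r=re$, and the same as $r=fr$; you need only one of these, not both as you wrote. Under this normalization uniqueness is one line: $r=re=rtr'=fr'=r'$. Your argument via $r'|_{\ran(e)}$ also works, but the one-liner is cleaner.

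The normalized version is also what the paper actually uses later. In Lemma~\ref{P_lP_r}, $\widetilde{W_1}$ is identified with $\overline{W_1}$ through the uniqueness in Proposition~\ref{5}, and this is legitimate precisely because $\overline{W_1}W_1\overline{W_1}=\overline{W_1}$ by Proposition~\ref{13}.

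In your write-up, delete the false start $r'=r'e=r'tr=fr$. Also delete the remark that $fr'\xi=r'e\xi$ identifies $r'$ on $\ran(f)$ and $\ran(e)$; by itself it determines nothing. Replace both with the counterexample and the one-line argument above.
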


\begin{lemma}\cite[Proposition A.3]{van2012weak}\label{extension}
Let $A$ and $B$ be non-unital algebras with non-degenerate products, and let $F$ be a homomorphism from $A$ to $M(B)$. Assume that there is an idempotent $e\in M(B)$ such that
$$F(A)B=eB \quad \text{ and} \quad BF(A)=Be.$$
Then there is a unique homomorphism $\bar{F}: M(A)\rightarrow M(B)$ such that
$\bar{F}(1)=e$ and $\bar{F}(a)=F(a)$ for all $a\in A$.  
\end{lemma}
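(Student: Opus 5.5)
We have to prove Lemma \ref{extension}: for a homomorphism $F\colon A\to M(B)$ with $F(A)B=eB$ and $BF(A)=Be$ for an idempotent $e\in M(B)$, there is a unique homomorphism $\bar F\colon M(A)\to M(B)$ with $\bar F(1)=e$ extending $F$. The plan is to define $\bar F(m)$ for $m\in M(A)$ through its left and right multiplier actions on $B$.

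\textbf{Definition of $\bar F(m)$.} Since $eB=F(A)B$, every element of $eB$ is a finite sum $\sum_i F(a_i)b_i$. Put
$$\bar F(m)\Bigl(\sum_i F(a_i)b_i\Bigr)=\sum_i F(ma_i)b_i ,$$
and extend to all of $B$ by first applying $e$: for $b\in B$, write $eb=\sum_i F(a_i)b_i$ and set $\bar F(m)b:=\bar F(m)(eb)$. Symmetrically, using $Be=BF(A)$, set
$$\Bigl(\sum_j c_jF(a'_j)\Bigr)\bar F(m)=\sum_j c_jF(a'_jm),$$
and for $c\in B$ let $c\bar F(m):=(ce)\bar F(m)$.

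\textbf{Well-definedness (the main obstacle).} Suppose $\sum_i F(a_i)b_i=0$; we need $\sum_i F(ma_i)b_i=0$. Multiply on the left by an arbitrary $cF(a')$ with $c\in B$, $a'\in A$. This gives
$$\sum_i cF(a')F(ma_i)b_i=\sum_i cF(a'm)F(a_i)b_i=cF(a'm)\sum_i F(a_i)b_i=0 ,$$
using that $F$ is multiplicative and $a'm\in A$. Hence $x\sum_i F(ma_i)b_i=0$ for all $x\in BF(A)=Be$. Since $F(ma_i)=F(ma_i)\cdot$ lies in $F(A)$ and $F(A)\subseteq eM(B)$ (because $F(a)b\in eB$ for all $b$, so $eF(a)=F(a)$ by nondegeneracy), the element $y=\sum_i F(ma_i)b_i$ satisfies $y=ey$. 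Then $cy=(ce)y=0$ for every $c\in B$, so $y=0$ by nondegeneracy of $B$. The right-hand version is symmetric, using $F(a)e=F(a)$.

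\textbf{Multiplier property and compatibility.} Left linearity, $\bar F(m)(bb')=(\bar F(m)b)b'$, is clear because $e(bb')=(eb)b'$. The pair condition $c(\bar F(m)b)=(c\bar F(m))b$ is checked on spanning elements $c=c'F(a')$ and $eb=F(a)b''$: both sides equal $c'F(a'ma)b''$.

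\textbf{Homomorphism, extension and unit.} $\bar F$ is multiplicative since $F(m(m'a))=F((mm')a)$. It extends $F$ because $F(a)F(a_i)b_i=F(aa_i)b_i$ and $F(a)b=F(a)eb$. It satisfies $\bar F(1)b=eb$, i.e.\ $\bar F(1)=e$.

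\textbf{Uniqueness.} Let $G$ be any such extension. Then $G(m)F(a)b=G(ma)b=F(ma)b$, so $G(m)$ agrees with $\bar F(m)$ on $eB$. Moreover $G(m)=G(m)G(1)=G(m)e$, so $G(m)b=G(m)eb$ is determined for every $b\in B$, and $G=\bar F$.
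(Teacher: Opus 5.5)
Your proof is correct and follows the standard argument. The paper takes this lemma from \cite[Proposition A.3]{van2012weak} without proof, and the construction you use is exactly the one it sketches for $\Gamma$ in Proposition \ref{extension-gamma}: define $\bar F(m)$ on $eB=F(A)B$ by $F(a)b\mapsto F(ma)b$, precompose with $e$, and get well-definedness from $BF(A)=Be$ together with $eF(a)=F(a)$. The only compressed step is the pair condition, where an arbitrary $c\in B$ must first be replaced by $ce\in BF(A)$; this is harmless because $c\bar F(m)=(ce)\bar F(m)$ by definition and $\bar F(m)b\in eB$.
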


A homomorphism $F:A\to M(B)$ with the property in Lemma \ref{extension} is called {\itshape weakly nondegenerate}.
A {\it coproduct} on an  algebra $A$ is a homomorphism $\Delta: A\rightarrow M(A\o A)$ such that
\begin{itemize}[noitemsep]
	\item $\Delta(a)(1\o b)$ and $(a\o 1)\Delta(b)$ are in $A\o A$ for all $a, b\in A$,
	\item $\Delta$ is coassociative in the sense that
	$$
	(c\o 1\o 1)(\Delta\o id)(\Delta(a)(1\o b))=(id\o\Delta)((c\o 1)\Delta(a))(1\o 1\o b)
	$$
	for all $a, b, c\in A$.
\end{itemize}
The coproduct $\Delta$ is called \textit{full} if the smallest subspaces $V$ and $W$ of $A$ satisfying
$$\Delta(A)(1\o A)\subseteq V\o A \quad \quad \text{and} \quad \quad (A\o 1)\Delta(A)\subseteq A\o W$$
are equal to $A$. The coproduct $\Delta $ is called \textit{regular} if $\Delta(a)(b\o 1)$ and $(1\o a)\Delta(b)$ are in $A\o A$ for all $a, b\in A$.

\begin{notation}
The four canonical maps $T_i(i=1,2,3,4)$ from $A\o A$ to $M(A\o A)$ are defined as
\begin{gather*}
T_1(a\o b)=\Delta(a)(1\o b),\\
T_2(a\o b)=(a\o 1)\Delta(b),\\
T_3(a\o b)=(1\o b)\Delta(a),\\
T_4(a\o b)=\Delta(b)(a\o 1).
\end{gather*}
$T_i$ is called {\itshape regular} if it has the range in $A\o A$. In this paper, we also need the following maps:
\begin{gather*}
T_1^{\tau}(a\o b)=\Delta(b)(1\o a),\\
T_2^{\tau}(a\o b)=(b\o 1)\Delta(a),\\
T_3^{\tau}(a\o b)=(1\o a)\Delta(b),\\
T_4^{\tau}(a\o b)=\Delta(a)(b\o 1).
\end{gather*}
Clearly, $T_i^{\tau}=T_i\circ \tau$  for $i=1,2,3,4$. If $T_i$ is regular, so is $T_i^{\tau}$.
\end{notation}

Let $A$ be an algebra with a coproduct $\Delta$, define
$\Delta^{\text{cop}}: A\rightarrow M(A\o A)$
via
$$
\Delta ^{\text{cop}}(a)(b \o c):= \tau(\Delta (a)(c \o b))\, \mbox { and }\,  (b \o c)\Delta ^{\text{cop}}(a):= \tau((c \o b)\Delta (a)).
$$
$A^{\text{cop}}$ is the algebra $A$ with the coproduct $\Delta^{\text{cop}}$.  

A linear map $\varepsilon: A\rightarrow \mathbb{C}$ is called a {\it counit}
on an algebra $A$ with a coproduct $\Delta$ if
$$
(\varepsilon\o id)(\Delta(a)(1\o b))=ab \quad \text{and} \quad (id \o \varepsilon)((a\o 1)\Delta(b))=ab
$$
for all $a, b\in A$. 
	
Now we recall the definition of weak multiplier Hopf algebras from \cite{van2015weak}.

\begin{definition}\label{wmha}
	A weak multiplier Hopf algebra is a pair $(A,\Delta)$ of a nondegenerate idempotent algebra with a full coproduct and a counit satisfying the following conditions.
	
	(i) There exists an idempotent $E\in M(A\otimes A)$ that gives the ranges of the canonical maps $T_1$ and $T_2$:
	$$
	T_1(A\otimes A)=E(A\otimes A)\quad {and} \quad T_2(A\otimes A)=(A\otimes A)E.
	$$
	
	(ii) The element $E$ satisfies
	$$
	(id \otimes \Delta)E=(E\otimes 1)(1\otimes E)=(1\otimes E)(E\otimes 1),
	$$
	
	(iii) The kernels of $T_1$ and $T_2$ are of the form
	$$
	\text{ker}(T_1)=(1-G_1)(A\otimes A),$$
	$$
	\text{ker}(T_2)=(1-G_2)(A\otimes A),
	$$
	where $G_1$ and $G_2$ are linear maps from $A\otimes A$ to itself, given as
	$$
	(G_1\otimes id)(\Delta^{13}(a)(1\otimes b\otimes c))=\Delta^{13}(a)(1\otimes E)(1\otimes b\otimes c),
	$$
	$$
	(id\otimes G_2)((a\otimes b\otimes 1)\Delta^{13}(c))=(a\otimes b\otimes 1)(E\otimes 1)\Delta^{13}(c),
	$$
	for all $a, b, c\in A$.
\end{definition}

The idempotent $E$ in the above definition is called the {\itshape canonical idempotent}. It is uniquely determined and satisfies $\Delta(a)E=\Delta(a)=E\Delta(a)$ for all $a\in A$. By Lemma \ref{extension}, the coproduct can be extended in a unique way to a homomorphism from $M(A)$ to $M(A\o A)$ such that $\Delta(1)=E$. Similarly, we have an extension for $\Delta \o id$ and $id\o \Delta$. Then the coassociativity of the coproduct can be expressed as $(\Delta\o id)\Delta=(id\o \Delta)\Delta$.

Since the coproduct is full, any element in $A$ is a linear span of elements of the form $(id\o \omega)(\Delta(a)(1\o b))$ or $(\omega\o id)((c\o 1)\Delta(a))$ where $a, b, c\in A, \omega\in A'$. If the coproduct is regular, we have a similar result.  The fullness of the coproduct implies that the counit is unique. For the proof, we refer to \cite[Lemma 1.11]{van2012weak}. The notation {\itshape full} was first introduced by Van Daele and Wang, see \cite[Proposition 1.6]{van2006larson}.

We will use WMHA as a shorthand notation for weak multiplier Hopf algebra. 

Given a WMHA $(A, \Delta, E, \varepsilon)$, there is a unique \textit{antipode} $S$. It is a linear map from $A$ to $M(A)$. It is an anti-algebra map in the sense that $S(ab)=S(b)S(a)$ for any $a, b\in A$. We also know that $S$ is nondegenerate (meaning that $AS(A)=A$ and $S(A)A=A$). By Lemma \ref{extension}, the antipode has a unique extension.  The antipode satisfies the formulas
$$\sum a_{(1)}S(a_{(2)})a_{(3)}=a \quad \text{and} \quad \sum S(a_{(1)})a_{(2)}S(a_{(3)})=S(a)$$
for all $a\in A$. For an explanation of the Sweedler notation in these equations, we refer to \cite[Remark 2.5]{van2015weak}.
	
A WMHA is called \textit{regular} if the coproduct is regular and if also $(A, \Delta^{cop})$ is a weak multiplier Hopf algebra. A WMHA is regular if and only if the antipode $S$ maps $A$ to $A$ and is bijective, see \cite[Theorem 4.10]{van2015weak}. In the regular case, the antipode is an anti-coalgebra map in the sense that $\Delta(S(a))=\tau(S\o S)\Delta(a)$ for all $a\in A$. 

Let $(A, \Delta, E, S, \varepsilon)$ be a WMHA. The \textit{source} and \textit{target} maps $\varepsilon_s: A\rightarrow M(A)$ and $\varepsilon_t: A\rightarrow M(A)$ are defined by 
$$\varepsilon_s(a)=\sum S(a_{(1)})a_{(2)} \quad \text{and} \quad \varepsilon_t(a)=\sum a_{(1)}S(a_{(2)}).$$
These maps are well-defined in $M(A)$, see \cite[Remark 2.5]{van2015weak}. The images $\varepsilon_s(A)$ and $\varepsilon_t(A)$ are commuting nondegenerate subalgebras of $M(A)$. They are called the \textit{source algebra} and the \textit{target algebra}, respectively. Van Daele and Wang also show that any WMHA has local units, see \cite[Proposition 2.21]{van2020weak}. Denote by $A_s$ the set of elements $y\in M(A)$ satisfying $\Delta(y)=E(1\o y)$ and by $A_t$ the set of elements $x\in M(A)$ satisfying $\Delta(y)=E(x\o 1)$. In the regular case, we have $M(\varepsilon_s(A))=A_s$ and $M(\varepsilon_t(A))=A_t$. For any $x\in A_t,y\in A_s$, we have
\begin{equation}\label{xE}
(1\o x)E=(S(x)\o 1)E,
\end{equation}
\begin{equation}\label{Ey}
E(y\o 1)=E(1\o S(y)).
\end{equation}

Given a WMHA $(A, \Delta, E, S, \varepsilon)$, the generalized inverses of $T_1$ and $T_2$ are given by:
\begin{gather*}
R_1(a\o b)=\sum a_{(1)}\o S(a_{(2)})b,\\
R_2(a\o b)=\sum aS(b_{(1)})\o b_{(2)}.
\end{gather*}
Note that the antipode $S$ is determined by $R_1$ and $R_2$, i.e., the existence of $R_1$ and $R_2$ induce the existence of $S$. If $S$ is bijective, then we can define 
\begin{gather*}
R_3(a\o b)=\sum a_{(1)}\o bS^{-1}(a_{(2)}),\\
R_4(a\o b)=\sum S^{-1}(b_{(1)})a\o b_{(2)}.
\end{gather*}
In the regular case, we  have four idempotent maps  
$$F_1=(id\o S)E, \quad \quad F_2=(S\o id)E, \quad\quad F_3=(id\o S^{-1})E, \quad \quad F_4=(S^{-1}\o id)E.$$
The four elements are multipliers of $A\o A^{\text{op}}$(or $A^{\text{op}}\o A$). And recall that 
\begin{equation}\label{G_i}
G_i(a\o b)=(a\o 1)F_i(1\o b),\quad i=1,2,3,4.
\end{equation}
The integrals of WMHA will be recalled in Section 4. 	
	
\subsection{Module algebra and smash product}

Let $A$ be a  WMHA. By a left $A$-module, we mean a vector space $R$ equipped with a bilinear map $\rhd: A\otimes R\rightarrow R, a\otimes r\mapsto ar$ satisfying $(ab)\rhd r=a(b\rhd r)$ for all $r\in R$ and $ a, b\in A$. The module is called {\itshape unital } if $AR=R$. The module is called nondegenerate if $r\in R$ and $ar=0$ for all $a\in A$, imply $r=0$. Since $A$ has local units, a unital module is nondegenerate. Given a unital $A$-module $R$, there is a unique extension to a left $M(A)$-module with $1\rhd r=r$.

\begin{definition}\label{modulealgebra}
Let $A$ be a  WMHA and $R$ an algebra. Then $R$ is called a left $A$-\textit{module algebra} if $R$ is a left unital $A$-module and satisfies
\begin{equation}\label{modalg}
a\rhd (ru)=\sum (a_{(1)}\rhd r)(a_{(2)}\rhd u)
\end{equation}
for any $r,u\in R, a\in A$.
\end{definition}
The definition is well-defined, see \cite{zhou2017duality}. We give some useful formulas. For any $x\in A_t, y\in A_s, u\in R$,
\begin{equation}\label{xru}
x\rhd (ru)=(x\rhd r)u,
\end{equation}
\begin{equation}\label{yru}
(y\rhd r)u=r(S(y)\rhd u). 
\end{equation}
If $R$ has a unit $1_R$, then $a\rhd 1_R=\varepsilon_t(a)\rhd 1_R$. When $A$ is a regular WMHA and $R$ is a left $A$-module algebra, by \cite[Proposition 3.4]{zhou2017duality}, we can extend the action of $A$ from $R$ to $M(R)$ by 
\begin{equation}\label{right-r}
  (a\rhd m)r=\sum a_{(1)}\rhd (m(S(a_{(2)})\rhd r))  
\end{equation}
\begin{equation}\label{left-r}
 r(a\rhd m)=\sum a_{(2)}\rhd ((S^{-1}(a_{(1)})\rhd r)m)   
\end{equation}
for any $a\in A, m\in M(R), r\in R$. Now $a\rhd 1_{M(R)}$ is meaningful and we have $a\rhd 1_{M(R)}=\varepsilon_t(a)\rhd 1_{M(R)}$. 

Let $R$ be a left $A$-module algebra, then we have a smash product algebra $R\# A=E\rhd (R\o A)$ with the product
$$(r\# a)(r'\# a')= \sum r(a_{(1)}\rhd r')\# r'_{(2)}a'.$$
The product is associative and nondegenerate. The main result in \cite{zhou2017duality} is the duality theorem for actions and their dual actions on the smash product of weak algebraic quantum groups. Specifically, given a weak algebraic quantum group pairing $\left\langle A, \widehat{A}\right\rangle$ and a left $A$-module algebra $R$. We have the following algebra isomorphism: 
$$(R\# A)\# \widehat{A}\cong R\o(A\bar{\o} \widehat{A}).$$
We will illustrate this result in Section 6 where $A$ is the groupoid algebra.   
	
\section{Coaction and comodule algebra}	
\subsection{The definition of coaction}	
Let $(A, \Delta, E, S, \varepsilon)$ be a WMHA and $R$  an algebra with a nondegenerate product. In the following, elements in $R$ and $A$ will be denoted by $r, r'$ and $a, a'$, respectively. 
	
\begin{notation}
Given a linear map $\Gamma: R \rightarrow M(R\o A)$, we have the following four maps 
$$ W_1: R\o A\rightarrow M(R\o A), \quad r\o a\mapsto \Gamma(r)(1\o a),$$
$$ W_2: R\o R\rightarrow M(R\o A), \quad r\o r'\mapsto (r\o 1)\Gamma(r'),$$
$$ W_3: R\o A\rightarrow M(R\o A), \quad r\o a\mapsto (1\o a)\Gamma(r),$$
$$ W_4: R\o R\rightarrow M(R\o A), \quad r\o r'\mapsto \Gamma(r')(r\o 1).$$		
\end{notation}

$W_1$ and $W_3$ are called the \textit{canonical maps} associated with $\Gamma$. $W_2$ and $W_4$ are called the \textit{Galois maps}. The map $W_i$ ($i = 1, 2, 3, 4$) is called \textit{regular} if its range is contained in $R \otimes A$. 
		 	 
\begin{proposition}\label{id-delta}
The map $id\o \Delta$ has a unique extension to a homomorphism $(id\o \Delta)_1$ from $M(R\o A)$ to $M(R\o A\o A)$, satisfying $(id\o \Delta)_1(1)=1\o E$.
\end{proposition}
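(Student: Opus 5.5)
The plan is to apply Lemma \ref{extension} with $F=id\o \Delta$, viewed as a homomorphism from the nondegenerate algebra $R\o A$ into $M(R\o A\o A)$, and with the idempotent $e=1\o E\in M(R\o A\o A)$. Uniqueness and the homomorphism property of the extension then come directly from the lemma. What has to be checked is the hypothesis
$$(id\o\Delta)(R\o A)(R\o A\o A)=(1\o E)(R\o A\o A),\qquad (R\o A\o A)(id\o\Delta)(R\o A)=(R\o A\o A)(1\o E).$$
First I will note that $R\o A$ and $R\o A\o A$ are nondegenerate, since $R$ is nondegenerate and $A$ is nondegenerate and idempotent. I will also check that $id\o\Delta$ maps $R\o A$ into $M(R\o A\o A)$. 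This is clear from $(id\o\Delta)(r\o a)=r\o\Delta(a)$, using $M(R)\o M(A\o A)\subseteq M(R\o A\o A)$.

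For the first equality, a typical element of the left-hand side is
$$(r\o\Delta(a))(r'\o b\o c)=rr'\o\Delta(a)(b\o c).$$
Since $R$ need not be idempotent, I will not try to factor the $R$-leg. Instead I will show the equality as subspaces leg by leg. On the $A\o A$ leg, the spans satisfy
$$\Delta(A)(A\o A)=\Delta(A)(1\o A)(A\o 1)=E(A\o A)(A\o 1)=E(A\o A),$$
using condition (i) of Definition \ref{wmha} and $A^2=A$. On the $R$-leg, the left side gives $R^2\o E(A\o A)$ while the right side gives $R\o E(A\o A)$.

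This mismatch in the $R$-leg is the main obstacle: $R$ is only assumed nondegenerate, not idempotent, so $R^2$ may be strictly smaller than $R$. I would first try to avoid it by replacing $F$ with $id\o\Delta$ regarded as a map from $M(R)\o A$, so that the $R$-leg contributes $1\cdot R=R$. Alternatively, I would reprove the relevant part of Lemma \ref{extension} directly. For $m\in M(R\o A)$, define the extension on elements $x=r\o E(b\o c)$ of $(1\o E)(R\o A\o A)$. Write $E(b\o c)=\sum_i\Delta(a_i)(b_i\o c_i)$ and set
$$(id\o\Delta)_1(m)\,x=\sum_i\big((id\o\Delta)(m(r\o a_i))\big)(1\o b_i\o c_i).$$
Here $m(r\o a_i)\in R\o A$, so the right-hand side lies in $R\o A\o A$, and no idempotency of $R$ is needed. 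On the complement, set $(id\o\Delta)_1(m)\,x(1-(1\o E))=0$.

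The delicate point is well-definedness: the value must not depend on the chosen representation $\sum_i\Delta(a_i)(b_i\o c_i)$. I plan to obtain this by multiplying on the left by arbitrary $r''\o a''\o a'''$ and using nondegeneracy in the $R$-leg. The expression then reduces to a statement purely about $A$, namely the well-definedness of the extension $\Delta:M(A)\to M(A\o A)$ that is already available. The right multiplier is defined symmetrically using $T_2$, and the compatibility $y\,(id\o\Delta)_1(m)\,x$ is checked the same way. The homomorphism property, the value $1\o E$ at $1$, and uniqueness then follow as in Lemma \ref{extension}, since any extension $\Phi$ with $\Phi(1)=1\o E$ must satisfy $\Phi(m)(1\o E)=\Phi(m)$ and is therefore forced on $(1\o E)(R\o A\o A)$.
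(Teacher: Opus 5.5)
Your direct construction is essentially the paper's proof. The paper likewise bypasses Lemma~\ref{extension} because $R$ need not be idempotent: it writes $E(a\o a')=\sum\Delta(p_i)(1\o q_i)$ (range of $T_1$), sets $(id\o\Delta)_1(m)(r\o a\o a')=\sum(id\o\Delta)(m(r\o p_i))(1\o 1\o q_i)$, and builds the right multiplier from $(a\o a')E=\sum(u_i\o 1)\Delta(v_i)$ (range of $T_2$), exactly as you propose. Drop your first workaround through $M(R)\o A$: an element $m\in M(R\o A)$ need not map $M(R)\o A$ into itself (already $m(1\o a)$ can fail to lie in $M(R)\o A$), so Lemma~\ref{extension} applied there gives no map on $M(R\o A)$. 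Also, the vanishing on the complement should read $(id\o\Delta)_1(m)(1-(1\o E))x=0$.
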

\begin{proof}
For any $m\in M(R\o A)$. Denote $E(a\o a')= \sum\Delta(p_i)(1\o q_i)$ and $(a\o a')E=\sum (u_i\o 1)\Delta(v_i)$. Then we can define $(id\o \Delta)_1$ by
$$((id\o \Delta)_1(m))(r\o a\o a')=(id\o \Delta)(m(r\o p_i))(1\o 1\o v_i),$$
$$(r\o a\o a')((id\o \Delta)_1(m))=\sum (1\o u_i\o 1)(id\o \Delta)((r\o v_i)m).$$
This is well-defined and it is standard to show that $(id\o \Delta)_1(m)$ is a multiplier. We also refer to the proof of Proposition 1.18 in \cite{van2024reflections1}. 
\end{proof}

As usual, we will denote the extension $(id\o \Delta)_1$ by $id\o \Delta$. Remark that in the above proof we do not assume that $R$ is idempotent. If $R$ is idempotent, then $id\o \Delta$ is weakly nondegenerate and the above result can be deduced from the Lemma \ref{extension}.
	
\begin{definition}
Let $A$ be a WMHA and $R$ a nondegenerate algebra. If $\Gamma$ is a linear map from $R$ to $M(R\o A)$, then $\Gamma$ is called coassociative if 
\begin{enumerate}[(i)]
    \item $W_1$ is regular, i.e. $\Gamma(r)(1\o a)\in R\o A$ for all $r\in R$ and $a\in A$.
    \item For any $r\in R, a\in A$, we have
        \begin{equation}\label{coass1}
        (\Gamma\o id)(\Gamma(r)(1\o a))=((id\o \Delta)\Gamma(r))(1\o 1\o a).
	\end{equation}
\end{enumerate}
\end{definition}
By Proposition \ref{id-delta}, the formula $(id\o \Delta)\Gamma(r)$ is meaningful. But it is too early to expect that $(\Gamma\o \text{id})\Gamma(r)$ is meaningful since we cannot extend the map $\Gamma$. Right now, the regularity of $W_1$ is needed for the left side of equation (\ref{coass1}) to be meaningful.

\begin{remark}
\begin{enumerate}[(1)]
\item The equation \eqref{coass1} is also used in the definition of  coaction of multiplier Hopf algebras, see \cite[Definition 2.1]{van1999galois}. The so-called \textit {counit property} will be discussed later. Since $\varepsilon$ is not a nondegenerate homomorphism, we cannot define $(id\o \varepsilon)\Gamma(r)$.  We also cannot regarded it as a multiplier, since we do not require $W_2$ or $W_4$ to be regular. 
    
\item There are several possible ways to define the coassociativity of a coaction. Some of them are of a very different nature, within different situations. 
		  \begin{enumerate}[(a)]			
		\item  If $W_3$ is regular, we call that $\Gamma$ is coassociative if 
		      \begin{equation}\label{coass2}
		      (\Gamma\o id)((1\o a)\Gamma(r))=(1\o 1\o a)((id\o \Delta)\Gamma(r)).
		      \end{equation}
		      Here we use the extension of the map $id\o \Delta$ to $M(R\o A)$.
              
		\item  If $W_1$ and $W_2$ are regular,  then equation (\ref{coass1}) can be reformulated as 
		      \begin{equation}\label{coass3}
		      (W_2\o id)(id\o W_1)=(id\o T_1)(W_2\o id).
		      \end{equation}
		
		\item  If $W_1$ and $W_4$ are regular, then equation (\ref{coass1}) can be reformulated as  
		\begin{equation}\label{coass1-2}
		(W_4\o id)(id\o W_1)=(id\o T_1)(W_4\o id).
		\end{equation}
				
		\item If $W_2, W_3$ and $T_3$ are regular, then equation (\ref{coass2}) can be reformulated as 
		\begin{equation}\label{coass1-4}
		(W_2\o id)(id\o W_3)=(id\o T_3)(W_2\o id).
		\end{equation}
				
		\item If $W_3, W_4$ and $T_3$ are regular, then equation (\ref{coass2}) can be reformulated as 
		\begin{equation}\label{coass1-5}
		(W_4\o id)(id\o W_3)=(id\o T_3)(W_4\o id).
		\end{equation}	
		The expressions $\eqref{coass3}-\eqref{coass1-5}$ are both defined on $R\o R\o A$, and their ranges are in $R\o A\o A$. 	
		\end{enumerate}
		For more information about the coassociativity of the coaction map on a non-unital algebra, we refer to \cite{van2024reflections1}.
\end{enumerate}
\end{remark}
	
\begin{definition}
Let $A$ be a WMHA and $R$ a nondegenerate algebra. Assume that $\Gamma$ is a linear map from $R$ to $M(R\o A)$. We call $\Gamma$ {\itshape semi-regular} if $W_1$ and $W_3$ are regular.
If all four maps $W_i (i=1,2,3,4)$ are regular, then we call $\Gamma$ {\itshape regular}.
\end{definition}
	
By the nondegeneracy of the product, we obtain the following results.

\begin{proposition}\label{coass-eqv1}
\begin{enumerate}[(i)]
    \item  If $W_1$ and $W_3$ are regular, then equation \eqref{coass1} $\Leftrightarrow$ equation \eqref{coass2}.
    \item  If $\Delta$ and $\Gamma$ are regular, then the coassociativity conditions \eqref{coass1}--\eqref{coass1-5} are all equivalent.
\end{enumerate}
\end{proposition}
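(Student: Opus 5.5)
The plan is to reduce every equivalence to an equality of multipliers, tested by multiplying with elements of $R\o A\o A$ (or $R\o R\o A$ in the reformulated versions). We then use that the products on $R$ and $A$ are nondegenerate, so $R\o A\o A$ acts faithfully on $M(R\o A\o A)$.

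For (i), assume $W_1$ and $W_3$ are regular. Multiplying \eqref{coass1} on the left by $1\o 1\o a'$ and \eqref{coass2} on the right by $1\o 1\o a$ both produce the expression
$$(1\o 1\o a')\,\big((id\o\Delta)\Gamma(r)\big)\,(1\o 1\o a).$$
We have to identify the left-hand sides. Because $\Gamma\o id$ is a homomorphism on $R\o A$ taking values in $M(R\o A\o A)$, we get
$$(1\o 1\o a')(\Gamma\o id)\big(\Gamma(r)(1\o a)\big)=(\Gamma\o id)\big((1\o a')\Gamma(r)(1\o a)\big).$$
This uses that $(\Gamma\o id)(x(1\o a))=(\Gamma\o id)(x)(1\o 1\o a)$ for $x\in R\o A$, which follows since $\Gamma\o id$ acts as the identity on the second leg. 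Symmetrically,
$$(\Gamma\o id)\big((1\o a')\Gamma(r)\big)(1\o 1\o a)$$
equals the same element. The element $(1\o a')\Gamma(r)(1\o a)$ lies in $R\o A$ by either regularity assumption.

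So \eqref{coass1} implies that
$$(\Gamma\o id)\big((1\o a')\Gamma(r)\big)(1\o 1\o a)=(1\o 1\o a')\big((id\o\Delta)\Gamma(r)\big)(1\o 1\o a)$$
holds for all $a$. Both sides of \eqref{coass2} are multipliers, namely $(\Gamma\o id)((1\o a')\Gamma(r))$ and $(1\o 1\o a')(id\o\Delta)\Gamma(r)$. By nondegeneracy in the third leg (after also multiplying by $r''\o b\o 1$ to land in the algebra), \eqref{coass2} follows. The converse is the same argument.

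For (ii), with $\Delta$ and $\Gamma$ regular, (i) already gives \eqref{coass1}$\Leftrightarrow$\eqref{coass2}. It therefore remains to show that \eqref{coass3} and \eqref{coass1-2} are each equivalent to \eqref{coass1}, and that \eqref{coass1-4} and \eqref{coass1-5} are each equivalent to \eqref{coass2}.

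For \eqref{coass3}, apply both sides to $r'\o r\o a$. The left side is
$$(W_2\o id)\big(r'\o\Gamma(r)(1\o a)\big)=(r'\o 1\o 1)(\Gamma\o id)\big(\Gamma(r)(1\o a)\big).$$
The right side is
$$(id\o T_1)\big((r'\o 1)\Gamma(r)\o a\big)=(r'\o 1\o 1)\big((id\o\Delta)\Gamma(r)\big)(1\o 1\o a).$$
Writing $x=\Gamma(r)(1\o a)=\sum r_i\o a_i$ justifies these identifications, since then $(W_2\o id)(r'\o x)=\sum (r'\o1)\Gamma(r_i)\o a_i$. Hence \eqref{coass3} is exactly \eqref{coass1} multiplied on the left by $r'\o1\o1$, and nondegeneracy of $R$ gives the equivalence. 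The cases \eqref{coass1-2}, \eqref{coass1-4} and \eqref{coass1-5} are identical, using multiplication on the right by $r'\o1\o1$ or the $T_3$ form $(1\o a)\Delta(b)$ where appropriate.

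The main obstacle is the bookkeeping in the step identifying, for example, $(id\o T_1)((r'\o1)\Gamma(r)\o a)$ with $(r'\o1\o1)(id\o\Delta)\Gamma(r)(1\o1\o a)$. Here $(id\o\Delta)$ is the extension from Proposition \ref{id-delta} and $(r'\o 1)\Gamma(r)$ lies in $R\o A$. I would verify it by multiplying further on the left by $1\o b\o 1$ and using the definition of the extension via $E$, together with $\Delta(a)E=\Delta(a)$. This is where the regularity hypotheses are needed, so that every expression lands in genuine tensor products.
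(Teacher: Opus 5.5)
Your proposal is correct and follows the paper's route. The paper's proof is only the remark that the statement follows from the nondegeneracy of the product, and you supply those details: you identify \eqref{coass3}--\eqref{coass1-5} as \eqref{coass1} or \eqref{coass2} multiplied by $r'\o 1\o 1$ on the left or right, and you cancel using the faithfulness of $R\o A\o A$ on $M(R\o A\o A)$. One small point: in (i) you need not say that $\Gamma\o id$ is a homomorphism, since $\Gamma$ is only a linear map there; as you note yourself, the argument uses only that $\Gamma\o id$ acts as the identity on the second leg.
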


Since we are  interested in the notion of comodule algebra, we need $\Gamma$ to be a homomorphism. When $\Gamma$ is a homomorphism, we have another notion of coassociativity.

\begin{definition}
Let $A$ be a WMHA and $R$ a nondegenerate algebra. If $\Gamma$ is a homomorphism from $R$ to $M(R\o A)$, then we call $\Gamma$ coassociative if
\begin{equation}\label{coass4}
(\Gamma\o id)(\Gamma(r)(r'\o a))=((id\o \Delta)\Gamma(r))(\Gamma(r')\o a)
\end{equation}
and
\begin{equation}\label{coass4'}
(\Gamma\o id)((r'\o a)\Gamma(r))=(\Gamma(r')\o a)((id\o \Delta)\Gamma(r)).
\end{equation}
\end{definition}

\begin{proposition}\label{coass-eqv2}
\begin{enumerate}[(1)]
    \item If $\Gamma$ is  homomorphism and $W_1$ is regular, then equation \eqref{coass1} $\Rightarrow$ equation \eqref{coass4}. Moreover, if $W_1$ is surjective, then the converse is true.
    \item If $\Gamma$ is  homomorphism and $W_3$ is regular, then equation $\eqref{coass2}\Rightarrow \text{equation } \eqref{coass4'}$. If $W_3$ is surjective, then the converse is true.
\end{enumerate}
\end{proposition}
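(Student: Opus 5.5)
For part (1), the approach is to multiply both sides of \eqref{coass1} on the left by $\Gamma(r')\otimes 1$ and use that $\Gamma\otimes id$ is a homomorphism on $R\otimes A$. Fix $r,r'\in R$ and $a\in A$. Since $W_1$ is regular, we have $\Gamma(r)(1\otimes a)\in R\otimes A$.

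The left-hand side of \eqref{coass4} is $(\Gamma\otimes id)(\Gamma(r)(r'\otimes a))$. Write $r'\otimes a$ as $(r'\otimes 1)(1\otimes a)$. Because $\Gamma$ is a homomorphism, $\Gamma(r)(r'\otimes 1)$ can be read inside $M(R\otimes A)$. I would instead write
\[
\Gamma(r)(r'\otimes a)=\Gamma(rr')(1\otimes a)-\ldots
\]
The cleaner route is to note $\Gamma(r)(r'\otimes a)\in R\otimes A$, by regularity of $W_1$ together with $\Gamma(r)(r'\otimes 1)\cdots$. To make this rigorous, I would test against elements: it suffices to show that both sides of \eqref{coass4} agree after multiplying on the left by an arbitrary $s\otimes b\otimes c$, with $s\in R$ and $b,c\in A$, and then invoke nondegeneracy.

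The key identity is $(\Gamma\otimes id)(x(r'\otimes 1))=(\Gamma\otimes id)(x)(\Gamma(r')\otimes 1)$ for $x\in R\otimes A$, applied with $x$ replaced by the appropriate elements. More precisely, I would use $\Gamma(r)(r'\otimes a)=\Gamma(r)(1\otimes a)(r'\otimes 1)$ (these commute, being in different legs), with $x=\Gamma(r)(1\otimes a)\in R\otimes A$. Then
\[
(\Gamma\otimes id)(x(r'\otimes 1))=(\Gamma\otimes id)(x)\,(\Gamma(r')\otimes 1),
\]
since $\Gamma\otimes id$ is a homomorphism $R\otimes A\to M(R\otimes A\otimes A)$. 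Substituting \eqref{coass1} gives
\[
((id\otimes\Delta)\Gamma(r))(1\otimes 1\otimes a)(\Gamma(r')\otimes 1)=((id\otimes\Delta)\Gamma(r))(\Gamma(r')\otimes a).
\]
This is \eqref{coass4}.

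For the converse under surjectivity of $W_1$, I would take \eqref{coass4} and multiply on the right by $1\otimes 1\otimes a'$ after first rewriting $r'\otimes a$. Multiply both sides of \eqref{coass1} (which is what we want) on the right by elements $\Gamma(r')(1\otimes b)\otimes c$. These span $R\otimes A\otimes A$, because $W_1$ is surjective. By the same homomorphism computation, both sides then reduce to the two sides of \eqref{coass4} applied with $a$ replaced by suitable products, so they agree. Nondegeneracy of $R\otimes A\otimes A$ (acting on multipliers from the right) then yields \eqref{coass1}.

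Part (2) is the mirror image, using $W_3$, left multiplication, and \eqref{coass2}.

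The main obstacle is bookkeeping of where each expression lives, in $R\otimes A$ versus $M(R\otimes A)$, so that $\Gamma\otimes id$ is only ever applied to genuine elements of $R\otimes A$. The commutation $\Gamma(r)(r'\otimes a)=\Gamma(r)(1\otimes a)(r'\otimes 1)$ handles this.
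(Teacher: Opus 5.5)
Your proof is correct and follows the paper's route. For the forward direction you factor $\Gamma(r)(r'\otimes a)$ through $\Gamma(r)(1\otimes a)\in R\otimes A$, use multiplicativity of $\Gamma\otimes id$, and then apply \eqref{coass1}. For the converse you test both sides of \eqref{coass1} against $\Gamma(r')(1\otimes b)\otimes c$, which span $R\otimes A\otimes A$ when $W_1$ is surjective; the paper instead multiplies by $1\otimes b\otimes 1$ and cancels $\Gamma(r')(1\otimes b)$, which amounts to the same thing.

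The only difference is cosmetic. The paper writes $\Gamma(r)(r'\otimes a)=\Gamma(r)(1\otimes a)(r'\otimes e)$ with a local unit $ae=a$, so the homomorphism property of $\Gamma\otimes id$ on $R\otimes A$ applies literally. Your identity $(\Gamma\otimes id)(x(r'\otimes 1))=(\Gamma\otimes id)(x)(\Gamma(r')\otimes 1)$ instead needs a one-line check on simple tensors via $\Gamma(sr')=\Gamma(s)\Gamma(r')$. You should also delete the abandoned false starts from your write-up.
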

\begin{proof}
(1) Since $A$ has local units, it follows that for any given $a$ there exists an element $e$ in $A$ such that $ae=a$. Then
\begin{align*}
    (\Gamma\o id)(\Gamma(r)(r'\o a))
    &=(\Gamma\o id)(\Gamma(r)(1\o a))(\Gamma(r')\o e)\\
    &=((id\o \Delta)\Gamma(r))(1\o 1\o a)(\Gamma(r')\o e)\\
    &=((id\o \Delta)\Gamma(r))(\Gamma(r')\o a).
\end{align*}
Conversely. Take any $b\in A$, we have
\begin{align*}
    (\Gamma\o id)(\Gamma(r)(r'\o a))(1\o b\o 1)=
    (\Gamma \o id)(\Gamma(r)(1\o a))(\Gamma(r')(1\o b)\o 1)
\end{align*}
and
$$((id\o \Delta)\Gamma(r))(\Gamma(r')\o a)(1\o b\o 1)=(id\o \Delta)\Gamma(r)(1\o 1\o a)(\Gamma(r')(1\o b)\o 1).$$
Since $W_1$ is surjective, we can cancel $\Gamma(r')(1\o b)$ and get the equation $\eqref{coass1}$.
	 
(2) Similarly.
\end{proof}

In the case of (regular multiplier) Hopf algebras, $W_1$ and $W_3$ are bijective, see \cite[Proposition 2.5]{van1999galois}. However, in the theory of weak multiplier Hopf algebras, these maps are generally not surjective. Requiring both maps to be surjective would be too restrictive. We will show that the converse result holds under a weaker hypothesis.

\begin{definition}
Let $\Gamma$ be a homomorphism from $R$ to $M(R\o A)$. Assume that $W_1$ is regular. $\Gamma$ is called a \textit{right coaction} of $A$ on $R$ if $\Gamma$ is coassociative in the sense of equation \eqref{coass1}. If $\Gamma$ is (semi-)regular, then the coaction is called (semi-)regular.
\end{definition}

\begin{remark}
\begin{enumerate}[(1)]
\item In the theory of multiplier Hopf algebras \cite{van1999galois}, the authors used the term {\itshape reduced coaction} for the regular coaction defined above. Later, we will see that the assumptions on the coaction map are similar to those on the coproduct of a WMHA. To emphasize this connection, we adopt the term regular rather than reduced.
\item In general, we should first consider the comodule of a WMHA $A$ over a vector space $V$. There are two ways to consider this notion. With the notion of completed module introduced by Van Daele in \cite{van2008tools}, the coaction of $A$ on $V$ is a linear map $\Gamma: A\rightarrow M_0(V\o A)$ satisfying $(\Gamma\o id)\Gamma=(id\o \Delta)\Gamma$ where $M_0(V\o A)$ is the completion of the $A$-bimodule $V\o A$. The equation is meaningful without extra assumptions. In another way, the comodule could be defined via a compatible pair of maps $\lambda,\rho: V\o A\rightarrow V\o A$ (instead of a map $V\rightarrow V\o A$), see \cite{van1999corepresentation,kurose2000corepresentation,bohm2014comodules}. We will also arrive at the compatible conditions of the pair $(\lambda,\rho)$, see Proposition \ref{compatible-pair}.
\end{enumerate}
\end{remark}

If $W_2$ is regular and
\begin{equation}\label{counit1}
(id\o \varepsilon)((r'\o 1)\Gamma(r))=r'r,
\end{equation}
then $\Gamma$ is called satisfying the counit property. If $W_4$ is regular and 
\begin{equation}\label{counit2}
(id\o \varepsilon)(\Gamma(r)(r'\o 1))=rr',
\end{equation}
then $\Gamma$ is called satisfying the counit property. If $W_2$ and $W_4$ are regular, by the nondegeneracy of $R$, the equations \eqref{counit1} and \eqref{counit2} are equivalent. If $W_2$ and $W_4$ are regular, we may denote the counit property as $(id\o \varepsilon)\Gamma(r)=r$, since $(id\o \varepsilon)\Gamma(r)$ can be viewed as a multiplier in the natural way.
 
\begin{definition}
\begin{enumerate}[(i)]
\item Assume that $W_1$ is regular. Denote by $V_1$ the smallest subspace of $R$ satisfying $\Gamma(r)(1\o a)\in V_1\o A$. We will call $V_1$ the left leg of $\Gamma$.
\item  Assume that $W_2$ is regular. Denote by $V_2$ the smallest subspace of $A$ satisfying $(r'\o 1)\Gamma(r)\in R\o V_2$. We will call $V_2$ the right leg of $\Gamma$.
\item Assume that $W_3$ is regular. Denote by $V_3$ the smallest subspace of $R$ satisfying $(1\o a)\Gamma(r)\in V_3\o A$. We will call $V_3$ the left leg of $\Gamma$.
\item  Assume that $W_4$ is regular. Denote by $V_4$ the smallest subspace of $A$ satisfying $\Gamma(r)(r'\o 1)\in R\o V_4$. We will call $V_4$ the right leg of $\Gamma$.
\end{enumerate}
$\Gamma$ is called \textit{left full} if the left leg is $R$. $\Gamma$ is called \textit{right full} if the right leg is $A$. The coaction $\Gamma$ is called \textit{full} if it is both left full and right full.
\end{definition}
	
If $W_1$ and $W_3$ are regular, then $V_1=V_3$. If $W_2$ and $W_4$ are regular, then $V_2=V_4$. 

\begin{proposition}
\begin{enumerate}[(1)]
\item If $W_1$ is regular then $V_1=\{(id\o \omega)(\Gamma(r)(1\o a))\mid \omega\in A', r\in R, a\in A\}$,
\item If $W_2$ is regular then $V_2=\{(\eta\o id)(r'\o 1)\Gamma(r)\mid \eta\in R', r, r'\in R\}$,
\item If $W_3$ is regular then $V_3=\{(id\o \omega)((1\o a)\Gamma(r))\mid \omega\in A', r\in R, a\in A\}$,
\item If $W_4$ is regular then $V_4=\{(\eta\o id)(\Gamma(r)(r'\o 1))\mid \eta\in A', r,r'\in R\}$.
\end{enumerate}
\end{proposition}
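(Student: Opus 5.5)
Each of the four statements is a standard "leg" characterization, and I will prove (1) in detail and obtain (2)--(4) by the same argument with the roles of the tensor factors or the side of multiplication changed. Write $L_1$ for the linear span of all elements $(id\o \omega)(\Gamma(r)(1\o a))$ with $\omega\in A'$, $r\in R$, $a\in A$. The set in (1) should be read as this span, since the smallest subspace is a subspace. The claim is then $V_1=L_1$, and I will show the two inclusions separately.

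\emph{Inclusion $L_1\subseteq V_1$.} By definition of $V_1$, every $\Gamma(r)(1\o a)$ lies in $V_1\o A$. Hence it can be written as a finite sum $\sum_i v_i\o a_i$ with $v_i\in V_1$. Applying $id\o\omega$ gives $\sum_i \omega(a_i)v_i\in V_1$, so each generator of $L_1$ lies in $V_1$, and therefore $L_1\subseteq V_1$.

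\emph{Inclusion $V_1\subseteq L_1$.} By minimality of $V_1$, it suffices to show that $\Gamma(r)(1\o a)\in L_1\o A$ for all $r$ and $a$. Fix $x=\Gamma(r)(1\o a)\in R\o A$, which we may do because $W_1$ is regular. Write $x=\sum_{i=1}^n v_i\o a_i$ with the $a_i$ linearly independent. Choose $\omega_j\in A'$ dual to the $a_i$, so that $\omega_j(a_i)=\delta_{ij}$. Then
\[
v_j=(id\o\omega_j)(x)\in L_1 .
\]
Hence $x\in L_1\o A$. This is the standard fact that the smallest subspace $V$ with $x\in V\o A$ is spanned by the slices $(id\o\omega)(x)$. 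The only point needing care is that we slice elements of $R\o A$, never genuine multipliers, and regularity of $W_1$ guarantees exactly this.

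For (2), I would use slices $(\eta\o id)((r'\o 1)\Gamma(r))$ with $\eta\in R'$, which live in $R\o A$ by regularity of $W_2$. To write $(r'\o 1)\Gamma(r)$ as a sum $\sum_i u_i\o b_i$ with the $u_i$ linearly independent, I would choose functionals on $R$ dual to the $u_i$; the same two inclusions then hold verbatim. Cases (3) and (4) are identical, with $W_3$ and $W_4$ in place of $W_1$ and $W_2$. In (4) the functional $\eta$ should be taken in $R'$ (the statement's $A'$ appears to be a typo). I expect no real obstacle; the argument is pure linear algebra.
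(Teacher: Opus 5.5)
Your proposal is correct and is essentially the paper's argument: the paper only defers to the standard slicing proof of Proposition 1.6 in Van Daele's reflections paper, which you reproduce. Slices of elements of $V\o A$ stay in $V$, and functionals dual to a linearly independent family of second legs recover the first legs, so the span of the slices is the smallest admissible subspace. Your reading of the displayed sets as linear spans, and of $\eta\in A'$ in (4) as $\eta\in R'$, is the right one.
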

\begin{proof}
The proof is simple and we refer to a similar proof of Proposition 1.6 in \cite{van2024reflections3}.
\end{proof}

\subsection{First properties about $W_1$ and $W_3$}

Now we assume that $\Gamma$ is a right coaction of $A$ on $R$. Then $W_1$ behaves with respect to multiplication as
$$W_1(id\o \mu)=(id\o \mu)(W_1\o id)$$
where $\mu$ denotes multiplication in $A$. The equation above means that
\begin{equation*}
W_1(r\o aa')=W_1(r\o a)(1\o a'),
\end{equation*}
for all $r\in R$ and $ a,a'\in A$.
We may have the following equation
\begin{equation}\label{zz}
(\Gamma\o id)W_1=(id\o T_1)(\Gamma\o id).
\end{equation}
Note that it is not safe to use equation $\eqref{zz}$ since the right side is meaningless. Equation $\eqref{zz}$ is meaningful only in the case where $W_2$ or $W_4$ is regular. For example, if $W_2$ is regular, then the expression
$$(t\o 1\o 1)(id\o T_1)(\Gamma\o id)(r\o a)$$
is meaningful. Thus, the equation $\eqref{zz}$ is nothing else but the coassociativity equation $\eqref{coass3}$. If $W_3$ is regular, we have $W_3(r\o aa')=(1\o a)W_3(r\o a)$. 

\begin{proposition}\label{compatible-pair}
Let $\Gamma$ be a right coaction of $A$ on $R$. We have
    \begin{equation}\label{RR-corep}
    (T_1)^{23}(W_1)^{12}=(W_1)^{12}(W_1)^{13}(T_1)^{23}.
    \end{equation}
If $W_3$ is regular, then we have
    \begin{equation}\label{RL-corep}
    (T_2)^{23}(W_3)^{13}=(W_3)^{12}(W_3)^{13}(T_2)^{23}.
    \end{equation}
If $\Gamma$ and $\Delta$ are regular, then we have
    \begin{equation}\label{12-23}
    (W_j)^{12}(W_j)^{23}=(T_j)^{23}(W_j)^{13}(W_j)^{12}, \quad j=2,4.
    \end{equation}
\end{proposition}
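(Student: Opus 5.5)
Each of the three identities is an equality of linear maps on a triple tensor product, and I will check it by evaluating both sides on an elementary tensor. The inputs are only the coassociativity \eqref{coass1} (or \eqref{coass2}), the fact that $\Gamma$ is a homomorphism, and the multiplicativity rules for $W_1$ and $W_3$ recorded above. Throughout, $R\o A\o A$ is read with the legs in this order, and $(T_1)^{23}$ acts as $id\o T_1$.

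\textbf{Equation \eqref{RR-corep}.} Evaluate on $r\o a\o b$. The left side is
$$(id\o T_1)(\Gamma(r)(1\o a)\o b)=((id\o\Delta)(\Gamma(r)(1\o a)))(1\o 1\o b)=((id\o\Delta)\Gamma(r))(1\o\Delta(a)(1\o b)).$$
This uses that $id\o\Delta$ is a homomorphism on $M(R\o A)$ (Proposition \ref{id-delta}). For the right side, first apply $(T_1)^{23}$ to get $r\o\Delta(a)(1\o b)$. Then $(W_1)^{13}$ gives $\Gamma^{13}(r)(1\o \Delta(a)(1\o b))$, and $(W_1)^{12}$ applies $\Gamma$ to the first leg. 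Since $\Gamma$ is a homomorphism, applying $\Gamma$ to the first leg of $\Gamma^{13}(r)$ produces $(\Gamma\o id)\Gamma(r)$. More precisely, I write $\Delta(a)(1\o b)=\sum a_i\o b_i$ and compute
$$(W_1)^{12}\bigl(\textstyle\sum \Gamma(r)(1\o b_i)\text{ placed in legs }1,3,\ \text{with } a_i \text{ in leg } 2\bigr)=\textstyle\sum(\Gamma\o id)(\Gamma(r)(1\o b_i))(1\o a_i\o 1).$$
By \eqref{coass1} this equals $\sum((id\o\Delta)\Gamma(r))(1\o a_i\o b_i)$, which is the left side. The careful point is to order the legs so that $(1\o a_i\o 1)$ multiplies correctly. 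Both sides lie in $M(R\o A\o A)$, and all the needed products are defined because $W_1$ is regular and $T_1$ has range in $A\o A$.

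\textbf{Equation \eqref{RL-corep}.} This is the mirror argument. Evaluate on $r\o b\o a$ and use $W_3(r\o a)=(1\o a)\Gamma(r)$, $T_2(b\o a)=(b\o 1)\Delta(a)$, and coassociativity in the form \eqref{coass2}. Proposition \ref{coass-eqv1}(i) supplies \eqref{coass2} because $W_1$ and $W_3$ are both regular. The left side becomes $(1\o (b\o1)\Delta(a))(id\o\Delta)\Gamma(r)$, and the right side reduces to the same expression by \eqref{coass2}.

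\textbf{Equation \eqref{12-23}.} Here I use the reformulations \eqref{coass3} and \eqref{coass1-2}, which are valid under full regularity by Proposition \ref{coass-eqv1}(ii), together with the homomorphism property. For $j=2$, evaluate on $r\o r'\o a$:
$$(W_2)^{12}(W_2)^{23}(r\o r'\o a)=(r\o1\o1)(\Gamma\o id)((r'\o1)\Gamma(a'))$$
after adjusting the variable names to match the legs of $R\o R\o R$. I then rewrite this using $(\Gamma\o id)\Gamma=(id\o\Delta)\Gamma$ applied against $(r\o 1)\Gamma(r')$. The main obstacle is bookkeeping: deciding which tensor legs the Galois maps act on, since $W_2,W_4$ go from $R\o R$ into $R\o A$, and so matching domains correctly (the identity lives on $R\o R\o R\to R\o A\o A$). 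Once the legs are fixed, the identity follows from the homomorphism property $(r\o1)\Gamma(r')\Gamma(r'')$-type factorizations and \eqref{coass3} (for $j=2$) or \eqref{coass1-2} (for $j=4$).
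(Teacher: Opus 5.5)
Your verifications of \eqref{RR-corep} and \eqref{RL-corep} are correct, and they are the elementwise check the paper intends; its proof only says ``the proof is easy''. In the first one, the explicit computation with $\Delta(a)(1\o b)=\sum a_i\o b_i$ is what carries the argument. The heuristic sentence saying that applying $\Gamma$ to the first leg of $\Gamma^{13}(r)$ ``produces $(\Gamma\o id)\Gamma(r)$'' should be dropped, because $(\Gamma\o id)\Gamma(r)$ is not defined before Assumption I is imposed.

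The part on \eqref{12-23} is not yet a proof. Three things go wrong:
\begin{itemize}
\item You evaluate on $r\o r'\o a$, but the domain is $R\o R\o R$.
\item You never compute the right-hand side $(T_j)^{23}(W_j)^{13}(W_j)^{12}$.
\item The step ``rewrite using $(\Gamma\o id)\Gamma=(id\o\Delta)\Gamma$'' is not available here. Without Assumption I, $\Gamma\o id$ has not been extended to $M(R\o A)$; that identity is Proposition \ref{2.8}, proved later under Assumption I.
\end{itemize}

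Here is what is missing, for $j=2$ evaluated on $r\o r'\o r''$.
\begin{itemize}
\item The left side is $(r\o 1\o 1)(\Gamma\o id)((r'\o 1)\Gamma(r''))$. It lies in $R\o A\o A$ because $W_2$ is regular.
\item For the right side, write $(r\o 1)\Gamma(r')=\sum_k u_k\o v_k$. Then $(W_2)^{13}(W_2)^{12}$ places $(u_k\o 1)\Gamma(r'')$ in legs $1,3$ with $v_k$ in leg $2$. Applying $(T_2)^{23}$ gives $\sum_k(1\o v_k\o 1)(id\o\Delta)((u_k\o 1)\Gamma(r''))=((r\o 1)\Gamma(r')\o 1)\,(id\o\Delta)\Gamma(r'')$, which also lies in $R\o A\o A$.
\item To identify the two sides, multiply both on the right by $1\o 1\o a$. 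Since $(r'\o 1)\Gamma(r'')(1\o a)=(r'\o 1)W_1(r''\o a)$ and $\Gamma$ is a homomorphism, the left side becomes $(r\o 1\o 1)(\Gamma(r')\o 1)(\Gamma\o id)(\Gamma(r'')(1\o a))$.
\item By \eqref{coass1} this equals the right side times $1\o 1\o a$. Now cancel $a$, using nondegeneracy of $A$.
\end{itemize}

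For $j=4$ the same argument works, using $\Gamma(r'')(r'\o a)=W_1(r''\o a)(r'\o 1)$. Both sides become $(id\o\Delta)\Gamma(r'')\,(\Gamma(r')(r\o 1)\o 1)$. Regularity of $\Delta$ is needed exactly here, so that $(T_4)^{23}$ lands in $R\o A\o A$; for $j=2$ it is not needed at all. Only \eqref{coass1} and the homomorphism property are used, so you do not need Proposition \ref{coass-eqv1}(ii).
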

\begin{proof}
Note that the two sides of equations (\ref{RR-corep}) and (\ref{RL-corep}) are linear maps from $R\o A\o A$ to $R\o A\o A$, and the two sides of equation (\ref{12-23}) are linear maps from $R\o R\o R$ to $R\o A\o A$. The proof is easy.
\end{proof}

Remark that equations \eqref{RR-corep} and \eqref{RL-corep} were used in \cite{van1999corepresentation} to define the right-right corepresentation and the right-left corepresentation of multiplier Hopf algebras. B\"ohm also uses these equations to define the comodule over a regular multiplier bialgebra, see the coassociativity conditions in \cite[Definition 2.1]{bohm2014comodules}. Obviously, we have
$$(1\o a)W_1(r\o b)=(W_3(r\o a))(1\o b)$$
with any $r\in R, a,b\in A$. This is the compatible condition in \cite[Definition 2.7]{van1999corepresentation} or \cite[Definition 2.1]{bohm2014comodules}. 

More formulas will be obtained and we summarize them as follows. Considering the two equations
\begin{equation}\label{eq12}
x^{23}y^{12}=y^{12}y^{13}x^{23},
\end{equation}
\begin{equation}\label{eq13}
x^{23}y^{13}=y^{12}y^{13}x^{23},
\end{equation}
$x$ is a linear map from $A\o A$ to $A\o A$, $y$ is a linear map from $R\o A$ to $R\o A$.  If $x=a$ and $ y=b$ satisfy the two equations, then we denote the solution by a pair $(a, b)$. 
 
\begin{proposition}
Assume that $\Gamma$ is a right regular coaction of $A$ on $R$. If $A$ is regular, then 
\begin{enumerate}[(1)]
    \item $(T_1, W_1),(T_4^{\tau}, W_1) (T_3, W_3), (T_2^{\tau}, W_3)$ are solutions of equation \eqref{eq12}.
    \item $(T_1^{\tau}, W_1), (T_4, W_1), (T_2, W_3), (T_3^{\tau}, W_3)$ are solutions of equation \eqref{eq13}.
\end{enumerate}
\end{proposition}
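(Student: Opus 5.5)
We verify each of the eight identities by evaluating both sides on elementary tensors $r\o a\o b\in R\o A\o A$. Each side is a composition of maps that all act on $R\o A\o A$: by regularity of $\Gamma$ and $A$, $W_1,W_3$ and all $T_i,T_i^{\tau}$ land in $R\o A$ or $A\o A$. So each side is an honest linear map, and we compare the results as multipliers (in fact elements) of $R\o A\o A$. We first write the identities in Sweedler-type notation and only afterwards say how to make this rigorous.

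Take $(T_1,W_1)$ in \eqref{eq12}. The right side gives
\[
(W_1)^{12}(W_1)^{13}(T_1)^{23}(r\o a\o b)=\sum \Gamma(\Gamma(r)(1\o a_{(2)}))\ldots,
\]
which is exactly the content of \eqref{RR-corep} from Proposition \ref{compatible-pair}, so that case is already done. The identity for $(T_3,W_3)$ is the mirror version: use $(1\o a)$ on the left, coassociativity in the form \eqref{coass2} (equivalent to \eqref{coass1} by Proposition \ref{coass-eqv1}), and $T_3(a\o b)=(1\o b)\Delta(a)$.

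For the twisted maps we compute symbolically. For $(T_4^{\tau},W_1)$,
\[
(T_4^{\tau})^{23}(W_1)^{12}(r\o a\o b)=\sum r_{(0)}\o \Delta(r_{(1)}a)(b\o 1).
\]
Since $\Delta$ is a homomorphism, this equals $\sum r_{(0)}\o r_{(1)}a_{(1)}b\o r_{(2)}a_{(2)}$. The right side is $(W_1)^{12}(W_1)^{13}(a_{(1)}b\o a_{(2)})$, and by coassociativity $(\Gamma\o id)\Gamma=(id\o\Delta)\Gamma$ applied through the $13$ and $12$ legs it gives the same expression.

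Equation \eqref{eq13} is handled the same way. For $(T_1^{\tau},W_1)$, the left side is $\sum r_{(0)}\o \Delta(b)(1\o r_{(1)}a)$, and the right side is $(W_1)^{12}(W_1)^{13}(r\o b_{(1)}\o b_{(2)}a)$, which is $\sum r_{(0)}\o r_{(1)}b_{(1)}\o r_{(2)}b_{(2)}a$. Multiplicativity of $\Delta$ and coassociativity show the two agree. The cases $(T_4,W_1)$, $(T_2,W_3)$ and $(T_3^{\tau},W_3)$ follow by the same pattern, with the $A$-factors multiplied on the other side. $(T_2,W_3)$ should also follow directly from \eqref{RL-corep} after the appropriate reading of leg numbering.

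The main obstacle is justifying these Sweedler manipulations, in particular the step $\Delta(r_{(1)}a)=\Delta(r_{(1)})\Delta(a)$, where $r_{(1)}$ is not covered. We handle it as in the proof of Proposition \ref{coass-eqv2}, using coassociativity in the covered form \eqref{coass1}.

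First, we use the local units of $A$ to insert $e$ with $ae=a$ or $ea=a$. Then $\Gamma(r)(1\o a)$ becomes a genuine element $\sum r_i\o a_i$ of $R\o A$. Next, the identity $\Delta(r_{(1)}a)(b\o 1)$ is expressed as $(id\o T_4^{\tau})$ applied to that element. Coassociativity \eqref{coass1} and its consequence \eqref{coass4} then convert the $(id\o\Delta)\Gamma(r)$ term into $(\Gamma\o id)\Gamma(r)$ with all factors covered.

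The regularity of $\Delta$ (for $T_3,T_4$) and of $\Gamma$ (for $W_3$) guarantees that every intermediate expression lies in $R\o A\o A$. Hence the final comparison is an equality of elements, and no cancellation argument is needed.
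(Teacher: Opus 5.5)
Your overall approach, direct evaluation on $r\o a\o b$ using coassociativity, is the natural one. The paper states this proposition without proof, as a routine check. Your cases $(T_1,W_1)$ (literally \eqref{RR-corep}) and $(T_4^{\tau},W_1)$ in \eqref{eq12} are correct, and their mirror versions go through as you describe. $(T_2,W_3)$ is literally \eqref{RL-corep}.

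However, the one case of \eqref{eq13} you actually compute is wrong as written. In $(T_1^{\tau})^{23}(W_1)^{13}$ the map $W_1$ acts on legs $1$ and $3$, so $(W_1)^{13}(r\o a\o b)=\sum r_{(0)}\o a\o r_{(1)}b$. Hence
\[
(T_1^{\tau})^{23}(W_1)^{13}(r\o a\o b)=\sum r_{(0)}\o \Delta(r_{(1)}b)(1\o a)=\sum r_{(0)}\o r_{(1)}b_{(1)}\o r_{(2)}b_{(2)}a,
\]
which does agree with your right-hand side. The expression you wrote, $\sum r_{(0)}\o\Delta(b)(1\o r_{(1)}a)=\sum r_{(0)}\o b_{(1)}\o b_{(2)}r_{(1)}a$, is $(T_1^{\tau})^{23}(W_1)^{12}(r\o a\o b)$. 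It is \emph{not} equal to the right side: for a Hopf algebra $A$ coacting on itself by $\Delta$ with $a=b=1$, you get $\sum r_{(1)}\o 1\o r_{(2)}$ versus $\sum r_{(1)}\o r_{(2)}\o r_{(3)}$. So the step ``multiplicativity of $\Delta$ and coassociativity show the two agree'' fails for your expression.

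Since you obtain $(T_4,W_1)$, $(T_2,W_3)$ and $(T_3^{\tau},W_3)$ ``by the same pattern'', the pattern must be the corrected one. In \eqref{eq13} the coaction leg sits in the third factor, next to $b$. What makes the two sides agree is $\Delta(r_{(1)}b)=\Delta(r_{(1)})\Delta(b)$, respectively $\Delta(br_{(1)})=\Delta(b)\Delta(r_{(1)})$ for $W_3$.

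Your rigour paragraph also does not address the obstacle you single out. Local units are unnecessary, since $\Gamma(r)(1\o a)$ already lies in $R\o A$, and \eqref{coass4} plays no role. What you actually need is two facts.
\begin{enumerate}[(a)]
\item The extension of $id\o\Delta$ in Proposition \ref{id-delta} is a homomorphism on $M(R\o A)$ with $(id\o\Delta)(1\o c)=1\o\Delta(c)$. Hence $(id\o\Delta)(\Gamma(r)(1\o c))=((id\o\Delta)\Gamma(r))(1\o\Delta(c))$, and similarly with $1\o c$ on the left.
\item By \eqref{coass1}, $(W_1)^{12}(W_1)^{13}(r\o c\o d)=((id\o\Delta)\Gamma(r))(1\o c\o d)$. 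By \eqref{coass2}, which is available through Proposition \ref{coass-eqv1}, $(W_3)^{12}(W_3)^{13}(r\o c\o d)=(1\o c\o d)((id\o\Delta)\Gamma(r))$.
\end{enumerate}
With these, each of the eight identities takes one line. The relevant module properties are:
\begin{itemize}
\item $T_1,T_4^{\tau}$ satisfy $x(ca\o b)=\Delta(c)x(a\o b)$;
\item $T_1^{\tau},T_4$ satisfy $x(a\o cb)=\Delta(c)x(a\o b)$;
\item $T_3,T_2^{\tau}$ satisfy $x(ac\o b)=x(a\o b)\Delta(c)$;
\item $T_2,T_3^{\tau}$ satisfy $x(a\o bc)=x(a\o b)\Delta(c)$.
\end{itemize}
Using (a), both sides of each identity equal $((id\o\Delta)\Gamma(r))(1\o x(a\o b))$ in the $W_1$ cases, and $(1\o x(a\o b))((id\o\Delta)\Gamma(r))$ in the $W_3$ cases.
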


\begin{remark}\label{cover-1}		
Assume that $\Gamma$ is a right coaction of $A$ on $R$.	
			 
(1) Since  $\Gamma(r)(1\o a)\in R\o A$, we may use Sweedler notation  
$$\Gamma(r)(1\o a)=\sum r_{(0)}\o r_{(1)}a.$$
And we will say that $r_{(1)}$ is {\itshape covered} by $a$. Moreover, we will use the following expression
\begin{equation*}\label{coass}
(id\o \Delta)\Gamma(r)=\sum r_{(0)}\o r_{(1)}\o r_{(2)}.
\end{equation*}
The right side is meaningful if we multiply with $1\o a\o b$ from the right or left. Specifically, if we denote $E(a\o b)=\sum_i \Delta(a_i)(1\o b_i)$, then we have
$$\sum r_{(0)}\o r_{(1)}a\o r_{(2)}b=\sum r_{(0)}\o (r_{(1)}a_i)_{(1)}\o (r_{(1)}a_i)_{(2)}b_i.$$
Now $r_{(1)}$ is covered by $a_i$, then $(r_{(1)}a_{i})_{(2)}$ is covered by $b_i$. 
	
(2) We consider the expression $(id\o S)\Gamma(r)$. Recall that $AS(A)=A=S(A)A$, we can denote $a=pS(q)$, then
$$\sum r_{(0)}\o aS(r_{(1)})=\sum r_{(0)}\o pS(r_{(1)}q)\in R\o A.$$
So $(id\o S)\Gamma(r)$ is a right multiplier of $R\o A$. 		
\begin{itemize}
	 \item If $W_2$ is regular, then we have $\sum r'r_{(0)}\o S(r_{(1)})\in R\o A$. And $(id\o S)\Gamma(r)$ is still a right multiplier.
	 \item If $W_3$ is regular, denote $a=S(p)q$, then 
	 	$$\sum r_{(0)}\o S(r_{(1)})a=\sum r_{(0)}\o S(pr_{(1)})q\in R\o A.$$
	 	So $(id\o S)\Gamma(r)$ is a left multiplier. Hence $(id\o S)\Gamma(r)\in M(R\o A)$. And we can consider the following important map $$\overline{W_1}: R\o A\rightarrow R\o A, r\o a\mapsto \sum r_{(0)}\o S(r_{(1)})a.$$
	 \item If $W_4$ is regular, then $\sum r_{(0)}r'\o S(r_{(1)})\in R\o A$. So $(id\o S)\Gamma(r)\in M(R\o A)$. 
\end{itemize}
To define the multiplier $(id\o S)\Gamma(r)$, we require the regularity of $W_3$ or $W_4$. We will choose $W_3$ due to Proposition \ref{13}. If $A$ is regular and now $S$ is bijective, then we can consider the multiplier $(id\o S^{-1})\Gamma(r)$ in a similar way. If $W_3$ and $A$ are regular, we can define the map $\overline{W_3}$ as follows,
$$\overline{W_3}: R\o A\rightarrow R\o A, r\o a \mapsto \sum r_{(0)}\o aS^{-1}(r_{(1)}).$$ 
By the Sweedler notation, we denote the multiplier $(id\o S)\Gamma(r)$ as $\sum r_{(0)}\o S(r_{(1)})$ and $(id\o S^{-1})\Gamma(r)$ as $\sum r_{(0)}\o S^{-1}(r_{(1)})$.
	
(3) Next, we focus on the expression $ (id\o \varepsilon_t)\Gamma(r)$. 
\begin{itemize}
    \item If $W_2$ and $W_4$ are regular, we have
            $$(id\o \varepsilon_t)((r'\o 1)\Gamma(r))\in R\o \varepsilon_t(A) \quad \text{and} \quad (id\o \varepsilon_t)(\Gamma(r)(r'\o 1))\in R\o \varepsilon_t(A).$$
            Then we can view $ (id\o \varepsilon_t)\Gamma(r)$ as a multiplier of $R\o A$.
    \item In the following we will assume that $W_3$ is regular. Let us consider the range of $W_1\overline{W_1}$, we have 
            $$W_1\overline{W_1}(r\o a)=\sum r_{(0)}\o \varepsilon_t(r_{(1)})a \in R\o A.$$
            Then we can say that $r_{(1)}$ is covered by $a$. Note that $(id\o \varepsilon_t)\Gamma(r)$ is only a left multiplier because we can not obtain that $\sum r_{(0)}\o a\varepsilon_t(r_{(1)}) \in R\o A$ or $\sum r'r_{(0)}\o \varepsilon_t(r_{(1)}) \in R\o A$. Denote the element $\sum r_{(0)}\o \varepsilon_t(r_{(1)})a$ as $\sum u_i\o v_i$ with $u_i\in R, v_i\in A$, then we find
            $$\sum r_{(0)}\o S(r_{(1)})r_{(2)}S(r_{(3)})a=\sum u_{i(0)}\o S(u_{i(1)})v_i\in R\o A.$$
            By the property of $S$, the left side is equal to $\sum r_{(0)}\o S(r_{(1)})a$. Similarly, we could consider the  expression $(id\o \varepsilon_s)\Gamma(r)$ and we have
            $$ \overline{W_1}W_1(r\o a)=\sum r_{(0)}\o \varepsilon_s(r_{(1)})a \in R\o A.$$
            When $A$ is regular, we can define the right multipliers $(id\o \varepsilon'_t)\Gamma(r)$ and $(id\o \varepsilon'_s)\Gamma(r)$ as
            \begin{align*}
                W_3\overline{W_3}(r\o a)=\sum r_{(0)}\o a\varepsilon'_t(r_{(1)})\in R\o A,\\
                \overline{W_3}W_3(r\o a)=\sum r_{(0)}\o a\varepsilon'_s(r_{(1)})\in R\o A.
            \end{align*}
\end{itemize} 
\end{remark}

\begin{notation}
Denote $Q=\overline{W_1}W_1$ and $ U=\overline{W_3}W_3$. 
\end{notation}

\begin{proposition}\label{13}
Let $\Gamma$ be a right semi-regular coaction of $A$ on $R$. Then the map $\overline{W_1}$ satisfies
$$\overline{W_1}W_1\overline{W_1}=\overline{W_1} \quad \text{and} \quad W_1\overline{W_1}W_1=W_1.$$
If $A$ is regular, then the map $\overline{W_3}$ satisfies 
$$\overline{W_3}W_3\overline{W_3}=\overline{W_3} \quad \text{and} \quad W_3\overline{W_3}W_3=W_3.$$
In this sense, $\overline{W_1}$ and $\overline{W_3}$ are called the generalized inverses of  $W_1$ and $W_3$, respectively.
\end{proposition}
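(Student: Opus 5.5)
The plan is to compute both composites directly in Sweedler notation and reduce them to the antipode identities $\sum a_{(1)}S(a_{(2)})a_{(3)}=a$ and $\sum S(a_{(1)})a_{(2)}S(a_{(3)})=S(a)$, applied to the second leg. The coassociativity equation \eqref{coass1} is what lets us write iterated expressions as $\sum r_{(0)}\o r_{(1)}\o r_{(2)}\o r_{(3)}$. Throughout, $\overline{W_1}$ is well defined with range in $R\o A$ by Remark \ref{cover-1}(2), which uses the regularity of $W_3$.

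\textbf{First identity, $W_1\overline{W_1}W_1=W_1$.} For $r\in R$ and $a\in A$, write $W_1(r\o a)=\sum r_{(0)}\o r_{(1)}a$. Applying $\overline{W_1}$ to a general element $\sum u_i\o v_i$ gives $\sum u_{i(0)}\o S(u_{i(1)})v_i$. By \eqref{coass1}, with $u_i=r_{(0)}$, the expression $\sum u_{i(0)}\o u_{i(1)}\o v_i$ becomes $\sum r_{(0)}\o r_{(1)}\o r_{(2)}a$, all legs properly covered by $a$. Hence
$$\overline{W_1}W_1(r\o a)=\sum r_{(0)}\o S(r_{(1)})r_{(2)}a=\sum r_{(0)}\o \varepsilon_s(r_{(1)})a,$$
which is the formula in Remark \ref{cover-1}(3). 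Applying $W_1$ and using coassociativity once more gives
$$\sum r_{(0)}\o r_{(1)}S(r_{(2)})r_{(3)}a=\sum r_{(0)}\o r_{(1)}a=W_1(r\o a).$$

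\textbf{Second identity, $\overline{W_1}W_1\overline{W_1}=\overline{W_1}$.} Similarly, $W_1\overline{W_1}(r\o a)=\sum r_{(0)}\o r_{(1)}S(r_{(2)})a$. Applying $\overline{W_1}$ then gives $\sum r_{(0)}\o S(r_{(1)})r_{(2)}S(r_{(3)})a=\sum r_{(0)}\o S(r_{(1)})a$. This computation is essentially already carried out in Remark \ref{cover-1}(3).

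\textbf{Main obstacle.} The hard part is justifying the Sweedler manipulations rigorously, since $(id\o S)\Gamma(r)$ is only a multiplier. I would work with explicit covering. Choose a local unit, or write $a=pS(q)$, so that every $r_{(k)}$ is covered by an element of $A$. Then reduce each identity to the corresponding identity for the WMHA $A$, applied to elements of the form $(\omega\o id\o id)((id\o\Delta)\Gamma(r)(1\o 1\o a))$, via the coassociativity \eqref{coass1}. Concretely, I would express the composite as $(id\o m)(id\o S\o id)$ of elements of the form $(\Gamma\o id)(\Gamma(r)(1\o a))$. I would then use that the formula $\sum a_{(1)}S(a_{(2)})a_{(3)}=a$ holds in the precise form given in \cite[Remark 2.5]{van2015weak}, namely as an identity of the maps built from $T_1$ and $R_1$: $T_1R_1T_1=T_1$ and $R_1T_1R_1=R_1$. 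This turns $W_1\overline{W_1}W_1=W_1$ into $(id\o T_1R_1T_1)$-type expressions applied to the first coassociativity decomposition.

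\textbf{The case of $\overline{W_3}$.} When $A$ is regular, the statement for $\overline{W_3}$ is proved the same way. One uses $W_3$, \eqref{coass2}, and the identities for $S^{-1}$, namely $\sum a_{(3)}S^{-1}(a_{(2)})a_{(1)}=a$ and its companion, which follow from applying the original identities to $A^{\text{cop}}$. These are available because $(A,\Delta^{\text{cop}})$ is a WMHA with antipode $S^{-1}$. Equivalently, one can apply the first part to the opposite setting.
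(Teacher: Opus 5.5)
Your proposal is correct and is essentially the paper's proof. The paper computes $\overline{W_1}W_1\overline{W_1}(r\o a)=\sum r_{(0)}\o S(r_{(1)})r_{(2)}S(r_{(3)})a=\overline{W_1}(r\o a)$ in Sweedler notation and calls the other three identities similar, which is exactly your argument; this includes the $\overline{W_3}$ cases, which need the $S^{-1}$ identities you state.

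Your concern about covering goes beyond what the paper writes. Note that the clean transfer of $T_1R_1T_1=T_1$ via $(W_i\o id)(id\o \overline{W_1})=(id\o R_1)(W_i\o id)$ (Proposition \ref{behave}) needs $W_2$ or $W_4$ to be regular. Under semi-regularity alone you would instead have to cover on the second leg, e.g.\ through $AS(A)=A$ as you suggest.
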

\begin{proof}
For any $r\in R, a\in A$, we have
\begin{align*}
\overline{W_1}W_1\overline{W_1}(r\o a)&=\sum r_{(0)}\o S(r_{(1)})r_{(2)}S(r_{(3)})a\\
&=\sum r_{(0)}\o S(r_{(1)})a\\
&= \overline{W_1}(r\o a).
\end{align*}
The proofs for the other three equations are similar.
\end{proof}

\begin{proposition}\label{behave}
Let $\Gamma$ be a right semi-regular coaction of $A$ on $R$. Assume that $W_i(i=2 \text{\, or \,}4)$ is regular. The following hold:
\begin{enumerate}[(1)]
	\item $\overline{W_1}(r\o aa')=\overline{W_1}(r\o a)(1\o a')$,
	\item $(W_i\o id)(id\o \overline{W_1})=(id\o R_1)(W_i\o id)$ on $R\o R\o A$.
\end{enumerate}
If $A$ is regular, then we have
\begin{enumerate}[(a)]
	\item $\overline{W_3}(r\o aa')=(1\o a)\overline{W_3}(r\o a')$,
	\item $(W_i\o id)(id\o \overline{W_3})=(id\o R_3)(W_i\o id)$ on $R\o R\o A$.
\end{enumerate}
\end{proposition}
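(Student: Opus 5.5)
Both statements are Sweedler-notation computations. The plan is to make the notation precise by covering: for (1) and (a) the legs $r_{(1)}$ are covered by elements of $A$, and for (2) and (b) they are covered by the extra element of $R$ via the regularity of $W_i$.

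For (1), I will write $a=\sum_j p_jS(q_j)$, using $S(A)A=A$ and $AS(A)=A$ as in Remark \ref{cover-1}(2). The map $\overline{W_1}$ is obtained from $W_3$ by
$$\overline{W_1}(r\o S(p)q)=\sum (id\o S\o id)\bigl((1\o p)\Gamma(r)\bigr)(1\o q),$$
read with the multiplication of the last two legs. This expression is linear in $q$ and compatible with right multiplication. So $\overline{W_1}(r\o aa')=\overline{W_1}(r\o a)(1\o a')$ follows once I note that $(S(p)q)a'=S(p)(qa')$. This is really the statement that $(id\o S)\Gamma(r)$ is a left multiplier of $R\o A$ (Remark \ref{cover-1}(2)): both sides equal $((id\o S)\Gamma(r))(1\o aa')$, because multipliers compose associatively. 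Statement (a) is proved the same way. It uses that $(id\o S^{-1})\Gamma(r)$ is a right multiplier when $A$ is regular and $W_3$ is regular, and that $\overline{W_3}(r\o a)=(1\o a)\,(id\o S^{-1})\Gamma(r)$.

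For (2), I take $i=2$; the case $i=4$ is symmetric. I evaluate both sides on $r'\o r\o a$.

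The left side is
$$(W_2\o id)\Bigl(\sum r'\o r_{(0)}\o S(r_{(1)})a\Bigr)=\sum (r'\o 1\o 1)\bigl(\Gamma(r_{(0)})\o S(r_{(1)})a\bigr).$$

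For the right side, I first write $(W_2\o id)(r'\o r\o a)=\sum r'r_{(0)}\o r_{(1)}\o a$. Applying $id\o R_1$ gives
$$\sum r'r_{(0)}\o r_{(1)(1)}\o S(r_{(1)(2)})a.$$
Using coassociativity (\ref{coass1}), in the form $(\Gamma\o id)\Gamma=(id\o\Delta)\Gamma$ made legal by covering with $r'\o 1\o a$, this is
$$\sum r'r_{(0)}\o r_{(1)}\o S(r_{(2)})a.$$
That equals the left side after I rename $\Gamma(r_{(0)})=r_{(0)}\o r_{(1)}$.

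The rigorous version is the main obstacle: the two different coverings must be matched. The plan is to multiply both sides from the left by $1\o b\o 1$ with $b\in A$ arbitrary, and to write $a=\sum_j S(p_j)q_j$. Then the left side becomes
$$(\Gamma\o id)\bigl((id\o S)\text{-leg applied to }(1\o p_j)\Gamma(r)\bigr)\,(r'\text{-covered}),$$
and the right side becomes $(id\o R_1)$ applied to $(W_2\o id)$. Both reduce to expressions involving $(id\o\Delta)\bigl((1\o p_j)\Gamma(r)\bigr)$ (or, equivalently, via (\ref{coass2}), which is available by Proposition \ref{coass-eqv1} since $\Gamma$ is semi-regular). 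One then applies $S$ on the third leg, using the anti-coalgebra-free identity $R_1(x\o S(p)q)=\sum x_{(1)}\o S(px_{(2)})q$, which holds by definition of $R_1$ and antimultiplicativity of $S$. Nondegeneracy of $A$ removes the factor $b$.

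Statement (b) follows by the identical argument with $S^{-1}$ and $R_3$, covering on the right: write $a=\sum q_jS^{-1}(p_j)$ and use (\ref{coass2}).
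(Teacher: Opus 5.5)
Your proposal is correct and is the direct Sweedler-type verification the paper leaves to the reader (its proof only says ``the proof is easy''): (1) and (a) follow at once from the coverings $a=\sum_j S(p_j)q_j$ and $a=\sum_j q_jS^{-1}(p_j)$, and (2) follows by covering with $1\otimes b\otimes 1$, using (\ref{coass2}) and the covered identity $(b\otimes 1)R_1(x\otimes S(p)q)=(id\otimes S)\bigl((b\otimes p)\Delta(x)\bigr)(1\otimes q)$. Two harmless slips: the common intermediate expression should be $(r'\otimes b\otimes p_j)\bigl((id\otimes\Delta)\Gamma(r)\bigr)$, not $(id\otimes\Delta)((1\otimes p_j)\Gamma(r))$; and in (b) the decomposition $a=\sum_j q_jS^{-1}(p_j)$ is covered by $\Gamma(r)(1\otimes p_j)$, so it is (\ref{coass1}) rather than (\ref{coass2}) that enters, though both hold for a semi-regular coaction.
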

\begin{proof}
The proof is easy.
\end{proof}

For the maps $Q$ and $U$, we have the following results.

\begin{proposition}\label{c3}
Let $\Gamma$ be a right semi-regular coaction of $A$ on $R$. Assume that $W_i(i=2 \text{\, or \,}4)$ is regular. The following equations hold.
\begin{enumerate}[(1)]
\item $Q(r\o aa')=Q(r\o a)(1\o a')$,
\item $(id\o \Delta)Q=Q^{13}(id\o \Delta)$,
\item $(W_i \o id)(id\o Q)=(id\o G_1)(W_i \o id)$.
\end{enumerate}
If $A$ is regular, then we have
\begin{enumerate}[(a)]
\item $U(r\o a'a)=(1\o a')U(r\o a),$
\item $(id\o \Delta)U=U^{12}(id\o \Delta),$
\item $(W_i\o id)(id\o U)=(id\o G_3)(W_i \o id)$.
\end{enumerate}
\end{proposition}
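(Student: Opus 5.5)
Throughout I work with the Sweedler notation of Remark \ref{cover-1}. By that remark, $Q(r\o a)=\overline{W_1}W_1(r\o a)=\sum r_{(0)}\o \varepsilon_s(r_{(1)})a$ and $U(r\o a)=\sum r_{(0)}\o a\varepsilon'_s(r_{(1)})$. Each identity is checked by writing both sides in this form and comparing them. Coverings are justified using the regularity of $W_1$ and $W_3$. Where needed, I multiply by an element $r'\o 1$ on the left (when $W_2$ is regular) or on the right (when $W_4$ is regular) so that every expression lies in $R\o A$ or $R\o A\o A$.

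For (1), use Proposition \ref{behave}(1) together with $W_1(r\o aa')=W_1(r\o a)(1\o a')$. Both maps are right $A$-linear in the second leg, so their composition $Q=\overline{W_1}W_1$ is as well. Item (a) follows in the same way from Proposition \ref{behave}(a) and the left $A$-linearity of $W_3$.

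For (3), compose the intertwining relations. From coassociativity in the form \eqref{coass3} (or \eqref{coass1-2} when $i=4$) we have $(W_i\o id)(id\o W_1)=(id\o T_1)(W_i\o id)$. From Proposition \ref{behave}(2) we have $(W_i\o id)(id\o \overline{W_1})=(id\o R_1)(W_i\o id)$. Applying these two in succession gives
$$(W_i\o id)(id\o Q)=(id\o R_1T_1)(W_i\o id).$$
By the WMHA theory, $R_1T_1=G_1$, since $R_1$ is the generalized inverse of $T_1$ with $R_1T_1$ projecting onto a complement of $\ker T_1=(1-G_1)(A\o A)$. Item (c) follows in the same way with $T_3$ and $R_3$, and uses $R_3T_3=G_3$ in the regular case.

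For (2), the claim is $\sum r_{(0)}\o \Delta(\varepsilon_s(r_{(1)})a)=\sum r_{(0)}\o \varepsilon_s(r_{(1)})a_{(1)}\o a_{(2)}$. Since $\varepsilon_s(r_{(1)})\in A_s$, we have $\Delta(y)=E(1\o y)$. Writing $\Delta(ya)=\Delta(y)\Delta(a)=(1\o y)E\Delta(a)$, and using that $y\in A_s$ commutes appropriately with the first leg of $E$, this equals $(1\o y)\Delta(a)$. That gives a $y$ in the second leg rather than the first, so the reading $Q^{13}$ must be interpreted carefully.

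The main obstacle is getting the correct leg. I would first try the alternative identity $\Delta(ya)=(y\o 1)\Delta(a)$ for $y\in A_s$, obtained via $E(y\o 1)=E(1\o S(y))$ as in \eqref{Ey}, and see which form matches $Q^{13}$ as defined by leg numbering. This reduces (2) to the equality $\Delta(\varepsilon_s(b)a)=\varepsilon_s(b)a_{(1)}\o a_{(2)}$ or its variant. If that fails, I would instead apply $(id\o\Delta)$ to $\sum r_{(0)}\o S(r_{(1)})r_{(2)}a$ and use coassociativity with $\Delta(S(x))=\tau(S\o S)\Delta(x)$ to move the legs. Item (b) is the mirror computation with $\varepsilon'_s$ in the first leg, which produces $U^{12}$.
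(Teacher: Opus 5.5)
Your arguments for (1), (a), (3) and (c) are correct and are presumably the intended ``easy'' proof. Composing $(W_i\o id)(id\o W_1)=(id\o T_1)(W_i\o id)$ with Proposition \ref{behave}(2) to get $(id\o R_1T_1)(W_i\o id)$ is exactly how (3) is used later in the proof of Proposition \ref{second}. Only your justification of $R_1T_1=G_1$ is off, since an idempotent is not determined by its kernel. The identity holds because $R_1$ is, via Lemma \ref{1}, the generalized inverse with $T_1R_1=E_l$ and $R_1T_1=G_1$; likewise $R_3T_3=G_3$.

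The genuine problem is in (2) and (b): you have the legs reversed. $Q^{13}$ acts on the $R$-leg and the \emph{third} leg, so
\[
Q^{13}(id\o\Delta)(r\o a)=\sum r_{(0)}\o a_{(1)}\o \varepsilon_s(r_{(1)})a_{(2)}.
\]
The expression you state as ``the claim'', with $\varepsilon_s(r_{(1)})$ multiplying $a_{(1)}$, is the $Q^{12}$ form, which is false in general. Your own computation $\Delta(ya)=(1\o y)E\Delta(a)=(1\o y)\Delta(a)$ for $y\in A_s$ therefore already proves (2). It uses only $\Delta(y)=E(1\o y)=(1\o y)E$ and $E\Delta(a)=\Delta(a)$.

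The fallback you propose would fail. The rule $\Delta(ya)=(y\o 1)\Delta(a)$ holds for $y\in A_t$, not $A_s$. In $\mathrm{C}(G)$ the elements of $A_s$ are the functions $f\circ\s$, and $\Delta(ya)(p,q)=f(\s(q))a(pq)$, while $((y\o 1)\Delta(a))(p,q)=f(\s(p))a(pq)$.

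For (b), the ``mirror computation'' does not put $\varepsilon'_s$ in the first leg. The map $\varepsilon'_s(b)=\sum b_{(2)}S^{-1}(b_{(1)})=S^{-1}(\varepsilon_t(b))$ (what $\overline{W_3}W_3$ produces) takes values in the source algebra. Hence $\Delta(a\varepsilon'_s(b))=\Delta(a)E(1\o\varepsilon'_s(b))=\Delta(a)(1\o\varepsilon'_s(b))$, which gives $(id\o\Delta)U=U^{13}(id\o\Delta)$.

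The $U^{12}$ version as printed is in fact false, so your sentence asserts something that cannot be proved. Take a finite groupoid $G$ with an arrow $u$ such that $\s(u)\neq\t(u)$ (e.g.\ the pair groupoid on two points), and let $R=A=\mathrm{C}(G)$ with $\Gamma=\Delta$. Write $[\cdot]$ for $1$ if the condition holds and $0$ otherwise. Directly from the definitions,
\[
U(\delta_g\o\delta_p)=Q(\delta_g\o\delta_p)=[\s(g)=\s(p)]\,\delta_g\o\delta_p .
\]
Hence $(id\o\Delta)U(\delta_g\o\delta_p)=[\s(g)=\s(p)]\sum_{vw=p}\delta_g\o\delta_v\o\delta_w$, whereas $U^{12}(id\o\Delta)$ carries the coefficient $[\s(v)=\s(g)]$ instead. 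For $g=p=\t(u)$, the term $\delta_g\o\delta_u\o\delta_{u^{-1}}$ appears on the left but not on the right. Since $Q=U$ here, (2) and the printed (b) cannot both hold. So (b) should read $U^{13}$, and that is the version your argument should establish.
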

\begin{proof}
The proof is easy.
\end{proof}

\begin{proposition}\label{kernel}
Let $\Gamma$ be a right semi-regular coaction of $A$ on $R$. We have
$$\ker(W_1)=(1-Q)(R\o A).$$
If $A$ is regular, then
$$\ker(W_3)=(1-U)(R\o A).$$
\end{proposition}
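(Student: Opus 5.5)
We prove the two inclusions separately, using the generalized inverse property of Proposition \ref{13}. For the first statement put $Q=\overline{W_1}W_1$; it is a linear map on $R\o A$ because $\Gamma$ is semi-regular, which makes both $W_1$ and $\overline{W_1}$ maps of $R\o A$ into itself.

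\emph{The inclusion $(1-Q)(R\o A)\subseteq\ker(W_1)$.} By Proposition \ref{13},
$$W_1Q=W_1\overline{W_1}W_1=W_1.$$
Hence for every $\xi\in R\o A$ we get $W_1(\xi-Q\xi)=W_1\xi-W_1\xi=0$.

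\emph{The inclusion $\ker(W_1)\subseteq(1-Q)(R\o A)$.} Let $\xi\in\ker(W_1)$. Then
$$Q\xi=\overline{W_1}(W_1\xi)=\overline{W_1}(0)=0,$$
so $\xi=\xi-Q\xi=(1-Q)\xi$, which lies in $(1-Q)(R\o A)$.

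In particular $Q$ is idempotent, since $Q^2=\overline{W_1}W_1\overline{W_1}W_1=\overline{W_1}W_1=Q$. So $1-Q$ is the projection onto $\ker(W_1)$, in the sense of Lemma \ref{1}.

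\emph{The case of $W_3$.} When $A$ is regular, $\overline{W_3}$ is defined and Proposition \ref{13} gives $W_3\overline{W_3}W_3=W_3$. The same two inclusions then hold with $W_1,\overline{W_1},Q$ replaced by $W_3,\overline{W_3},U$, and we obtain $\ker(W_3)=(1-U)(R\o A)$.

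The only nontrivial input is the identity $W_1\overline{W_1}W_1=W_1$ (and its analogue for $W_3$) from Proposition \ref{13}. Its verification uses the antipode identity $\sum a_{(1)}S(a_{(2)})a_{(3)}=a$ together with the coassociativity \eqref{coass1}, and it requires checking the Sweedler coverings as in Remark \ref{cover-1}. Since we take Proposition \ref{13} as given, what remains is to confirm that $\overline{W_1}$ really maps $R\o A$ into $R\o A$. This holds because $W_3$ is regular, which is part of the semi-regularity assumption (Remark \ref{cover-1}(2)). It is what makes $Q$ an honest endomorphism of $R\o A$.
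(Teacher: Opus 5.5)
Your proof is correct and is essentially the paper's argument: $(1-Q)(R\o A)\subseteq\ker(W_1)$ follows from $W_1\overline{W_1}W_1=W_1$ (Proposition \ref{13}), the reverse inclusion from $\xi=(1-Q)\xi$ whenever $W_1\xi=0$, and the $W_3$ case is handled the same way. Your write-up of the second inclusion is more direct than the paper's, which phrases the same observation as an unnecessary contradiction argument.
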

\begin{proof}
Let us consider the first equation. Since $W_1\overline{W_1}W_1=W_1$, we have $\ran(1-\overline{W_1}W_1)\subseteq \ker(W_1)$. Conversely, assume that there exists $x\in \ker(W_1)$, but $x\notin \ran(1-\overline{W_1}W_1)$. Indeed, we have $x=(1-\overline{W_1}W_1)x\in \ran(1-\overline{W_1}W_1)$. So we get $\ker(W_1)\subseteq \ran(1-\overline{W_1}W_1)$. Then we prove the first equation. The second one is similar.
\end{proof}

Since we are working with (weak) multiplier Hopf algebras, it is natural and necessary to consider the ranges of $W_1$ and $W_3$. This is what we are doing in the next subsection.
	
\subsection{The extension of coaction map $\Gamma$}
	
Let $\Gamma$ be a right semi-regular coaction of $A$ on $R$. Note that we do not assume that $A$ is regular and $R$ is idempotent. We will start with the following \textit{familiar} assumption.
	
{\bfseries Assumption I.}
Assume that there exists an idempotent element $P\in M(R\o A)$ such that
\begin{equation}\label{P}
P(R\o A)=\Gamma(R)(1\o A) \quad \text{and} \quad (R\o A)P=(1\o A)\Gamma(R).
\end{equation}
	
If $R$ is idempotent, then $\Gamma$ is weakly non-degenerate in the sense of that
$$P(R\o A)=\Gamma(R)(R\o A) \quad \text{and} \quad (R\o A)P=(R\o A)\Gamma(R).$$

\begin{remark}
In fact, we have another choice about the first assumption. If we consider the axioms of WMHA, then we should assume that
\begin{equation}\label{P'}
P(R\o A)=\Gamma(R)(1\o A) \quad \text{and} \quad (R\o A)P=(R\o 1)\Gamma(R).
\end{equation} 
Note that in the coaction theory of multiplier Hopf algebras \cite{van1999galois}, a non-regular multiplier Hopf algebra $(A, \Delta)$ is not an $A$-comodule algebra with the coaction $\Delta$. If we use Assumption I, we still cannot include the examples of non-regular WMHAs. However, if we replace equation \eqref{P} with equation \eqref{P'}, we find that the non-regular (weak) multiplier Hopf algebras do fit into the theory. As we introduced before, $W_2$ is a Galois map and we will study the map in the context of Galois theory for WMHAs.  
\end{remark}

In this subsection, we will always assume that Assumption I is satisfied. 
	
\begin{definition}
Let $D$ be an algebra with a nondegenerate product. For any idempotents $m,n\in M(D)$, we call $m$ is {\itshape smaller} than $n$ if $m=mn=nm.$
\end{definition}
	
\begin{proposition}
The element $P$ is the smallest idempotent in $M(R\o A)$ such that
\begin{equation}\label{3}
P\Gamma(r)=\Gamma(r) \quad \text{and} \quad \Gamma(r)P=\Gamma(r).
\end{equation}
\end{proposition}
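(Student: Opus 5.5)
Two things need to be shown: that $P$ satisfies \eqref{3}, and that $P$ is smaller than any idempotent $Q'\in M(R\o A)$ satisfying \eqref{3}, i.e. $P=PQ'=Q'P$. Everything is a multiplier identity in $M(R\o A)$, so I will test each identity against elements of $R\o A$ and use Assumption I together with the nondegeneracy of $R\o A$.

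\textbf{$P$ satisfies \eqref{3}.} Take $r\in R$ and $x\in R\o A$. By Assumption I, $\Gamma(r)(1\o a)\in P(R\o A)$ for every $a\in A$. Since $P$ is idempotent, $P$ acts as the identity on $P(R\o A)$, so
$$P\Gamma(r)(1\o a)=\Gamma(r)(1\o a).$$
Multiplying on the right by $y\in R\o A$ and using that $A$ has local units (choose $e$ with $ae=a$ for the finitely many legs involved), we obtain $P\Gamma(r)x=\Gamma(r)x$ for all $x$ of the form $(1\o a)y$. Every element of $R\o A$ is a finite sum of such elements: write $r'\o a=(1\o e)(r'\o a)$ with $ea=a$. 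Hence $P\Gamma(r)x=\Gamma(r)x$ for all $x\in R\o A$, and nondegeneracy gives $P\Gamma(r)=\Gamma(r)$. Symmetrically, the second half of Assumption I, $(1\o A)\Gamma(R)=(R\o A)P$, gives $(1\o a)\Gamma(r)P=(1\o a)\Gamma(r)$, and the same local-unit argument yields $\Gamma(r)P=\Gamma(r)$.

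\textbf{Minimality.} Let $Q'$ be an idempotent with $Q'\Gamma(r)=\Gamma(r)=\Gamma(r)Q'$ for all $r$.
\begin{itemize}
\item For $PQ'=P$: any element of $R\o A$ can be written as $x=\sum_i (1\o a_i)\Gamma(r_i)$ times elements, so every element of $(R\o A)P$ is a sum of terms $(1\o a)\Gamma(r)$. Each such term satisfies $(1\o a)\Gamma(r)Q'=(1\o a)\Gamma(r)$. Thus $yPQ'=yP$ for all $y\in R\o A$, and nondegeneracy gives $PQ'=P$.
\item For $Q'P=P$: every element of $P(R\o A)$ is a sum of terms $\Gamma(r)(1\o a)$, and $Q'\Gamma(r)(1\o a)=\Gamma(r)(1\o a)$. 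Hence $Q'Px=Px$ for all $x$, so $Q'P=P$.
\end{itemize}
Therefore $P=PQ'=Q'P$, i.e. $P$ is smaller than $Q'$.

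The only step needing care is passing from identities tested against $(1\o a)y$ or $y(1\o a)$ to identities on all of $R\o A$. No idempotency of $R$ is required for this, only that $A$ has local units, which handles it as above. Uniqueness of such a smallest idempotent is then automatic: if two idempotents are each smaller than the other, then $m=mn=n$.
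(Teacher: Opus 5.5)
Your proof is correct and is essentially the paper's argument: you cancel $a$ in $P\Gamma(r)(1\o a)=\Gamma(r)(1\o a)$ and in its mirror image to get \eqref{3}, and then use the two halves of Assumption I to obtain $Q'P=P$ and $PQ'=P$. The one blemish is the opening clause of your $PQ'=P$ bullet, which claims every element of $R\o A$ is built from terms $(1\o a_i)\Gamma(r_i)$; this is not true in general and is not needed, since you only use that $(R\o A)P$ is spanned by such terms, which is exactly Assumption I.
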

\begin{proof}
By assumption, for any $a\in A, r, r'\in A$, we have 
$$P(\Gamma(r)(1\o a))=\Gamma(r)(1\o a).$$
By the non-degeneracy of the product, we can cancel $a$ and obtain that $P\Gamma(r)=P$. Similarly for the second equation. Assume that $P'$ is another idempotent that satisfies equation \eqref{3}. Denote $P(r'\o a')=\sum \Gamma(r_i)(1\o a_i)$, then we have
$$P'P(r'\o a')=\sum P'\Gamma(r_i)(1\o a_i)=\sum \Gamma(r_i)(1\o a_i)=P(r'\o a'),$$
it implies that $P'P=P$. Similarly we can get $PP'=P$. This means that $P\leq P'$.
\end{proof}

\begin{corollary}\label{Pisunique}
If an idempotent satisfies Assumption I, then it is unique.
\end{corollary}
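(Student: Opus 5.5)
Let $P$ and $P'$ be two idempotents in $M(R\o A)$, both satisfying Assumption I. I will show that each is smaller than the other, and then that this forces $P=P'$.

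First, the preceding proposition applies to both. Its proof only used the defining equalities \eqref{P} for the given idempotent, so $P$ is the smallest idempotent satisfying \eqref{3}. The same argument shows that $P'$ is also the smallest such idempotent. Moreover, each of them satisfies \eqref{3}. For $P$, take any $r\in R$ and $a\in A$. Since $\Gamma(r)(1\o a)\in P(R\o A)$ and $P$ is idempotent, we get $P\Gamma(r)(1\o a)=\Gamma(r)(1\o a)$. Cancelling $a$ by nondegeneracy gives $P\Gamma(r)=\Gamma(r)$. The identity $\Gamma(r)P=\Gamma(r)$ follows in the same way from $(1\o a)\Gamma(r)\in (R\o A)P$. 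The same reasoning applies to $P'$.

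Second, apply minimality in both directions. Since $P$ is smallest and $P'$ satisfies \eqref{3}, we get $P=PP'=P'P$. Since $P'$ is smallest and $P$ satisfies \eqref{3}, we get $P'=P'P=PP'$. Comparing the two, $P=P'P=P'$.

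If one prefers to avoid the order relation, there is a direct argument. Every element of $P(R\o A)$ is a sum $\sum \Gamma(r_i)(1\o a_i)$, and $P'$ fixes each such sum because $P'\Gamma(r_i)=\Gamma(r_i)$. Hence $P'Px=Px$ for all $x\in R\o A$, and nondegeneracy of $R\o A$ gives $P'P=P$. By symmetry, with $P$ acting on elements of $P'(R\o A)$, we get $PP'=P'$. For the other products, the right-hand version of Assumption I gives $(R\o A)P'=(1\o A)\Gamma(R)$, so every $xP'$ is a sum of elements $(1\o a_i)\Gamma(r_i)$. Using $\Gamma(r_i)P=\Gamma(r_i)$, we get $xP'P=xP'$, hence $P'P=P'$. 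Combined with $P'P=P$, this gives $P=P'$.

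The only point needing care is that multipliers are determined by their action on $R\o A$. Here one uses that $R\o A$ is nondegenerate, which holds because $R$ and $A$ are nondegenerate; this is standard. Beyond that there is no real obstacle.
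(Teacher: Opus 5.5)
Your proof is correct and matches the paper's route: the corollary follows from the preceding proposition, which shows that any idempotent satisfying Assumption I is the smallest idempotent satisfying \eqref{3}, so two such idempotents satisfy $P\le P'$ and $P'\le P$, which forces $P=PP'=P'$. Your ``direct argument'' just unpacks the proof of that proposition, and it correctly uses both halves of Assumption I: the left-range condition gives $P'P=P$ and the right-range condition gives $P'P=P'$.
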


Now we will call that $P$ is the \textit{canonical idempotent} for the map $\Gamma$.
	
\begin{proposition}
Assume that $\Gamma$ satisfies the counit property. If $r\in R$ and if $(r\o 1)P=0$ or $P(r\o 1)=0$, then $r=0$.
\end{proposition}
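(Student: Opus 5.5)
The plan is to use the fact that $P$ acts as the identity on the range of $\Gamma$ (that is, $P\Gamma(r')=\Gamma(r')=\Gamma(r')P$, equation \eqref{3}), together with the counit property and the nondegeneracy of $R$.

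Consider first the case $(r\o 1)P=0$. For any $r'\in R$, equation \eqref{3} gives
$$(r\o 1)\Gamma(r')=(r\o 1)P\Gamma(r')=0 .$$
The counit property requires $W_2$ to be regular, so this element lies in $R\o A$ and we may apply $id\o\varepsilon$. Equation \eqref{counit1} then yields
$$rr'=(id\o\varepsilon)\bigl((r\o 1)\Gamma(r')\bigr)=0$$
for all $r'\in R$. By nondegeneracy of the product in $R$, this forces $r=0$.

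The case $P(r\o 1)=0$ is symmetric: we get $\Gamma(r')(r\o 1)=\Gamma(r')P(r\o 1)=0$, and \eqref{counit2} gives $r'r=0$ for all $r'$, hence $r=0$.

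The main obstacle is deciding which form of the counit property is available. If only $W_2$ is regular, then only \eqref{counit1} is at hand, and this does not directly cover the second case. I will read ``satisfies the counit property'' as meaning that both $W_2$ and $W_4$ are regular with $(id\o\varepsilon)\Gamma(r)=r$ as a multiplier, in which case the two formulations are equivalent, as noted after \eqref{counit2}.

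If only $W_2$ is regular, I would instead handle the second case as follows. Multiply $P(r\o 1)=0$ on the left by $(r''\o 1)\Gamma(r')$, giving $(r''\o 1)\Gamma(r')(r\o 1)=0$. Then apply $id\o\varepsilon$, using that $\varepsilon$ is defined on $R\o A$ after this multiplication. This should give $r''r'r=0$ for all $r'',r'$, and nondegeneracy, applied twice, then gives $r=0$.
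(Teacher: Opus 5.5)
Your proof is correct and is essentially the paper's argument: use $\Gamma(r')=\Gamma(r')P$ (respectively $P\Gamma(r')=\Gamma(r')$), apply the counit on the second leg, and conclude by nondegeneracy of $R$. The paper writes out only the case $P(r\o 1)=0$ via \eqref{counit2}. Your treatment of both cases, and your fallback when only $W_2$ is regular (multiplying by $(r''\o 1)\Gamma(r')$ to get $r''r'r=0$ and using nondegeneracy twice), are valid refinements that the paper leaves implicit.
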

\begin{proof}
Take any $r\in R$. If $P(r\o 1)=0$ then $\Gamma(r')(r\o 1)=0$ for all $r'\in R$. Now we apply  counit on the second leg then we find $rr'=0$. By the non-degeneracy of $R$, we get $r=0$.
\end{proof}
	
\begin{proposition}\label{extension-gamma}
$\Gamma$ can be uniquely extended to the map from $M(R)$ to $M(R\o A)$ such that $\Gamma(1)=P$. Here we denote the extension map still by $\Gamma$.
\end{proposition}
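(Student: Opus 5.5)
We extend $\Gamma$ by hand, in the spirit of Lemma \ref{extension}. That lemma cannot be invoked directly, because $R$ is not assumed idempotent, so $\Gamma(R)(R\o A)$ need not equal $P(R\o A)$. Assumption I, however, is phrased with $1\o A$ instead of $R\o A$, and this is exactly what is needed.

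\textbf{Defining the extension.} For $m\in M(R)$ we define $\Gamma(m)$ as a pair of maps on $R\o A$. The left multiplication is
$$\Gamma(m)\Big(\sum_i \Gamma(r_i)(1\o a_i)\Big)=\sum_i \Gamma(mr_i)(1\o a_i),$$
extended to all of $R\o A$ by setting $\Gamma(m)x:=\Gamma(m)(Px)$. This uses $P(R\o A)=\Gamma(R)(1\o A)$. The right multiplication is
$$(x)\Gamma(m):=\Big(\sum_j (1\o b_j)\Gamma(s_j)\Big)\Gamma(m)=\sum_j(1\o b_j)\Gamma(s_jm),$$
where $xP=\sum_j(1\o b_j)\Gamma(s_j)$.

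\textbf{Well-definedness.} This is the main step. Suppose $\sum_i\Gamma(r_i)(1\o a_i)=0$. We must show $\sum_i\Gamma(mr_i)(1\o a_i)=0$. Multiply on the left by an arbitrary $(1\o b)\Gamma(s)$. Since $\Gamma$ is a homomorphism, this gives
$$\sum_i(1\o b)\Gamma(sm\,r_i)(1\o a_i)=(1\o b)\Gamma(sm)\sum_i\Gamma(r_i)(1\o a_i)=0,$$
using $sm\in R$. By Assumption I, elements of the form $(1\o b)\Gamma(s)$ span $(R\o A)P$. So $y:=\sum_i\Gamma(mr_i)(1\o a_i)$ satisfies $(R\o A)Py=0$. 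Moreover $Py=y$, since $y\in\Gamma(R)(1\o A)=P(R\o A)$. Hence $(R\o A)y=0$, and nondegeneracy of $R\o A$ gives $y=0$. The right-hand side is treated symmetrically.

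\textbf{Multiplier property.} The left action is a left multiplier because $W_1(r\o aa')=W_1(r\o a)(1\o a')$ together with $P$ being a multiplier. The compatibility $x(\Gamma(m)y)=(x\Gamma(m))y$ reduces, on generators $x=(1\o b)\Gamma(s)$ and $y=\Gamma(r)(1\o a)$, to
$$(1\o b)\Gamma(smr)(1\o a)$$
computed in two ways; for general $x,y$ one inserts $P$ via $xP\Gamma(m)Py$. One should also check that the extension agrees with the original $\Gamma$ when $m\in R$; this is immediate from the homomorphism property.

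\textbf{Homomorphism, $\Gamma(1)=P$, and uniqueness.} The extension is a homomorphism since $\Gamma(mn)\Gamma(r)(1\o a)=\Gamma(mnr)(1\o a)$. Taking $m=1$ gives $\Gamma(1)x=Px$, so $\Gamma(1)=P$. For uniqueness, let $\Gamma'$ be any homomorphic extension with $\Gamma'(1)=P$. Then
$$\Gamma'(m)x=\Gamma'(m)Px=\Gamma'(m)\sum_i\Gamma(r_i)(1\o a_i)=\sum_i\Gamma(mr_i)(1\o a_i),$$
so $\Gamma'$ is forced to coincide with the extension constructed above.
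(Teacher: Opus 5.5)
Your proposal is correct and takes the same approach as the paper. The paper defines $\Gamma(m)(r\o a)=\sum\Gamma(mr_i)(1\o v_i)$ whenever $P(r\o a)=\sum\Gamma(r_i)(1\o v_i)$, defines the right action symmetrically, and calls the rest ``standard''; your cancellation argument against $(R\o A)P=(1\o A)\Gamma(R)$ correctly supplies the well-definedness that the paper omits. One small point: your stated reason for the left-multiplier property, $W_1(r\o aa')=W_1(r\o a)(1\o a')$, only covers right multiplication by $1\o A$, not by $R\o 1$; this does no harm, because the compatibility $x(\Gamma(m)y)=(x\Gamma(m))y$ that you verify already gives both one-sided multiplier properties by nondegeneracy of $R\o A$.
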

\begin{proof}
For any $m\in M(R), r\o a\in R\o A$,  we have $P(r\o a)=\sum \Gamma(r_i)(1\o v_i)$ with $r_i, v_i\in A$. We can define
$$\Gamma(m)(r\o a)=\Gamma(m)P(r\o a)=\sum  \Gamma(mr_i)(1\o v_i).$$
Similarly, we can define $(r\o a)\Gamma(m)$. The rest of the proof is standard.
\end{proof}

In a completely similar way, we can extend $\Gamma\o id$ to the map from $M(R\o A)$ to $M(R\o A\o A)$ such that 
$$(\Gamma\o id)(1)=P\o 1.$$
The $1$ in the left side is the identity in $M(R\o A)$ and the right side 1 is the identity in $M(A)$. We also use the following natural embeddings:
$$M(R\o A)\o M(A)\subseteq M(R\o A\o A)$$
and
$$M(R)\o M(A\o A)\subseteq M(R\o A\o A).$$
So $P\o 1$ and $1\o E$ can be regarded as elements in $M(R\o A\o A)$. Now the $1$ in $1\o E$ is the identity in $M(R\o A)$.
	
Since $(\Gamma \o id)\Gamma(r)$ is meaningful, we get the following result.
	
\begin{proposition}\label{2.8}
$(\Gamma \o id)\Gamma(r)=(id\o \Delta)\Gamma(r)$ for all $r\in R$.
\end{proposition}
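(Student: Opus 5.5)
Both sides of the identity are multipliers in $M(R\o A\o A)$: the right side through Proposition \ref{id-delta}, and the left side through the extension of $\Gamma\o id$ to $M(R\o A)$ with $(\Gamma\o id)(1)=P\o 1$. Since the product on $R\o A\o A$ is nondegenerate, I will show that the two multipliers agree after right multiplication by elements of the form $1\o 1\o a$ together with suitable elements of $R\o A\o 1$. The tool for this is the defining coassociativity \eqref{coass1}.

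\textbf{Step 1: a multiplicativity property of the extension.} For $m\in M(R\o A)$ and $x\in R\o A$, the extended map should satisfy
$$(\Gamma\o id)(m)\,(\Gamma\o id)(x)=(\Gamma\o id)(mx).$$
This holds because the extension is a homomorphism that agrees with $\Gamma\o id$ on $R\o A$, which is the analogue of Proposition \ref{extension-gamma}. Taking $m=\Gamma(r)$ and $x=1\o a$ is not allowed, since $1\o a$ is not in $R\o A$. Instead I take $x=r'\o a$. Then $(\Gamma\o id)(x)=\Gamma(r')\o a$, and
$$((\Gamma\o id)\Gamma(r))(\Gamma(r')\o a)=(\Gamma\o id)(\Gamma(r)(r'\o a)).$$
By Proposition \ref{coass-eqv2}(1), the right-hand side equals $((id\o\Delta)\Gamma(r))(\Gamma(r')\o a)$. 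So the two multipliers agree after right multiplication by $\Gamma(r')\o a$.

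\textbf{Step 2: removing the factor $\Gamma(r')$.} Multiply both sides on the right by $1\o b\o 1$. By Assumption I, the elements $\Gamma(r')(1\o b)$ span $P(R\o A)$. Hence for all $y\in R\o A$ and $a\in A$,
$$L\,((Py)\o a)=D\,((Py)\o a),$$
where $L=(\Gamma\o id)\Gamma(r)$ and $D=(id\o\Delta)\Gamma(r)$.

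It then suffices to show $L(P\o 1)=L$ and $D(P\o 1)=D$. Once these hold, $L(y\o a)=L(P\o1)(y\o a)=D(P\o1)(y\o a)=D(y\o a)$ for all $y$ and $a$, and nondegeneracy gives $L=D$.

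\textbf{Step 3: the two absorption identities.} For $L$, this follows from $\Gamma(r)P=\Gamma(r)$ (proved in the proposition preceding Corollary \ref{Pisunique}): apply the extended homomorphism $\Gamma\o id$ to both sides and use $(\Gamma\o id)(P)$. Here I need to be careful: $(\Gamma\o id)(P)$ is not the same as $P\o 1$. It is simpler to argue directly: $L=(\Gamma\o id)(\Gamma(r)\cdot 1)=L\,(\Gamma\o id)(1)=L(P\o1)$.

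For $D$, I expect this to be the main obstacle. The cleanest route is to use Step 1 in the other order. For $D$ we know $D(\Gamma(r')\o a)=L(\Gamma(r')\o a)$, and I want to show $D=D(P\o 1)$, that is, $D(1-P)\o 1$ kills $R\o A\o A$. I would try to write $(id\o\Delta)\Gamma(r)\,(x\o a)$ with $x=r''\o b$ and use $\Delta(r_{(1)})(1\o a)=\Delta(r_{(1)})E(1\o a)$. Rewriting $E(b\o a)$ as $\sum\Delta(p_i)(1\o q_i)$ as in Proposition \ref{id-delta} expresses $D(r''\o b\o a)$ in terms of $\Gamma(r)(r''\o p_i)$. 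Then \eqref{coass1} should turn this into $(\Gamma\o id)$ of an element of $R\o A$, whose image automatically lies in $(\Gamma\o id)(R\o A)=(\Gamma\o id)(R\o A)(P\o1)$, giving the required absorption.

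If this route gets tangled, the fallback is to verify both sides against $1\o 1\o a$ directly. The left side $L(1\o1\o a)$ makes sense as $(\Gamma\o id)(\Gamma(r)(1\o a))$ by multiplicativity, using a local unit $e$ with $ea=a$. That reduces the claim exactly to \eqref{coass1}, followed by cancellation of $a$ using nondegeneracy of $A$ in the third leg.
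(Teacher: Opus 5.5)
Your Steps 1 and 2 are sound and give $L(P\o 1)=D(P\o 1)$, and $L(P\o 1)=L$ is fine. The gap is the remaining identity $D(P\o 1)=D$. You leave it as something you ``would try'', and the route you sketch does not reach it.

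Expanding $D(r''\o b\o a)$ through $E(b\o a)=\sum\Delta(p_i)(1\o q_i)$ produces $\sum(id\o\Delta)(\Gamma(r)(r''\o p_i))(1\o 1\o q_i)$. That is not of the shape $((id\o\Delta)\Gamma(t))(1\o 1\o c)$ which \eqref{coass1} handles. More fundamentally, suppose you did exhibit $D(r''\o b\o a)$ as some $(\Gamma\o id)(z)$. Right absorption of $P\o 1$ by that element still says nothing about $D(P\o 1)(r''\o b\o a)=D(P(r''\o b)\o a)$, because $P\o 1$ does not commute with $r''\o b\o a$.

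The missing observation is to test against an element that does commute with $P\o 1$. Write $\Gamma(r)(1\o a)=\sum_k s_k\o t_k\in R\o A$. By \eqref{coass1}, $D(1\o 1\o a)=(\Gamma\o id)(\Gamma(r)(1\o a))=\sum_k\Gamma(s_k)\o t_k$. Since $\Gamma(s_k)P=\Gamma(s_k)$, this gives $D(1\o 1\o a)(P\o 1)=D(1\o 1\o a)$. Because $P\o 1$ commutes with $1\o 1\o a$, this says $(D(P\o 1)-D)(1\o 1\o a)=0$ for all $a$, hence $D(P\o 1)=D$ since $A^2=A$.

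With this added, your main route is complete and genuinely differs from the paper's. You pass through \eqref{coass4} (Proposition \ref{coass-eqv2}) and $\Gamma(R)(1\o A)=P(R\o A)$ to get agreement after $P\o 1$, and then absorb. The paper instead establishes $L(1\o 1\o a)=(\Gamma\o id)(\Gamma(r)(1\o a))$ and then applies \eqref{coass1}.

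Your fallback is the paper's route, but its key identity $L(1\o 1\o a)=(\Gamma\o id)(\Gamma(r)(1\o a))$ is exactly what the paper's proof is devoted to, and your justification does not establish it. As you noted yourself in Step 1, multiplicativity cannot be invoked with $1\o a\notin R\o A$. A local unit $e\in A$ does not remedy this, since $R$ has no local units.

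The paper obtains the identity as follows. Multiply on the left by $x\in R\o A\o A$ and write $x(P\o 1)=\sum u_i(\Gamma\o id)(v_i)$ with $v_i\in R\o A$, which Assumption I allows. Then use $L=(P\o 1)L$ together with $(\Gamma\o id)(v_i)L=(\Gamma\o id)(v_i\Gamma(r))$. Alternatively, you can check from the definition of the extension that $(\Gamma\o id)(1\o a)=P\o a$, and then use multiplicativity on $M(R\o A)$.
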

\begin{proof}
We use $\tilde{\Gamma}\o id$ for the extension of $\Gamma \o id$. Firstly, we show that
$$((\tilde{\Gamma}\o id)\Gamma(r))(1\o 1\o a)=(\Gamma\o id)(\Gamma(r)(1\o a))$$
for any $r\in R, a\in A$. Let $b\in A$ and $x\in R\o A\o A$, we can denote 
$$x(P\o 1)=\sum u_i(\Gamma \o id)(v_i)$$
with $u_i\in R\o A\o A$ and $v_i\in R\o A$. Then
		\begin{align*}
			x((\tilde{\Gamma}\o id)\Gamma(r))(1\o 1\o b)&=\sum u_i(\Gamma \o id)(v_i)((\tilde{\Gamma}\o id)\Gamma(r))(1\o 1\o b)\\
			&= \sum u_i(\Gamma \o id)(v_i\Gamma(r))(1\o 1\o b)\\
			&= \sum u_i(\Gamma\o id)(v_i\Gamma(r)(1\o b))\\
			&= \sum u_i(\Gamma \o id)(v_i)(\Gamma \o id)(\Gamma(r)(1\o b))\\
			&= x(\Gamma \o id)(\Gamma(r)(1\o b)).
		\end{align*}
Next,by the coassociativity equation \eqref{coass1}, we get
$$((\tilde{\Gamma}\o id)\Gamma(r))(1\o 1\o a)=(id\o \Delta)(\Gamma(r))(1\o 1\o a)$$
and this proves the result.
\end{proof}

\begin{corollary}
For any $i\in \{1,3\}$, $W_i^{12}W_i^{13}=W_i^{12}W_i^{13}E^{23}=P^{12}W_i^{12}W_i^{13}$.
\end{corollary}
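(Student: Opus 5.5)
The plan is to compute both composites explicitly on an elementary tensor $r\o a\o b\in R\o A\o A$ and reduce everything to Proposition \ref{2.8}. Here $W_i^{12}$, $W_i^{13}$ act as linear maps on $R\o A\o A$ in the legs indicated, and $E^{23}$, $P^{12}$ act by multiplication. For $W_1$ they multiply on the right of the input, respectively on the left of the output. For $W_3$, where the $A$-legs stand on the left, the analogous one-sided multiplications are used.

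\emph{Case $i=1$.} We have $W_1^{13}(r\o a\o b)=\sum r_{(0)}\o a\o r_{(1)}b$. Applying $W_1^{12}$ gives
$$\sum \Gamma(r_{(0)})(1\o a)\o r_{(1)}b=(\Gamma\o id)(\Gamma(r)(1\o b))(1\o a\o 1).$$
The first step of the proof of Proposition \ref{2.8} identifies this with $((\Gamma\o id)\Gamma(r))(1\o a\o b)$, using the extension of $\Gamma\o id$. By Proposition \ref{2.8} it therefore equals $((id\o\Delta)\Gamma(r))(1\o a\o b)$. For the first equality, note that $(id\o\Delta)\Gamma(r)=((id\o\Delta)\Gamma(r))(1\o E)$. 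This holds because $(id\o\Delta)$ is a homomorphism with $(id\o\Delta)(1)=1\o E$ (Proposition \ref{id-delta}), since $\Gamma(r)=\Gamma(r)\cdot 1$. Hence the composite evaluated at $r\o a\o b$ agrees with its value at $(1\o E)(r\o a\o b)$, which is $W_1^{12}W_1^{13}E^{23}$, using linearity after writing $E(a\o b)$ as a finite sum in $A\o A$. For the second equality, we use that the extended $\Gamma\o id$ is a homomorphism with $(\Gamma\o id)(1)=P\o 1$. Then $(\Gamma\o id)(x)=(P\o 1)(\Gamma\o id)(x)$ for $x=\Gamma(r)(1\o b)\in R\o A$, so the output is left-invariant under $P^{12}$.

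\emph{Case $i=3$.} The argument is symmetric. $W_3^{13}(r\o a\o b)=\sum r_{(0)}\o a\o b r_{(1)}$, and $W_3^{12}$ then yields $(1\o a\o b)(\Gamma\o id)\Gamma(r)$. This uses the left-handed analogue of the first step in Proposition \ref{2.8}, which is proved identically with $x$ multiplied on the other side. By Proposition \ref{2.8} the result is $(1\o a\o b)(id\o\Delta)\Gamma(r)$. The factor $1\o E$ is now absorbed from the left via $(id\o\Delta)\Gamma(r)=(1\o E)(id\o\Delta)\Gamma(r)$. The factor $P\o 1$ appears through $(\Gamma\o id)\Gamma(r)=(\Gamma\o id)(P\Gamma(r))$, that is, $\Gamma(r)=P\Gamma(r)$ together with multiplicativity of the extension.

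The main obstacle is bookkeeping rather than substance. First, we must justify that $(\Gamma\o id)(\Gamma(r)(1\o b))(1\o a\o 1)$ really equals the multiplier $(\Gamma\o id)\Gamma(r)$ applied to $1\o a\o b$. This is exactly the computation in the proof of Proposition \ref{2.8}, and I will cite it. Second, we must fix consistently on which side $E^{23}$ and $P^{12}$ act in each case. Everything else is the homomorphism property of the two extensions together with $\Gamma(1)=P$ and $\Delta(1)=E$.
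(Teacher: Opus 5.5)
Your proposal is correct and follows the route the paper intends; the corollary is stated without proof right after Proposition~\ref{2.8}. You identify $W_i^{12}W_i^{13}(r\otimes a\otimes b)$ with $(\Gamma\otimes id)\Gamma(r)$ multiplied by $1\otimes a\otimes b$, pass to $(id\otimes\Delta)\Gamma(r)$ to absorb $1\otimes E$ via $(id\otimes\Delta)(1)=1\otimes E$, and absorb $P\otimes 1$ via $(\Gamma\otimes id)(1)=P\otimes 1$.

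One slip in the case $i=3$. Writing $\Gamma(r)=P\Gamma(r)$ and using multiplicativity produces the factor $(\Gamma\otimes id)(P)=\mathcal{H}$, not $P\otimes 1$, and places it on the left of $(\Gamma\otimes id)\Gamma(r)$, which is the wrong side for $W_3$. You should instead mirror your $i=1$ argument: $(\Gamma\otimes id)(x)=(\Gamma\otimes id)(x)(P\otimes 1)$ for $x=(1\otimes b)\Gamma(r)\in R\otimes A$, with $P^{12}$ (and likewise $E^{23}$ on the input) acting by right multiplication.
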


Let $i=1$, we get $W_1^{12}W_1^{13}=W_1^{12}W_1^{13}E^{23}$. Together with equation \eqref{RR-corep}, we obtain the normalization conditions in \cite[Definition 2.1]{bohm2014comodules}. 

\begin{corollary}
Let $\Gamma$ be a regular coaction with the Assumption I satisfied. If $\Gamma$ is injective, then $\Gamma$ satisfies the counit property.
\end{corollary}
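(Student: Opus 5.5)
The plan is to show that the counit property \eqref{counit1}, $(id\o\varepsilon)((r'\o 1)\Gamma(r))=r'r$, follows from injectivity by applying $\Gamma$ to the candidate element and comparing with $\Gamma(r'r)$. Since $\Gamma$ is regular, $W_2$ is regular, so $x:=(id\o\varepsilon)((r'\o 1)\Gamma(r))$ is a well-defined element of $R$. The goal is to prove $\Gamma(x)=\Gamma(r'r)$; injectivity then gives $x=r'r$. Because $W_2$ and $W_4$ are both regular, \eqref{counit1} and \eqref{counit2} are equivalent, so it suffices to treat \eqref{counit1}.

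To compute $\Gamma(x)$, I would multiply on the right by $1\o a$ for arbitrary $a\in A$. Formally,
$$\Gamma(x)(1\o a)=(id\o id\o\varepsilon)\big((\Gamma\o id)((r'\o1)\Gamma(r))\big)(1\o a)$$
after an appropriate flip of legs. By Proposition \ref{2.8}, together with multiplicativity of $\Gamma\o id$ (extended as above), one has
$$(\Gamma\o id)((r'\o 1)\Gamma(r))=(\Gamma(r')\o 1)(id\o\Delta)\Gamma(r).$$
Applying $\varepsilon$ to the third leg, the counit property of $\Delta$, namely $(id\o\varepsilon)((b\o1)\Delta(c))=bc$, collapses $(id\o\Delta)\Gamma(r)$ back to $\Gamma(r)$. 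This gives
$$\Gamma(x)(1\o a)=\Gamma(r')\Gamma(r)(1\o a)=\Gamma(r'r)(1\o a).$$
Cancelling $a$ by nondegeneracy then yields $\Gamma(x)=\Gamma(r'r)$.

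The main obstacle is making the step "apply $\varepsilon$ on the third leg" rigorous, since $\varepsilon$ is not a homomorphism and the objects involved are multipliers. To control this, I would multiply on the left by $(s\o b\o 1)$ with $s\in R$ and $b\in A$, so that everything lies in $R\o A\o A$. Write $(r'\o1)\Gamma(r)=\sum r_j\o a_j\in R\o A$. Then
$$(s\o b)\Gamma(x)(1\o a)=\sum \varepsilon(a_j)\,(s\o b)\Gamma(r_j)(1\o a),$$
which is a genuine finite sum by regularity of $W_2$. Next, rewrite $\sum (s\o b)\Gamma(r_j)(1\o a)\o a_j$ as the flipped version of
$$(s\o b\o 1)\big[(\Gamma(r')\o1)(id\o\Delta)\Gamma(r)\big](1\o a\o 1),$$
using \eqref{coass1} in the form $(W_2\o id)(id\o W_1)=(id\o T_1)(W_2\o id)$ from \eqref{coass3}, valid because $W_1$ and $W_2$ are regular. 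In this form the second and third legs are covered so that $(b'\o1)\Delta(c)$ terms appear. Applying $id\o id\o\varepsilon$ and then using $(id\o\varepsilon)((b'\o1)\Delta(c))=b'c$ gives $(s\o b)\Gamma(r')\Gamma(r)(1\o a)$. Nondegeneracy of $R\o A$ finishes the argument.
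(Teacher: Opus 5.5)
Your proof is correct and is essentially the paper's own argument: apply $\Gamma$ to $x=(id\o\varepsilon)((r'\o 1)\Gamma(r))$, rewrite $(\Gamma\o id)((r'\o 1)\Gamma(r))=(\Gamma(r')\o 1)(id\o\Delta)\Gamma(r)$ using multiplicativity and Proposition \ref{2.8}, collapse the third leg with the counit of $\Delta$ to obtain $\Gamma(r'r)$, and conclude by injectivity. In your last paragraph the identity you need is exactly this consequence of Proposition \ref{2.8}: no flip of legs occurs, and \eqref{coass3} by itself produces $T_1$-type terms $\Delta(c)(1\o a)$ rather than the $(b'\o 1)\Delta(c)$ terms you then use; your left covering by $(s\o b\o 1)$ is sound, and is slightly cleaner than the paper's right covering by $(t\o a\o 1)$ because it only involves the always-regular map $T_2$.
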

\begin{proof}
For any $r,r', t\in R, a\in A$, we have 
	\begin{align*}
	\Gamma((id\o \varepsilon)((r'\o 1)\Gamma(r)))(t\o a)
	&=(id\o id\o \varepsilon)(\Gamma\o id)((r'\o 1)\Gamma(r)(t\o a\o 1))\\
	&=(\Gamma(r')\o 1)(id\o id\o \varepsilon)((\Gamma\o id)\Gamma(r)(t\o a\o 1))\\
	&=(\Gamma(r')\o 1)(id\o id\o \varepsilon)((id\o \Delta)\Gamma(r)(t\o a\o 1))\\
	&=\Gamma(r'r)(t\o a).
	\end{align*}
By injectivity of $\Gamma$, we obtain $(id\o \varepsilon)((r'\o 1)\Gamma(r))=r'r$. Similarly, we can show that $(id\o \varepsilon)(\Gamma(r)(r'\o 1))=rr'$.
\end{proof}

\begin{corollary}\label{coass-eqv3}
The equation \eqref{coass4} implies the equation \eqref{coass1}.
\end{corollary}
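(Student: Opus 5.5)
We have to show: under Assumption I (and $\Gamma$ a homomorphism with $W_1$ regular), if \eqref{coass4} holds, i.e.
$$(\Gamma\o id)(\Gamma(r)(r'\o a))=((id\o \Delta)\Gamma(r))(\Gamma(r')\o a),$$
then \eqref{coass1} holds. The plan mirrors the converse part of Proposition \ref{coass-eqv2}(1). There, surjectivity of $W_1$ was used to cancel the factor $\Gamma(r')(1\o b)$. Here we replace surjectivity by Assumption I: the range of $W_1$ is $P(R\o A)$, and then the extension of $\Gamma\o id$ with $(\Gamma\o id)(1)=P\o 1$ lets us cancel up to the idempotent $P$.

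Step 1. Fix $r\in R$, $a,b\in A$, $r'\in R$. Exactly as in the proof of Proposition \ref{coass-eqv2}, multiply both sides of \eqref{coass4} on the right by $1\o b\o 1$ and use that $\Gamma\o id$ is a homomorphism on $R\o A$. This gives
$$(\Gamma\o id)(\Gamma(r)(1\o a))\,(\Gamma(r')(1\o b)\o 1)=((id\o\Delta)\Gamma(r))(1\o 1\o a)(\Gamma(r')(1\o b)\o 1).$$
By linearity and Assumption I, the elements $\Gamma(r')(1\o b)$ span $P(R\o A)$. Hence we obtain
$$X\,(P\o 1)(y\o 1)=Y\,(P\o 1)(y\o 1)\quad\text{for all } y\in R\o A,$$
where $X=(\Gamma\o id)(\Gamma(r)(1\o a))$ and $Y=((id\o\Delta)\Gamma(r))(1\o 1\o a)$. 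Using local units in $A$ for the third leg, we may replace $y\o 1$ by arbitrary elements of $R\o A\o A$. Nondegeneracy then gives $X(P\o 1)=Y(P\o 1)$.

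Step 2. Remove the factor $P\o 1$ on each side separately. For $X$: since $\Gamma(r)(1\o a)\in R\o A$ and the extension satisfies $(\Gamma\o id)(1)=P\o 1$, we have
$$X(P\o 1)=(\Gamma\o id)(\Gamma(r)(1\o a)\cdot 1)=X.$$
For $Y$: we need $((id\o\Delta)\Gamma(r))(P\o 1)=(id\o\Delta)\Gamma(r)$, using that $(1\o 1\o a)$ commutes past $P\o 1$ because they sit in different legs. Write $\Gamma(r)=\Gamma(r)P$, which holds by the earlier proposition on the minimality of $P$. Then
$$(id\o\Delta)\Gamma(r)=((id\o\Delta)\Gamma(r))\,(id\o\Delta)(P).$$
It remains to show $((id\o\Delta)\Gamma(r))(P\o 1)=(id\o\Delta)\Gamma(r)$.

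Step 3, the expected main obstacle: this last identity. I would compute it from \eqref{coass4} itself, taking $r'$ to range over local-unit-type elements. The right side of \eqref{coass4} is $((id\o\Delta)\Gamma(r))(\Gamma(r')\o a)$. Multiply on the left by an arbitrary $z\in R\o A\o A$ and expand $\Gamma(r')$ through $\Gamma(r')=P\Gamma(r')$. The quantity $\Gamma(r')\o a$ then equals $(P\o 1)(\Gamma(r')\o a)$, so
$$((id\o\Delta)\Gamma(r))(\Gamma(r')\o a)=((id\o\Delta)\Gamma(r))(P\o 1)(\Gamma(r')\o a).$$
Combined with Step 1, this reduces the problem to cancelling $\Gamma(r')(1\o b)$ against $P$ as before. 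If this becomes circular, the alternative is to derive $((id\o\Delta)\Gamma(r))(P\o 1)=(id\o\Delta)\Gamma(r)$ from the first leg. Because $(id\o\Delta)\Gamma(r)$ times $1\o 1\o a$ lies in the span of elements $\Gamma(r_0)\o\cdot$, the first two legs are of the form $\Gamma(R)(1\o A)=P(R\o A)$; hence they are absorbed by $P$ on the left. For the right, the symmetric description $(R\o A)P=(1\o A)\Gamma(R)$ would be used. Either route finishes with $X=Y$, which is \eqref{coass1}.
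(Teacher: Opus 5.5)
Your Steps 1 and 2 coincide with the paper's proof. From \eqref{coass4} and Assumption I you get $X(P\o 1)=Y(P\o 1)$, and $X(P\o 1)=X$ because $(\Gamma\o id)(1)=P\o 1$. Everything then hinges on the identity $((id\o\Delta)\Gamma(r))(P\o 1)=(id\o\Delta)\Gamma(r)$, and neither of your routes proves it.

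\begin{itemize}
\item Your first route only re-expresses the right-hand side of \eqref{coass4}. Since $\Gamma(r')\o a$ is already absorbed by $P\o 1$, that equation sees $(id\o\Delta)\Gamma(r)$ only through $((id\o\Delta)\Gamma(r))(P\o 1)$. It can never remove the factor.
\item Your second route asserts that the first two legs of $((id\o\Delta)\Gamma(r))(1\o 1\o a)$ lie in $\Gamma(R)(1\o A)=P(R\o A)$. That is exactly what \eqref{coass1} would give, since the element would then equal $(\Gamma\o id)(\Gamma(r)(1\o a))$. So it assumes the conclusion.
\end{itemize}

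The paper closes this step by quoting Proposition \ref{2.8}. From $(\Gamma\o id)\Gamma(r)=(id\o\Delta)\Gamma(r)$ one gets $((id\o\Delta)\Gamma(r))(P\o 1)=((\Gamma\o id)\Gamma(r))(P\o 1)=(id\o\Delta)\Gamma(r)$. But Proposition \ref{2.8} is itself proved from \eqref{coass1}, which is a standing hypothesis of that subsection ($\Gamma$ is assumed to be a coaction). So the paper's argument is formally valid only because the conclusion is already assumed. It does not derive \eqref{coass1} from \eqref{coass4}.

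Read as a genuine implication under Assumption I alone, which is how you read it, the statement is false, so your Step 3 cannot be completed. Here is a counterexample.
\begin{itemize}
\item Take $A=\mathrm{C}(\mathbb{Z}_2)$ with basis $\delta_0,\delta_1$; it is a Hopf algebra, so $E=1\o 1$.
\item Take $R=\mathbb{C}^2$ with minimal idempotents $e_1,e_2$, let $\alpha(x_1,x_2)=(x_1,0)$, and set $\Gamma(r)=r\o\delta_0+\alpha(r)\o\delta_1$.
\item $\Gamma$ is an injective homomorphism, all $W_i$ are regular, and the counit property holds.
\item Assumption I holds with $P=1\o\delta_0+e_1\o\delta_1$.
\item Write $\alpha_0=id$ and $\alpha_1=\alpha$. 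For $a=\delta_p$, the coefficients of $\delta_q\o\delta_p$ in the two sides of \eqref{coass4} are $\alpha_q(\alpha_p(r)r')$ and $\alpha_{p+q}(r)\alpha_q(r')$. These agree in all four cases. The only nontrivial case is $p=q=1$, where both equal $\alpha(r)\alpha(r')=r\alpha(r')$.
\item Since $R$ and $A$ are commutative, \eqref{coass4'} holds as well.
\item Yet $(\Gamma\o id)(\Gamma(e_2)(1\o\delta_1))=0$, while $((id\o\Delta)\Gamma(e_2))(1\o 1\o\delta_1)=e_2\o\delta_1\o\delta_1$. So \eqref{coass1} fails.
\item Correspondingly, $((id\o\Delta)\Gamma(e_2))(P\o 1)=e_2\o\delta_0\o\delta_0\neq(id\o\Delta)\Gamma(e_2)$. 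Also $e_2\o\delta_1\notin P(R\o A)$, which refutes your second route directly.
\end{itemize}

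To make the implication true you need an extra hypothesis. Either \eqref{coass1} itself, or a condition such as $(id\o\Delta)(P)(P\o 1)=(id\o\Delta)(P)$, would give the missing absorption, because $(id\o\Delta)\Gamma(r)=((id\o\Delta)\Gamma(r))\,(id\o\Delta)(P)$.
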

\begin{proof}
From the proof of Proposition \ref{coass-eqv2}, we have 
$$(\Gamma \o id)(\Gamma(r)(1\o a))(P\o 1)(u\o v\o 1)=(id\o \Delta)\Gamma(r)(1\o 1\o a)(P\o 1)(u\o v\o 1)$$
for any $u\in R, v\in A$. Since $P\o 1=(\Gamma\o id)(1\o 1)$, the left-hand side equals $(\Gamma \o id)(\Gamma(r)(1\o a))(u\o v\o 1)$. 
By Proposition \ref{2.8}, we have $(id\o \Delta)\Gamma(r)(P\o 1)=(id\o \Delta)\Gamma(r)$. So
$$(\Gamma \o id)(\Gamma(r)(1\o a))(u\o v\o 1)=(id\o \Delta)\Gamma(r)(1\o 1\o a)(u\o v\o 1).$$
Now we can cancel $u, v$ and get equation $\eqref{coass1}$.
\end{proof}

If $\Gamma$ is a regular coaction with the Assumption I satisfied, then the different coassociativity equations(i.e. equations \eqref{coass1} to \eqref{coass1-5}, equations \eqref{coass4} and \eqref{coass4'}) are equivalent. 

\begin{proposition}\label{gammaPdelta}
We have $(\Gamma \o id)(P)=(id\o \Delta)(P)$.
\end{proposition}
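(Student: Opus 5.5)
Since $P=\Gamma(1)$ by Proposition \ref{extension-gamma}, we would like to apply Proposition \ref{2.8} to $m=1\in M(R)$. That proposition, however, is only proved for $r\in R$, so the plan is to compare both sides of the statement after multiplying by elements of the form $(\Gamma\o id)(\Gamma(r)(1\o a))$, or equivalently by $(\Gamma\o id)(P(r\o a))$. Both sides are elements of $M(R\o A\o A)$. The left side $(\Gamma\o id)(P)$ is defined through the extension of $\Gamma\o id$ with $(\Gamma\o id)(1)=P\o 1$. The right side $(id\o \Delta)(P)$ is defined through Proposition \ref{id-delta}.

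The key computation is as follows. For $r\in R$ and $a,b\in A$, use $P\Gamma(r)=\Gamma(r)$ and multiplicativity of the two extended homomorphisms:
\begin{align*}
(\Gamma\o id)(P)\,(\Gamma\o id)\Gamma(r)&=(\Gamma\o id)(P\Gamma(r))=(\Gamma\o id)\Gamma(r),\\
(id\o\Delta)(P)\,(id\o\Delta)\Gamma(r)&=(id\o\Delta)(P\Gamma(r))=(id\o\Delta)\Gamma(r).
\end{align*}
By Proposition \ref{2.8}, $(\Gamma\o id)\Gamma(r)=(id\o\Delta)\Gamma(r)$, so both idempotents act as the identity on this common element. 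This alone does not identify them. So I would instead show that each side equals the smallest idempotent fixing all $(id\o\Delta)\Gamma(r)$, in the spirit of the uniqueness proposition for $P$, by identifying the ranges:
$$(\Gamma\o id)(P)(R\o A\o A)\quad\text{and}\quad (id\o\Delta)(P)(R\o A\o A).$$

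For the left side, Assumption I gives $(\Gamma\o id)(P)(R\o A\o A)=(\Gamma\o id)(P(R\o A))(R\o A\o A)$. Using $P(R\o A)=\Gamma(R)(1\o A)$, this is the span of $(\Gamma\o id)(\Gamma(r)(1\o a))(x)$, which equals the span of $(id\o\Delta)(\Gamma(r))(1\o 1\o a)x$ by \eqref{coass1}. For the right side, I would use the explicit formula from the proof of Proposition \ref{id-delta} together with $E(A\o A)=\Delta(A)(1\o A)$. This expresses $(id\o\Delta)(P)(r\o a\o a')$ as $(id\o\Delta)(P(r\o p_i))(1\o1\o q_i)$. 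Substituting $P(r\o p_i)=\sum\Gamma(r_j)(1\o c_j)$ turns it into sums of $(id\o\Delta)\Gamma(r_j)\,(1\o\Delta(c_j)(1\o q_i))$. Hence both ranges are spanned by elements of the form $(id\o\Delta)\Gamma(r)\,(y)$ with $y$ ranging over suitable elements, and $1\o E$ (equivalently $(1\o\Delta(A)(1\o A))$) gets absorbed.

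Finally, let $X$ and $Y$ be the two idempotents. The range computations show that each of $X$ and $Y$ acts as the identity on the range of the other: $X(id\o\Delta)\Gamma(r)=(id\o\Delta)\Gamma(r)$, and similarly for $Y$. Then, exactly as in the minimality argument for $P$, we obtain $XY=Y$ and $YX=X$ on the right. The symmetric left-sided range arguments use $(R\o A)P=(1\o A)\Gamma(R)$, \eqref{coass2} in its extended form, and $(A\o A)E=(A\o 1)\Delta(A)$; they give $YX=Y$ and $XY=X$. Hence $X=Y$.

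The main obstacle is the right-hand range, because $(id\o\Delta)(P)$ is defined abstractly. The delicate point is to check carefully that $(id\o \Delta)(P)$ multiplied by $R\o A\o A$ lands in the span of $(id\o\Delta)\Gamma(r)(1\o 1\o a)(R\o A\o A)$, using $\Delta(a)E=\Delta(a)$ and the fullness/range property of $T_1$.
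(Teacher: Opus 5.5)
Your proposal is correct and is essentially the paper's proof. The paper likewise shows that both $(\Gamma\o id)(P)$ and $(id\o\Delta)(P)$ have right range $(id\o\Delta)\Gamma(R)(1\o A\o A)$ and left range $(1\o A\o A)(id\o\Delta)\Gamma(R)$, using $(\Gamma\o id)(P)=(\Gamma\o id)(P)(P\o 1)$ with \eqref{coass1}, and $(id\o\Delta)(P)=(id\o\Delta)(P)(1\o E)$ with $E(A\o A)=T_1(A\o A)$; it then concludes by uniqueness of the canonical idempotent, which is exactly your argument via $XY=Y$, $YX=X$, $YX=Y$, $XY=X$.

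One small correction. Since $R$ is not assumed idempotent, your identity $(\Gamma\o id)(P)(R\o A\o A)=(\Gamma\o id)(P(R\o A))(R\o A\o A)$ can fail (already for trivial $A$ and $R^2\neq R$). Describe that range as $(id\o\Delta)\Gamma(R)(1\o A\o A)$, as the paper does; your inclusion-and-fixing argument then goes through unchanged.
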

\begin{proof}
For any $r, a,b$, we have
	\begin{align*}
	(\Gamma \o id)(P)(r\o a\o b)
	&=(\Gamma \o id)(P)(P\o 1)(r\o a\o b)\\
	&= (\Gamma \o id)(P)(\Gamma \o id)(\eta\o b)(1\o \delta\o 1) \tag*{denote $P(r\o a)=W_1(\eta\o \delta)$}\\
	&= (\Gamma \o id)(P((\eta\o b)))(1\o \delta \o 1)\\
	&=( (\Gamma \o id)\Gamma(e))(P\o 1)(1\o \delta \o f) \tag*{denote $P(\eta \o b)=W_1(e\o f)$}\\
	&= (\Gamma \o id)\Gamma(e)(1\o \delta \o f)\\
	&= (id\o \Delta)\Gamma(e)(1\o \delta \o f).
	\end{align*}
This means that $(id\o \Delta)\Gamma(R)(1\o A\o A)=(\Gamma \o id)(P)(R\o A\o A)$. Similarly, we can show that 
	$$(1\o A\o A)(id\o \Delta)\Gamma(R)=(R\o A\o A)(\Gamma \o id)(P).$$
So $(\Gamma \o id)(P)$ is the canonical idempotent for the map $(id\o \Delta)\Gamma$.  On the other hand, we have 
	\begin{align*}
	(id\o \Delta)(P)(r\o a\o b)
	&=(id\o \Delta)(P)(1\o E)(r\o a\o b)\\
	&=(id\o \Delta)(P)(r\o \Delta(u)(1\o v)) \tag*{denote $E(a\o b)=T_1(u\o v)$}\\
	&=(id\o \Delta)(P(r\o u))(1\o 1\o v)\\
	&=(id\o \Delta)(\Gamma(m)(1\o n))(1\o 1\o v) \tag*{denote $P(r\o u)=W_1(m\o n)$}\\
	&=(id\o \Delta)\Gamma(m)(1\o \Delta(n)(1\o v))\\
	&=(id\o \Delta)\Gamma(m)(1\o \xi\o \kappa) \tag*{denote $E(\xi\o \kappa)=T_1(n\o v)$}\\
	&=(\Gamma\o id)\Gamma(m)(1\o \xi\o \kappa).
	\end{align*}
Then we obtain $(id\o \Delta)\Gamma(R)(1\o A\o A)=(id\o \Delta)(P)(R\o A\o A)$. Similarly, we can get
	$$(1\o A\o A)(id\o \Delta)\Gamma(R)=(R\o A\o A)(id \o \Delta)(P).$$
So $(id\o \Delta)(P)$ is another canonical idempotent for $(id\o \Delta)\Gamma$. Then by the uniqueness of canonical idempotent, we obtain $(\Gamma\o id)(P)=(id\o \Delta)(P)$.
\end{proof}

\begin{corollary}\label{general-coass}
We have
\begin{equation*}
(\Gamma \o id)\Gamma(m)=(id\o \Delta)\Gamma(m) 
\end{equation*}
for all $m\in M(R)$.
\end{corollary}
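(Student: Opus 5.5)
The plan is to reduce the statement for a general multiplier $m\in M(R)$ to the case $m=r\in R$, which is Proposition \ref{2.8}, using the case $m=1$, which is Proposition \ref{gammaPdelta}. Both sides of the desired identity are elements of $M(R\o A\o A)$. So it suffices to show that they agree after multiplication on the right (and symmetrically on the left) by elements of $R\o A\o A$.

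The first step is to write each side as a composition of homomorphisms on $M(R)$ and compare them on a spanning set. By Proposition \ref{extension-gamma}, $\Gamma(m)=\Gamma(m)P$. The extension of $\Gamma\o id$ to $M(R\o A)$ is a homomorphism, and so is the extension of $id\o\Delta$ from Proposition \ref{id-delta}. Hence
\[
(\Gamma\o id)\Gamma(m)=(\Gamma\o id)\Gamma(m)\,(\Gamma\o id)(P)
\quad\text{and}\quad
(id\o\Delta)\Gamma(m)=(id\o\Delta)\Gamma(m)\,(id\o\Delta)(P).
\]
By Proposition \ref{gammaPdelta} the two idempotents $(\Gamma\o id)(P)$ and $(id\o\Delta)(P)$ coincide. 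The proof of that proposition also shows that this common idempotent has range
\[
(\Gamma\o id)(P)(R\o A\o A)=(id\o\Delta)\Gamma(R)(1\o A\o A).
\]
So it is enough to evaluate both sides on elements of the form $(id\o\Delta)\Gamma(r)(1\o a\o b)$, equivalently $(\Gamma\o id)\Gamma(r)(1\o a\o b)$ by Proposition \ref{2.8}.

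On such elements the computation is direct:
\[
(\Gamma\o id)\Gamma(m)\,(\Gamma\o id)\Gamma(r)(1\o a\o b)=(\Gamma\o id)\Gamma(mr)(1\o a\o b),
\]
and similarly
\[
(id\o\Delta)\Gamma(m)\,(id\o\Delta)\Gamma(r)(1\o a\o b)=(id\o\Delta)\Gamma(mr)(1\o a\o b).
\]
Since $mr\in R$, Proposition \ref{2.8} gives equality of the right-hand sides. This shows that the two multipliers agree on $(\Gamma\o id)(P)(R\o A\o A)$. Combined with the first step, they agree after right multiplication by all of $R\o A\o A$, so they are equal by nondegeneracy. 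The left-sided version, using $(1\o A\o A)(id\o\Delta)\Gamma(R)=(R\o A\o A)(\Gamma\o id)(P)$, is symmetric and only serves as a consistency check.

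The main obstacle is to justify that the two extended maps are genuinely multiplicative, in the form $\tilde F(m)F(r)=F(mr)$, when composed. Concretely, the extension of $\Gamma\o id$ applied to the multiplier $\Gamma(m)$ must be compatible with the definition of $\Gamma(m)$ in Proposition \ref{extension-gamma}, and likewise for $id\o\Delta$ applied to $\Gamma(m)$. Because $R$ is not assumed idempotent, I would verify this directly from the defining formulas. Using Proposition \ref{extension-gamma}, write $\Gamma(m)\Gamma(r)=\Gamma(mr)$ in $M(R\o A)$. Then apply the homomorphism property of the extensions, which holds because each extension is defined through $P$, respectively $E$, as in Lemma \ref{extension} and Proposition \ref{id-delta}.
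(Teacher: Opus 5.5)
Your argument is correct and is essentially the paper's proof. The paper simply cites Proposition \ref{gammaPdelta} together with the uniqueness of an extension with prescribed value at $1$ (as in Proposition \ref{extension-gamma}). Your computation on $\mathcal{H}(R\o A\o A)=(id\o\Delta)\Gamma(R)(1\o A\o A)$, using Proposition \ref{2.8} for $mr\in R$, is exactly that uniqueness argument written out for the two composite homomorphisms $M(R)\to M(R\o A\o A)$, and the multiplicativity you flag is just the homomorphism property of the extensions in Propositions \ref{id-delta} and \ref{extension-gamma} (and its analogue for $\Gamma\o id$).
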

\begin{proof}
The result follows from the equation in Proposition \ref{gammaPdelta} and the uniqueness in Proposition \ref{extension-gamma}.
\end{proof}

\begin{notation}
We denote $(\Gamma \o id)(P)$ or $(id\o \Delta)(P)$ by $\mathcal{H}$.
\end{notation}

\begin{remark}
By the definition we have $(\Gamma\o id)(1)=P\o 1$, then
$$(P\o 1)\mathcal{H}=\mathcal{H}(P\o 1)=\mathcal{H}.$$
This means that $\mathcal{H}\leq P\o 1$. Similarly by $(id\o \Delta)(1)=1\o E$, we have
$$(1\o E)\mathcal{H}=\mathcal{H}(1\o E)=\mathcal{H}.$$
Then $\mathcal{H}\leq 1\o E$. So we get an idempotent that is smaller than both $P\o 1$ and $1\o E$. Moreover,  we have
$$\mathcal{H}\leq (P\o 1)(1\o E)$$
and
$$\mathcal{H}\leq (1\o E)(P\o 1).$$
Inspired by the theory of WMHA in \cite{van2012weak, van2015weak}, we may require that $P\o 1$ and $1\o E$ commute and that 
$$\mathcal{H}=(P\o 1)(1\o E).$$
Next we will give a detailed discussion about the equation. 
\end{remark}

We continue the study of the maps $W_1$ and $W_3$. We assume that $\Gamma$ is a right semi-regular coaction of $A$ on $R$ with the idempotent $P$ satisfying Assumption I.  $P_l$ is left multiplication with $P$ and $P_r$ is right multiplication with $P$. By Proposition \ref{kernel} and Assumption I,  the range and kernel of $W_1$ can be described by
$$\ran(W_1)=P_l(R\o A),\quad  \ker(W_1)=(1-Q)(R\o A).$$
So $P_l$ and $Q$ are idempotent maps on $R\o A$ satisfying that $P$ projects on the range of $W_1$ and $1-Q$ projects on the kernel of $W_1$. Then by Lemma \ref{1}, we get the following result.

\begin{proposition}\label{5}
The pair of equations 
$$W_1x=P_l, \quad xW_1=Q$$ 
has a unique solution, denoted by $\widetilde{W_1}$. If $A$ is regular, then the pair of equations
$$W_3y=P_r,\quad  yW_3=U$$
has a unique solution, denoted by $\widetilde{W_3}$. Moreover, $\widetilde{W_1}$ and $\widetilde{W_3}$ are generalized inverses of $W_1$ and $W_3$, respectively. And the two generalized inverses satisfy that 
$$\widetilde{W_1}(r\o aa')=(\widetilde{W_1}(r\o a))(1\o a') \quad\text{and} \quad \widetilde{W_3}(r\o aa')=(1\o a)(\widetilde{W_3}(r\o a')).$$
\end{proposition}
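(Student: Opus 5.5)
The plan is to apply Lemma \ref{1} with $X=R\o A$, $t=W_1$, $e=P_l$ and $f=Q$. First I check the hypotheses.

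\emph{Range and kernel.} Assumption I gives $\ran(W_1)=P(R\o A)$, so $P_l$ is an idempotent projecting onto $\ran(W_1)$. By Proposition \ref{kernel}, $\ker(W_1)=(1-Q)(R\o A)$. By Proposition \ref{13}, $Q^2=\overline{W_1}W_1\overline{W_1}W_1=\overline{W_1}W_1=Q$, so $Q$ is idempotent and $1-Q$ projects onto the kernel. Lemma \ref{1} then gives a unique $\widetilde{W_1}$ with $W_1\widetilde{W_1}=P_l$ and $\widetilde{W_1}W_1=Q$, and also $W_1\widetilde{W_1}W_1=W_1$ and $\widetilde{W_1}W_1\widetilde{W_1}=\widetilde{W_1}$. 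So $\widetilde{W_1}$ is a generalized inverse.

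\emph{The $W_3$ case.} When $A$ is regular the argument is symmetric. We have $\ran(W_3)=(R\o A)P$, which is the range of the idempotent $P_r$. Proposition \ref{kernel} gives $\ker(W_3)=(1-U)(R\o A)$, and Proposition \ref{13} shows $U$ is idempotent. Lemma \ref{1} then yields $\widetilde{W_3}$.

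\emph{Module property.} The main step is $\widetilde{W_1}(r\o aa')=\widetilde{W_1}(r\o a)(1\o a')$. I will use uniqueness: for fixed $a'$, let $\rho_{a'}$ denote right multiplication by $1\o a'$ on $R\o A$. I want to show that $\widetilde{W_1}\circ(id\o \mu(\cdot\o a'))$ and $\rho_{a'}\circ\widetilde{W_1}$ agree.

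A direct approach is a comparison via $x\mapsto W_1x$ and $x\mapsto xW_1$. However, uniqueness in Lemma \ref{1} is for maps, not for compositions with $\rho_{a'}$. So I argue elementwise instead, using that $\widetilde{W_1}=\widetilde{W_1}P_l$ and $Q\widetilde{W_1}=\widetilde{W_1}$. Both identities follow from $\widetilde{W_1}W_1\widetilde{W_1}=\widetilde{W_1}$ together with $W_1\widetilde{W_1}=P_l$ and $\widetilde{W_1}W_1=Q$.

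Write $P(r\o a)=W_1(\xi)$ for some $\xi\in R\o A$. Then
\[
\widetilde{W_1}(r\o a)=\widetilde{W_1}P(r\o a)=\widetilde{W_1}W_1\xi=Q\xi .
\]
Since $W_1$ commutes with $\rho_{a'}$ (that is, $W_1(r\o aa')=W_1(r\o a)(1\o a')$), and $P$ is a multiplier, we get
\[
P(r\o aa')=P(r\o a)(1\o a')=W_1(\xi)(1\o a')=W_1(\xi(1\o a')).
\]
Hence $\widetilde{W_1}(r\o aa')=Q(\xi(1\o a'))$. Proposition \ref{c3}(1) says $Q$ commutes with $\rho_{a'}$, so this equals $Q(\xi)(1\o a')=\widetilde{W_1}(r\o a)(1\o a')$.

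Proposition \ref{c3} assumes $W_2$ or $W_4$ regular. If that is unavailable, I would check directly that $Q(\xi(1\o a'))=Q(\xi)(1\o a')$. This follows from $\overline{W_1}(r\o aa')=\overline{W_1}(r\o a)(1\o a')$, which is clear from the formula $\sum r_{(0)}\o S(r_{(1)})a$. I expect this covering check to be the only delicate point.

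The identity for $\widetilde{W_3}$ is proved the same way with left multiplication, using $U(r\o a'a)=(1\o a')U(r\o a)$ and $(R\o A)P$.
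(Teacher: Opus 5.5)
Your proof is correct and is essentially the paper's argument. Existence, uniqueness and the generalized-inverse identities come from Lemma \ref{1} with $e=P_l$, $f=Q$ (resp.\ $P_r$, $U$), and the module property comes from the right $A$-linearity of $P_l$, $W_1$ and $Q$ together with $\widetilde{W_1}=\widetilde{W_1}W_1\widetilde{W_1}$; the paper's chain is yours with the preimage chosen as $\xi=\widetilde{W_1}(r\o a)$. Your observation that the linearity of $Q$ (via that of $\overline{W_1}$) needs only semi-regularity, not the regularity of $W_2$ or $W_4$ assumed in Proposition \ref{c3}, usefully tightens a point the paper leaves implicit.
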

\begin{proof}
We only need to check the last pair of equations.
	\begin{align*}
	\widetilde{W_1}(r\o aa') &=(\widetilde{W_1}W_1\widetilde{W_1})(r\o aa')\\
				  &= \widetilde{W_1}((W_1\widetilde{W_1})(r\o a)(1\o a'))\\
				  &= \widetilde{W_1}W_1(\widetilde{W_1}(r\o a)(1\o a'))\\
				  &=(\widetilde{W_1}W_1\widetilde{W_1}(r\o a))(1\o a')\\
				  &=\widetilde{W_1}(r\o a)(1\o a').
	\end{align*}
We use the property for $W_1\widetilde{W_1}=P_l$ in the second equality, then for $W_1$ and finally for $\widetilde{W_1}W_1=Q$. The rest is similar.
\end{proof}

Note that the generalized inverse is not unique, so we can not conclude that $\widetilde{W_1}$ is equal to $\overline{W_1}$ and $\widetilde{W_3}$ is equal to $\overline{W_3}$. We need more conditions and it is time to introduce the second assumption on $P$.

\subsection{The second assumption and main definition}

Let $A$ be a WMHA and $R$ a non-degenerate algebra. In this subsection, we always assume that $(\Gamma,P)$ is a  semi-regular coaction with the Assumption I satisfied. 

\begin{lemma}\label{P_lP_r}
Let $(\Gamma,P)$ be a semi-regular coaction of $A$ on $R$.  
\begin{enumerate}[(1)]
	\item If $P(r\o 1)=(id\o \varepsilon_t)\Gamma(r)$, then $\widetilde{W_1}=\overline{W_1}$. 
	\item  If $A$ is regular and $(r\o 1)P=(id\o \varepsilon'_t)\Gamma(r)$, then $\widetilde{W_3}=\overline{W_3}.$
\end{enumerate}
\end{lemma}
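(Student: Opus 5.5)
The plan is to use the uniqueness statement of Lemma \ref{1}, as packaged in Proposition \ref{5}. $\widetilde{W_1}$ is the unique solution of
$$W_1x=P_l, \qquad xW_1=Q.$$
So it suffices to check that $\overline{W_1}$ solves both equations. The second one holds by definition, since $Q=\overline{W_1}W_1$ (see the Notation after Remark \ref{cover-1}). Everything therefore reduces to proving $W_1\overline{W_1}=P_l$ on $R\o A$ under the hypothesis $P(r\o 1)=(id\o \varepsilon_t)\Gamma(r)$.

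By Remark \ref{cover-1}(3), for $r\in R$ and $a\in A$ we have
$$W_1\overline{W_1}(r\o a)=\sum r_{(0)}\o \varepsilon_t(r_{(1)})a,$$
and this element is exactly the left multiplier $(id\o\varepsilon_t)\Gamma(r)$ applied to $1\o a$. The hypothesis identifies this left multiplier with $P(r\o 1)$. Hence
$$W_1\overline{W_1}(r\o a)=P(r\o 1)(1\o a)=P(r\o a)=P_l(r\o a).$$
The point to be careful about is that $(id\o\varepsilon_t)\Gamma(r)$ is only a left multiplier of $R\o A$, since it is defined only through its products with $1\o a$ on the right. The hypothesis must therefore be read as an equality of the maps $1\o a\mapsto(\cdot)(1\o a)$. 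On the other side, $P(r\o 1)$ is an honest multiplier, and multiplying it by $1\o a$ gives $P(r\o a)\in R\o A$. With that interpretation the computation is immediate. Linearity then gives $W_1\overline{W_1}=P_l$, and uniqueness in Proposition \ref{5} yields $\widetilde{W_1}=\overline{W_1}$.

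Part (2) is the mirror argument for $W_3$ with right multiplication. When $A$ is regular, $\widetilde{W_3}$ is the unique solution of
$$W_3y=P_r, \qquad yW_3=U,$$
and $U=\overline{W_3}W_3$ by definition, so again only the range equation needs proof. Remark \ref{cover-1}(3) gives
$$W_3\overline{W_3}(r\o a)=\sum r_{(0)}\o a\varepsilon'_t(r_{(1)}),$$
which is the right multiplier $(id\o\varepsilon'_t)\Gamma(r)$ multiplied on the left by $1\o a$. The hypothesis $(r\o 1)P=(id\o\varepsilon'_t)\Gamma(r)$ turns this into
$$(1\o a)(r\o 1)P=(r\o a)P=P_r(r\o a).$$
Uniqueness then gives $\widetilde{W_3}=\overline{W_3}$.

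The only genuine obstacle is the bookkeeping about one-sided multipliers noted in part (1): the hypothesis has to be understood as an equality of one-sided multipliers, left in part (1) and right in part (2). Once that is fixed, both parts follow directly from Proposition \ref{5} without further computation.
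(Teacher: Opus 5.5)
Your proposal is correct and is essentially the paper's proof. You read the hypothesis through Remark \ref{cover-1} to get $W_1\overline{W_1}=P_l$ (resp.\ $W_3\overline{W_3}=P_r$), note $\overline{W_1}W_1=Q$ (resp.\ $\overline{W_3}W_3=U$) by definition, and conclude by the uniqueness in Proposition \ref{5}.

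One point both you and the paper pass over: uniqueness in Lemma \ref{1} as quoted really needs the extra condition $rtr=r$. For instance, if $t=e=f$ is a nontrivial projection, both $r=t$ and $r=1$ solve $tr=e$, $rt=f$. So you should also cite $\overline{W_1}W_1\overline{W_1}=\overline{W_1}$ and $\overline{W_3}W_3\overline{W_3}=\overline{W_3}$, which is Proposition \ref{13}.
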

\begin{proof}
(1) By assumption, we have 
\begin{equation}\label{6}
P(r\o a)=W_1\overline{W_1}(r\o a)=\sum r_{(0)}\o \varepsilon_t(r_{(1)})a.
\end{equation}
By the definition of $Q$, we have $\overline{W_1}W_1=Q$. By the uniqueness in Proposition \ref{5}, we must have $\widetilde{W_1}=\overline{W_1}$.
	
(2) Similarly.
\end{proof}

\begin{lemma}\label{commute1}
Let $(\Gamma,P)$ be a semi-regular coaction of $A$ on $R$. Assume that $W_2$ is regular. If $P(r\o 1)=(id\o \varepsilon_t)\Gamma(r)$, then we have 
\begin{equation}\label{commute2}
(1\o E)(P\o 1)=(P\o 1)(1\o E).
\end{equation}
If $A$ is regular and $(r\o 1)P=(id\o \varepsilon'_t)\Gamma(r)$, we also get the equation \eqref{commute2}.
\end{lemma}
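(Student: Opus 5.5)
Since both sides are multipliers of $R\o A\o A$ and $R\o A\o A$ is nondegenerate, it suffices to check that $(1\o E)(P\o 1)$ and $(P\o 1)(1\o E)$ agree after multiplying on the left by $r'\o 1\o 1$ and on the right by $r\o a\o b$. The regularity of $W_2$ is what makes the left multiplication by $r'\o 1\o 1$ land in honest tensors. The plan is to rewrite each side in Sweedler notation through the formula $P(r\o 1)=(id\o\varepsilon_t)\Gamma(r)$, i.e.\ $P(r\o a)=\sum r_{(0)}\o\varepsilon_t(r_{(1)})a$ as in \eqref{6}, and then use a standard identity of the target algebra to move $E$ past $\varepsilon_t(r_{(1)})$.

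For the right-hand side we first act with $1\o E$ on $r\o a\o b$, which gives $r\o E(a\o b)$, and then apply $P\o 1$. This yields
\[
(P\o 1)(1\o E)(r\o a\o b)=\sum r_{(0)}\o (\varepsilon_t(r_{(1)})\o 1)E(a\o b).
\]
For the left-hand side we first apply $P\o 1$ and then $1\o E$, which gives
\[
(1\o E)(P\o 1)(r\o a\o b)=\sum r_{(0)}\o E(\varepsilon_t(r_{(1)})\o 1)(a\o b).
\]
So the whole matter reduces to the commutation
\[
(x\o 1)E=E(x\o 1),\qquad x\in \varepsilon_t(A)\subseteq A_t .
\]

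To get this commutation, we use that $\varepsilon_t(A)$ and $\varepsilon_s(A)$ commute, and that $E$ lies in $M(A_s\o A_t)$, or at least that its first leg lies in the source algebra. This is a known property of the canonical idempotent of a WMHA (see \cite{van2015weak,van2020weak}). Alternatively, one can argue from the defining relation: $\Delta(x)=E(x\o 1)$ for $x\in A_t$, while $\Delta(x)=\Delta(1)\Delta(x)\Delta(1)$ gives $\Delta(x)=E(x\o1)E$. Combined with a symmetric form $\Delta(x)=(x\o1)E$, this yields $E(x\o 1)=(x\o 1)E$. The main obstacle is making this step rigorous in the non-regular setting, and in particular ensuring that the Sweedler expressions with $r_{(1)}$ covered by $a$ remain legitimate after $E$ is inserted. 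The coverings are as follows: $r'\o 1\o 1$ on the left covers $r_{(1)}$ via regularity of $W_2$, and the right factor $b$ is absorbed by $E$. With these we can apply the commutation leg-wise and cancel $r'$ and $r\o a\o b$ by nondegeneracy.

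For the second statement, with $A$ regular and $(r\o 1)P=(id\o\varepsilon'_t)\Gamma(r)$, the argument is symmetric. We multiply from the left by $r\o a\o b$, using $(r\o a)P=\sum r_{(0)}\o a\varepsilon'_t(r_{(1)})$ from $W_3\overline{W_3}$. We then need the analogous commutation of $E$ with $\varepsilon'_t(A)\o 1$, where $\varepsilon'_t(A)=S^{-1}$-type images, which also lie in $A_t$ in the regular case. The same cancellation finishes the proof.
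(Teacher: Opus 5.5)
Your argument is the paper's: sandwich both sides between $t\otimes 1\otimes 1$ and $r\otimes a\otimes b$, use regularity of $W_2$ so that the second leg of $(t\otimes 1)P(r\otimes 1)=\sum tr_{(0)}\otimes\varepsilon_t(r_{(1)})$ is an honest element of $\varepsilon_t(A)$, and commute it past the first leg of $E$ (which lies in the source algebra) because $\varepsilon_s(A)$ and $\varepsilon_t(A)$ commute, with the regular case dismissed as ``similar'' in both. Your ``alternative'' route via $\Delta(x)=E(x\otimes 1)E$ contributes nothing, since the symmetric form $\Delta(x)=(x\otimes 1)E$ it invokes is, given $\Delta(x)=E(x\otimes 1)$, literally equivalent to the commutation you want, so your proof rests on the cited structural fact about $E$ exactly as the paper's does.
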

\begin{proof}
Note that $(id\o \varepsilon_t)\Gamma(r)$ is an element in $M(R\o A)$. For any $t', t\in R$,  since $W_2$ is regular, we have 
\begin{equation}\label{t'Pt}
(t'\o 1)P(t\o 1)=\sum t't_{(0)}\o \varepsilon_t(t_{(1)})\in R\o \varepsilon_t(A).
\end{equation}
We use notation $P=P_{(0)}\o P_{(1)}$ and $E=E_{(1)}\o E_{(2)}$. Then
\begin{align*}
&(t\o 1\o 1)(1\o E)(P\o 1)(r\o a\o b)\\
&=tP_{(0)}r\o E_{(1)}P_{(1)}a\o E_{(2)}b\\
&= tP_{(0)}r\o P_{(1)}E_{(1)}a\o E_{(2)}b\\
&=(t\o 1\o 1)(P\o 1)(1\o E)(r\o a\o b)
\end{align*}
By formula \eqref{t'Pt}, the second and third equalities are meaningful. In the third equality, we use the fact that the source algebra commutes with the target algebra. Then we prove the formula.

Since $\varepsilon_t$ and $\varepsilon'_t$ have the same ranges, the proof of the rest is similar.
\end{proof}

\begin{proposition}\label{1EP1}
Let $(\Gamma,P)$ be a semi-regular coaction of $A$ on $R$. Assume that $W_2$ is regular. If $P(r\o 1)=(id\o \varepsilon_t)\Gamma(r)$, then we have
\begin{equation}\label{commute3}
\mathcal{H}=(1\o E)(P\o 1)
\end{equation}
and
\begin{equation}
\mathcal{H}=(P\o 1)(1\o E).
\end{equation}
\end{proposition}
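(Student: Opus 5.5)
Since $(1\o E)(P\o 1)=(P\o 1)(1\o E)$ by Lemma \ref{commute1}, it suffices to prove $\mathcal{H}=(P\o 1)(1\o E)$. I will work with the description $\mathcal{H}=(id\o \Delta)(P)$ from Proposition \ref{gammaPdelta}, and compare both sides after multiplying from the left by $t\o 1\o 1$ and from the right by $r\o a\o b$. The hypothesis $W_2$ regular together with formula \eqref{t'Pt} makes $(t\o 1)P(r\o 1)=\sum t r_{(0)}\o \varepsilon_t(r_{(1)})$ an honest element of $R\o \varepsilon_t(A)$, so every expression below is a finite sum. The goal is therefore
$$(t\o 1\o 1)(id\o \Delta)(P)(r\o a\o b)=\sum tr_{(0)}\o \Delta(\varepsilon_t(r_{(1)}))(a\o b).$$
By nondegeneracy, this identity for all $t,r,a,b$ suffices.

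For the left-hand side, I first rewrite $(id\o\Delta)(P)(r\o a\o b)=(id\o\Delta)(P)(1\o E)(r\o a\o b)$, as in the proof of Proposition \ref{gammaPdelta}. I then write $E(a\o b)=\sum\Delta(u)(1\o v)$, so that the expression becomes $(id\o\Delta)(P(r\o u))(1\o 1\o v)$. Next I insert $P(r\o u)=\sum r_{(0)}\o\varepsilon_t(r_{(1)})u$ from \eqref{6} and multiply by $t\o 1\o 1$ on the left. By \eqref{t'Pt} the first two legs then lie in $R\o\varepsilon_t(A)$, and $\Delta$ is multiplicative on $M(A)$. This gives
$$\sum tr_{(0)}\o \Delta(\varepsilon_t(r_{(1)}))\Delta(u)(1\o v)=\sum tr_{(0)}\o \Delta(\varepsilon_t(r_{(1)}))E(a\o b).$$

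Now I use the standard WMHA fact that for $x\in A_t$ one has $\Delta(x)=E(x\o 1)=(x\o 1)E$, together with $E^2=E$. These give $\Delta(x)E=\Delta(x)$, which is exactly the desired left-hand side. On the other side, $(1\o E)$ applied to the element $\sum tr_{(0)}\o\varepsilon_t(r_{(1)})a\o b$ gives
$$(t\o 1\o 1)(P\o 1)(1\o E)(r\o a\o b)=\sum tr_{(0)}\o (\varepsilon_t(r_{(1)})\o 1)E(a\o b)=\sum tr_{(0)}\o \Delta(\varepsilon_t(r_{(1)}))(a\o b),$$
so the two sides agree and the proposition follows. Here I use that $P\o 1$ and $1\o E$ commute, which we already have.

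The main obstacle is justifying the Sweedler manipulations. In particular, I must check that $\Delta$ can be applied to the second leg of the finite-sum element $(t\o 1)P(r\o u)\in R\o A$ while $\varepsilon_t(r_{(1)})$ is only a multiplier. I would handle this by writing the element as $\sum_j t_j\o x_ju_j$ with $x_j\in\varepsilon_t(A)$, and then using the extended homomorphism $\Delta$ on $M(A)$ from Lemma \ref{extension}. I would also need the identity $\Delta(x)=(x\o 1)E$ for $x\in\varepsilon_t(A)$; in the regular case this follows from $M(\varepsilon_t(A))=A_t$. If the non-regular case causes trouble, I would instead use $\Delta(\varepsilon_t(a))=E(\varepsilon_t(a)\o 1)$ together with the commutation of $E$ with $\varepsilon_t(A)\o 1$ from \cite{van2015weak}.
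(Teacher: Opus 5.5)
Your proposal is correct and follows the paper's proof essentially step for step. Like the paper, you insert $1\o E$, write $E(a\o b)=\sum\Delta(u)(1\o v)$, and substitute \eqref{6}, which is meaningful by \eqref{t'Pt} because $W_2$ is regular. You then use $\Delta(x)=(x\o 1)E$ for $x\in\varepsilon_t(A)$, and Lemma \ref{commute1} supplies the other ordering. Your detour through $\Delta(x)E=\Delta(x)$ only rearranges the paper's single step $\Delta(\varepsilon_t(r_{(1)})u)(1\o v)=(\varepsilon_t(r_{(1)})\o 1)E(a\o b)$.
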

\begin{proof}
For any $t,r\in R, x, y\in A$, we have
\begin{align*}
&(t\o 1\o 1)(id\o \Delta)(P)(r\o x\o y)\\
&=(t\o 1\o 1)(id\o \Delta)(P)(1\o E)(r\o x\o y)\\
&=(t\o 1\o 1)(id\o \Delta)(P(r\o a))(1\o 1\o b) \tag*{denote $E(x\o y)=\Delta(a)(1\o b)$}\\
&=\sum tr_{(0)}\o \Delta (\varepsilon_t(r_{(1)})a)(1\o b) \tag*{by equation \eqref{6}}\\
&= \sum tr_{(0)}\o (\varepsilon_t(r_{(1)})\o 1)E\Delta(a)(1\o b)  \tag*{by second equality of (\ref{xE})}\\
&=(t\o 1\o 1)(P\o 1)(1\o E)(r\o x\o y)
\end{align*}
Note that the regularity of $W_2$ is necessary, otherwise the fourth equality is meaningless. When $W_2$ is regular, we have equation \eqref{t'Pt}. Then we can use $\Delta(\varepsilon_t(r_{(1)}))=(\varepsilon_t(r_{(1)})\o 1)E$. So we have proved that $(id\o \Delta)(P)=(P\o 1)(1\o E)$. By Lemma \ref{commute1}, the second equation is obtained. 
\end{proof}

\begin{proposition}\label{second}
Let $(\Gamma,P)$ be a semi-regular coaction of $A$ on $R$.  Assume that $W_2$ is regular. Consider the following equations:
\begin{enumerate}[(1)]
\item \begin{enumerate}
			 \item $\mathcal{H}=(1\o E)(P\o 1),$
			 \item  $\mathcal{H}=(P\o 1)(1\o E),$
	          \end{enumerate}
\item   \begin{enumerate}
			\item $(\Gamma\o id)P_l=(id\o E_l)(\Gamma\o id)$,
			\item  $(\Gamma\o id)P_r=(id\o E_r)(\Gamma\o id)$,
			\end{enumerate}
\item    
\begin{enumerate}
			\item $(W_2\o id)(id\o \widetilde{W_1})=(id\o R_1)(W_2\o id),$
			\item  $(W_2\o id)(id\o \widetilde{W_3})=(id\o R_3)(W_2\o id),$
			\end{enumerate}
\item    \begin{enumerate}
			\item  $P(r\o 1)=(id\o \varepsilon_t)\Gamma(r)\in M(R\o A),$
			\item  $(r\o 1)P=(id\o \varepsilon'_t)\Gamma(r)\in M(R\o A).$
			\end{enumerate}
\end{enumerate}
Then we have 
\begin{enumerate}[(i)]
    \item The equations $(1a), (2a), (3a), (4a)$ are equivalent.
    \item If $A$ is regular, then all the equations are equivalent. 
\end{enumerate}
\end{proposition}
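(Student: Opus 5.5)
The plan is to organize the equivalences of part (i) around $(4a)$ as a hub, and then obtain part (ii) by running the mirror-image argument with $W_3$, $\overline{W_3}$, $\varepsilon'_t$ in place of $W_1$, $\overline{W_1}$, $\varepsilon_t$. The implication $(4a)\Rightarrow(1a)$ is already Proposition \ref{1EP1}. The implication $(4a)\Rightarrow(3a)$ follows by combining Lemma \ref{P_lP_r}(1), which gives $\widetilde{W_1}=\overline{W_1}$, with Proposition \ref{behave}(2) for $i=2$.

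The remaining work is to come back to $(4a)$ from each of $(1a)$, $(2a)$ and $(3a)$, and to link $(1a)$ with $(2a)$.

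For $(1a)\Leftrightarrow(2a)$, I would unfold both sides on elements of the form $\Gamma(r)(1\o a)\o b$. Take $x=W_1(r\o a)\o b$. Then $(\Gamma\o id)(P_l x)=(\Gamma\o id)(P)\,(\Gamma\o id)(x)=\mathcal{H}(\Gamma\o id)(x)$, while the right side of $(2a)$ is $(1\o E)(\Gamma\o id)(x)$. Since $(\Gamma\o id)(x)=(\Gamma\o id)(1)(\Gamma\o id)(x)=(P\o1)(\Gamma\o id)(x)$, condition $(1a)$ gives $(2a)$ on such $x$. Conversely, the elements $(\Gamma\o id)(x)$ span $(P\o 1)(R\o A\o A)$ by Assumption I, so $(2a)$ yields $\mathcal{H}(P\o1)=(1\o E)(P\o 1)$ after cancelling. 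Finally, $\mathcal{H}(P\o 1)=\mathcal{H}$ by the Remark following Corollary \ref{general-coass}.

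For $(3a)\Rightarrow(4a)$, I would compute $W_1\widetilde{W_1}=P_l$ after multiplying by $t\o1$ on the left. The identity $(3a)$, with $W_2$ regular, identifies $(t\o 1)\,W_1\widetilde W_1(r\o a)$ with $\sum tr_{(0)}\o r_{(1)}S(r_{(2)})a$, that is, with $(t\o1)(id\o\varepsilon_t)\Gamma(r)(1\o a)$. Cancelling $t$ and $a$ then gives $(4a)$.

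For $(1a)\Rightarrow(4a)$, the plan is to apply $id\o id\o\varepsilon$, or rather the slice $(id\o\varepsilon\o id)$ composed with the antipode, to $\mathcal{H}=(id\o\Delta)(P)$. In weak multiplier Hopf algebras one has $\varepsilon_t(a)=(\varepsilon\o id)(E(a\o1))$-type formulas. Applying $(id\o m(S\o id))$ suitably to $(id\o\Delta)(\Gamma(r)(1\o a))$ yields $\sum r_{(0)}\o\varepsilon_t(r_{(1)})a$, while on the other side $(1\o E)(P\o1)$ collapses to $P(r\o a)$ via $m(id\o S)E=1$. I expect this slicing to be the main obstacle. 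The expressions are only multipliers, so each step must be covered, using the regularity of $W_2$ through formula \eqref{t'Pt} with $t\o1$ on the left. If this route is too delicate, a fallback is to prove $(1a)\Rightarrow(3a)$ directly by checking that $\overline{W_1}$ solves the pair of equations in Proposition \ref{5} and then invoking uniqueness.

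For (ii), with $A$ regular, the same arguments with $\overline{W_3}$, $R_3$, $U$, $\varepsilon'_t$ and Lemma \ref{P_lP_r}(2) give $(1b)\Leftrightarrow(2b)\Leftrightarrow(3b)\Leftrightarrow(4b)$. To tie the two chains together, note that Lemma \ref{commute1} shows either $(4a)$ or $(4b)$ forces $P\o1$ and $1\o E$ to commute, so $(1a)\Leftrightarrow(1b)$. This closes all the equivalences.
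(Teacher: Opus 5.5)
Your links $(4a)\Rightarrow(1a)$ (Proposition \ref{1EP1}), $(4a)\Rightarrow(3a)$ (Lemma \ref{P_lP_r} plus Proposition \ref{behave}), $(3a)\Rightarrow(4a)$ and $(1a)\Leftrightarrow(2a)$ are fine. Together, however, they only give $(3a)\Leftrightarrow(4a)\Rightarrow(1a)\Leftrightarrow(2a)$.

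The one non-routine direction is not proved. That is the passage from the comodule-algebra axiom $(1a)$/$(2a)$ back to $(3a)$/$(4a)$, and it is the direction the later results actually use. The slice you sketch fails as written, for three reasons.
\begin{enumerate}
\item $(id\o m(S\o id))$ applied to $(id\o\Delta)(\Gamma(r)(1\o a))$ gives $\sum r_{(0)}\o\varepsilon_s(r_{(1)}a)$, not $\sum r_{(0)}\o\varepsilon_t(r_{(1)})a$.
\item Applying $m(id\o S)$ to the last two legs of $(1\o E)(P\o 1)$ gives $\sum P_{(0)}\o E_{(1)}P_{(1)}S(E_{(2)})$. This collapses to $P$ only if $P_{(1)}$ can be moved past $E_{(1)}$, which is what Lemma \ref{commute1} derives from $(4a)$. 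You also apply different maps to the two sides of the identity.
\item Slicing $\mathcal{H}$ alone can only yield statements about $P$ (at best something like $P=(id\o\varepsilon_t)P$). It never produces the term $\Gamma(r)$ that $(4a)$ involves.
\end{enumerate}
Your fallback is circular. $\overline{W_1}W_1=Q$ holds by definition, and $W_1\overline{W_1}=P_l$ is literally $(4a)$ (this is how Lemma \ref{P_lP_r} is proved). So checking that $\overline{W_1}$ solves the pair in Proposition \ref{5} is the same problem restated.

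The paper closes this gap with $(2a)\Rightarrow(3a)$:
\begin{enumerate}
\item Since $\widetilde{W_1}=Q\widetilde{W_1}$, item (3) of Proposition \ref{c3} replaces $id\o Q$ by $id\o G_1=(id\o R_1)(id\o T_1)$.
\item Coassociativity \eqref{coass3} turns $(id\o T_1)(W_2\o id)(id\o\widetilde{W_1})$ into $(W_2\o id)(id\o P_l)$.
\item Then \eqref{2a'} and $R_1E_l=R_1$ finish.
\end{enumerate}

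Alternatively, your slicing idea can be repaired if you slice $(2a)$ evaluated at $r\o a$ rather than $\mathcal{H}$. Multiply $(\Gamma\o id)(P(r\o a))=(1\o E)(\Gamma(r)\o a)$ on the left by $t\o 1\o 1$; everything lands in $R\o A\o A$ because $W_2$ is regular. Then apply $id\o\varepsilon\o id$ to both sides.
\begin{enumerate}
\item Left side: write $P(r\o a)=\sum_i\Gamma(u_i)(1\o v_i)$. By \eqref{coass1} and the counit of $\Delta$, it becomes $(t\o 1)P(r\o a)$.
\item Right side: use $(\varepsilon\o id)(E(b\o c))=\varepsilon_t(b)c$, which follows from $E(b\o c)=T_1R_1(b\o c)$ and the counit. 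It becomes $\sum tr_{(0)}\o\varepsilon_t(r_{(1)})a$.
\end{enumerate}
Cancelling $t$ gives $(4a)$ directly, with no antipode bookkeeping.

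The same repair is needed in the mirrored chain for (ii). After that, tying $(1a)$ to $(1b)$ via Lemma \ref{commute1} is exactly what the paper does.
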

\begin{proof}
Note that $(1a)$ and $(1b)$ are multipliers in $M(R\o A\o A)$, $(2a)$ and $(2b)$ are linear maps from $R\o A$ to $M(R\o A\o A)$, $(3a)$ and $(3b)$ are linear maps from $R\o R\o A$ to $R\o A\o A$, formulas $(3b)$ and $(4b)$ are only legal when $A$ is regular. The multipliers $(id\o \varepsilon_t)\Gamma(t)$ and $(id\o \varepsilon'_t)\Gamma(t)$ have been explained in Remark \ref{cover-1}.
	
 	(i): $(1a)\implies (2a)$. For any $r\in R, a\in A$, we have
	\begin{align*}
	(\Gamma\o id)(P_l(r\o a)) &=(\Gamma\o id)(P(r\o a))\\
										  &= (\Gamma \o id)(P)(\Gamma(r)\o a)\\
										  &= (1\o E)(P\o 1)(\Gamma(r)\o a)\\
										  &=(1\o E)(\Gamma(r)\o a)\\
										  &=(id\o E_l)((\Gamma\o id)(r\o a)).
	\end{align*}
	
	$(2a)\implies (1a)$. We can apply this equation to $r\o a$ and use that $\Gamma\o id$ is a homomorphism, then we find
	$$(\Gamma\o id)(P)(\Gamma(r)\o a)=(1\o E)(\Gamma(r)\o a).$$
	This implies that 
        $$(\Gamma\o id)(P)=(\Gamma\o id)(P)(P\o 1)=(1\o E)(P\o1).$$
	
	$(2a)\implies (3a)$. Since $W_2$ is regular,  the equation $(2a)$ can be denoted as  
	\begin{equation}\label{2a'}
	(W_2\o id)(id\o P_l)=(id\o E_l)(W_2\o id).
	\end{equation}
By (ii) of Proposition \ref{c3} we have
$$(W_2\o id)(id\o Q)
=(id\o G_1)(W_2\o id).$$
Apply $id\o \widetilde{W_1}$ and use $Q\widetilde{W_1}=\widetilde{W_1}W_1\widetilde{W_1}=\widetilde{W_1}$, then
\begin{align*}
(W_2\o id)(id\o \widetilde{W_1}) 
&=(id\o G_1)(W_2\o id)(id\o \widetilde{W_1}) 
\\
&=(id\o R_1)(id\o T_1)(W_2\o id)(id\o \widetilde{W_1})\\
&=(id\o R_1)(W_2\o id)(id\o W_1)(id\o \widetilde{W_1}) \tag*{by equation \eqref{coass3}}\\
&= (id\o R_1)(W_2\o id)(id\o P_l)\\
&= (id\o R_1)(id\o E_l)(W_2\o id) \tag*{by equation \eqref{2a'}}\\
&= (id\o R_1)(W_2\o id). \tag*{by $R_1E_l=R_1T_1R_1=R_1$}
\end{align*}

$(3a)\implies (2a)$. Since $W_2$ is regular, it is equivalent to show equation \eqref{2a'}. We have
\begin{align*}
(W_2\o id)(id\o P_l)
&=(W_2\o id)(id\o W_1)(id\o \widetilde{W_1})\\
&=(id\o T_1)(W_2\o id)(id\o \widetilde{W_1}) \tag*{by equation \eqref{coass3}}\\
&=(id\o T_1)(id\o R_1)(W_2\o id) \tag*{by assumption $(3a)$}\\
&=(id\o E_l)(W_2\o id).
\end{align*}

$(3a)\implies (4a)$.  Denote $\widetilde{W_1}(t\o a)=t_{\widetilde{W_1}}\o a_{\widetilde{W_1}}$. Then the formula $3(a)$ can be expressed as 
$$\sum rt_{\widetilde{W_1}(0)}\o t_{\widetilde{W_1}(1)}\o a_{\widetilde{W_1}}=\sum rt_{(0)}\o t_{(1)}\o S(t_{(2)})a.$$
Apply the multiplication of $A$, then we get
$$\sum rt_{\widetilde{W_1}(0)}\o t_{\widetilde{W_1}(1)} a_{\widetilde{W_1}}=\sum rt_{(0)}\o \varepsilon_t(t_{(1)})a.$$
It means that $(r\o 1)W_1\widetilde{W_1}(t\o a)=\sum rt_{(0)}\o \varepsilon_t(t_{(1)})a$. Cancel $r$ and $a$, then we get 
$$P_l(t\o 1)=\sum t_{(0)}\o \varepsilon_t(t_{(1)}).$$

$(4a)\implies (2a)$. It is equivalent to show equation \eqref{2a'} and the proof is similar to the one of $(3a)\implies (2a)$. We have
\begin{align*}
(W_2\o id)(id\o P_l) &=(W_2\o id)(id\o W_1)(id\o \overline{W_1})\tag*{by Lemma \ref{P_lP_r}}\\
&=(id\o T_1)(W_2\o id)(id\o \overline{W_1}) \tag*{by equation \eqref{coass3}}\\
&=(id\o T_1)(id\o R_1)(W_2\o id) \tag*{by (ii) of Proposition \ref{behave}}\\
&=(id\o E_l)(W_2\o id).
\end{align*}
 
(ii): Firstly, we will show that $(1b)\Leftrightarrow (2b)\Leftrightarrow (3b)\implies (4b)\implies (2b)$ and the proof is completely similar with part (i). Next by Lemma \ref{commute1}, we get $(1a)\Leftrightarrow (1b)$, then we finish the proof.
\end{proof}

\begin{remark}\label{important}
In this remark, we give more information about the items (3) and (4) of Proposition \ref{second}. 
\begin{enumerate}[(i)]
        \item If we are working with (weak) Hopf algebras, then the formulas $(3a)$ and $(3b)$ are equivalent with 
                    \begin{equation}\label{3ab}
				        (\Gamma\o id)\widetilde{W_1}=(id\o R_1)(\Gamma\o id) \text{\, and\, } (\Gamma\o id)\widetilde{W_3}=(id\o R_3)(\Gamma\o id).
                    \end{equation}
		 But we want to emphasize that it is not safe to use the formulas \eqref{3ab} without the regularity of $W_2$ or $W_4$. Later we will show that $W_2$ is regular is equivalent with $W_4$ is regular. If the coaction $\Gamma$ is regular, the item (3) can be summarized as
		 \begin{equation}
		 (W_i\o id)(id\o \widetilde{W_j})=(id\o R_j)(W_i\o id), \quad i\in \{2,4\}, j\in \{1,3\}.
		 \end{equation}

	\item Note that from formula $4(a)$ we can not conclude that $P(R\o 1)\in R\o \varepsilon_t(A)$. If $R$ is idempotent and $W_4$ is regular, then we have 
		 $$P(rr'\o 1)=\sum r_{(0)}r'\o \varepsilon_t(r_{(1)}).$$
		 we find that $P(R\o 1)\in R\o \varepsilon_t(A)$. Similarly, in that case, formula $(4b)$ implies $(R\o 1)P\in R\o \varepsilon_t(A)$. Moreover, if $R$ has an identity, then we can get $P\in R\o \varepsilon_t(A)$. If $A$ is weak Hopf algebra, the condition $P\in R\o \varepsilon_t(A)$ is equivalent with the formulas in Proposition \ref{second}.
		 
	\item The equations $4(a)$ and $4(b)$ means that the right leg of $P$  belongs to the target algebra of $A$. To be precise, by equation \eqref{t'Pt} we have that       $(\omega(r'\cdot r)\o id)(P)\in \varepsilon_t (A)$ for all linear functionals $\omega$ on $R$ and all $r', r''\in R$. Moreover, if $\Gamma$ is right full, then any       element in $\varepsilon_t(A)$ is a linear combination of such elements. 
	\end{enumerate}
\end{remark}

Now we come to the main definition.

\begin{definition}
Let $\Gamma$ be a semi-regular coaction of $A$ on $R$.  $R$ is called a right $A$-\textit{comodule algebra} if there exists an idempotent $P\in M(R\o A)$ such that 
	\begin{enumerate}[(1)]
		\item $P(R\o A)=\Gamma(R)(1\o A) \quad \text{and} \quad (R\o A)P=(1\o A)\Gamma(R)$,
		\item $(\Gamma\o id)(P)=(1\o E)(P\o 1)$.
	\end{enumerate}
We denote it by $(R, \Gamma, P)$. The comodule algebra is called \textit{regular} if $\Gamma$ is regular. 
\end{definition}

\begin{proposition}
Let $R$ be an $A$-comodule algebra. If $W_2$ is regular, then the coaction $\Gamma$ is regular. Similarly, if $A$ is regular and $W_4$ is regular, then $\Gamma$ is regular.
\end{proposition}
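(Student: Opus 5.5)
We must show that if $W_2$ is regular then $W_4$ is regular; the semi-regularity assumption already gives $W_1$ and $W_3$. The idea is to write $\Gamma(r)(t\o 1)$ as $P$ times something already known to lie in $R\o A$, and then to control the factor $P(t\o 1)$ using condition (4a) of Proposition \ref{second}. Since $R$ is an $A$-comodule algebra, condition (2) of the definition is exactly $(1a)$. Because $W_2$ is regular, part (i) of Proposition \ref{second} applies, so $(4a)$ holds: $P(t\o 1)=(id\o\varepsilon_t)\Gamma(t)$ as a multiplier of $R\o A$.

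For $r,t\in R$ we write
$$\Gamma(r)(t\o 1)=\Gamma(r)P(t\o 1)=\Gamma(r)\sum t_{(0)}\o \varepsilon_t(t_{(1)}).$$
Regularity of $W_2$ gives formula \eqref{t'Pt}, so $(t'\o 1)P(t\o 1)\in R\o\varepsilon_t(A)$ for all $t'\in R$. The plan is to move an element of $A$ into the second leg to cover things. First I would show that $P(t\o 1)$ multiplied by $(1\o a)$ on either side lies in $R\o A$. On the left this is $P(t\o a)$, which lies in $R\o A$ since $P\in M(R\o A)$. On the right, $\varepsilon_t(A)\subseteq M(A)$, and in the regular case $\varepsilon_t(A)$ is a nondegenerate subalgebra with local units. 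I would use $\varepsilon_t(A)A=A$ together with the fact that $P(t\o 1)$ has right leg in $\varepsilon_t(A)$ to write $P(t\o 1)=\sum_j u_j\o x_j$ with $x_j\in\varepsilon_t(A)$. This is the crucial point: to get it, I would apply the right leg argument of Remark \ref{important}(iii) to the element $W_1\overline{W_1}(t\o a)=\sum t_{(0)}\o\varepsilon_t(t_{(1)})a$ and cancel $a$ using the local units of $\varepsilon_t(A)$. Then
$$\Gamma(r)(t\o 1)=\sum_j \Gamma(r)(u_j\o 1)(1\o x_j),$$
and each term can be handled by choosing $e\in A$ with $\Gamma(r)(1\o e)$ covering, via $\Gamma(r)(1\o x_j)$-type expressions. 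In practice I would use $(1\o x_j)\Gamma(r)$, i.e.\ $W_3$, together with the commutation of $\Gamma(R)$ with $1\o\varepsilon_t(A)$. That commutation I would derive from $(1a)=(1b)$, i.e.\ Lemma \ref{commute1}, applied in the first two legs.

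The alternative route, which I expect to be cleaner, is to use the ``$(1b)$'' side, $(r\o 1)P=(id\o\varepsilon'_t)\Gamma(r)$. For the second claim (with $A$ regular and $W_4$ regular) one proceeds symmetrically. Since all equations of Proposition \ref{second} are then equivalent, we obtain $(4b)$, and the mirror argument gives $W_2$ regular.

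The main obstacle is the first step: establishing that $P(t\o 1)$ is an honest element of $R\o\varepsilon_t(A)$, or at least that it is covered from the right by $\Gamma(r)$. Formula \eqref{t'Pt} gives this only after multiplying by $t'$ on the left, and Remark \ref{important}(ii) warns that $(4a)$ alone does not yield $P(R\o 1)\subseteq R\o\varepsilon_t(A)$. If idempotency of $R$ is unavailable, I would instead compute $(t'\o 1)\Gamma(r)(t\o 1)$, which lies in $R\o A$ by regularity of $W_2$. I would then show that its right leg is independent of $t'$, using nondegeneracy of $R$ and the counit property, and conclude that the element already lies in $R\o A$.
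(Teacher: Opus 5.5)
Your first two steps match the paper's: condition (2) of the definition is (1a), so Proposition \ref{second}(i) gives (4a), and $\Gamma(r)(t\o 1)=\Gamma(r)P(t\o 1)=\Gamma(r)\bigl(\sum t_{(0)}\o\varepsilon_t(t_{(1)})\bigr)$. After that the argument has a genuine gap.

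\textbf{The decomposition of $P(t\o 1)$.} Writing $P(t\o 1)$ as an honest tensor $\sum_j u_j\o x_j$ with $x_j\in\varepsilon_t(A)$ is not justified. Remark \ref{important}(ii) obtains $P(R\o 1)\subseteq R\o\varepsilon_t(A)$ only when $R$ is idempotent and $W_4$ is regular, which is what you are trying to prove. Even if you grant it, the reduction is circular: $\Gamma(r)(u_j\o x_j)=\Gamma(r)(u_j\o 1)(1\o x_j)$ is again a value of $W_4$, now multiplied by $x_j$, which is only a multiplier.

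\textbf{The commutation step.} The commutation of $\Gamma(R)$ with $1\o\varepsilon_t(A)$ that you want to use to break the circle is false. Take $R=A=\mathrm{C}G$, $\Gamma=\Delta$, $P=E$. Then $\Delta(\lambda_p)(1\o\lambda_e)\neq 0$ iff $e=\s(p)$, while $(1\o\lambda_e)\Delta(\lambda_p)\neq 0$ iff $e=\t(p)$. Lemma \ref{commute1} only says that $P\o 1$ commutes with $1\o E$ (the target-valued leg of $P$ against the source-valued leg of $E$). It says nothing about $\Gamma(r)$.

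\textbf{The fallbacks.} Switching to (4b) does not help for the first claim. It needs $A$ regular, and it concerns $(t\o 1)P$, which is the $W_2$ side. Your last fallback is not an argument as stated: $(t'\o 1)X\in R\o A$ for all $t'$ does not force $X\in R\o A$, and the counit property is not available without injectivity. For the second claim, note that Proposition \ref{second} assumes $W_2$ regular, so you would need its $W_4$-mirror.

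\textbf{The missing idea.} Do not treat $(id\o\varepsilon_t)\Gamma(t)$ as an abstract tensor. Use coassociativity to keep its two legs coupled:
\[
\sum t_{(0)}\o\varepsilon_t(t_{(1)})=\sum t_{(0)}\o t_{(1)}S(t_{(2)})=\sum \Gamma(t_{(0)})(1\o S(t_{(1)})).
\]
Since $\Gamma$ is a homomorphism, $\Gamma(r)$ is then absorbed into the first factor:
\[
\Gamma(r)(t\o 1)=\sum \Gamma(rt_{(0)})(1\o S(t_{(1)})).
\]
So $W_4(t\o r)$ is $W_1$ applied to $(id\o S)\bigl((r\o 1)\Gamma(t)\bigr)$. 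This lies in $R\o A$ by regularity of $W_2$ and $W_1$; this last step uses $S(A)\subseteq A$, which is automatic for regular $A$. That is the paper's three-line proof. The second claim is the mirror computation, using $(t\o 1)P=(id\o\varepsilon'_t)\Gamma(t)$, $S^{-1}$ and $W_3$.
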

\begin{proof}
Assume that $W_2$ is regular. We need to show that $W_4$ is regular. For any $x,y\in R$, we have
	\begin{align*}
	\Gamma(x)(y\o 1) 
	&=\Gamma(x)P(y\o 1)\\
	&=\sum \Gamma(x)(y_{(0)}\o \varepsilon_t(y_1))\\
	&=\sum \Gamma(xy_{(0)})(1\o S(y_{(1)}))\in R\o A.
	\end{align*}
When $A$ is regular, the proof is similar.
\end{proof}

Recall that $\Gamma$ satisfies the counit property iff $\Gamma$ is injective. And we will have a more direct proof of $(1a)\implies (4a)$ when $\Gamma$ is injective.

\begin{proposition}
Let $R$ be a regular $A$-comodule algebra. If $\Gamma$ is injective, then we have $P(r\o 1)=(id\o \varepsilon_t)\Gamma(r)$.
\end{proposition}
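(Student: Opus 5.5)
Since $\Gamma$ is injective, the plan is to show that both sides of $P(r\o 1)=(id\o\varepsilon_t)\Gamma(r)$ become equal after applying the injective map $\Gamma\o id$. The equality is then pulled back through that injectivity. This mirrors the proof of the counit property in the earlier corollary, where injectivity of $\Gamma$ was used in the same way. Because $\Gamma$ is regular, both $W_2$ and $W_4$ are regular. Hence both sides are honest multipliers, and for $t\in R$ the products $(t\o 1)P(r\o 1)$ and $(t\o 1)(id\o\varepsilon_t)\Gamma(r)$ lie in $R\o A$. It therefore suffices to show, for all $t,r\in R$,
$$(t\o 1)P(r\o 1)=\sum tr_{(0)}\o \varepsilon_t(r_{(1)}),$$
as elements of $R\o A$, and then cancel $t$ by nondegeneracy. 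Both sides here lie in $R\o A$, and $\Gamma\o id$ is injective on $R\o A$ because $\Gamma$ is injective. So I would compute the image of each side under $\Gamma\o id$, multiplied on the right by elements of the form $1\o a\o b$ so that everything lives in $R\o A\o A$.

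For the left side, $(\Gamma\o id)$ is a homomorphism, and assumption (2) of the definition of comodule algebra gives $(\Gamma\o id)(P)=(1\o E)(P\o 1)$. This yields
$$(\Gamma(t)\o 1)(1\o E)(P\o 1)(\Gamma(r)\o 1)=(\Gamma(t)\o 1)(1\o E)(\Gamma(r)\o 1).$$
For the right side, I would use Corollary \ref{general-coass} in the form $(\Gamma\o id)\Gamma(r)=(id\o\Delta)\Gamma(r)$. Applying $id\o id\o\varepsilon_t$ then gives
$$\sum \Gamma(t)\Gamma(r_{(0)})\o\varepsilon_t(r_{(1)})=\sum (t\o1\o1)\big(r_{(0)}\o r_{(1)}\o \varepsilon_t(r_{(2)})\big),$$
after suitable covering. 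The key Hopf-type identity needed is
$$\sum a_{(1)}\o\varepsilon_t(a_{(2)})=E(a\o 1),$$
valid in the multiplier sense for WMHAs (this is $(id\o\varepsilon_t)\Delta(a)=E(a\o1)$; cf.\ \cite{van2015weak}). With it, the right side becomes $(\Gamma(t)\o 1)(1\o E)(\Gamma(r)\o 1)$, which matches the left side. Injectivity of $\Gamma\o id$ then gives the claim.

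The main obstacle is making these manipulations rigorous in the non-unital setting. The term $\varepsilon_t$ applied in the third leg has to be covered correctly, which requires multiplying by $1\o a\o b$ and using regularity of $W_2$, $W_4$ and $T_1$. One also has to justify $(id\o\varepsilon_t)\Delta(a)=E(a\o1)$ with the right coverings. I would handle the covering by the device of Remark \ref{cover-1}: write $E(a\o b)=\sum\Delta(a_i)(1\o b_i)$, then verify the identity pointwise and cancel using nondegeneracy of $R\o A\o A$.
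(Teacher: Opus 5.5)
Your argument uses the same ingredients as the paper's proof. These are: apply $\Gamma\otimes id$; use axiom (2), $(\Gamma\otimes id)(P)=(1\otimes E)(P\otimes 1)$, together with $P\Gamma(r)=\Gamma(r)$; use coassociativity and the identity $\sum c_{(1)}\otimes\varepsilon_t(c_{(2)})b=E(c\otimes b)$ (that is, $T_1R_1=E_l$); then pull back through injectivity of $\Gamma$. The paper instead expands $P(r\otimes a)=\sum\Gamma(u_i)(1\otimes v_i)$ and evaluates $(t\otimes 1\otimes 1)(\Gamma\otimes id)(P)(r\otimes a\otimes b)$ in two ways. Your use of multiplicativity of the extended $\Gamma\otimes id$ on $(t\otimes 1)P(r\otimes 1)$ is slightly more direct, since it yields the identity for the given $r$ rather than in a form summed over the $u_i$.

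There is one false step, on which your injectivity argument rests as written. You claim $(t\otimes 1)P(r\otimes 1)$ and $(t\otimes 1)(id\otimes\varepsilon_t)\Gamma(r)$ lie in $R\otimes A$, but regularity of $W_2,W_4$ gives nothing of the kind. Take a regular multiplier Hopf algebra coacting on itself by $\Delta$. Then $P=1\otimes 1$ and $\varepsilon_t(a)=\varepsilon(a)1$, so both elements equal $tr\otimes 1$, which is not in $A\otimes A$ when $A$ is non-unital. In general they only lie in $R\otimes M(A)$, so "$\Gamma\otimes id$ is injective on $R\otimes A$" cannot be applied to them.

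The repair is to cover the second leg too. Compare $(t\otimes 1)P(r\otimes b)$ with $\sum tr_{(0)}\otimes\varepsilon_t(r_{(1)})b$.
\begin{itemize}
\item The first lies in $(t\otimes 1)\Gamma(R)(1\otimes A)\subseteq R\otimes A$, because $P(R\otimes A)=\Gamma(R)(1\otimes A)$ and $W_2$ is regular.
\item The second is in $R\otimes A$ because $(t\otimes 1)\Gamma(r)\in R\otimes A$.
\item Since $(\Gamma\otimes id)(1\otimes b)=P\otimes b$, right multiplication by $1\otimes 1\otimes b$ after applying $\Gamma\otimes id$ is the same as right multiplication by $1\otimes b$ before applying it. So your computation shows both elements have image $(\Gamma(t)\otimes 1)(1\otimes E)(\Gamma(r)\otimes b)$.
\item Injectivity of $\Gamma\otimes id$ on $R\otimes A$ then gives their equality. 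Finally cancel $t$ and $b$.
\end{itemize}

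There is also a slip in your display for the right-hand side: the factor $(t\otimes 1\otimes 1)$ must be $(\Gamma(t)\otimes 1)$. Coassociativity rewrites $\sum\Gamma(r_{(0)})\otimes r_{(1)}$; it does not turn $\Gamma(t)$ into $t\otimes 1$. As written, the two sides of that display are not equal (in the multiplier Hopf example they are $\Delta(tr)\otimes 1$ versus $(t\otimes 1)\Delta(r)\otimes 1$), although your next sentence uses the correct factor.
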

\begin{proof}
	For any $r\in R, a,b \in A$, denote $P(r\o a)=\sum \Gamma(u_i)(1\o v_i)$. Then we have
	\begin{align*}
	(t\o 1\o 1)(\Gamma\o id)(P)(r\o a\o b)
	&=\sum (t\o 1\o 1)(\Gamma\o id)(P)(\Gamma(u_i)(1\o v_i)\o b)\\
	&=\sum (t\o 1\o 1)(\Gamma\o id)(P(u_i\o 1))(1\o v_i\o b).
	\end{align*}
	On the other hand, we have
	\begin{align*}
	(t\o 1\o 1)(1\o E)(P\o 1)(r\o a\o b)
	&=\sum (t\o 1\o 1)(1\o E)(u_{i(0)}\o u_{i(1)}v_i\o b)\\
	&=\sum tu_{i(0)}\o u_{i(1)}v\o \varepsilon_t(u_{(2)})b\\
	&=\sum (t\o 1\o 1)(\Gamma\o id)((id\o \varepsilon_t)\Gamma(u_i))(1\o v_i\o b).
	\end{align*}
In the second equality, $u_{i(1)}$ is covered since $W_2$ is regular. Using the injective of $\Gamma$, we obtain $P(r\o 1)=(id\o \varepsilon_t)\Gamma(r)$.
\end{proof}

Similarly, we can introduce the left $A$-comodule algebra.

\begin{definition}\label{leftcomodulealgebra}
Let $\Gamma$ be a left regular coaction of $A$ on $R$. $R$ is called a left $A$-\textit{comodule algebra} if there exists an idempotent element $P\in M(A\o R)$ such that 
	\begin{itemize}
	\item $P(A\o R)=\Gamma(R)(A\o 1) \quad \text{and} \quad (A\o R)P=(A\o 1)\Gamma(R)$,
	\item $(id\o \Gamma)P=(E\o 1)(1\o P)$.
	\end{itemize}
\end{definition}
	
When the comodule algebra is regular and $A$ is regular, we find more results. Recall that the maps $G_i(i=1,2,3,4)$ in regular WMHA are given by idempotents in multiplier algebras, see equation \eqref{G_i} or \cite[Proposition 4.5]{van2015weak}. Now for the maps $Q$ and $U$, we have similar results. Define two idempotents $V_1$ and $V_3$:
$$V_1=(id\o S)P\quad \text{and} \quad V_3=(id\o S^{-1})P.$$
They are elements in $M(R\o A^{\text{op}})$. 
\begin{proposition}
Assume that $A$ is a regular WMHA. Let $(R, \Gamma, P)$ be a regular $A$-comodule algebra. Then we have
$$Q(r\o a)=(r\o 1)V_1(1\o a)$$
and
$$U(r\o a)=(1\o a)V_3(r\o 1).$$
\end{proposition}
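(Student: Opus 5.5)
The plan is to compute $Q=\overline{W_1}W_1$ directly in Sweedler notation and then recognize the result via condition $(4a)$ of Proposition \ref{second}. Since $(R,\Gamma,P)$ is a regular comodule algebra, that proposition gives $P(r\o 1)=\sum r_{(0)}\o \varepsilon_t(r_{(1)})$. Because $A$ is regular, all equations there are equivalent, so also $(r\o 1)P=\sum r_{(0)}\o \varepsilon'_t(r_{(1)})$. By Remark \ref{cover-1} we have
$$Q(r\o a)=\sum r_{(0)}\o \varepsilon_s(r_{(1)})a=\sum r_{(0)}\o S(r_{(1)})r_{(2)}a .$$
The goal is therefore to show $\sum r_{(0)}\o \varepsilon_s(r_{(1)})=(r\o 1)V_1$ as left multipliers, tested against $1\o a$.

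The key step is to express $\varepsilon_s$ through $\varepsilon_t$ and $S$. The formula I would use is $\varepsilon_s(b)=S(\varepsilon'_t(b))$, where $\varepsilon'_t(b)=\sum b_{(2)}S^{-1}(b_{(1)})$ is the map appearing in $W_3\overline{W_3}$. Indeed $S(b_{(2)}S^{-1}(b_{(1)}))=b_{(1)}$-type rearrangement gives $\sum S(b_{(1)})b_{(2)}$, which is $\varepsilon_s(b)$, using that $S$ is anti-multiplicative on $M(A)$ in the regular case. Then
$$\sum r_{(0)}\o \varepsilon_s(r_{(1)})=(id\o S)\Big(\sum r_{(0)}\o \varepsilon'_t(r_{(1)})\Big)=(id\o S)((r\o 1)P).$$
Viewed in $M(R\o A^{\text{op}})$, this equals $(r\o 1)(id\o S)P=(r\o 1)V_1$. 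Multiplying by $1\o a$ on the right gives the first formula. Note that the right leg of $(r\o 1)P$ lies in $\varepsilon_t(A)$ by Remark \ref{important}(iii), so $S$ is applied only to elements of $M(A)$ where it is well defined, and the element lies in $R\o A$ after covering with $a$.

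The second formula is handled symmetrically. Here
$$U(r\o a)=\sum r_{(0)}\o a\varepsilon'_s(r_{(1)}), \qquad \varepsilon'_s(b)=\sum b_{(2)}S^{-1}(b_{(1)})\ \text{-type expression}.$$
One writes $\varepsilon'_s=S^{-1}\circ\varepsilon_t$ and uses $P(r\o 1)=\sum r_{(0)}\o\varepsilon_t(r_{(1)})$ to obtain $\sum r_{(0)}\o\varepsilon'_s(r_{(1)})=(id\o S^{-1})(P(r\o 1))=V_3(r\o 1)$ in $M(R\o A^{\text{op}})$, now multiplied by $1\o a$ on the left.

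The main obstacle is making the covering rigorous. $P$ and $V_1$ are only multipliers, and $(id\o S)P$ must be shown to be a well-defined idempotent multiplier of $R\o A^{\text{op}}$ with $(id\o S)((r\o 1)P)=(r\o 1)V_1$. I would define $V_1$ by
$$(r\o 1)V_1(1\o a):=\sum r_{(0)}\o S(\varepsilon'_t(r_{(1)}))a .$$
This uses regularity of $W_2$ and $W_4$ so that $(r\o 1)P\in R\o\varepsilon_t(A)$ after cutting down, in the manner of \cite[Proposition 4.5]{van2015weak} for $F_i$. Idempotency of $V_1$ then follows because $S$ restricted to $\varepsilon_t(A)$ is an anti-isomorphism onto $\varepsilon_s(A)$, which becomes a homomorphism into $A^{\text{op}}$. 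The identity $\varepsilon_s=S\circ\varepsilon'_t$ itself should be checked on covered elements using $S(ab)=S(b)S(a)$ and $SS^{-1}=id$.
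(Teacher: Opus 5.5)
Your route is the paper's. You use condition (4b) of Proposition \ref{second} to write $(r\o 1)P=(id\o\varepsilon'_t)\Gamma(r)$, apply $id\o S$, and recognize $Q$ through $S\circ\varepsilon'_t=\varepsilon_s$. For $U$ you use (4a) and $S^{-1}\circ\varepsilon_t=\varepsilon'_s$, and that half you verify correctly.

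There is, however, a slip in the key identity of the first half. The formula you give, $\varepsilon'_t(b)=\sum b_{(2)}S^{-1}(b_{(1)})$, is actually $\varepsilon'_s(b)$; you use the same expression for $\varepsilon'_s$ later. With it the computation gives $S\bigl(b_{(2)}S^{-1}(b_{(1)})\bigr)=\sum b_{(1)}S(b_{(2)})=\varepsilon_t(b)$, not $\varepsilon_s(b)$. The correct $\varepsilon'_t$ is read off from
\[
W_3\overline{W_3}(r\o a)=W_3\Bigl(\sum r_{(0)}\o aS^{-1}(r_{(1)})\Bigr)=\sum r_{(0)}\o aS^{-1}(r_{(2)})r_{(1)},
\]
so $\varepsilon'_t(b)=\sum S^{-1}(b_{(2)})b_{(1)}$. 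Then $S(\varepsilon'_t(b))=\sum S(b_{(1)})b_{(2)}=\varepsilon_s(b)$, as you need, and with this correction your argument goes through.

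On the covering issue you flag: the paper avoids it by applying $id\o S$ to $(r\o a)P=W_3\overline{W_3}(r\o a)$ rather than to the multiplier $(r\o 1)P$. The former lies in $R\o A$, where $id\o S$ is defined outright since $S(A)=A$. This gives $(r\o 1)V_1(1\o S(a))=\sum r_{(0)}\o\varepsilon_s(r_{(1)})S(a)=Q(r\o S(a))$, which the paper takes as the meaning of $V_1=(id\o S)P$; bijectivity of $S$ then finishes. That is cleaner than your ``cutting down'' step. It also avoids needing $(r\o 1)P\in R\o\varepsilon_t(A)$, which Remark \ref{important}(ii) only obtains for idempotent $R$.
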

\begin{proof}
For any $r\in R, S(a)\in A$, we have
        \begin{align*}
	(r\o 1)((id\o S)P)(1\o S(a))&= (id\o S)((r\o a)P)\\
	&=\sum (id\o S)(r_{(0)}\o a\varepsilon'_t(r_{(1)}))\\
	&=\sum r_{(0)}\o \varepsilon_s(r_{(1)})S(a)\\
	&=Q(r\o S(a)).
	\end{align*}
In fact, the first equality gives a correct interpretation of the formula $V_1=(id\o S)P$. Similarly for $V_3$.
\end{proof}

We also have 
$$\ker (W_1)=(R\o 1)(1- V_1)(1\o A) \text{\quad and \quad} \ker(W_3)=(R\o 1)(1- V_3)(1\o A)$$
and
$$P^{13}(V_1\o 1)=P^{13}(1\o E)\text{\quad and \quad}
(V_3\o 1)P^{13}=(1\o E)P^{13}.$$

\begin{proposition}
Assume that $A$ is a regular WMHA. Let $(R, \Gamma, P)$ be a regular $A$-comodule algebra. If $R$ is idempotent, then we have
$$P(R\o 1)\in R\o \varepsilon_t(A) \text{\quad and\quad} (R\o 1)P\in R\o \varepsilon_t(A),$$
$$Q(R\o 1)\in R\o \varepsilon_s(A) \text{\quad and\quad } (R\o 1)U\in R\o \varepsilon_s(A).$$
\end{proposition}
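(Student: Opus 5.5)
The plan is to work directly from formula $(4a)$ and $(4b)$ of Proposition \ref{second}. Because $A$ is regular and $(R,\Gamma,P)$ is a regular comodule algebra, these two formulas hold. We also use that $R$ is idempotent, in the way indicated in Remark \ref{important}(ii).

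\textbf{First inclusion.} Take $r\in R$. Since $R=R^2$, write $r=\sum_j x_jy_j$. Then
$$P(r\o 1)=\sum_j P(x_j\o 1)(y_j\o 1)=\sum_j \sum x_{j(0)}y_j\o \varepsilon_t(x_{j(1)}),$$
using $(4a)$. The regularity of $W_4$ gives $\Gamma(x)(y\o 1)\in R\o A$. Applying $id\o\varepsilon_t$ leg-wise, which is legitimate because $\varepsilon_t$ is linear on $A$, then shows that each term lies in $R\o \varepsilon_t(A)$.

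To make this rigorous, I would note the identity
$$P(x\o 1)(y\o 1)=(id\o \varepsilon_t)\bigl(\Gamma(x)(y\o 1)\bigr).$$
I would check it by multiplying on the right with $1\o a$. Then $\sum x_{(0)}y\o \varepsilon_t(x_{(1)})a$ is obtained from $\Gamma(x)(y\o 1)\in R\o A$, and the covering is provided by $y$ through $W_4$. The inclusion $(R\o 1)P\in R\o\varepsilon_t(A)$ follows in the same way from $(4b)$ with $W_2$. Here $\varepsilon'_t(A)=\varepsilon_t(A)$ in the regular case (used already in Lemma \ref{commute1}), since $(r\o 1)P=(id\o\varepsilon'_t)\Gamma(r)$.

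\textbf{Second pair: $Q$ and $U$.} Use the preceding proposition, $Q(r\o a)=(r\o 1)V_1(1\o a)$ with $V_1=(id\o S)P$. Formally this gives $Q(r\o 1)=(r\o1)V_1=(id\o S)((r\o 1)P)$. By the first part, $(r\o1)P=\sum u_i\o \varepsilon_t(b_i)$, and regularity gives $S(\varepsilon_t(A))=\varepsilon_s(A)$. Hence $Q(R\o 1)$, interpreted as the multiplier $(R\o1)V_1$ (the idempotent on the left leg side), lies in $R\o\varepsilon_s(A)$.

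Alternatively, I would compute directly: $Q(r\o a)=\sum r_{(0)}\o \varepsilon_s(r_{(1)})a$. With $r=xy$ and $W_2$ regular, $(x\o 1)\Gamma(y)\in R\o A$ lets me write this as $(id\o\varepsilon_s)((x\o1)\Gamma(y))$-type elements times $a$. Symmetrically, $(R\o 1)U$ uses $V_3=(id\o S^{-1})P$ together with $P(R\o1)\subseteq R\o\varepsilon_t(A)$ and $S^{-1}(\varepsilon_t(A))=\varepsilon_s(A)$.

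\textbf{Main obstacle.} The delicate step is not the algebra but the bookkeeping of multipliers. One must justify that $(id\o\varepsilon_t)$ applied to $\Gamma(x)(y\o 1)$ is a genuine element of $R\o A$ matching $P(xy\o 1)$, which requires the precise covering arguments of Remark \ref{cover-1}(3). One must also correctly interpret ``$Q(R\o 1)$'' through $V_1$, whose side conventions involve $A^{\text{op}}$. I would handle both by testing against $1\o a$ and cancelling, using nondegeneracy of $A$.
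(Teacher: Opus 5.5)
Your proof is correct. For the $P$-inclusions it matches the paper: the $R^2=R$ argument of Remark~\ref{important}(ii), using regularity of $W_4$ resp.\ $W_2$ and $\varepsilon'_t(A)=\varepsilon_t(A)$. For $Q$ and $U$ you push these inclusions through $id\o S$ and $id\o S^{-1}$ using $S(\varepsilon_t(A))=\varepsilon_s(A)=S^{-1}(\varepsilon_t(A))$, whereas the paper reruns the idempotence argument directly on $(rt\o 1)V_1=\sum rt_{(0)}\o \varepsilon_s(t_{(1)})$ with $W_2$ regular. Since $V_1=(id\o S)P$, this is the same computation in a different order. Your ``alternative'' route is exactly the paper's, but there you must get $Q(xy\o a)=(x\o 1)Q(y\o a)$ from $Q(r\o a)=(r\o 1)V_1(1\o a)$, not from $\Gamma(xy)$.
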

\begin{proof}
The results for $P$ have been explained in part (ii) of Remark \ref{important}. We have
$$(rt\o 1)V_1=(r\o 1)(t\o 1)V_1=\sum rt_{(0)}\o \varepsilon_s(r_{(1)}).$$
Since $W_2$ is regular, the above element belongs to $R\o \varepsilon_s(A)$. Because $R^2=R$, we get
$$(r\o 1)V_1=\sum r_{(0)}\o \varepsilon_s(r_{(1)})\in R\o \varepsilon_s(A).$$
This means $Q(R\o 1)\in R\o \varepsilon_s(A)$. Similar for $U$.
\end{proof}
 
Obviously, if $R$ has an identity, then we get $P\in R\o \varepsilon_t(A)$ and $Q, U\in R\o \varepsilon_s(A)$. Let $r=1$, then the item (4) of Proposition \ref{second} can be expressed as $P=(id\o \varepsilon_t)P$. 

In the end of this section, we consider the basic examples. The examples of groupoid algebra and its dual will discussed in Section \ref{groupoid-example}.

\begin{example}
Let $(A, \Delta, E)$ be a regular (weak) multiplier Hopf algebra. Then $A$ can be viewed as a regular $A$-comodule algebra with the coaction map $\Delta$. Note that our definition does not include the example of  non-regular (weak) multiplier Hopf algebra.
\end{example}

\begin{example}
Assume that $R$ is an $A$-comodule algebra with the counit property. If $A$ is a  multiplier Hopf algebra, then $R$ is a comodule algebra in the sense of Definition 2.1 in \cite{van1999galois}.  
\end{example}

\begin{example}
Assume that $R$ is an $A$-comodule algebra.	Let $A$ be a regular weak multiplier Hopf algebra with identity (now $A$ is a weak Hopf algebra), we get a comodule algebra in the sense of  Definition 2.1 in \cite{bohm2000doi} or Proposition 4.10 in \cite{caenepeel2000modules}. Now the comodule algebra is regular. 
\end{example}

\section{Coactions of weak algebraic quantum groups}
\subsection{A duality theorem for actions and coactions}
Let $\left\langle A, B \right\rangle$ be a weak multiplier Hopf algebra pairing, then we have four unital nondegenerate actions 
\begin{align*}
    A\rhd B: a\rhd b=\sum b_{(1)}\left\langle a, b_{(2)} \right\rangle,\\
    B\rhd A: b\rhd a=\sum a_{(1)}\left\langle a_{(2)}, b \right\rangle,\\
    A\lhd B: a\lhd b=\sum \left\langle a_{(1)}, b \right\rangle a_{(2)},\\
    B\lhd A: b\lhd a=\sum \left\langle a, b_{(1)} \right\rangle b_{(2)}.
\end{align*}
The four actions give us four module algebras. The pairing can be uniquely extended in the following way,
$$\left\langle m, b\lhd a \right\rangle= \left\langle am, b \right\rangle \text{\quad and \quad} 
\left\langle m, a\rhd b \right\rangle= \left\langle ma, b \right\rangle,$$
$$\left\langle a\lhd b, m \right\rangle= \left\langle a, bm \right\rangle \text{\quad and \quad}
\left\langle b\rhd a, m \right\rangle= \left\langle a, mb \right\rangle.$$
Then $\left\langle a, 1 \right\rangle$ and $\left\langle 1, b \right\rangle$ can be defined, and we have
$$\left\langle a, 1 \right\rangle=\varepsilon_A(a) \text{\quad and \quad} \left\langle 1, b \right\rangle=\varepsilon_B(b).$$
But the pairing can not be extended to $M(A)\times M(B)$ generally. For more information about the pairing of WMHA we refer to Section 3 of \cite{zhou2024algebraic} or \cite{tvw2022pairing}.

Let $\left\langle A, B \right\rangle$ be a WMHA pairing. Assume that $(R,\Gamma)$ is a right $A$-comodule algebra. Note that $(id\o \left\langle \cdot, b \right\rangle)\Gamma(r)$ is a multiplier in the natural way. Since the action $A\rhd B$ is unital, we denote $b=\sum a'_i\rhd b'_i$, then 
\begin{equation}\label{eq1}
(id\o \left\langle \cdot, b \right\rangle)\Gamma(r)=\sum (id\o \left\langle \cdot, b'_i \right\rangle)(\Gamma(r)(1\o a'_i))\in R.
\end{equation}
So we can define the action of $B$ on $R$ by 
\begin{equation}\label{co-action}
b\rhd r=(id\o \left\langle \cdot, b \right\rangle)\Gamma(r).
\end{equation}
And we will use the Sweedler notation $b \rhd r=\sum r_{(0)}\left\langle r_{(1)}, b\right\rangle.$

\begin{proposition}
As multipliers, we have $\Gamma(b\rhd r)=(id\o b\rhd \cdot)\Gamma(r)$.
\end{proposition}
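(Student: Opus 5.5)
We have to show, as multipliers of $R\o A$, that $\Gamma(b\rhd r)=(id\o b\rhd\cdot)\Gamma(r)$, where $b\rhd a=\sum a_{(1)}\langle a_{(2)},b\rangle$ is the action of $B$ on $A$. The strategy is to multiply both sides on the right by $1\o a$ with $a\in A$ (or by a general element $t\o a$, using $\Gamma(R)(1\o A)=P(R\o A)$). We then reduce both sides to the same expression coming from the coassociativity $(\Gamma\o id)\Gamma(r)=(id\o\Delta)\Gamma(r)$ of Proposition \ref{2.8}, paired in the third leg against $b$. Since $R\o A$ is nondegenerate, equality after multiplication by every $1\o a$ (or $t\o a$) gives equality of multipliers.

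First we make both sides meaningful. Write $b=\sum a'_i\rhd b'_i$, as in \eqref{eq1}. Then
$$b\rhd r=\sum(id\o\langle\cdot,b'_i\rangle)(\Gamma(r)(1\o a'_i))\in R,$$
and hence
$$\Gamma(b\rhd r)(1\o a)=\sum (id\o id\o\langle\cdot,b'_i\rangle)\big((\Gamma\o id)(\Gamma(r)(1\o a'_i))(1\o a\o 1)\big).$$
By the coassociativity \eqref{coass1}, the inner term equals $((id\o\Delta)\Gamma(r))(1\o a\o a'_i)$. We can therefore rewrite the expression as $\sum r_{(0)}\o r_{(1)}a\,\langle r_{(2)}a'_i,b'_i\rangle$. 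Because $\langle xa',b'\rangle=\langle x,a'\rhd b'\rangle$, this is formally $\sum r_{(0)}\o r_{(1)}a\langle r_{(2)},b\rangle$.

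For the right side, $(id\o b\rhd\cdot)\Gamma(r)$ is interpreted by covering. We use $(b\rhd x)a$ together with the module algebra extension to $M(A)$, or more concretely we apply $(id\o b\rhd\cdot)$ to $\Gamma(r)(1\o a')$ with $a'$ chosen so that the result can be expressed via $1\o a$. Writing $(b\rhd x)a$, the natural route is to use the relation
$$(id\o b\rhd\cdot)(\Gamma(r))(1\o a)=\sum r_{(0)}\o r_{(1)}\langle r_{(2)},b\rangle\, a.$$
Here the third leg $r_{(2)}$ must be covered. We arrange this by inserting $E$: we write $(1\o a\o 1)$ through $E(a\o a'')=\sum\Delta(p_j)(1\o q_j)$, exactly as in Remark \ref{cover-1}(1), and use $\Delta(x)E=\Delta(x)$.

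The main obstacle is this covering and Sweedler bookkeeping. The expression $\langle r_{(2)},b\rangle$ is only legitimate after writing $b=\sum a'_i\rhd b'_i$, which covers $r_{(2)}$ by $a'_i$. We must check that the two ways of covering, $a$ in the second leg and $a'_i$ in the third, are compatible. This requires identifying
$$(id\o\Delta)\Gamma(r)(1\o a\o a'_i)=\sum r_{(0)}\o (r_{(1)}a_j)_{(1)}\o(r_{(1)}a_j)_{(2)}b_j,\qquad E(a\o a'_i)=\sum\Delta(a_j)(1\o b_j),$$
and then using $(id\o\langle\cdot,b'\rangle)(\Delta(x)(1\o y))=(x\,\cdot)\rhd$-type identities for the pairing. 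Concretely, $\sum x_{(1)}\langle x_{(2)}y,b'\rangle=b''\rhd x$ with $b''=y\rhd b'$, which holds by the definition of the action $B\rhd A$ and the unital extension of the pairing.

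Once this identity is checked, both sides agree after right multiplication by $1\o a$ for all $a$. A similar argument gives agreement after left multiplication, using regularity of $W_3$, and the equality of multipliers follows.
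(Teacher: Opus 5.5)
Your argument is sound and uses the same ingredients as the paper's proof: coassociativity, pairing the third leg with $b$, and cancelling an auxiliary factor. The difference is that you cover on the other side, and that changes where the work lies.

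The paper multiplies on the right by $t\o 1$ with $t\in R$. Since $b\rhd\cdot$ acts only on the second leg, it commutes with this multiplication. So $((id\o b\rhd\cdot)\Gamma(r))(t\o 1)$ is simply $(id\o b\rhd\cdot)(\Gamma(r)(t\o 1))$, and the whole proof is the line $\Gamma(b\rhd r)(t\o 1)=\sum r_{(0)}t\o r_{(1)}\langle r_{(2)},b\rangle=\sum r_{(0)}t\o b\rhd r_{(1)}$, followed by cancelling $t$. The price is that this tacitly needs $\Gamma(R)(R\o 1)\subseteq R\o A$, i.e.\ regularity of $W_4$. That is not part of the semi-regular definition of a comodule algebra.

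Your choice of $1\o a$ makes the left-hand side rigorous from \eqref{coass1} and regularity of $W_1$ alone, which is a genuine gain. But it is exactly what creates the obstacle you describe. Since $b\rhd(xa)\neq(b\rhd x)a$, you must first give the right-hand side $((id\o b\rhd\cdot)\Gamma(r))(1\o a)$ a meaning, for instance through \eqref{right-r} with $B$ acting on the second leg of $R\o A$. You then have to match it with your left side. Writing $E(a\o a'_i)=\sum_j\Delta(p_{ij})(1\o q_{ij})$, your $E$-insertion turns the left side into $\sum_{i,j}(id\o (q_{ij}\rhd b'_i)\rhd\cdot)(\Gamma(r)(1\o p_{ij}))$. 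This matching does go through. It uses $\Delta(x)E=\Delta(x)$ and, if you go via \eqref{right-r}, the usual interplay of $\varepsilon_t$ and $E$ with the pairing. However, your proposal only announces this step rather than carrying it out.

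Finally, the extra check by left multiplication using $W_3$ is unnecessary. Agreement on all $1\o a$ already gives agreement on all $t\o a$, since $X(t\o a)=X(1\o a)(t\o 1)$, and that determines the multiplier.
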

\begin{proof}
For any $t\in R$, we have
$$\Gamma(b\rhd r)(t\o 1)=\sum r_{(0)}t\o r_{(1)}\left\langle r_{(2)}, b\right\rangle=\sum r_{(0)}t\o b\rhd r_{(1)}.$$
Cancel $t$, then we get the equation.
\end{proof}

\begin{proposition}
If $\Gamma$ is left full, then the equation \eqref{co-action} gives a unital action. Moreover, if $\Gamma$ is right full, then the action is faithful.
\end{proposition}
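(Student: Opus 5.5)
We have to show three things: that the formula \eqref{co-action} defines a module structure, that it is unital ($B\rhd R=R$) when $\Gamma$ is left full, and that it is faithful (if $b\rhd r=0$ for all $r$ then $b=0$) when $\Gamma$ is moreover right full.

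\textbf{Module property.} We first check $(bb')\rhd r=b\rhd(b'\rhd r)$.
\begin{itemize}
\item By the preceding proposition, $\Gamma(b'\rhd r)=(id\o b'\rhd\cdot)\Gamma(r)$.
\item Hence
$$b\rhd(b'\rhd r)=(id\o\langle\cdot,b\rangle)(id\o b'\rhd\cdot)\Gamma(r)=\sum r_{(0)}\langle r_{(1)},b'\rangle\langle r_{(2)},b\rangle .$$
\item The pairing identity $\langle a,bb'\rangle=\sum\langle a_{(1)},b\rangle\langle a_{(2)},b'\rangle$ (with the order fixed by the conventions of the pairing) then gives $(bb')\rhd r$.
\end{itemize}
To keep this rigorous, we cover $r_{(1)}$ exactly as in \eqref{eq1}. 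We write $b=\sum a'_i\rhd b'_i$ and work with the element $\Gamma(r)(1\o a'_i)\in R\o A$. Coassociativity \eqref{coass1} then lets us replace $(id\o\Delta)\Gamma(r)(1\o1\o a)$ by $(\Gamma\o id)(\Gamma(r)(1\o a))$, and the identity follows by applying functionals leg by leg.

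\textbf{Unitality.} Assume $\Gamma$ is left full, so $R$ is spanned by the elements $(id\o\omega)(\Gamma(r)(1\o a))$ with $\omega\in A'$. We must show each of these lies in $B\rhd R$. The idea is to replace the arbitrary $\omega$ by functionals of the form $\langle\cdot,b\rangle$, as follows.
\begin{itemize}
\item Use local units of $A$ together with the nondegeneracy of the pairing.
\item Since the action $B\rhd A$ is unital, we can write $a=\sum b_j\rhd a_j$, that is, $a=\sum a_{j(1)}\langle a_{j(2)},b_j\rangle$.
\item Then
$$(id\o\omega)(\Gamma(r)(1\o a))=\sum(id\o\omega)\bigl(\Gamma(r)(1\o a_{j(1)})\bigr)\langle a_{j(2)},b_j\rangle .$$
\item Using coassociativity \eqref{coass1} we rewrite $\sum r_{(0)}\o r_{(1)}a_{j(1)}\o r_{(2)}a_{j(2)}$ (with $E$ inserted), so as to move the $b_j$ onto the last leg.
\end{itemize}
The \emph{main obstacle} is the presence of $E$: the right-hand side of \eqref{coass1} involves $(id\o\Delta)$, which produces the factor $1\o E$. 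We plan to handle this via Assumption~I and the identity $\mathcal H=(1\o E)(P\o1)$ of the comodule algebra definition, and to use the relation between the actions $A\rhd B$ and $B\rhd A$ to absorb the $\varepsilon_t$-type factors. Should this route get stuck, a cleaner alternative is to apply the counit: the relation $\sum\langle\cdot,\,b\rangle$-type expressions with local units $e\in B$ act as identity on a given finite set, reducing to showing $e\rhd r=r$ for suitable $e$, which follows from $P(r\o1)=(id\o\varepsilon_t)\Gamma(r)$ and $\langle\varepsilon_t(a),e\rangle$-computations.

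\textbf{Faithfulness.} Suppose $b\rhd r=0$ for all $r$. Then for all $\eta\in R'$ and $r,r'\in R$,
$$0=\eta(r'(b\rhd r))=\bigl\langle(\eta\o id)((r'\o1)\Gamma(r)),\,b\bigr\rangle .$$
When $\Gamma$ is right full, the elements $(\eta\o id)((r'\o1)\Gamma(r))$ span $A$ (this is $V_2=A$). Hence $\langle A,b\rangle=0$, and nondegeneracy of the pairing gives $b=0$. If $W_2$ is not known to be regular, we instead use $W_4$, or we cover with $a'_i$ as in \eqref{eq1} and use the right leg through the unital action $A\rhd B$.
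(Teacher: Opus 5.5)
\textbf{The gap is in unitality.} You never actually show that $(id\o\omega)(\Gamma(r)(1\o a))\in B\rhd R$. The route through $a=\sum b_j\rhd a_j$, coassociativity, $E$ and $\mathcal{H}=(1\o E)(P\o 1)$ cannot succeed. It only reshuffles the $A$-legs, while the arbitrary functional $\omega\in A'$ on the middle leg remains untouched, so you never reach an expression of the form $(id\o\langle\cdot,b\rangle)\Gamma(r')$. Your fallback, reducing to $e\rhd r=r$, is essentially the statement to be proved. It also leans on the second comodule-algebra axiom, whereas the hypothesis that should do the work here is left fullness. The $E$-obstacle you identify is self-inflicted: none of coassociativity, $E$ or $P$ is needed.

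\textbf{The missing idea} is to trade $\omega$ for a pairing with an element of $B$, using nondegeneracy of the pairing together with the covering identity \eqref{eq1}, i.e.\ $\langle m,a\rhd b\rangle=\langle ma,b\rangle$. The paper does this by duality. Suppose $\omega\in R'$ vanishes on $B\rhd R$. Then for all $r\in R$, $a\in A$, $b\in B$,
\[
0=\omega\bigl((a\rhd b)\rhd r\bigr)=\bigl\langle(\omega\o id)(\Gamma(r)(1\o a)),\,b\bigr\rangle .
\]
By nondegeneracy, $(\omega\o id)(\Gamma(r)(1\o a))=0$. Hence $\omega$ kills every $(id\o\nu)(\Gamma(r)(1\o a))$ with $\nu\in A'$, and left fullness gives $\omega=0$.

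Equivalently, you can argue directly. Write $\Gamma(r)(1\o a)=\sum_k u_k\o v_k$. Nondegeneracy makes the restriction map $B\to(\mathrm{span}\{v_k\})'$ onto, so there is $b\in B$ with $\langle v_k,b\rangle=\omega(v_k)$ for all $k$. Then $(id\o\omega)(\Gamma(r)(1\o a))=(a\rhd b)\rhd r$.

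Your faithfulness argument is the paper's. The paper uses $\Gamma(r)(t\o 1)$, i.e.\ $W_4$, where you use $W_2$, which is symmetric. Your module-property argument also follows the paper's, but it has an order slip. In fact
\[
(id\o\langle\cdot,b\rangle)(id\o b'\rhd\cdot)\Gamma(r)=\sum r_{(0)}\langle r_{(1)},b\rangle\langle r_{(2)},b'\rangle,
\]
with $b$ and $b'$ the other way round from what you wrote. With your order, the pairing identity you quote would yield $(b'b)\rhd r$ rather than $(bb')\rhd r$. The paper writes this step as $\langle b'\rhd a,b\rangle=\langle a,bb'\rangle$.
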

\begin{proof}
	For any $b, b'\in B, r\in R$, we have
	\begin{align*}
	b\rhd (b'\rhd r) 
	&= (id\o \left\langle \cdot, b \right\rangle)\Gamma(b'\rhd r)\\
	&=(id\o \left\langle \cdot, b \right\rangle)(id\o b'\rhd \cdot)\Gamma(r)\\
	&=(id\o \left\langle b'\rhd\cdot, b \right\rangle) \Gamma(r)\\
	&=  (id\o \left\langle \cdot, bb' \right\rangle) \Gamma(r)\\
	&=(bb')\rhd r.
	\end{align*}
	
Next let $\omega$ be a non-zero linear functional on $R$ and assume that it is 0 on $B\rhd R$. For any $r\in R, a, b\in A$, we have $\sum \left\langle \omega(r_{(0)})r_{(1)}a, b \right\rangle=0$. So $\sum \omega(r_{(0)})r_{(1)}a=0$ for all $a$. This means that $(\omega\o id)\Gamma(r)=0$ in $M(R)$ for all $r$. Because $\Gamma$ is left full, we get $\omega=0$. So we have a unital action.

Assume that $b\rhd r=0$ for all $r\in R$. Then $\sum r_{(0)}t\left\langle r_{(1)}, b\right\rangle=0$ for all $r, t\in R.$ Since $\Gamma$ is right full, we have $\left\langle a, b\right\rangle=0$ for all $a\in A$. Because the pairing is non-degenerate, we get $b=0$. 
\end{proof}

\begin{proposition}\label{main-1}
With the notation used above. If $R$ is a left-full right $A$-comodule algebra, then $R$ is a left $B$-module algebra with the action defined by equation \eqref{co-action}.
\end{proposition}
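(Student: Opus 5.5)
By the preceding proposition, left fullness of $\Gamma$ already makes \eqref{co-action} a unital left $B$-module structure on $R$. Checking that it is associative, $b\rhd(b'\rhd r)=(bb')\rhd r$, uses $\Gamma(b'\rhd r)=(id\o b'\rhd\cdot)\Gamma(r)$ and the identity $\langle b'\rhd a, b\rangle = \langle a, bb'\rangle$ for the action $A\rhd B$. What remains is the module algebra condition \eqref{modalg}:
$$b\rhd (ru)=\sum (b_{(1)}\rhd r)(b_{(2)}\rhd u), \qquad r,u\in R,\ b\in B.$$
The plan is to compute both sides through $\Gamma$, using that $\Gamma$ is a homomorphism and that the pairing turns multiplication in $A$ into the coproduct of $B$: $\langle aa',b\rangle=\sum\langle a,b_{(1)}\rangle\langle a',b_{(2)}\rangle$.

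\textbf{Left-hand side.} Write $b=\sum a_i\rhd b_i$, as in \eqref{eq1}. Then
$$b\rhd(ru)=\sum (id\o\langle\cdot,b_i\rangle)\bigl(\Gamma(r)\Gamma(u)(1\o a_i)\bigr).$$
Since $W_1$ is regular, $\Gamma(u)(1\o a_i)=\sum u_{(0)}\o u_{(1)}a_i\in R\o A$. Because $A$ has local units, I choose $e\in A$ with $e\,u_{(1)}a_i=u_{(1)}a_i$ for the finitely many legs involved. This gives
$$\Gamma(r)\Gamma(u)(1\o a_i)=\sum \Gamma(r)(1\o e)\,(u_{(0)}\o u_{(1)}a_i)=\sum r_{(0)}u_{(0)}\o r_{(1)}e\,u_{(1)}a_i,$$
and every term here lies in $R\o A$.

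\textbf{Right-hand side.} Applying the pairing to the second leg and using the multiplicativity formula above, the expression becomes $\sum r_{(0)}u_{(0)}\langle r_{(1)}e,\,b_{i(1)}\rangle\langle u_{(1)}a_i,\,b_{i(2)}\rangle$. The task is to show this equals $\sum (b_{(1)}\rhd r)(b_{(2)}\rhd u)$.

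\textbf{Main obstacle.} The difficulty is one of bookkeeping: the Sweedler legs must be legitimately covered, and $\Delta(b)$ must be expressed through the decomposition $b=\sum a_i\rhd b_i$. I would do the following.
\begin{enumerate}[(1)]
\item Use the fact that $\Delta_B$ is dual to the product of $A$ via the extended pairing. Concretely, $\Delta(a\rhd b')=(1\o a\rhd\cdot)\Delta(b')$, so the factor $a_i$ transfers to the second leg. This identifies the right-hand side with $\sum (b_{i(1)}\rhd r)\bigl(b_{i(2)}\rhd\text{-with-}a_i\bigr)u$.
\item Handle the local unit $e$ with the same trick: $\Gamma(r)(1\o e)$ represents $\Gamma(r)$ when paired against elements of $B$ whose coverage is absorbed by $e$. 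Since the smash-type products $(b_{(1)}\rhd r)(b_{(2)}\rhd u)$ are defined via a covering by $(1\o b')$ and the action is unital, I may write $b_{(1)}=\sum a'_j\rhd b'_j$ and let $e$ be a local unit for the $a'_j$.
\end{enumerate}
If the direct computation becomes too messy, the fallback is to test both sides against arbitrary $t\in R$, that is, to compare $(t\o1)\Gamma(\cdot)$. This works whenever $W_2$ is regular. To finish, I would use nondegeneracy of $R$ together with the proposition $\Gamma(b\rhd r)=(id\o b\rhd\cdot)\Gamma(r)$, which reduces the required identity to the known module algebra property of $A$ acting on $B$.
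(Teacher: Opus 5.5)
\textbf{There is a gap: your main line never proves the module algebra identity.} It stops exactly where the content of the proposition lies. You reach $\sum r_{(0)}u_{(0)}\langle r_{(1)}e, b_{i(1)}\rangle\langle u_{(1)}a_i, b_{i(2)}\rangle$, but you still have to identify this with $\sum (b_{(1)}\rhd r)(b_{(2)}\rhd u)$. That right-hand side is itself only defined after a second covering: for instance $u=\sum_j c_j\rhd u_j$ and $\Delta(b)(1\o c_j)\in B\o B$, with each leg then rewritten through $A\rhd B$ so that \eqref{co-action} makes sense.

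Steps (1) and (2) do not carry this out:
\begin{itemize}
\item ``$b_{i(2)}\rhd$-with-$a_i$'' is not a defined expression.
\item The local unit $e$ in (2) must act as a left unit both for the legs $u_{(1)}a_i$ and for the $a'_j$ in a decomposition of $b_{(1)}$. You never fix the order of these choices or check that the resulting expressions agree.
\end{itemize}
So, as written, the main line does not establish \eqref{modalg}.

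\textbf{The repair is your fallback, which is essentially the paper's proof, but its final sentence names the wrong ingredient.} Multiply on the left by $t\in R$ and use regularity of $W_2$. The paper also uses this tacitly, even though the definition of comodule algebra only asks for semi-regularity. Write $(t\o 1)\Gamma(r)=\sum_k y_k\o z_k$ and $(y_k\o 1)\Gamma(u)=\sum_l y_{kl}\o z_{kl}$. Since $\Gamma$ is a homomorphism, $(t\o 1)\Gamma(ru)=\sum_{k,l}y_{kl}\o z_kz_{kl}$ is an honest element of $R\o A$. By \eqref{eq1}, $t(b\rhd x)=(id\o\langle\cdot,b\rangle)((t\o 1)\Gamma(x))$.

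Now use $\langle z_kz_{kl},b\rangle=\langle z_{kl}, b\lhd z_k\rangle$, where $b\lhd z_k=\sum\langle z_k,b_{(1)}\rangle b_{(2)}\in B$. This gives
\[ t(b\rhd(ru))=\sum_k y_k\bigl((b\lhd z_k)\rhd u\bigr)=t\sum(b_{(1)}\rhd r)(b_{(2)}\rhd u), \]
and you cancel $t$ by nondegeneracy of $R$.

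Only the duality between the product of $A$ and the coproduct of $B$ is used. Neither $\Gamma(b\rhd r)=(id\o b\rhd\cdot)\Gamma(r)$ nor the module algebra property of $A\rhd B$ enters. The latter is the wrong duality, since it pairs $\Delta_A$ with the product of $B$. If you instead meant to compare $(t\o 1)\Gamma$ of the two sides, you could only conclude with injectivity of $\Gamma$ (the counit property), which is not among the hypotheses.
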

\begin{proof}	
Note the expression $\Delta(b)\rhd (r\o r')$ makes sense since we have a unital action. For any $u\in R, r,r'\in R$, we have
\begin{align*}
	u(b\rhd (rr') )&=\sum ur_{(0)}r'_{(0)} \left\langle r_{(1)}r'_{(1)}, b \right\rangle\\
	&=\sum ur_{(0)}r'_{(0)}\left\langle r_{(1)}, b_{(1)} \right\rangle \left\langle r'_{(1)}, b_{(2)} \right\rangle\\
	&=\sum u(b_{(1)}\rhd r)(b_{(2)}\rhd r').
\end{align*}
In the second equality, $r_{(1)}$ and $r'_{(1)}$ are covered by $u$, then $b_{(1)}$ and $b_{(2)}$ are well covered. By the non-degeneracy, we can cancel $u$. So $R$ is a left $B$-module algebra.  
\end{proof}

Assume that $R$ is a left $B$-module algebra. We want to show that the equation \eqref{co-action} gives us a right $A$-comodule algebra structure. This is more complicated and we need the help of integrals. Therefore, we now consider regular weak multiplier Hopf algebras with enough integrals, i.e., weak algebraic quantum groups.

Let $A$ be a regular WMHA. A non-zero linear functional $\varphi: A\mapsto \mathbb{C}$ is called \textit{left integral} if $(id\o \varphi)\Delta(a)\in A_t$ for all $a\in A$. 
The set of left integrals of $A$ is denoted by $\int_l$. The set $\int_l$ is called a \textit{faithful set of integrals} if $\varphi(xa)=0 (\text{or }\varphi(ax)=0)$ for all $\varphi\in \int_l$ and $a\in A$, then $x=0$. Similarly we can defined the right integrals and the set of right integrals of $A$ is denoted by $\int_r$. A \textit{weak algebraic quantum group} is a a regular WMHA with a faithful set of integrals. The dual weak algebraic quantum group $\widehat{A}$ is the linear span of functionals of one of the following four types
$$\varphi(\cdot a), \, \varphi(a\cdot), \, \psi(\cdot a), \, \psi(a\cdot),$$  
for any $\varphi\in \int_l, \psi\in \int_r$ and $a\in A$. Recall that for any $a, b\in A$ and $\varphi,\psi\in A'$, the non-zero linear functional $\varphi$ is left invariant if and only if
\begin{equation}\label{integral-left}
S((id\o \varphi)(\Delta(a)(1\o b)))=(id\o \varphi)((1\o a)\Delta(b)).
\end{equation}
Similarly, $\psi$ is right invariant if and only if
\begin{equation}\label{integral-right}
S((\psi\o id)((a\o 1)\Delta(b)))=(\psi\o id)(\Delta(a)(b\o 1)).
\end{equation}
The counit on $\widehat{A}$ is given by $\varepsilon(\omega)=\omega(1)$ for any $\omega\in \widehat{A}$.

Let $A$ be a weak algebraic quantum group. $R$ is a left $A$-module algebra.
For any $r\in R$ and $ \varphi(\cdot a), \psi(\cdot b)\in \widehat{A}$, here $\varphi$ and $\psi$ are the  left and right  integrals on $A$, respectively. It is possible to define  $\Gamma: R\rightarrow M(R\o \widehat{A})$ by 
\begin{equation}\label{gamma-left}
\Gamma(r)(1\o \varphi(\cdot a))=\sum S^{-1}(a_{(1)})\rhd r\o \varphi(\cdot a_{(2)}),
\end{equation}
\begin{equation}\label{gamma-right}
(1\o \psi(\cdot b))\Gamma(r)=\sum S(b_{(2)})\rhd r\o \psi(\cdot b_{(1)}).
\end{equation}
Since the action is unital, $a_{(1)}$ and $b_{(2)}$ can be covered by $r$, so the right sides of above equations are elements in $R\o \widehat{A}$.  

\begin{proposition}\label{2-1}
For any $r\in R$, we have $\Gamma(r)\in M(R\o \widehat{A})$.
\end{proposition}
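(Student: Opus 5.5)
To prove Proposition \ref{2-1}, I define a right multiplier $\rho$ and a left multiplier $\lambda$ on $R\o \widehat{A}$ and then check the compatibility condition that makes the pair $(\lambda,\rho)$ a multiplier. Every element of $\widehat{A}$ can be written as $\varphi(\cdot a)$ with $\varphi\in\int_l$ and also as $\psi(\cdot b)$ with $\psi\in\int_r$; this is the standard fact for weak algebraic quantum groups from \cite{zhou2024algebraic}. Formula \eqref{gamma-left} therefore defines a linear map
$$\lambda: R\o\widehat{A}\to R\o\widehat{A},\qquad t\o \varphi(\cdot a)\mapsto \sum (S^{-1}(a_{(1)})\rhd r)t\o \varphi(\cdot a_{(2)}),$$
and formula \eqref{gamma-right} defines
$$\rho: t\o\psi(\cdot b)\mapsto \sum t(S(b_{(2)})\rhd r)\o\psi(\cdot b_{(1)}).$$

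\textbf{Step 1: well-definedness.} The map $a\mapsto \varphi(\cdot a)$ need not be injective, and the same functional can arise from different pairs $(\varphi,a)$. I would rewrite the right-hand side of \eqref{gamma-left} intrinsically. Pair the second leg with an arbitrary $x\in A$ and use \eqref{integral-left}. This gives
$$\sum S^{-1}(a_{(1)})\varphi(xa_{(2)})=S^{-1}\bigl((id\o\varphi)((1\o x)\Delta(a))\bigr)\ \text{-type expression}=(id\o\varphi)(\Delta(x)(1\o a)),$$
so the second leg is just $\omega=\varphi(\cdot a)$ applied after the coproduct. Concretely, $\langle x,\cdot\rangle$ of the right side equals $\sum x_{(1)}\rhd r\,\omega(x_{(2)})$, suitably covered. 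Hence \eqref{gamma-left} reads
$$(\mathrm{id}\o\mathrm{ev}_x)\bigl(\Gamma(r)(1\o\omega)\bigr)=\sum (x_{(1)}\rhd r)\,\omega(x_{(2)}),$$
which depends only on $\omega$. Since elements of $A$ separate $\widehat{A}$, the map $\lambda$ is well-defined. The analogous computation with \eqref{integral-right} shows that \eqref{gamma-right} gives $\sum \omega(x_{(1)})(x_{(2)}\rhd r)$, which also depends only on $\omega$. Unitality of the action ensures that all these sums are finite.

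\textbf{Step 2: module maps.} Next I verify that $\lambda((1\o\omega)(1\o\omega'))=\lambda(1\o\omega)(1\o\omega')$ and the mirror statement for $\rho$. Using the intrinsic description, this reduces to the product in $\widehat{A}$ being dual to $\Delta$ together with the coassociativity of $\Delta$ on $A$. The statements involving the $R$-leg are trivial.

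\textbf{Step 3: compatibility (main obstacle).} The key step is to show
$$(t\o\omega)\,\lambda(t'\o\omega')=\rho(t\o\omega)\,(t'\o\omega').$$
I would evaluate both sides against $x\in A$ in the second leg, using $\langle x,\omega\omega'\rangle=\omega(x_{(1)})\omega'(x_{(2)})$. The left side becomes
$$\sum t\,(x_{(2)}\rhd r)\,t'\,\omega(x_{(1)})\,\omega'(x_{(3)}),$$
and the right side gives the same expression. The difficulty lies in the coverings: $\Delta(x)$ is not in $A\o A$, and the third leg must be covered by $\omega'$. I would justify this by writing $\omega'=\varphi(\cdot a)$ and $\omega=\psi(\cdot b)$, so that $x_{(1)}$ is covered through $\psi(\cdot b)$ and $x_{(3)}$ through $\varphi(\cdot a)$. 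These are exactly the manipulations already used for the pairing $\langle A,\widehat{A}\rangle$ in \cite{zhou2024algebraic}. Finally, nondegeneracy of the pairing lets me cancel $x$, which gives the equality and hence $\Gamma(r)\in M(R\o\widehat{A})$.
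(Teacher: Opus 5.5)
Your proposal is correct and follows essentially the paper's route. Like the paper, you check the left-multiplier, right-multiplier and compatibility conditions by pairing the $\widehat{A}$-leg with points $x\in A$, and you use \eqref{integral-left} and \eqref{integral-right} to reduce both sides to the same expressions (e.g. $\sum \psi(x_{(1)}b)\,x_{(2)}\,\varphi(x_{(3)}a)$) that appear in the paper's computation. The differences are only organizational: you first isolate the intrinsic formula $(id\o\langle x,\cdot\rangle)(\Gamma(r)(1\o\omega))=\sum (x_{(1)}\rhd r)\,\omega(x_{(2)})$, which the paper only remarks on after its proof, and this lets you make explicit that \eqref{gamma-left}--\eqref{gamma-right} do not depend on the chosen representation $\omega=\varphi(\cdot a)$, a point the paper leaves implicit.
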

\begin{proof}
Take any $\varphi_1(\cdot a), \varphi_2(\cdot b)\in \widehat{A}, a,b\in A$, we have to show that 
$$\Gamma(r)((1\o \varphi_1(\cdot a))(1\o \varphi_2(\cdot b)))=(\Gamma(r)(1\o \varphi_1(\cdot a)))(1\o \varphi_2(\cdot b)).$$
Recall the product in $\widehat{A}$, we have 
	\begin{equation}\label{varphi1varphi2}
	\varphi_1(\cdot a)\varphi_2(\cdot b)=\varphi_2(\cdot p),\, p=\sum b_{(2)}\varphi_1(S^{-1}(b_{(1)})a).
	\end{equation}
It is equivalent to show that 
	$$\sum S^{-1}(p_{(1)})\varphi_2(cp_{(2)})=\sum S^{-1}(a_{(1)})\varphi_1(c_{(1)}a_{(2)})\varphi_2(c_{(2)}b)$$
for any $c\in A$. Note that the expression is meaningful since $T_1$ and $T_3$ are regular. Let us start from the left side, and we have
	\begin{align*}
	LHS
	&= \sum S^{-1}(b_{(2)})\varphi_2(cb_{(3)})\varphi_1(S^{-1}(b_{(1)})a)\\
	&= \sum c_{(1)}\varphi_2(c_{(2)}b_{(2)})\varphi_1(S^{-1}(b_{(1)})a) \tag*{by equation \eqref{integral-left}}\\
	&=\sum c_{(1)}\varphi_2(c_{(3)}b)\varphi_1(c_{(2)}a) \tag*{by equation \eqref{integral-left}}\\
	&= RHS. \tag*{by equation \eqref{integral-left}}
	\end{align*}
Everything is well covered here. We have shown that $\Gamma(r)$ is a left multiplier. Similarly we can show that $\Gamma(r)$ is a right multiplier. Next we need to show that 
$$((1\o \psi(\cdot a'))\Gamma(r))(1\o \varphi(\cdot a))=(1\o \psi(\cdot a'))(\Gamma(r)(1\o \varphi(\cdot a)).$$
It is equivalent to show that 
$$\sum \varphi(c_{(2)}a)\psi(c_{(1)}a'_{(1)})S(a'_{(2)})=\sum \psi(c_{(1)}a')\varphi(c_{(2)}a_{(2)})S^{-1}(a_{(1)})$$
for any $c\in A$. We have
	\begin{align*}
	\sum \varphi(c_{(2)}a)\psi(c_{(1)}a'_{(1)})S(a'_{(2)})
	&= \sum \varphi(c_{(3)}a)\psi(c_{(1)}a')c_{(2)} \tag*{by equation \eqref{integral-right}}\\
	&= \sum \psi(c_{(1)}a')\varphi(c_{(2)}a_{(2)})S^{-1}(a_{(1)}). \tag*{by equation \eqref{integral-left}}
	\end{align*}
So we prove that $\Gamma(r)\in M(R\o \widehat{A})$. 
\end{proof}

In fact, the equations \eqref{gamma-left} and \eqref{gamma-right} are nothing else but the equation \eqref{co-action}. For instance, take any points $p$ of $A$ and use the Sweedler notation $\Gamma(r)=\sum r_{(0)}\o r_{(1)}$, then equation \eqref{gamma-left} means that
$$\sum r_{(0)} \left\langle p_{(1)}\varphi(p_{(2)}a), r_{(1)} \right\rangle =\sum p_{(1)}\varphi(p_{(2)}a)\rhd r.$$
Since $\{\sum a_{(1)}\varphi(ba_{(2)})|a,b\in A, \varphi\in \int_l\}$ spans $A$, so we get 
$$(id\o \left\langle a, \cdot \right\rangle)\Gamma(r)=\sum r_{(0)} \left\langle a, r_{(1)} \right\rangle = a\rhd r,$$ 
here $r_{(1)}$ is covered by $a$ through the pairing.

\begin{proposition}
$\Gamma$ is left full.
\end{proposition}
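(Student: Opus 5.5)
We have to show that the left leg $V_1$ of $\Gamma$, namely the span of the elements $(id\o \omega)(\Gamma(r)(1\o \varphi(\cdot a)))$ with $\omega\in \widehat{A}'$, $r\in R$, $\varphi\in\int_l$, $a\in A$, is all of $R$. By formula \eqref{gamma-left},
$$(id\o\omega)(\Gamma(r)(1\o\varphi(\cdot a)))=\sum \omega(\varphi(\cdot a_{(2)}))\, S^{-1}(a_{(1)})\rhd r .$$
So the plan is to pick functionals $\omega$ on $\widehat{A}$ that turn the right-hand side into $x\rhd r$ for an arbitrary $x\in A$. Since the action of $A$ on $R$ is unital, i.e. $A\rhd R=R$, this gives $V_1=R$.

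The natural choice is evaluation at a point $c\in A$, that is $\omega(f)=f(c)$. This gives
$$\sum \varphi(ca_{(2)})\, S^{-1}(a_{(1)})\rhd r .$$
Here $a_{(2)}$ is covered by $c$ through $T_3$, which is regular because $A$ is regular. It then suffices to show that the elements
$$\sum S^{-1}(a_{(1)})\varphi(ca_{(2)}),\qquad a,c\in A,\ \varphi\in\int_l,$$
span $A$.

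To see this, apply $S$: the span becomes $\{\sum a_{(1)}\varphi(ca_{(2)})\}$ up to the bijection $S$, and this is exactly the family the paper already noted spans $A$ just before the statement. Alternatively, one can use equation \eqref{integral-left} in the form $\sum S^{-1}(a_{(1)})\varphi(ca_{(2)})=\sum c_{(1)}\varphi(c_{(2)}a)$, as in the proof of Proposition \ref{2-1}. Then one invokes that the elements $(id\o\varphi)(\Delta(c)(1\o a))$ span $A$. This follows from fullness of $\Delta$ combined with faithfulness of the set $\int_l$: if a functional $\mu$ on $A$ kills all of them, then $\varphi((\mu\o id)(\Delta(c)(1\o 1))a)=0$ for all $\varphi$ and $a$, so the right leg vanishes, hence $\mu=0$.

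Finally, since $S^{-1}(A)=A$ (or via the second form above), every $x\in A$ is a finite sum of such elements. Hence every $x\rhd r$ lies in $V_1$, and by unitality $R=A\rhd R\subseteq V_1$.

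The main obstacle is the spanning step: justifying rigorously, via faithfulness of integrals and fullness (or regularity) of $\Delta$, that these integral-twisted coproduct legs span $A$, with the coverings made precise. The rest is bookkeeping with \eqref{gamma-left}.
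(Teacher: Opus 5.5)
Your proof is correct, but it takes a different route from the paper's. The paper slices the second leg with the counit of $\widehat{A}$ (evaluation at $1$) rather than with points $c\in A$, so that $(id\o\varepsilon)(\Gamma(r)(1\o\varphi(\cdot a)))=S^{-1}\big((id\o\varphi)\Delta(a)\big)\rhd r$ involves only base-algebra elements. It then uses two facts: the integral-theoretic statement that the elements $(id\o\varphi)\Delta(a)$, $\varphi\in\int_l$, exhaust a base algebra, and $\varepsilon_t(A)A=A$, so that the base algebra already acts unitally on $R$.

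You instead evaluate at arbitrary $c\in A$. The coefficients $\sum S^{-1}(a_{(1)})\varphi(ca_{(2)})=\sum c_{(1)}\varphi(c_{(2)}a)$ then range over all of $A$, and you need only $A\rhd R=R$. Your spanning lemma uses nothing beyond fullness of $\Delta$ and faithfulness of $\int_l$, and it is the same fact the paper uses to prove injectivity of $\Gamma$.

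Two small points of precision. First, with $m=(\mu\o id)\Delta(c)$ a left multiplier, apply faithfulness to the elements $ma\in A$: $\varphi((ma)b)=\varphi(m(ab))=0$ for all $b$ gives $ma=0$. Second, it is the left leg of $\Delta(A)(1\o A)$, not the right one, that fullness identifies with $A$, and this is what forces $\mu=0$.

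The paper's route is shorter and shows that the single functional $\varepsilon_{\widehat{A}}$ already recovers all of $R$, in the spirit of a counit property. However, it rests on a deeper description of the base algebras via integrals. It also needs some care as to whether $S^{-1}((id\o\varphi)\Delta(A))$ lies in the source or the target algebra: for left integrals $(id\o\varphi)\Delta(a)\in A_t$, whereas the paper writes $\varepsilon_s(A)$. Your version avoids both issues.
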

\begin{proof}
We have $(id\o \varepsilon)(\Gamma(r)(1\o \varphi(\cdot a)))=\sum S^{-1}(a_{(1)}\varphi(a_{(2)}))\rhd r$. Since $\{(id\o \varphi)\Delta(a)| a\in A,\varphi\in \int_l\}=\varepsilon_s(A)$ and $\varepsilon_t(A)A=A$, then elements of the form $\sum S^{-1}(a_{(1)}\varphi(a_{(2)}))\rhd r$ span $R$. So the left leg of $\Gamma$ is $R$.
\end{proof}

\begin{proposition}
$\Gamma$ is a homomorphism.
\end{proposition}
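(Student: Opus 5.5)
We have to show $\Gamma(rr')=\Gamma(r)\Gamma(r')$ in $M(R\o \widehat{A})$, where $\Gamma$ is the map given by \eqref{gamma-left} and \eqref{gamma-right} for a left $A$-module algebra $R$. Since $R\o\widehat{A}$ is nondegenerate, it is enough to compare the two sides after multiplying on the right by elements $1\o \varphi(\cdot a)$. Such elements, together with $R\o 1$ acting on the left, separate multipliers.

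The cleanest route is through the pairing. By the remark following Proposition \ref{2-1}, we have
$$(id\o \langle a,\cdot\rangle)\Gamma(r)=a\rhd r\quad\text{for all } a\in A .$$
So both sides of the claimed identity are determined once we pair the second leg, suitably covered, with an arbitrary $a\in A$. First, pairing $\Gamma(rr')(1\o \omega)$ with $a$ amounts to computing
$$\sum S^{-1}(x_{(1)})\rhd (rr')\,\langle a,\varphi(\cdot x_{(2)})\omega'\rangle .$$
Rather than expanding that sum directly, I would proceed as follows.

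First I fix $\omega=\varphi(\cdot x)$ and write
$$\Gamma(r')(1\o \varphi(\cdot x))=\sum_i r'_i\o \omega_i,\qquad r'_i=S^{-1}(x_{(1)})\rhd r',\quad \omega_i=\varphi(\cdot x_{(2)}).$$
Then $\Gamma(r)\Gamma(r')(1\o\omega)=\sum_i \Gamma(r)(1\o\omega_i)(r'_i\o 1)$. Here I use that $\Gamma(r)(1\o\omega_i)\in R\o\widehat{A}$ and that left multiplication by $r'_i\o 1$ on the first leg commutes with right multiplication in the second leg. Applying \eqref{gamma-left} again gives
$$\sum S^{-1}(x_{(2)})\rhd r\,\big(S^{-1}(x_{(1)})\rhd r'\big)\o \varphi(\cdot x_{(3)}).$$
On the other side, the module algebra property \eqref{modalg} together with the fact that $S^{-1}$ is an anti-coalgebra map, $\Delta(S^{-1}(y))=\tau(S^{-1}\o S^{-1})\Delta(y)$, gives
$$S^{-1}(x_{(1)})\rhd(rr')=\sum (S^{-1}(x_{(2)})\rhd r)(S^{-1}(x_{(1)})\rhd r').$$
Tensoring with $\varphi(\cdot x_{(3)})$, the two expressions coincide. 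Cancelling the elements $1\o\varphi(\cdot x)$ then yields $\Gamma(rr')=\Gamma(r)\Gamma(r')$ as left multipliers. The right-multiplier side follows in the same way from \eqref{gamma-right}, and in any case it is automatic once the left multipliers agree, since both sides lie in $M(R\o\widehat{A})$.

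The main obstacle is making the Sweedler manipulations legitimate. First, $x_{(1)},x_{(2)}$ must be covered: they are, because $R$ is a unital $A$-module, so elements of $R$ cover the legs acting on them. Second, I must justify the step $\Gamma(r)\big(\sum r'_i\o\omega_i\big)=\sum\Gamma(r)(1\o\omega_i)(r'_i\o1)$, which uses that $\Gamma(r)$ is a genuine two-sided multiplier (Proposition \ref{2-1}). Third, the elements $1\o\varphi(\cdot x)$, with $\varphi\in\int_l$ and $x\in A$, must be enough to cancel. They span $\widehat{A}$ as a right-covering family by \eqref{varphi1varphi2}, and $\widehat{A}$ is nondegenerate, so cancellation is allowed. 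I would also keep in mind that \eqref{modalg} is stated for unital module algebras, so no extension of the action to $M(R)$ is needed here.
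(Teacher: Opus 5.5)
Your proposal is correct and is the same computation as the paper's: expand $\Gamma(r)\Gamma(r')(1\o\varphi(\cdot a))$ by applying \eqref{gamma-left} twice, rewrite $\sum (S^{-1}(a_{(2)})\rhd r)(S^{-1}(a_{(1)})\rhd r')\o\varphi(\cdot a_{(3)})$ as $\sum S^{-1}(a_{(1)})\rhd (rr')\o\varphi(\cdot a_{(2)})$ using the module algebra property and the anti-coalgebra property of $S^{-1}$, and cancel $1\o\varphi(\cdot a)$. The opening detour through the pairing is never used, and the cancellation needs the functionals $\varphi(\cdot x)$ to span $\widehat{A}$; that fact comes from the duality theory of weak algebraic quantum groups, not from \eqref{varphi1varphi2}, which only gives closure under products, and the paper also relies on this spanning implicitly.
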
		
\begin{proof}
For any $r, r'\in R, a\in A, \varphi\in \int_l$, we compute
	\begin{align*}
	\Gamma(r)\Gamma(r')(1\o \varphi(\cdot a))
	&=\sum \Gamma(r)(S^{-1}(a_{(1)})\rhd r'\o \varphi(\cdot a_{(2)}))\\
	&=\sum  (S^{-1}(a_{(2)})\rhd r)(S^{-1}(a_{(1)})\rhd r')\o \varphi(\cdot a_{(3)})\\
	&=\sum S^{-1}(a_{(1)})\rhd (rr')\o \varphi(\cdot a_{(2)})\\
	&= \Gamma(rr')(1\o \varphi(\cdot a)).
	\end{align*}
Since the action $\rhd$ is unital, elements are well covered. In the third equality we use the definition of module algebra.
\end{proof}

\begin{proposition}
$\Gamma$ is injective.
\end{proposition}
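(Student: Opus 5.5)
Assume $\Gamma(r)=0$. The plan is to read off the action of $A$ on $r$ by pairing the second leg of $\Gamma(r)$ with elements of $A$, and then use that the action is unital and nondegenerate.

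\emph{Step 1.} Fix $a\in A$ and $\varphi\in\int_l$. By the defining formula \eqref{gamma-left},
$$0=\Gamma(r)(1\o \varphi(\cdot a))=\sum S^{-1}(a_{(1)})\rhd r\o \varphi(\cdot a_{(2)}),$$
and this is an element of $R\o\widehat{A}$.

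\emph{Step 2.} Evaluate the second leg at an element $c\in A$. This means applying $id\o\langle c,\cdot\rangle$, which is legitimate because we now have a genuine element of $R\o\widehat{A}$. It gives
$$\sum S^{-1}(a_{(1)})\varphi(ca_{(2)})\rhd r=0 \quad\text{for all } a,c\in A,\ \varphi\in\int_l.$$
The element acting on $r$ is $(id\o\varphi)((1\o c)\Delta(a))$ with $S^{-1}$ applied to the first leg. By the left invariance formula \eqref{integral-left}, applied with $S^{-1}$, this equals
$$\sum c_{(1)}\varphi(c_{(2)}a).$$
Hence $\sum c_{(1)}\varphi(c_{(2)}a)\rhd r=0$ for all $a,c$ and all $\varphi\in\int_l$.

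\emph{Step 3.} Use the spanning fact recalled just before this proposition: elements of the form $\sum c_{(1)}\varphi(c_{(2)}a)$ span $A$. This holds because the integrals are faithful and $\widehat{A}$ is a nondegenerate dual. The relation from Step 2 is linear in the acting element, so it extends to $x\rhd r=0$ for all $x\in A$.

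\emph{Step 4.} Since the action of $A$ on $R$ is unital and $A$ has local units, the module is nondegenerate, as noted in the subsection on module algebras. Therefore $x\rhd r=0$ for all $x$ forces $r=0$.

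The main obstacle is Step 2. We must ensure that the invariance identity is applied with the Sweedler legs properly covered; here $a_{(1)}$ is covered through $r$ by unitality of the action. We must also get the spanning set exactly right: if the identity produces $(id\o\varphi)(\Delta(c)(1\o a))$ instead, we use its regular counterpart, which spans $A$ equally by fullness of $\Delta$ and faithfulness of $\int_l$.
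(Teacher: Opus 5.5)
Your proof is correct and follows the paper's argument. You evaluate $\Gamma(r)(1\o\varphi(\cdot a))=0$ at points of $A$, observe that the resulting elements acting on $r$ span $A$, and conclude $r=0$ from nondegeneracy of the unital action. The only difference is cosmetic: you first rewrite $\sum S^{-1}(a_{(1)})\varphi(ca_{(2)})$ as $\sum c_{(1)}\varphi(c_{(2)}a)$ via left invariance, whereas the paper applies the spanning fact directly, using that $S^{-1}$ is bijective.
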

\begin{proof}
Assume that $\Gamma(r)=0$, then $\Gamma(r)(1\o \varphi (\cdot a))=0$ for all $\varphi(\cdot a)\in \widehat{A}$, then $\sum \varphi(a'a_{(2)})S^{-1}(a_{(1)})\rhd r=0$  for all $a,a'\in A$ and $ \varphi\in \int_l$. Again by the fact that $\{\sum a_{(1)}\varphi(a'a_{(2)})|a,a'\in A, \varphi\in \int_l\}$ spans $A$,  we get $r=0$. So $\Gamma$ is injective.
\end{proof}

\begin{proposition}
$\Gamma$ is coassociative in the sense of equation \eqref{coass-1}.
\end{proposition}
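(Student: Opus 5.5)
The plan is to verify equation \eqref{coass-1}, i.e.
$$(\Gamma\o id)(\Gamma(r)(1\o \omega))=((id\o \Delta)\Gamma(r))(1\o 1\o \omega),$$
for $\omega=\varphi(\cdot a)$, with $\varphi\in\int_l$ and $a\in A$. Such elements span $\widehat{A}$, because elements of the form $\varphi(\cdot a)$ already give all of $\widehat{A}$ in the regular case. We will pair both sides in the third leg (and, where convenient, in the second leg) with arbitrary elements of $A$. Since the pairing $\langle A,\widehat{A}\rangle$ is nondegenerate and both sides lie in $R\o\widehat{A}\o\widehat{A}$ after the appropriate multiplication, equality of these pairings suffices.

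\textbf{Left-hand side.} By \eqref{gamma-left},
$$\Gamma(r)(1\o\varphi(\cdot a))=\sum S^{-1}(a_{(1)})\rhd r\o \varphi(\cdot a_{(2)}).$$
We then apply $\Gamma\o id$ to this expression. To evaluate $\Gamma(S^{-1}(a_{(1)})\rhd r)$ against the second leg, we multiply by $1\o\varphi'(\cdot c)\o 1$, or equivalently pair the second leg with $x\in A$. Using $(id\o\langle x,\cdot\rangle)\Gamma(u)=x\rhd u$, which was established just before the statement, pairing with $x\o y$ in legs two and three gives
$$\sum \bigl(xS^{-1}(a_{(1)})\bigr)\rhd r\;\varphi(ya_{(2)}).$$

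\textbf{Right-hand side.} The coproduct on $\widehat{A}$ is dual to the product of $A$. Hence pairing $(id\o\Delta)\Gamma(r)$ with $x\o y$ should give $(xy)\rhd r$ paired appropriately. Concretely, we will compute $\Delta_{\widehat{A}}(\widehat{\omega})(1\o\varphi(\cdot a))$ by the standard formula and reduce, using \eqref{integral-left}, to
$$\sum x S^{-1}(a_{(1)})\rhd r\;\varphi(ya_{(2)}).$$
More precisely, we will use the identity $\langle x\o y,\Delta(\omega)(1\o\varphi(\cdot a))\rangle=\sum \varphi(\,\cdot\,)$ together with the relation that $\Delta_{\widehat A}$ is dual to multiplication of $A$. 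The case of the element $\Gamma(r)$ itself is covered through \eqref{gamma-left}. To make the comparison rigorous, we will first multiply both sides from the left by $t\o 1\o 1$ for $t\in R$, and from the left in the second leg by an element of $\widehat{A}$, so that everything lands in $R\o\widehat{A}\o\widehat{A}$.

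\textbf{Main obstacle.} The main difficulty is the coverings. The right-hand side involves the extension of $id\o\Delta$ to $M(R\o\widehat{A})$ from Proposition~\ref{id-delta}, so the unit $E$ of $\widehat{A}$ appears. We will therefore expand $E_{\widehat A}(\widehat{\omega}\o\varphi(\cdot a))=\sum\Delta(p_i)(1\o q_i)$ and carry this through. The key step is then the identity
$$\sum \varphi(ya_{(2)})\,S^{-1}(a_{(1)})\,\text{ vs. }\, \sum\varphi(y_{(2)}a)\,y_{(1)},$$
which is exactly \eqref{integral-left} with $S^{-1}$. This identity converts the product $xS^{-1}(a_{(1)})$ into the form required by the coproduct of $\widehat{A}$. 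If the direct pairing argument becomes messy, the fallback is to prove \eqref{coass4} and invoke Corollary~\ref{coass-eqv3}; this would, however, require Assumption I, which we would first verify with $P$ given via $\varepsilon_t$.
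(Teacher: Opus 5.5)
Your proposal is correct and follows essentially the paper's route: evaluate both sides at points $x\o y$ of $A$ in the last two legs, and reduce the left side with $(id\o\langle x,\cdot\rangle)\Gamma(u)=x\rhd u$. You then match it to the right side via the left-invariance identity $\sum y_{(1)}\varphi(y_{(2)}a)=\sum S^{-1}(a_{(1)})\varphi(ya_{(2)})$ from \eqref{integral-left}, handling the extended $id\o\Delta$ by expanding $E$ through $T_1$ (the paper does this via $R_1$, after also multiplying by $\varphi_1(\cdot a)$ in the second leg). The one step to write out explicitly is the right-hand side: $\langle x\o y,\Delta(\hat b)(1\o\varphi(\cdot a))\rangle=\sum\langle xy_{(1)},\hat b\rangle\varphi(y_{(2)}a)$, which yields $\sum (xy_{(1)})\rhd r\,\varphi(y_{(2)}a)$.
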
	
\begin{proof}
	For any $x, y\in A$, we have
	\begin{align*}
	& (id\o \left\langle x, \cdot \right\rangle\o \left\langle y, \cdot \right\rangle)(\Gamma\o id)(\Gamma(r)(1\o \varphi_2(\cdot b)))(1\o \varphi_1(\cdot a)\o 1)\\
	&=\sum (id\o \left\langle x, \cdot \right\rangle\o \left\langle y, \cdot \right\rangle)(\Gamma\o id)(S^{-1}(b_{(1)})\rhd r \o \varphi_2(\cdot b_{(2)}))(1\o \varphi_1(\cdot a)\o 1)\\
	&=\sum (id\o \left\langle x, \cdot \right\rangle\o \left\langle y, \cdot \right\rangle)\Gamma(S^{-1}(b_{(1)})\rhd r)(1\o \varphi_1(\cdot a))\o \varphi_2(\cdot b_{(2)})\\
	&=\sum S^{-1}(b_{(1)}a_{(1)})\rhd r \varphi_1(x a_{(2)}) \varphi_2(y b_{(2)})\\
	&=\sum (x_{(1)}y_{(1)})\rhd r \varphi_1(x_{(2)} a) \varphi_2(y_{(2)} b). \tag*{by (\ref{integral-left})}
	\end{align*}
	Denote $R_1(\varphi_1(\cdot a)\o \varphi_2(\cdot b))=\sum \varphi_{3i}(\cdot p_i)\o \varphi_{4i}(\cdot q_i)$, then we have
	\begin{equation}\label{varphi-1234}
	\sum \varphi_1(xS(y_{(1)})a)\varphi_2(y_{(2)}b)=\sum \varphi_{3i}(xp_i)\varphi_{4i}(yq_i).
	\end{equation}
	Next we compute
	\begin{align*}
	& (id\o \left\langle x, \cdot \right\rangle\o \left\langle y, \cdot \right\rangle)((id\o \Delta(r))\Gamma(r))(1\o \varphi_1(\cdot a)\o
	\varphi_2(\cdot b))\\
	&= \sum  (id\o \left\langle x, \cdot \right\rangle\o \left\langle y, \cdot \right\rangle) (id\o \Delta)(\Gamma(r)(1\o \varphi_{3i}(\cdot p_i)))(1\o 1\o \varphi_{4i}(\cdot q_i))\\
	&=\sum \sum  (id\o \left\langle x, \cdot \right\rangle\o \left\langle y, \cdot \right\rangle)  S^{-1}(p_{i(1)})\rhd r\o \Delta(\varphi_{3i}(\cdot p_{i(2)}))(1\o \varphi_{4i}(\cdot q_i))\\
	&= \sum \sum S^{-1}(p_{i(1)})\rhd r \varphi_{3i}(xS^{-1}(q_{i(1)})p_{i(2)})\varphi_4(yq_{i(2)})\\
	&= \sum \sum (x_{(1)}y_{(1)})\rhd r\varphi_{3i}(x_{(2)}y_{(2)}p_i)\varphi_4(y_{(3)}q_i) \tag*{by equation \eqref{integral-left}}\\
	&= \sum (x_{(1)}y_{(1)})\rhd r\varphi_1(x_{(2)}\varepsilon_t(y_{(2)})a)\varphi_2(y_{(3)}b) \tag*{by equation \eqref{varphi-1234}}\\
	&=\sum (x_{(1)}E_1y_{(1)})\rhd r\varphi_1(x_{(2)}E_2a)\varphi_2(y_{(2)}b) \\
	&=\sum (x_{(1)}y_{(1)})\rhd r \varphi_1(x_{(2)} a) \varphi_2(y_{(2)} b) 
	\end{align*}
Everything is well covered. We have proved the coassociativity of $\Gamma$.
\end{proof}

We now have a semi-regular injective coaction. Next define $P$ by 
$$P(r\o \varphi(\cdot a))=\sum  \varepsilon_t(S^{-1}(a_{(1)}))\rhd r\o \varphi(\cdot a_{(2)})$$
$$(r\o \psi(\cdot a))P=\sum \varepsilon_s(a_{(2)})\rhd r\o \psi(\cdot a_{(1)}).$$
Since the action is unital, the right sides of above equations are elements in $R\o \widehat{A}$. 

\begin{proposition}
$P\in M(R\o \widehat{A})$.
\end{proposition}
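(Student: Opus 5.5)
The plan mirrors the proof of Proposition \ref{2-1}: $P$ is defined only by its left action on elements $r\o\varphi(\cdot a)$ and its right action on $r\o\psi(\cdot a)$. I will first check that each formula is compatible with the algebra structure of $R\o\widehat{A}$, so that it gives a left, respectively right, multiplier. Then I will prove the compatibility condition linking the two.

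Throughout I will use the formulas $\Delta(\varepsilon_t(a))=(\varepsilon_t(a)\o 1)E$ and $\Delta(\varepsilon_s(a))=E(1\o\varepsilon_s(a))$, together with \eqref{xE} and \eqref{Ey}. These allow $\varepsilon_t(S^{-1}(a_{(1)}))$ and $\varepsilon_s(a_{(2)})$ to be moved across tensor legs.

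\textbf{Left multiplier.} First I check that
$$P\big((1\o\varphi_1(\cdot a))(1\o \varphi_2(\cdot b))\big)=\big(P(1\o\varphi_1(\cdot a))\big)(1\o\varphi_2(\cdot b)).$$
By \eqref{varphi1varphi2} the left side is $P(r\o\varphi_2(\cdot p))$ with $p=\sum b_{(2)}\varphi_1(S^{-1}(b_{(1)})a)$. Pairing the second leg with an arbitrary $c\in A$, the identity reduces to
$$\sum \varepsilon_t(S^{-1}(p_{(1)}))\varphi_2(cp_{(2)})=\sum \varepsilon_t(S^{-1}(a_{(1)}))\varphi_1(c_{(1)}a_{(2)})\varphi_2(c_{(2)}b)$$
as elements of $M(A)$ acting on $r$.

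The easiest route is to write $\varepsilon_t(S^{-1}(x))=S^{-1}(\varepsilon'_s\ldots)$ or, more concretely, $\varepsilon_t(S^{-1}(a_{(1)}))\o a_{(2)}=\sum S^{-1}(E_{(2)})\o E_{(1)}a$, or an analogous expression in terms of $E$. Once this is done, the computation follows the three applications of \eqref{integral-left} in Proposition \ref{2-1} verbatim, with $S^{-1}(\cdot)$ replaced by $\varepsilon_t(S^{-1}(\cdot))$.

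A cleaner alternative is to observe that formally $P=\Gamma(1)$. By \eqref{left-r} the action of $A$ extends to $M(R)$, and $S^{-1}(a_{(1)})\rhd 1=\varepsilon_t(S^{-1}(a_{(1)}))\rhd 1$. Since $\Gamma(r)$ is given by the formula \eqref{gamma-left}, I would define $\tilde\Gamma(1)$ on $M(R)$ by the same formula. Then $P(r\o\omega)=\tilde\Gamma(1)(r\o \omega)$ is computed via \eqref{xru}, and the multiplier property transfers from the computation of Proposition \ref{2-1} with $r$ replaced by $1\in M(R)$. I will try this reduction first.

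\textbf{Right multiplier.} The right-multiplier property is checked symmetrically, using \eqref{integral-right}.

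\textbf{Compatibility.} Finally I must show
$$\big((r'\o\psi(\cdot a'))P\big)(r\o\varphi(\cdot a))=(r'\o\psi(\cdot a'))\big(P(r\o\varphi(\cdot a))\big).$$
The product in $R$ is $(\varepsilon_s(a'_{(2)})\rhd r')r$ on one side and $r'(\varepsilon_t(S^{-1}(a_{(1)}))\rhd r)$ on the other. Using \eqref{yru} and \eqref{xru}, both sides can be written as $r'(x\rhd r)$ for suitable $x$ in the target algebra. After pairing with $c\in A$, the task reduces to an identity of the form
$$\sum \varphi(c_{(2)}a)\psi(c_{(1)}a'_{(1)})S(\varepsilon_s(a'_{(2)}))=\sum\psi(c_{(1)}a')\varphi(c_{(2)}a_{(2)})\varepsilon_t(S^{-1}(a_{(1)})),$$
which is the analogue of the last computation in Proposition \ref{2-1}. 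I would prove it by passing through the middle expression $\sum\varphi(c_{(3)}a)\psi(c_{(1)}a')\varepsilon_t(c_{(2)})$.

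I expect this last step to be the main obstacle. It requires careful control of the source/target conversions ($S\varepsilon_s=\varepsilon_t S$-type identities on $E$) and of the coverings, since $\varepsilon_s$ and $\varepsilon_t$ take values only in $M(A)$.
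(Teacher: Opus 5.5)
Your proposal is correct and follows essentially the same route as the paper. The paper also pairs the $\widehat{A}$-leg with points of $A$, uses \eqref{integral-left} and \eqref{integral-right} to rewrite everything in terms of $\varepsilon_t(x_{(i)})$, and then matches the $R$-parts with \eqref{xru}, \eqref{yru} and $S\circ\varepsilon_s=\varepsilon_t\circ S$. The rewriting step is just $\varepsilon_t$ applied to the identities of Proposition \ref{2-1}, so the step you flagged as the main obstacle is immediate, going through exactly your middle expression.

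Two small points. First, in the formula for $P$ the element $r$ sits inside the action, so the left-multiplier check must be made against $u\o\varphi_2(\cdot b)$ with $u\in R$, not just against $1\o\widehat{A}$. This is exactly where the paper invokes \eqref{xru}; your $\tilde\Gamma(1)$ route gets it for free from $(b\rhd 1)r=\varepsilon_t(b)\rhd r$. Second, your $E$-formula is misstated: for $a=\lambda_p\in\mathrm{C}G$ the left side is $\lambda_{\s(p)}\o\lambda_p$, so it should read $\sum S^{-1}(E_{(1)})\o aE_{(2)}$. You never actually need it, though.
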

\begin{proof}
For any $x\in A$, we have
	\begin{align*}
	&(id\o \left\langle x , \cdot\right\rangle)P(r\o \varphi_1(\cdot a))(u\o \varphi_2(\cdot b))\\
	&=\sum (\varepsilon_t(S^{-1}(a_{(1)}))\rhd r)u\varphi_1(x_{(1)}a_{(2)})\varphi_2(x_{(2)}b)\\
	&=\sum (\varepsilon_t(x_{(1)})\rhd r)u\varphi_1(x_{(2)}a)\varphi_2(x_{(3)}b) \tag*{by equation \eqref{integral-left}}
	\end{align*}
and
	\begin{align*}
	(id\o \left\langle x , \cdot\right\rangle)P(ru\o \varphi_1(\cdot a)\varphi_2(\cdot b))
	&=(id\o \left\langle x , \cdot\right\rangle)P(ru\o \varphi_2(\cdot p)) \tag*{by equation \eqref{varphi1varphi2}}\\
	&= \sum \varepsilon_t(S^{-1}(b_{(2)}))\rhd (ru)\varphi_1(S^{-1}(b_{(1)})a)\varphi_2(xb_{(3)})\\
	&=\sum \varepsilon_t(x_{(1)})\rhd (ru)\varphi_1(S^{-1}(b_{(1)})a)\varphi_2(x_{(2)}b_{(2)}) \tag*{by equation \eqref{integral-left}}\\
	&= \sum \varepsilon_t(x_{(1)})\rhd(ru)\varphi_1(x_{(2)}a)\varphi_2(x_{(3)}b). \tag*{by equation \eqref{integral-left}}
	\end{align*}
By equation \eqref{xru}, we obtain that $P$ is a left multiplier. Similarly, $P$ is a right multiplier. Next we have
	\begin{align*}
	(id\o \left\langle x , \cdot\right\rangle)((r\o \psi(\cdot a))P)(u\o \varphi(\cdot b))
	&=\sum (\varepsilon_s(a_{(2)})\rhd r)u\psi(x_{(1)}a_{(1)})\varphi(x_{(2)}b)\\
	&=\sum (\varepsilon_s(S^{-1}(a_{(2)}))\rhd r)u\psi(x_{(1)}a)\varphi(x_{(3)}b) \tag*{by equation \eqref{integral-right}}
	\end{align*}
	and
	\begin{align*}
	(id\o \left\langle x , \cdot\right\rangle)(r\o \psi(\cdot a))(P(u\o \varphi(\cdot b)))
	&=\sum r(\varepsilon_t(S^{-1}(b_{(1)}))\rhd u)\psi(x_{(1)}a)\varphi(x_{(2)}b_{(2)})\\
	&=\sum r(\varepsilon_t(x_{(2)})\rhd u)\psi(x_{(1)}a)\varphi(x_{(3)}b). \tag*{by equation \eqref{integral-left}}
	\end{align*}
By equation \eqref{yru} and $S\circ \varepsilon_s=\varepsilon_t\circ S$, we get
	$$((r\o \psi(\cdot a))P)(u\o \varphi(\cdot b))=(r\o \psi(\cdot a))(P(u\o \varphi(\cdot b))).$$
This means that $P$ is a multiplier.
\end{proof}

\begin{proposition}
$P$ is an idempotent.
\end{proposition}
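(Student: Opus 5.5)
We must show $P^2=P$, where $P(r\o \varphi(\cdot a))=\sum \varepsilon_t(S^{-1}(a_{(1)}))\rhd r\o \varphi(\cdot a_{(2)})$. Since $P$ is already known to be a multiplier, it suffices to check $P(P(r\o \varphi(\cdot a)))=P(r\o \varphi(\cdot a))$ on elements of the form $r\o \varphi(\cdot a)$. These span $R\o\widehat{A}$, because $\widehat{A}$ is spanned by such functionals with $\varphi$ left integrals, using also the right-integral description via the multiplier property. Right multiplication need not be checked separately: $P^2$ and $P$ are multipliers that agree as left multipliers, hence are equal.

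Computing directly gives
$$P^2(r\o \varphi(\cdot a))=\sum \varepsilon_t(S^{-1}(a_{(2)}))\varepsilon_t(S^{-1}(a_{(1)}))\rhd r\o \varphi(\cdot a_{(3)}).$$
We pair the second leg with an arbitrary $x\in A$, as in the proof that $P$ is a multiplier. By equation \eqref{integral-left}, $\sum S^{-1}(a_{(1)})\varphi(xa_{(2)})=\sum x_{(1)}\varphi(x_{(2)}a)$. Applying this twice (first to the outer $a_{(2)}$, then to the remaining leg) converts the expression into
$$\sum \varepsilon_t(x_{(1)})\varepsilon_t(x_{(2)})\rhd r\,\varphi(x_{(3)}a),$$
while the single application gives $\sum \varepsilon_t(x_{(1)})\rhd r\,\varphi(x_{(2)}a)$. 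The problem therefore reduces to the identity
$$\sum \varepsilon_t(x_{(1)})\o x_{(2)}\varepsilon_t\cdots=\;\sum \varepsilon_t(x_{(1)})\varepsilon_t(x_{(2)})\o x_{(3)}=\sum \varepsilon_t(x_{(1)})\o x_{(2)},$$
understood in the appropriately covered sense.

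To prove this identity, I would use that $\varepsilon_t(x_{(1)})\o x_{(2)}=(\varepsilon_t\o id)\Delta(x)$ lies in $E(1\o x)$-type expressions. The standard fact for WMHA is $\sum \varepsilon_t(x_{(1)})\o x_{(2)}=\sum E_{(1)}\o E_{(2)}x$, with $E=\sum E_{(1)}\o E_{(2)}$ and the first leg in $A_t$. Together with $(id\o\Delta)E=(E\o1)(1\o E)$, this gives
$$\sum \varepsilon_t(x_{(1)})\varepsilon_t(x_{(2)})\o x_{(3)}=\sum E_{(1)}E'_{(1)}\o E'_{(2)}\ldots,$$
which reduces to $E^2=E$ after noting $(\varepsilon_t\o id)\Delta$ of an element in $A_t$ times $x$. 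An alternative is to compute $\sum\varepsilon_t(\varepsilon_t(x_{(1)})x_{(2)})\o x_{(3)}$ directly: $\varepsilon_t(y)\varepsilon_t(z)=\varepsilon_t(\varepsilon_t(y)z)$, since $\varepsilon_t(yz)$ factors for $y\in A_t$, and $\sum\varepsilon_t(x_{(1)})x_{(2)}=x$.

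The main obstacle is the covering. $\varepsilon_t(x_{(1)})$ alone is not covered, and the covering must come from $r$ through the unital action, in the sense that $\varepsilon_t(\cdot)\rhd r$ makes sense via local units of $R$ as an $A$-module. I would handle this by multiplying with $u\in R$ and using equation \eqref{xru}, so that $(\varepsilon_t(y)\rhd r)u=\varepsilon_t(y)\rhd(ru)$ and everything lives inside $A\rhd R$. After that, I would cancel $u$ and $x$ by nondegeneracy of $R$ and of the pairing.
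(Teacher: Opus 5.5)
Your overall strategy is the paper's: compute $P(P(r\o\varphi(\cdot a)))$ on generators and collapse the product of the two $\varepsilon_t$-factors. The paper simply asserts that last equality. Your detour through pairing with $x\in A$ and equation \eqref{integral-left} is a correct reformulation, but it is not needed.

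The ``standard fact'' you propose to use first is, however, false in general.
\begin{itemize}
\item What holds in a WMHA, from $T_1R_1=E_l$, is $\sum x_{(1)}\o\varepsilon_t(x_{(2)})=E(x\o 1)$, with $\varepsilon_t$ on the \emph{second} leg.
\item The first leg of $E$ lies in the source algebra, not in $A_t$.
\item Your version happens to hold for $\mathrm{C}G$, because there $E=\sum_e\lambda_e\o\lambda_e$ is symmetric. It fails for $\mathrm{C}(G)$: for $x=\delta_g$,
\[
\sum\varepsilon_t(x_{(1)})\o x_{(2)}=\sum_{\t(p)=\t(g)}\delta_p\o\delta_g,
\qquad
E(1\o\delta_g)=\sum_{\s(p)=\t(g)}\delta_p\o\delta_g .
\]
These differ already for the pair groupoid on two points.
\end{itemize}
So drop that route.

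Your ``alternative'' is correct and is all you need:
\[
\sum\varepsilon_t(y_{(1)})\varepsilon_t(y_{(2)})=\sum\varepsilon_t\big(\varepsilon_t(y_{(1)})y_{(2)}\big)=\varepsilon_t(y).
\]
Equivalently, $\sum y_{(1)}S(y_{(2)})y_{(3)}S(y_{(4)})=\sum y_{(1)}S(y_{(2)})$, by $\sum S(a_{(1)})a_{(2)}S(a_{(3)})=S(a)$.

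You can also skip the pairing entirely. Since $S^{-1}$ is anti-comultiplicative in the regular case,
\[
\sum S^{-1}(a_{(2)})\o S^{-1}(a_{(1)})\o a_{(3)}=\sum\Delta(S^{-1}(a_{(1)}))\o a_{(2)} .
\]
Applying the identity above with $y=S^{-1}(a_{(1)})$ then gives the paper's final equality directly.
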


\begin{proof}
\begin{align*}
P(P(r\o \varphi(\cdot a))) 
&=\sum P(\varepsilon_t(S^{-1}(a_{(1)}))\rhd r\o \varphi(\cdot a_{(2)}))\\
&=\sum \varepsilon_t(S^{-1}(a_{(2)}))\varepsilon_t(S^{-1}(a_{(1)}))\rhd r\o \varphi(\cdot a_{(3)}))\\
&=\sum \varepsilon_t(S^{-1}(a_{(1)}))\rhd r\o \varphi(\cdot a_{(2)}).
\end{align*}
Everything can be covered by the unital action.
\end{proof}

\begin{proposition}
$P$ satisfies the Assumption I.
\end{proposition}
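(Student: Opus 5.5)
We have to show the two range equalities
$$P(R\o \widehat{A})=\Gamma(R)(1\o \widehat{A}) \quad\text{and}\quad (R\o \widehat{A})P=(1\o \widehat{A})\Gamma(R).$$
The plan is to prove each equality as a pair of inclusions, using the explicit formulas \eqref{gamma-left} and \eqref{gamma-right} for $\Gamma$, the formulas defining $P$, and the unitality of the action $A\rhd R$.

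First inclusion, $\Gamma(R)(1\o\widehat{A})\subseteq P(R\o\widehat{A})$. By the proof of the proposition that $P$ is an idempotent, $P\Gamma(r)=\Gamma(r)$. Concretely, apply $P$ to $\sum S^{-1}(a_{(1)})\rhd r\o\varphi(\cdot a_{(2)})$. This gives
$$\sum \varepsilon_t(S^{-1}(a_{(2)}))S^{-1}(a_{(1)})\rhd r\o \varphi(\cdot a_{(3)}).$$
Since $\varepsilon_t(S^{-1}(a_{(2)}))S^{-1}(a_{(1)})=S^{-1}(a_{(1)}\varepsilon'_t\ldots)$, I would rewrite it using $\sum S^{-1}(a_{(2)})\varepsilon_t$-type identities. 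The cleanest route is to use the formula $\sum S^{-1}(a_{(2)})a_{(1)}=S^{-1}(\varepsilon_t(a))$-like identities together with $\sum a_{(1)}S(a_{(2)})a_{(3)}=a$ applied to $S^{-1}$. This reduces the expression to $\sum S^{-1}(a_{(1)})\rhd r\o\varphi(\cdot a_{(2)})$. Hence $\Gamma(r)(1\o\varphi(\cdot a))=P(\Gamma(r)(1\o\varphi(\cdot a)))$, which lies in $P(R\o\widehat{A})$ because $W_1$ is regular.

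Reverse inclusion, $P(R\o\widehat{A})\subseteq \Gamma(R)(1\o\widehat{A})$. Here I would produce an explicit preimage under $W_1$, in the spirit of $W_1\overline{W_1}=P_l$. I would define $\overline{W_1}(r\o\varphi(\cdot a))=\sum a_{(1)}\rhd r\o\varphi(\cdot a_{(2)})$, the candidate for $(id\o S)\Gamma$ applied on the right. Applying $W_1$ then gives
$$\sum S^{-1}(a_{(2)})a_{(1)}\rhd r\o \varphi(\cdot a_{(3)})=\sum \varepsilon_t(S^{-1}(a_{(1)}))\rhd r\o\varphi(\cdot a_{(2)}),$$
using $S^{-1}(a_{(2)})a_{(1)}=S^{-1}(S(a_{(1)})a_{(2)})=S^{-1}(\varepsilon_s(a))$ and the relation $S^{-1}\circ\varepsilon_s=\varepsilon_t\circ S^{-1}$. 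The last expression is exactly $P(r\o\varphi(\cdot a))$. Since functionals of the form $\varphi(\cdot a)$ with $\varphi\in\int_l$ span $\widehat{A}$ (using faithfulness and the standard description of the dual), this gives the inclusion. The main obstacle is the careful Sweedler bookkeeping: we must check that every $a_{(i)}$ is covered, which follows from the unitality of the action on $r$, and that the correct $\varepsilon_s/\varepsilon_t$ identity with $S^{-1}$ is applied. If needed, I would verify the identity by pairing the second leg with $x\in A$ and using \eqref{integral-left}, as in the proof that $P$ is a multiplier.

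The right-hand equality $(R\o\widehat{A})P=(1\o\widehat{A})\Gamma(R)$ is handled symmetrically. We use \eqref{gamma-right}, the right formula for $P$ with $\psi(\cdot b)$, $\psi\in\int_r$, and \eqref{integral-right}. The preimage candidate is $r\o\psi(\cdot b)\mapsto\sum S^{-1}(b_{(2)})\rhd r\o\psi(\cdot b_{(1)})$, and the relevant identity is $\sum S(b_{(3)})S^{-1}(b_{(2)})$-free, namely $S(b_{(2)})\ldots$ collapsing to $\varepsilon_s(b_{(2)})$. Combining the two equalities, $P$ satisfies Assumption I.
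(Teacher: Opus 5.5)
The right-hand half fails as written. Your candidate preimage is $\eta=\sum S^{-1}(b_{(2)})\rhd r\o\psi(\cdot b_{(1)})$, and formula \eqref{gamma-right} gives
\[
W_3(\eta)=\sum S(b_{(2)})S^{-1}(b_{(3)})\rhd r\o\psi(\cdot b_{(1)}).
\]
The product $S(b_{(2)})S^{-1}(b_{(3)})$ does not collapse to $\varepsilon_s(b_{(2)})$. Already for $\mathrm{C}G$ with $G$ a group and $b=\lambda_g$, you get $\lambda_{g^{-2}}\rhd r\o\psi(\cdot\lambda_g)$, whereas $(r\o\psi(\cdot\lambda_g))P=r\o\psi(\cdot\lambda_g)$. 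So the step you call ``handled symmetrically'' is exactly where the argument breaks. The true mirror of your left-hand preimage $\sum a_{(1)}\rhd r\o\varphi(\cdot a_{(2)})$ carries no antipode either.

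The fix is to use $\overline{W_3}(r\o\psi(\cdot b))=\sum b_{(2)}\rhd r\o\psi(\cdot b_{(1)})$. You can identify it by pairing with $x$ and using \eqref{integral-right}. Then $W_3$ of it is $\sum S(b_{(2)})b_{(3)}\rhd r\o\psi(\cdot b_{(1)})=\sum\varepsilon_s(b_{(2)})\rhd r\o\psi(\cdot b_{(1)})=(r\o\psi(\cdot b))P$. For the other inclusion, right multiplication by $P$ turns $(1\o\psi(\cdot b))\Gamma(r)$ into $\sum \varepsilon_s(b_{(2)})S(b_{(3)})\rhd r\o\psi(\cdot b_{(1)})$. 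This equals $(1\o\psi(\cdot b))\Gamma(r)$ by $\sum S(c_{(1)})c_{(2)}S(c_{(3)})=S(c)$.

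Your left-hand half is correct. The identity $\sum\varepsilon_t(S^{-1}(a_{(2)}))S^{-1}(a_{(1)})\o a_{(3)}=\sum S^{-1}(a_{(1)}S(a_{(2)})a_{(3)})\o a_{(4)}=\sum S^{-1}(a_{(1)})\o a_{(2)}$ gives $PW_1=W_1$, and your $W_1\overline{W_1}=P_l$ computation is right. There are two small slips there. First, $\sum S^{-1}(a_{(2)})a_{(1)}$ is $S^{-1}(\varepsilon_s(a))$, not $S^{-1}(\varepsilon_t(a))$. Second, $P\Gamma(r)=\Gamma(r)$ does not follow from the idempotent proposition, so your direct computation is genuinely needed.

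With the repair, yours is a complete proof by a different route from the paper's. The paper pairs the second leg with $x\in A$ and compares the first-leg expressions $\sum\varepsilon_t(x_{(1)})\varphi(x_{(2)}a)\rhd r$ and $\sum x_{(1)}\varphi(x_{(2)}a)\rhd r$. You instead exhibit $PW_1=W_1$, $W_1\overline{W_1}=P_l$ and their $W_3$ analogues, which give both inclusions outright. This also identifies $\overline{W_1}$ as the generalized inverse of Lemma \ref{P_lP_r}.
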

\begin{proof}
Take any point $x\in A$, we have
$$(id\o \left\langle x , \cdot\right\rangle)P(r\o \varphi(\cdot a))=\sum\varepsilon_t(x_{(1)}\varphi(x_{(2)}a))\rhd r.$$
Since $A\rhd R=R$ and $\varepsilon_t(A)A=A$, the elements on the right side of above the equation span $A$. On the other hand, 
$$(id\o \left\langle x, \cdot\right\rangle)\Gamma(r)(1\o \varphi(\cdot a))=\sum x_{(1)}\varphi(x_{(2)}a)\rhd r.$$
Then we obtain $P(R\o A)=\Gamma(R)(1\o A)$. Similarly for $(R\o A)P=(1\o A)\Gamma(R)$.  
So $P$ satisfies the first assumption.
\end{proof}

In the next proposition, we will use the following notation. Denote 
$\overline{W_1}(r\o \varphi(\cdot a))=\sum S(p_i)\rhd r\o \varphi'(\cdot q_i)$, $p_i,q_i\in A$ and $\varphi, \varphi'\in \int_l$. For any $x\in A$, we have 
\begin{equation}\label{2-2}
\sum S(p_i)\varphi'(xq_i)=\sum S(x_{(1)})\varphi(x_{(2)}a).
\end{equation}
For any $\varphi_1, \varphi_2\in \int_l$, denote $E(\varphi_1(\cdot a)\o \varphi_2(\cdot b))=\sum \varphi_{1i}(\cdot u_i)\o \varphi_{2i}(\cdot v_i)$ with $u_i,v_i\in A$ and $\varphi_{1i},\varphi_{2i}\in \int_l$. Then for any $x\in A$ we have
\begin{equation}\label{2-3}
\sum \varphi_{1i}(xu_i)\varphi_{2i}(yv_i)=\sum \varphi_1(x\varepsilon_t(y_{(1)})a)\varphi_2(y_{(2)}b).
\end{equation}

\begin{proposition}
$(\Gamma\o id)(P)=(1\o E)(P\o 1)$.
\end{proposition}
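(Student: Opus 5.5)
We verify the identity by evaluating both sides on elements of $R\o \widehat{A}\o\widehat{A}$ of the form $r\o \varphi_1(\cdot a)\o \varphi_2(\cdot b)$ with $\varphi_1,\varphi_2\in\int_l$. Since such elements span $R\o\widehat{A}\o\widehat{A}$ and $\widehat{A}$ is paired nondegenerately with $A$, it is enough to compare
$$(id\o \langle x,\cdot\rangle\o\langle y,\cdot\rangle)$$
of both sides for arbitrary $x,y\in A$. This is the same strategy as in the proof of coassociativity above.

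\textbf{Left-hand side.} $(\Gamma\o id)(P)$ acts on $r\o\varphi_1(\cdot a)\o\varphi_2(\cdot b)$ as follows.
\begin{enumerate}[(a)]
\item We may take $r=\Gamma$-compatible, i.e.\ use $P(r\o\varphi_2(\cdot b))=\sum \varepsilon_t(S^{-1}(b_{(1)}))\rhd r\o\varphi_2(\cdot b_{(2)})$.
\item Since $\Gamma\o id$ is a homomorphism, $(\Gamma\o id)(P)(\Gamma(r')(1\o \varphi_1(\cdot a))\o\varphi_2(\cdot b))$ equals $(\Gamma\o id)(P(r'\o\varphi_2(\cdot b)))(1\o\varphi_1(\cdot a)\o 1)$.
\item By Assumption I (just proved), elements $\Gamma(r')(1\o\varphi_1(\cdot a))$ span $P(R\o\widehat A)$. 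Because $(\Gamma\o id)(P)\le P\o 1$, it suffices to test on $(\Gamma(r')(1\o\varphi_1(\cdot a)))\o \varphi_2(\cdot b)$.
\end{enumerate}
This gives
$$\sum \Gamma\big(\varepsilon_t(S^{-1}(b_{(1)}))\rhd r'\big)(1\o\varphi_1(\cdot a))\o\varphi_2(\cdot b_{(2)}).$$
Pairing with $x,y$ and using \eqref{gamma-left}, this becomes
$$\sum S^{-1}(a_{(1)})\varepsilon_t(S^{-1}(b_{(1)}))\rhd r'\;\varphi_1(xa_{(2)})\varphi_2(yb_{(2)}).$$
Applying \eqref{integral-left} twice moves the $a$ and $b$ legs onto $x$ and $y$, giving
$$\sum x_{(1)}\varepsilon_t(y_{(1)})\rhd r'\,\varphi_1(x_{(2)}a)\varphi_2(y_{(2)}b).$$

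\textbf{Right-hand side.} On the same test elements, $P\o 1$ acts trivially, so we only need $(1\o E)$ applied to $\Gamma(r')(1\o\varphi_1(\cdot a))\o\varphi_2(\cdot b)$. We first expand via \eqref{gamma-left} to get $S^{-1}(a_{(1)})\rhd r'\o E(\varphi_1(\cdot a_{(2)})\o\varphi_2(\cdot b))$. Then we use \eqref{2-3} to compute the pairing with $x\o y$, which yields
$$\sum S^{-1}(a_{(1)})\rhd r'\,\varphi_1(x\varepsilon_t(y_{(1)})a_{(2)})\varphi_2(y_{(2)}b).$$
Applying \eqref{integral-left} once more (with $x\varepsilon_t(y_{(1)})$ in place of $x$) and using that $\Delta(x\varepsilon_t(y))=\Delta(x)(\varepsilon_t(y)\o 1)$ for target elements, this becomes
$$\sum x_{(1)}\varepsilon_t(y_{(1)})\rhd r'\,\varphi_1(x_{(2)}a)\varphi_2(y_{(2)}b).$$
This agrees with the left-hand side, so the two sides coincide.

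\textbf{Main obstacle.} The delicate point is justifying each Sweedler manipulation. This includes the coverings provided by the unital action and by $\varphi_i$, and the identity $\Delta(\varepsilon_t(y))=(\varepsilon_t(y)\o1)E$, i.e.\ \eqref{xE}, used when moving $\varepsilon_t(y_{(1)})$ inside $\varphi_1$. A second subtlety is the reduction in step (c): one must also check that the right-hand side kills $(1-P)\o 1$ parts, which holds automatically since $(1\o E)(P\o1)$ contains $P\o1$ on the right.
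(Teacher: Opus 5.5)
Your proof is correct and is essentially the paper's argument. Both sides are tested on elements of $R\o\widehat{A}\o\widehat{A}$, paired with $x\o y$ in the last two legs, and reduced using \eqref{gamma-left}, the left invariance \eqref{integral-left}, and \eqref{2-3}. The only difference is the reduction step: you test directly on the elements $\Gamma(r')(1\o\varphi_1(\cdot a))\o\varphi_2(\cdot b)$, which span the range of $P\o 1$ by Assumption I, whereas the paper inserts $P\o 1$ into a general element and rewrites $P(r\o\varphi_1(\cdot a))=W_1\overline{W_1}(r\o\varphi_1(\cdot a))$ via \eqref{2-2}; your version therefore avoids \eqref{2-2} and lets $P\o 1$ act trivially on the right-hand side, so no reordering of $P\o 1$ and $1\o E$ is needed there.
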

\begin{proof}
	\begin{align*}
	&(\Gamma\o id)(P)(r\o \varphi_1(\cdot a)\o \varphi_2(\cdot b))\\
	&=(\Gamma\o id)(P)(P(r\o \varphi_1(\cdot a))\o \varphi_2(\cdot b))\\
	&=\sum(\Gamma\o id)(P)(W_1(S(p_i)\rhd r\o \varphi'_1(\cdot q_i))\o \varphi_2(\cdot b))\\
	&=\sum (\Gamma\o id)(\varepsilon_t(S^{-1}(b_{(1)}))S(p_i)\rhd r\o \varphi_2(\cdot b_{(2)}))(1\o \varphi'_1(\cdot q_i)\o 1)\\
	&=\sum S^{-1}(q_{i(1)})\varepsilon_t(S^{-1}(b_{(1)}))S(p_i)\rhd r\o \varphi'_1(\cdot q_{i(2)})\o \varphi_2(\cdot b_{(2)}).
	\end{align*}
Take any $x\o y\in A\o A$, we get 
	\begin{align*}
	&\sum S^{-1}(q_{i(1)})\varepsilon_t(S^{-1}(b_{(1)}))S(p_i) \varphi'_1(x q_{i(2)}) \varphi_2(y
	b_{(2)})\\
	&=\sum x_{(1)}\varepsilon_t(y_{(1)})S(p_i)\varphi'_1(x_{(2)} q_i) \varphi_2(y_{(2)}
	b) \tag*{by equation \eqref{integral-left}}\\
	&=\sum x_{(1)}\varepsilon_t(y_{(1)})S(x_{(2)})\varphi_1(x_{(3)}a)\varphi_2(y_{(2)}b) \tag*{by equation \eqref{2-2}}\\
	&=\sum \varepsilon_t(x_{(1)}y_{(1)})\varphi_1(x_{(2)}a)\varphi_2(y_{(2)}b)
	\end{align*}
Next we have
	\begin{align*}
	&(id\o \left\langle x,\cdot \right\rangle\o \left\langle y,\cdot \right\rangle)(1\o E)(P\o 1)(r\o \varphi_1(\cdot a)\o \varphi_2(\cdot b))\\
	&=\sum\sum \varepsilon_t(S^{-1}(u_{i(1)}))\varphi_{1i}(xu_{i(2)})\varphi_{2i}(yv_i)\\
	&=\sum\sum \varepsilon_t(x_{(1)})\varphi_{1i}(x_{(2)}u_i)\varphi_{2i}(yv_i) \tag*{by equation \eqref{integral-left}}\\
	&=\sum \varepsilon_t(x_{(1)})\varphi_1(x_{(2)}\varepsilon_t(y_{(1)})a)\varphi_2(y_{(2)}b) \tag*{by equation \eqref{2-3}}\\
	&=\sum \varepsilon_t(x_{(1)}S^{-1}(\varepsilon_t(y_{(1)})))\varphi_1(x_{(2)}a)\varphi_2(y_{(2)}b)\tag*{by equation \eqref{Ey}}\\
	&=\sum \varepsilon_t(x_{(1)}y_{(1)})\varphi_1(x_{(2)}a)\varphi_2(y_{(2)}b)
	\end{align*}
So we get $(\Gamma\o id)(P)=(1\o E)(P\o 1)$.  Then we finish the proof.
\end{proof}
So far, we have proved the following result.

\begin{proposition}\label{main-2}
Let $A$ be a weak algebraic quantum group. If $R$ is a left $A$-module algebra, then $R$ is a left-full right $\widehat{A}$-comodule algebra.
\end{proposition}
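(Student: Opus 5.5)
The plan is to assemble Proposition \ref{main-2} from the propositions proved just before it in this subsection. Starting from a left $A$-module algebra $R$, I define $\Gamma: R\to M(R\o \widehat{A})$ by equations \eqref{gamma-left} and \eqref{gamma-right}. Proposition \ref{2-1} shows that $\Gamma(r)$ is a genuine multiplier. The subsequent propositions then give the following facts in turn: $\Gamma$ is left full, $\Gamma$ is a homomorphism, $\Gamma$ is injective, and $\Gamma$ is coassociative in the sense of \eqref{coass1}. So what remains is to check the definitional requirements of a right $\widehat{A}$-comodule algebra one by one.

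First, semi-regularity. $W_1$ is regular by the very form of \eqref{gamma-left}: the right side lies in $R\o\widehat{A}$ because the action is unital, so $a_{(1)}$ is covered by $r$. Since every element of $\widehat{A}$ is a combination of functionals $\varphi(\cdot a)$, this settles $W_1$. For $W_3$ I use \eqref{gamma-right}, which handles all functionals of the form $\psi(\cdot b)$. To pass to a general element of $\widehat{A}$, I note that the four types of functionals span the same space $\widehat{A}$, so $\widehat{A}$ is spanned by the $\psi(\cdot b)$ alone. Hence $(1\o\widehat{A})\Gamma(R)\subseteq R\o\widehat{A}$. Together with the injective coassociative homomorphism property, this makes $\Gamma$ a semi-regular right coaction.

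Next, the idempotent. The element $P$ defined above has already been shown to be a multiplier, to be an idempotent, and to satisfy Assumption I. The preceding proposition gives $(\Gamma\o id)(P)=(1\o E)(P\o 1)$. Both clauses of the main definition therefore hold, and $(R,\Gamma,P)$ is a right $\widehat{A}$-comodule algebra. Left fullness was already proved.

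The only step I expect to need care is the right-multiplication half of Assumption I and the spanning claim used for $W_3$. The proof of Assumption I only wrote out the left version and said ``similarly''. For the right version I would compute $(id\o\langle x,\cdot\rangle)$ applied to $(r\o\psi(\cdot a))P$ and to $(1\o\psi(\cdot a))\Gamma(r)$, using \eqref{integral-right} in place of \eqref{integral-left}. These give $\sum \varepsilon_s(S^{-1}\cdots)\rhd r$ and $\sum S(\cdots)\rhd r$ respectively, with the same functional weights. Both families span $R$, because $\varepsilon_s(A)A=A$ and $A\rhd R=R$. This shows the two ranges coincide.
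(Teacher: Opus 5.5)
Your overall plan matches the paper's: Proposition \ref{main-2} is stated there simply as a summary of the preceding propositions, and you collect the same ingredients. You also make explicit the tacit fact that each single type of functional spans $\widehat{A}$, which the paper uses when it defines $\Gamma$ and $P$ only on $\varphi(\cdot a)$ from the left and on $\psi(\cdot b)$ from the right. The gap is in the one step you write out yourself, the right half of Assumption~I. You show that for each point $x\in A$ the slices $(id\o\langle x,\cdot\rangle)\big((R\o\widehat{A})P\big)$ and $(id\o\langle x,\cdot\rangle)\big((1\o\widehat{A})\Gamma(R)\big)$ both span $R$. That only says each of the two subspaces of $R\o\widehat{A}$ has full slices; it does not show they are the same subspace. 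For example, $R\o U$ and $R\o\widehat{A}$ have identical slices at every point whenever $U\subseteq\widehat{A}$ separates the points of $A$, even if $U\neq\widehat{A}$. The paper's written sketch of the left half has the same defect, so it is not a safe template.

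What you need are the two inclusions, proved on elements. First, $\Gamma(r)P=\Gamma(r)$. Using $\sum\varepsilon_s(b_{(1)})S(b_{(2)})=\sum S(b_{(1)})b_{(2)}S(b_{(3)})=S(b)$, one gets
\[
\big((1\o\psi(\cdot b))\Gamma(r)\big)P=\sum \varepsilon_s(b_{(2)})S(b_{(3)})\rhd r\o\psi(\cdot b_{(1)})=\sum S(b_{(2)})\rhd r\o\psi(\cdot b_{(1)})=(1\o\psi(\cdot b))\Gamma(r),
\]
so $(1\o\widehat{A})\Gamma(R)\subseteq(R\o\widehat{A})P$. Second, writing $\varepsilon_s(a_{(2)})=S(a_{(2)})a_{(3)}$ gives the explicit factorization
\[
(r\o\psi(\cdot a))P=\sum \varepsilon_s(a_{(2)})\rhd r\o\psi(\cdot a_{(1)})=\sum (1\o\psi(\cdot a_{(1)}))\Gamma(a_{(2)}\rhd r).
\]
The right-hand side is well defined: with $r=\sum_j c_j\rhd r_j$, only $\Delta(a)(1\o c_j)\in A\o A$ occurs. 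Hence $(R\o\widehat{A})P\subseteq(1\o\widehat{A})\Gamma(R)$. The left half is repaired the same way, via $P\Gamma(r)=\Gamma(r)$ and $P(r\o\varphi(\cdot a))=\sum\Gamma(a_{(1)}\rhd r)(1\o\varphi(\cdot a_{(2)}))$; the latter uses $\varepsilon_t(S^{-1}(a_{(1)}))\o a_{(2)}=S^{-1}(a_{(2)})a_{(1)}\o a_{(3)}$. With these identities in place of the slice-spanning argument, the rest of your assembly goes through.
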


By Proposition \ref{main-1} and \ref{main-2}, we obtain the main theorem in this section.

\begin{theorem}\label{action-coaction}
Let $A$ be a weak algebraic quantum group. Then $R$ is an $A$-module algebra if and only if $R$ is a left full $\widehat{A}$-comodule algebra.
\end{theorem}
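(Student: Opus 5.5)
The plan is to combine Proposition \ref{main-1} and Proposition \ref{main-2}, applied to the pairing $\left\langle A, \widehat{A}\right\rangle$ (or $\left\langle \widehat{A}, A\right\rangle$). We then check that the two constructions are mutually inverse, so that the correspondence is one-to-one. Throughout we use that for a weak algebraic quantum group the dual $\widehat{A}$ is again a weak algebraic quantum group. We also use that the pairing $\langle A,\widehat{A}\rangle$, given by evaluation, is a nondegenerate WMHA pairing, so the formulas \eqref{co-action}, \eqref{gamma-left} and \eqref{gamma-right} all make sense.

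\emph{($\Rightarrow$)} Let $R$ be a left $A$-module algebra. Proposition \ref{main-2} gives a map $\Gamma:R\to M(R\o\widehat{A})$ defined by \eqref{gamma-left} and \eqref{gamma-right}. The propositions preceding it show the following:
\begin{itemize}
\item $\Gamma$ is a multiplier-valued injective homomorphism;
\item $\Gamma$ is coassociative and left full;
\item it comes with an idempotent $P$ satisfying Assumption I and $(\Gamma\o id)(P)=(1\o E)(P\o 1)$.
\end{itemize}
The one point to verify is semi-regularity: $W_1$ and $W_3$ must land in $R\o\widehat{A}$. This is exactly the remark after \eqref{gamma-right}: since the action is unital, $a_{(1)}$ and $b_{(2)}$ are covered by $r$. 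Hence $R$ is a left full right $\widehat{A}$-comodule algebra.

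\emph{($\Leftarrow$)} Let $(R,\Gamma,P)$ be a left full right $\widehat{A}$-comodule algebra. Apply Proposition \ref{main-1} with the pairing $\langle \widehat{A}, A\rangle$, so that $A$ plays the role of $B$. The formula $a\rhd r=(id\o\langle a,\cdot\rangle)\Gamma(r)$ then defines a left $A$-module algebra structure. Unitality comes from left fullness, and the module algebra identity \eqref{modalg} comes from multiplicativity of $\Gamma$ together with $\langle \omega\omega',a\rangle=\sum\langle\omega,a_{(1)}\rangle\langle\omega',a_{(2)}\rangle$.

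Finally, we check that the two constructions are inverse to each other. Starting from an action, the remark after Proposition \ref{2-1} already shows that $(id\o\langle a,\cdot\rangle)\Gamma(r)=a\rhd r$, so we recover the original action. Starting from a coaction $\Gamma$, we build the action and then the new coaction $\Gamma'$ by \eqref{gamma-left}. We must show $\Gamma'=\Gamma$. For this we pair the second legs of $\Gamma(r)(1\o\varphi(\cdot a))$ and $\Gamma'(r)(1\o\varphi(\cdot a))$ against an arbitrary $x\in A$ and compare. Both become $\sum x_{(1)}\varphi(x_{(2)}a)\rhd r$; for $\Gamma$ this uses the product formula in $\widehat{A}$ and \eqref{integral-left}. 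Since the pairing is nondegenerate, $\Gamma=\Gamma'$. The idempotent $P$ is then determined by Corollary \ref{Pisunique}.

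The main obstacle is this last identification. The computation must be correctly covered, which requires writing $\Gamma(r)(1\o\varphi(\cdot a))\in R\o\widehat{A}$ and evaluating the right leg on $x$ using \eqref{integral-left}. If this becomes delicate, the fallback is to state the theorem only as the two implications given by Propositions \ref{main-1} and \ref{main-2}, which is what the statement literally asserts.
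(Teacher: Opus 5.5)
Your proposal is correct and follows the paper's proof, which simply combines Proposition~\ref{main-1} (applied to the pairing $\langle \widehat{A}, A\rangle$, so that $A$ plays the role of $B$) with Proposition~\ref{main-2}. The extra check that the two constructions are mutually inverse goes beyond what the paper proves and is not needed for the statement as written, but it is sound. One correction to how you attribute the steps there: on the $\Gamma$ side only the product formula in $\widehat{A}$ is used, and \eqref{integral-left} is needed only on the $\Gamma'$ side.
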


So we generalize \cite[Theorem 3.3]{van1999galois} to the weak algebraic quantum group setting. It was shown in \cite{zhou2024algebraic} that a vector space $R$ is a unital left $D(A)$-module if and only if $R$ is a Yetter–Drinfeld $A$-module. Recall that an algebra $R$ is a Yetter–Drinfeld module algebra if $R$ is a left $A$-module algebra and a left $A$-comodule algebra such that $R$ is a Yetter–Drinfeld module. Now we have the following result.

\begin{corollary}
Let $A$ be a weak algebraic quantum group. Then $R$ is a Yetter–Drinfeld module algebra if and only if $R$ is a $D(A)$-module algebra.
\end{corollary}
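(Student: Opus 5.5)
The plan is to reduce the corollary to Theorem \ref{action-coaction} together with the Yetter–Drinfeld characterization of unital $D(A)$-modules from \cite{zhou2024algebraic}. The point is that a Yetter–Drinfeld module algebra has two pieces of structure: a left $A$-module algebra structure and a left $A$-comodule algebra structure. By the left-handed version of the duality of Section 4, the comodule algebra structure corresponds to a left module algebra structure over $\widehat{A}$. I will pair $A$ with $\widehat{A}$ on the opposite side, using Definition \ref{leftcomodulealgebra} and the left analogues of Propositions \ref{main-1} and \ref{main-2}. In this way both pieces of data become module algebra structures, over $A$ and over $\widehat{A}$. The Drinfeld double $D(A)$ is built on $A\o\widehat{A}$ (more precisely on $E$ applied to it), with both $A$ and $\widehat{A}$ sitting in $M(D(A))$. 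So the statement becomes: a $D(A)$-module algebra is the same as an $A$-module algebra and an $\widehat{A}$-module algebra whose combined action satisfies the double's commutation relation, and this relation is the Yetter–Drinfeld compatibility.

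For ($\Leftarrow$), let $R$ be a $D(A)$-module algebra.
\begin{enumerate}[(1)]
\item Restrict the unital action to $A$ and to $\widehat{A}$ via the embeddings into $M(D(A))$, using the extension of a unital action to the multiplier algebra. Unitality of each restricted action follows from $D(A)=A\widehat{A}$ and the local units of $A$ and $\widehat{A}$.
\item The module algebra condition \eqref{modalg} for $D(A)$ restricts to each factor, because the coproduct of $D(A)$ restricts to $\Delta_A$ and $\Delta_{\widehat A}$ (up to cop, depending on conventions). Hence $R$ is both an $A$-module algebra and an $\widehat{A}$-module algebra.
\item By Theorem \ref{action-coaction}, in its left version, the $\widehat{A}$-module algebra structure gives a left $A$-comodule algebra structure on $R$, since $\widehat{\widehat{A}}\cong A$ by duality of weak algebraic quantum groups.
\item By \cite{zhou2024algebraic}, the underlying unital $D(A)$-module is a Yetter–Drinfeld module. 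So $R$ is a Yetter–Drinfeld module algebra.
\end{enumerate}

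For ($\Rightarrow$), let $R$ be a Yetter–Drinfeld module algebra. By \cite{zhou2024algebraic}, the Yetter–Drinfeld module gives a unital $D(A)$-module structure, namely $(a\omega)\rhd r=a\rhd(\omega\rhd r)$, where the $\widehat{A}$-action is $\omega\rhd r=\sum r_{(0)}\langle r_{(-1)},\omega\rangle$ as in \eqref{co-action}. By Proposition \ref{main-1}, in its left version, this $\widehat{A}$-action makes $R$ an $\widehat{A}$-module algebra; the left-fullness needed there follows from the counit property of a comodule algebra. It remains to check \eqref{modalg} for elements $a\omega$ of $D(A)$. Writing $\Delta_{D}(a\omega)=\Delta(a)\Delta(\omega)$ and applying the two module algebra properties one after the other gives
$$(a\omega)\rhd(ru)=a\rhd\Bigl(\sum(\omega_{(1)}\rhd r)(\omega_{(2)}\rhd u)\Bigr)=\sum(a_{(1)}\omega_{(1)}\rhd r)(a_{(2)}\omega_{(2)}\rhd u).$$

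I expect the main obstacle to be making these manipulations legitimate in the non-unital weak setting. The multiplier-level identities (for instance that $\Delta_D$ restricted to $A\subseteq M(D(A))$ agrees with $\Delta_A$, and that the canonical idempotents are compatible) must be handled with covering arguments. I would do this by multiplying with elements of $R$ and using \eqref{xru} and \eqref{yru} to absorb the factors from $E$ coming from the source and target algebras.
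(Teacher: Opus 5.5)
Your proposal follows the paper's route. The paper gives no separate proof: the corollary is simply Theorem \ref{action-coaction}, used on the dual side for the pairing of $A$ with $\widehat{A}$, combined with the result of \cite{zhou2024algebraic} that unital $D(A)$-modules are exactly Yetter–Drinfeld modules. You also spell out what the paper leaves implicit, namely that the module-algebra condition passes between $D(A)$ and its two factors through the coproduct of the double.

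One correction: your claim that left-fullness follows from the counit property is not justified, and you should drop it. The counit property is not part of the paper's definition of a comodule algebra, and where it holds, $(id\o\varepsilon)$ applied to $(r'\o 1)\Gamma(r)$ only recovers products $r'r$, not $r$ itself. You do not need left-fullness here anyway. Unitality of the $\widehat{A}$-action follows by restricting the unital $D(A)$-action, exactly as in your step (1).
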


\subsection{The Blattner-Montgomery type theorem for weak algebraic quantum groups}

Let $\left\langle A, B\right\rangle$ be a weak algebraic quantum group pairing and $R$  a $B$-comodule algebra. We have shown that there is an action of $A$ comes from the coaction of $B$. With this action, we can form the smash product $R\# A$. Recall from Section 6 of \cite{zhou2017duality}, we have two maps $\Phi$ and $\Phi_1$, given by 
$$\Phi: R\o A\rightarrow R\# A, \quad r\o a\mapsto \sum a_{(1)}\rhd r\# a_{(2)},$$
$$\Phi_1: R\# A\rightarrow R\o A, \quad r\#a\mapsto \sum S^{-1}(a_{(1)})\rhd r\o a_{(2)}.$$
And we have 
$$\Phi\Phi_1\Phi=\Phi, \quad \quad \Phi_1\Phi\Phi_1=\Phi_1.$$
Denote $$R\bar{\o}A=\{r\o a\in R\o A\mid r\o a=\sum S^{-1}(E_1)\rhd r\o aE_2\},$$
then $\Phi$ is bijective on $R\bar{\o}A$ with the inverse $\Phi_1$. We also know that $R\# A$ is a left $B$-module algebra with the action $b\rhd (r\# a)=r\# (b\rhd a)$. So we have the smash product $(R\# A)\# B$. And $R\# A$ is a left faithful $(R\# A)\# B$-module with action $\rhd_1$ defined by
$$(r\# a)\# b)\rhd_1 (r'\# a')=(r\# a)(r'\# b\rhd a').$$
By \cite[Proposition 6.3]{zhou2017duality}, $R\bar{\o}A$ is a faithful $(R\# A)\# B$-module with the action $\rhd_2$ defined by 
$$((r\# a)\# b)\rhd_2(r'\bar{\o} a')=\Phi_1((r\# a)\# b)\rhd_1 \Phi(r'\# a')).$$
Since $A$ is a left faithful $A\# B$-module with action $(a\# b)\rhd a'=a(b\rhd a')$, $R\bar{\o}A$ is a left faithful $R\o (A\# B)$-module with the action $\rhd_3$ defined by
$$(r\o (a\# b))\rhd_3(r'\bar{\o} a')=rr'\bar{\o} ((a\# b)\rhd a').$$

\begin{theorem}\label{bidual}
    Let $\left\langle A, B\right\rangle$ be a weak algebraic quantum group pairing and $R$ a $B$-comodule algebra. If the right action of $B$ on $A$ can be expressed as a left action of  a multiplier of $R\# A$ as acting on $A$. Then
    $$(R\# A)\# B\cong R\o (A\# B).$$
\end{theorem}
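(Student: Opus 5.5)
We will realise both algebras faithfully on the same space $R\bar{\o}A$ and show that their images coincide. The set-up just before the statement already gives the two faithful modules: $(R\# A)\# B$ acts faithfully on $R\bar{\o}A$ through $\rhd_2$, transported from $\rhd_1$ by $\Phi$ and $\Phi_1$ via \cite[Proposition 6.3]{zhou2017duality}, and $R\o(A\# B)$ acts faithfully through $\rhd_3$.

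The isomorphism will be built on generators. First, for $r\in R$ we compute $((r\#\cdot)\#\cdot)\rhd_2$, or more precisely the multiplier $r\# 1$ of $R\# A$ acting on $r'\bar{\o}a'$. We pull back by $\Phi$, multiply in $R\# A$, and push forward by $\Phi_1$. Using $\Phi_1\Phi=$ identity on $R\bar{\o}A$ and the module-algebra rule \eqref{modalg}, we expect to get $\sum (S^{-1}(a'_{(1)})\rhd (a'_{(2)}\rhd\cdots))$. To rewrite this, we express the action of $A$ on $R$ through the coaction, $a\rhd r=\sum r_{(0)}\langle a,r_{(1)}\rangle$, as in \eqref{co-action} with the roles of $A$ and $B$ as in the statement. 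The result is an operator of the form $r\mapsto \sum r_{(0)}r'\bar{\o}(\text{something in }r_{(1)})\rhd a'$. In other words, $R$ acts on $R\bar{\o}A$ by a twisted operator $\sum r_{(0)}\o r_{(1)}$, where the leg $r_{(1)}\in B$ acts on $A$ by the \emph{right} action $a'\lhd\cdot$.

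Second, we compute the action of $a\in A$ and $b\in B$ under $\rhd_2$. For $b$, the definition $b\rhd(r\# a)=r\#(b\rhd a)$ shows that $b$ acts on the second leg exactly as $1\o(1\# b)$ does under $\rhd_3$. For $a$, left multiplication in $R\# A$ becomes, after applying $\Phi_1$, plain multiplication on the $A$-leg, matching $1\o(a\# 1)$.

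Third, we use the hypothesis of the theorem. It says that the right action of $B$ on $A$ is implemented by a multiplier of $R\# A$, and hence by an element of $M(A\# B)$ acting on $A$. This lets us realise the twisted operator from the first step as $\Gamma(r)$ evaluated in $M(R\o(A\# B))$. With this we define $\Theta:(R\# A)\# B\to M(R\o(A\# B))$ on generators by sending $r$ to the image of $\Gamma(r)$, $a$ to $1\o a$, and $b$ to $1\o b$. By construction $\Theta(X)\rhd_3 \xi=X\rhd_2\xi$ for all $\xi\in R\bar{\o}A$. Faithfulness of $\rhd_3$ then makes $\Theta$ well defined, and since both actions are algebra actions, $\Theta$ is a homomorphism; faithfulness of $\rhd_2$ makes it injective.

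The last step, which we expect to be the main obstacle, is showing that $\Theta$ lands in $R\o(A\# B)$ and is surjective. For this we will invert the twist. Using the generalized inverse $\overline{W_1}$, the multiplier $(id\o S)\Gamma(r)$ from Remark \ref{cover-1}, and the counit property (the coaction is injective), we write $r\o 1$ as $\sum\Theta(r_{(0)})\,(1\o S(r_{(1)}))$ suitably covered by elements of $A\# B$. The difficulty is that in the weak setting $\overline{W_1}$ is only a generalized inverse. We will need the second assumption in its forms (4a) and (3a) of Proposition \ref{second}, $P(r\o 1)=(id\o\varepsilon_t)\Gamma(r)$, together with the fact that the idempotents $E$ in $A\# B$ and $P$ are absorbed on $R\bar{\o}A$. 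Concretely, we will check that the defect $\Gamma(r)\,(id\o S)\Gamma(\cdot)$ equals $(id\o\varepsilon_t)\Gamma$. This acts as the identity on $R\bar{\o}A$ because $E$ is built into the definition of $R\bar{\o}A$.
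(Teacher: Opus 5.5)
Your proposal follows essentially the same route as the paper's proof. Both algebras are realised faithfully on $R\bar{\o}A$ through $\rhd_2$ and $\rhd_3$, and the $R$-part of the $\rhd_2$-operator is rewritten via the coaction as the twist $r'\bar{\o}a'\mapsto\sum r_{(0)}r'\bar{\o}\,a'\lhd S^{-1}(r_{(1)})$. The hypothesis then absorbs this right action into $A\# B$, and faithfulness gives the isomorphism. Your untwisting argument for surjectivity goes beyond the paper, which only asserts the isomorphism from faithfulness. That step needs only the identity $\sum \Gamma(r_{(0)})(1\o S(r_{(1)}))=(id\o\varepsilon_t)\Gamma(r)=P(r\o 1)$ from condition (4a) of Proposition~\ref{second}, not injectivity of $\Gamma$, which is not among the hypotheses.
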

\begin{proof}
    We need to show that the operator defined by the action of $(r\#a) \#b$ via $\rhd_2$ is equal to the operator defined by some element of $R \otimes (A \# B)$ acting via $\rhd_3$. By definition, we have
    $$((r\# a)\# b)\rhd_2(r'\bar{\o} a')=\sum (S^{-1}(a_{(1)}a'_{(1)})\rhd r)r'\bar{\o} a_{(2)}(b\rhd a'_{(2)}).$$
    Since $\Phi_1$ is bijective on $R\bar{\o} A$, the operator is of the form 
    $$r'\bar{\o} a' \mapsto \sum (S^{-1}(a'_{(1)})\rhd r)r'\bar{\o} a(b\rhd a'_{(2)}).$$
    This is equivalent with
    $$r'\bar{\o} a' \mapsto \sum r_{(0)}r'\bar{\o} (a\# b)\rhd a'\lhd S^{-1}(r_{(1)}).$$
    By the assumption in the theorem, the above operator can be denoted by
    $$r'\bar{\o} a' \mapsto \sum ur'\bar{\o} (v\# w)\rhd a'$$
    with certain $u\in R, v\# w\in A\# B$. Since the two modules are faithful, we get an isomorphism. 
\end{proof}
This theorem generalizes the Blattner-Montgomery duality theorem for Hopf algebras (see \cite[Theorem 9.4.9]{montgomery1993hopf}) to the setting of weak multiplier Hopf algebras.  The original version of this duality was proved by M. Takesaki for actions of locally compact groups on von Neumann algebras (\cite{takesaki1973duality}). Specifically, let $A$ be a weak algebraic quantum group and consider the pairing $\langle A, \hat{A} \rangle$. By Theorem \ref{action-coaction}, every action of $A$ arises from a coaction of $\hat{A}$. Consequently, our result unifies and extends the known duality theorems for multiplier Hopf algebras \cite[Theorems 7.6, 7.8]{drabant1999actions}, weak Hopf algebras \cite[Theorem 3.3]{nikshych2000duality} and weak multiplier Hopf algebras \cite[Theorem 6.1]{zhou2017duality}. Our method used here comes from \cite{drabant1999actions}. In the Example \ref{groupoiddual}, we will find an explicit expression for the isomorphism.

\section{Morita context of weak algebraic quantum group}\label{moritacontext}

\subsection{The fixed points}

Let $A$ be a weak algebraic quantum group and  $R$ a left $A$-module algebra with the action $\rhd$. By Theorem \ref{action-coaction} we get a right coaction $\Gamma: R\rightarrow M(R\o \hat{A})$ associated with $\rhd$. 

\begin{proposition}\label{coinvariant-equ}
For any $x\in M(R), r\in R, a\in A$, the following equations are equivalent:  
            \begin{enumerate}[(i)]
                \item $a\rhd (rx)=(a\rhd r)x$,    
                \item $(a\rhd x)r=\varepsilon_t(a)\rhd (xr)$,
                \item $r(a\rhd x)=(\varepsilon'_s(a)\rhd r)x$,
                \item $\Gamma(rx)=\Gamma(r)(x\o 1)$,
                \item $\Gamma(x)=P(x\o 1)$,
            \end{enumerate}
            \begin{enumerate}[(a)]
                \item $a\rhd (xr)=x(a\rhd r)$,
                \item $r(a\rhd x)=\varepsilon'_s(a)\rhd (rx)$,
                \item $(a\rhd x)r=x(\varepsilon_t(a)\rhd r)$,
                \item $\Gamma(xr)=(x\o 1)\Gamma(r),$
                \item $\Gamma(x)=(x\o 1)P.$ 
            \end{enumerate}
\end{proposition}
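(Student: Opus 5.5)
The plan is to prove the two chains (i)--(v) and (a)--(e) separately; they are mirror images, so I will write out the first and only indicate how the second follows. Every equivalence is to be read ``for all $r\in R$ and $a\in A$'' with $x$ fixed. The tools are: the extension of the action to $M(R)$ via \eqref{right-r} and \eqref{left-r}; the formulas \eqref{xru} and \eqref{yru}; the description $(id\o\langle a,\cdot\rangle)\Gamma(r)=a\rhd r$ from equation \eqref{co-action} (Theorem \ref{action-coaction}); and the explicit formula for $P$, namely $(id\o\langle a,\cdot\rangle)P(r\o\omega)$ computed as in the proof that $P$ satisfies Assumption I, which gives $P(r\o 1)$ ``$=$'' $\sum \varepsilon_t(\cdot)\rhd r$ after pairing.

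\textbf{(i)$\Leftrightarrow$(iv) and (iv)$\Leftrightarrow$(v).} Pair the second leg with $a\in A$. Since pairings with all $a$ separate elements of $\hat A$, and $R$ acts nondegenerately, $\Gamma(rx)=\Gamma(r)(x\o 1)$ holds iff $a\rhd(rx)=(a\rhd r)x$ for all $a$; this is the identity $(id\o\langle a,\cdot\rangle)\Gamma(r)=a\rhd r$ extended to $M(R)$ (using Proposition \ref{extension-gamma}). For (iv)$\Rightarrow$(v), multiply on the left by $\Gamma(r')$: $\Gamma(r'x)=\Gamma(r')(x\o1)=\Gamma(r')P(x\o 1)$, since $\Gamma(r')P=\Gamma(r')$; then use that $\Gamma(R)(1\o\hat A)=P(R\o\hat A)$ separates, so that $\Gamma(r')\Gamma(x)=\Gamma(r')P(x\o 1)$ for all $r'$, and cancelling via Assumption I gives $P\Gamma(x)=P(x\o1)$, i.e.\ $\Gamma(x)=P(x\o 1)$. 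For (v)$\Rightarrow$(iv), write $\Gamma(rx)=\Gamma(r)\Gamma(x)=\Gamma(r)P(x\o1)=\Gamma(r)(x\o1)$.

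\textbf{(i)$\Leftrightarrow$(ii) and (i)$\Leftrightarrow$(iii).} Use \eqref{right-r}: $(a\rhd x)r=\sum a_{(1)}\rhd(x(S(a_{(2)})\rhd r))$. If (i) holds (or, more conveniently, (v), i.e.\ $a\rhd(xr)$ is controlled), then a pairing computation of (v) against $a$ gives directly $a\rhd x=\varepsilon_t(a)\rhd x$ acting on the left, and via $\sum a_{(1)}S(a_{(2)})=\varepsilon_t(a)$ this yields (ii). Conversely, (ii) means that $a\rhd x$ and $\varepsilon_t(a)\rhd x$ agree as right multipliers; pairing shows this is (v), since $P(x\o1)$ paired with $a$ is $\varepsilon_t(a)\rhd x$ by the formula for $P$ (for $x\in R$ first, then for $x\in M(R)$ by multiplying with $r$). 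Statement (iii) is handled in the same way using \eqref{left-r} together with $\sum S^{-1}(a_{(1)})a_{(2)}=\varepsilon'_s(a)$, and \eqref{yru} to move source elements across a product.

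The main obstacle I expect is making these manipulations rigorous for a genuine multiplier $x\in M(R)$: the Sweedler legs must be covered, and one must identify ``$P(x\o1)$ paired with $a$'' with $\varepsilon_t(a)\rhd x$. To do this I will always multiply by an extra $r'\in R$ on the appropriate side, use unitality of the action to write $r'=\sum b_i\rhd r_i$, and reduce each identity to one in $R\o\hat A$, where the pairing with $A$ is faithful. The chain (a)--(e) follows by the symmetric argument, using right multiplication, $(R\o\hat A)P=(1\o\hat A)\Gamma(R)$, and the right-multiplier formula for $P$ via $\psi$.
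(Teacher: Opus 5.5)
\textbf{Gap: the two lists are never linked.} Showing (i)--(v) mutually equivalent and then (a)--(e) mutually equivalent ``by symmetry'' only gives two separate equivalence classes. The statement asserts a single ten-way equivalence, and the paper relies on exactly that later:
\begin{itemize}
\item coinvariants are defined by \emph{any one} of (iv), (v), (d), (e);
\item the well-definedness of $\nu$ uses (e);
\item in the lemma on $(\cdot,\cdot)$, the paper proves (v) for $x=(id\o\varphi)\Gamma(r)$ and then uses both (d) and (iv).
\end{itemize}
Nothing in either of your chains shows, for instance, that $\Gamma(x)=P(x\o 1)$ forces $\Gamma(x)=(x\o 1)P$. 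That implication is not a formal consequence of either chain. For example, (ii) and (c) differ precisely by the identity $\varepsilon_t(a)\rhd(xr)=x(\varepsilon_t(a)\rhd r)$, which is a cross-list condition.

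\textbf{How to close it.} The extension formulas \eqref{left-r} and \eqref{right-r} place $x$ so that one list's hypothesis yields the other list's conclusion. The paper's Step 3 uses this. Applying (i) \emph{inside} the expression from \eqref{left-r} gives (i)$\Rightarrow$(b):
\[
r(a\rhd x)=\sum a_{(2)}\rhd\big((S^{-1}(a_{(1)})\rhd r)x\big)=\sum a_{(2)}\rhd\big(S^{-1}(a_{(1)})\rhd (rx)\big)=\varepsilon'_s(a)\rhd(rx).
\]
Symmetrically, \eqref{right-r} together with (a) gives
\[
(a\rhd x)r=\sum a_{(1)}\rhd\big(S(a_{(2)})\rhd(xr)\big)=\varepsilon_t(a)\rhd(xr),
\]
which is (a)$\Rightarrow$(ii). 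You were one step away: applying (i) \emph{outside} the same expression from \eqref{left-r} is how one gets (iii).

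\textbf{Two smaller points.}
\begin{itemize}
\item In (iv)$\Rightarrow$(v) you cancel $\Gamma(r')$ on the left. The half of Assumption I needed there is $(1\o\hat A)\Gamma(R)=(R\o\hat A)P$, not $\Gamma(R)(1\o\hat A)=P(R\o\hat A)$. The halves are swapped in your mirrored chain too.
\item Your pairing route (ii)$\Leftrightarrow$(v) needs the extension of $\Gamma$ to $M(R)$ from Proposition \ref{extension-gamma} to be compatible with the extension of the action via \eqref{right-r}. You flag this but do not prove it. The paper avoids the issue: it proves (i)$\Leftrightarrow$(ii)$\Leftrightarrow$(iii) purely at the level of the action, and it applies $\Gamma$ only to $rx\in R$, together with Assumption I.
\end{itemize}
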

\begin{proof}
Step 1: $(i)\implies (ii)$ and $(i)\implies (iii)$. For any $t\in R$, we have 
    \begin{align*}
        t(a\rhd x)r &=\sum (a_{(2)}\rhd ((S^{-1}(a_{(1)})\rhd t)x))r\\
                    &\stackrel{(i)}{=}\sum (\varepsilon'_s(a)\rhd t)xr\\
                    &\stackrel{\eqref{yru}}{=}\sum t(\varepsilon_t(a)\rhd (xr)).
    \end{align*}
We can cancel $r$ in the third equality, then we obtain (iii). By the last equality, we obtain (ii). 

$(iii)\iff (ii)$. For any $t\in R$, we have
$$t(a\rhd x)r\stackrel{(iii)}{=} (\varepsilon_s'(a)\rhd t)xr\stackrel{\eqref{yru}}{=}t(\varepsilon_t(a)\rhd (xr)).$$
Cancel $t$, then we obtain (ii). Conversely, we use formula \eqref{xru} and the proof is similar.

$(iii)\implies (i)$. Note that the expression $\sum (a_{(1)}\rhd r)(a_{(2)}\rhd x)$ is meaningful since $a_{(1)}$ can be covered by the unital action. We have
\begin{align*}
    \sum (a_{(1)}\rhd r)(a_{(2)}\rhd x)t &= \sum (a_{(1)}\rhd r)(a_{(2)}\rhd (x(S(a_{(3)})\rhd t)))\\
    &= \sum a_{(1)}\rhd (rx(S(a_{(2)})\rhd t))\\
    &= (a\rhd (rx))t.
\end{align*}
Now we are free to use the formula $a\rhd (rx)=\sum (a_{(1)}\rhd r)(a_{(2)}\rhd x)$. Next we get
\begin{align*}
    a\rhd (rx)&=\sum (a_{(1)}\rhd r)(a_{(2)}\rhd x)\\
    &\stackrel{(iii)}{=}\sum (\varepsilon'_s(a_{(2)})\rhd (a_{(1)}\rhd r))x\\
    &=(a\rhd r)x.
\end{align*}

$(iv)\implies (i)$. 
    \begin{align*}
        a\rhd (rx) &= (id\o \left\langle  \cdot, a \right\rangle)\Gamma(rx)\\
                   &= (id\o \left\langle  \cdot, a \right\rangle)\Gamma(r)(x\o 1)\\
                   &= \sum r_{(0)}x \left\langle  r_{(1)}, a \right\rangle\\
                   &= (a\rhd r)x.
    \end{align*}

$(i)\implies (iv)$. For any $\varphi(\cdot a)\in \widehat{A}$, we have
    \begin{align*}
        \Gamma(rx)(1\o \varphi(\cdot a))&=\sum S^{-1}(a_{(1)})\rhd (rx)\o \varphi(\cdot a_{(2)})\\
         &\stackrel{(i)}{=} \sum (S^{-1}(a_{(1)})\rhd r)x\o \varphi(\cdot a_{(2)})\\
         &=\Gamma(r)(x\o 1)(1\o \varphi(\cdot a)).
    \end{align*}

$(iv)\implies (v)$. For any element $\hat{a}\in \hat{A}$, we have
    $$(1\o \hat{a})\Gamma(rx)=(1\o \hat{a})\Gamma(r)(x\o 1).$$
Recall that $(R\o \hat{A})P=(1\o \hat{A})\Gamma(R)$ and $P\Gamma(r)=\Gamma(r)$, then we obtain formula $(v)$. From (v) to (iv), it is easy. We have shown that the equations  (i)-(v) are equivalent.

Step 2: The equivalence among the last five equations can be proved in a similar way.

Step 3: $(1)\implies (b)$. 
   \begin{align*}
      r(a\rhd x) &\stackrel{\eqref{left-r}}{=} \sum a_{(2)}\rhd ((S^{-1}(a_{(1)})\rhd r)x)\\
                 &\stackrel{(i)}{=}\sum a_{(2)}\rhd (S^{-1}(a_{(1)})\rhd (rx))\\
                 &= \varepsilon_s'(a)\rhd (rx).
   \end{align*}
In a similar way we can show that $(a)\implies (ii)$. Then we finish the proof.
\end{proof}

\begin{definition}
Let $A$ be a WMHA and $R$ a nondegenerate algebra.
\begin{enumerate}[(1)] 
    \item  Assume that $R$ is an $A$-module algebra. The fixed point algebra $R^A$ is defined as the subalgebra of $M(R)$ consisting of all elements $x$ that satisfy one of the conditions (i), (ii), (a), or (c) in Proposition \ref{coinvariant-equ}. 
    \item  Assume that $(R, \Gamma, P)$ an $A$-comodule algebra. An element $r \in M(R)$ is a coinvariant if it satisfies one of the conditions (iv), (v), (d), or (e) in Proposition \ref{coinvariant-equ}.  The set of all coinvariants of $M(R)$ form a unital subalgebra of $M(R)$, denoted by $R^{\text{co} A}$. 
\end{enumerate}
\end{definition}

Note that equations $(iii)$ and $(b)$ of Proposition \ref{coinvariant-equ} are only meaningful when $A$ is regular. As a consequence of Proposition \ref{coinvariant-equ}, we obtain the following result.

\begin{corollary}
Let $A$ be a weak algebraic quantum group and $R$ a left $A$-module algebra. Then $R^{\text{co} A}=R^{\hat{A}}$.
\end{corollary}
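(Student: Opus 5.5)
The corollary should follow by unwinding the two definitions and matching them through Proposition \ref{coinvariant-equ}. The one thing to keep straight is which coaction is meant. Here $R$ is a left $A$-module algebra. By Theorem \ref{action-coaction} (more precisely, Proposition \ref{main-2}) it becomes a right $\widehat{A}$-comodule algebra $(R,\Gamma,P)$, with $\Gamma$ given by \eqref{gamma-left}–\eqref{gamma-right} and $P$ the idempotent constructed in Section 4.1. The coinvariants $R^{\text{co}\,\widehat{A}}$ are taken with respect to this $\Gamma$. The fixed points are taken with respect to the original action $\rhd$ of $A$. I read the notation in the statement in this sense, that is, coinvariants for the $\widehat{A}$-coaction and fixed points for the $A$-action. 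Both sets are subsets of $M(R)$, so it suffices to prove that they contain the same elements.

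First take $x \in M(R)$ in the fixed point algebra. By definition $x$ satisfies one of (i), (ii), (a), (c), and hence all of them by Proposition \ref{coinvariant-equ}. In particular (i) holds for all $r \in R$ and $a \in A$. Step 1 of that proposition gives (i)$\Leftrightarrow$(iv)$\Leftrightarrow$(v), so $\Gamma(rx)=\Gamma(r)(x\o 1)$ for all $r$, and $\Gamma(x)=P(x\o 1)$. Thus $x$ is a coinvariant. Conversely, if $x$ satisfies one of (iv), (v), (d), (e), the same proposition gives (i), so $x$ is a fixed point.

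The only point requiring care is the quantifiers. Conditions (i)–(iii) and (a)–(c) quantify over $a$ and $r$, while (v) and (e) do not. I would check that the equivalences in Proposition \ref{coinvariant-equ} are to be read as ``for all $r\in R$, $a\in A$''. The proof of (i)$\Rightarrow$(iv) does use (i) for all $a$, in order to test against all $\varphi(\cdot a)$. I would also check that the identification of the $A$-action with $(id\o\langle\cdot,a\rangle)\Gamma$, from the discussion after Proposition \ref{2-1}, holds on $M(R)$ and not only on $R$. This matters because the extension of $\Gamma$ to $M(R)$ (Proposition \ref{extension-gamma}) and the extension of $\rhd$ to $M(R)$ (via \eqref{right-r}, \eqref{left-r}) must be compatible. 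In the proof of Proposition \ref{coinvariant-equ} this is only needed in the form applied to $rx \in R$, so I do not expect a real obstacle.

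Finally, both sets are unital subalgebras of $M(R)$ with the same elements, so they coincide as algebras, and $R^{\text{co}\,\widehat{A}}=R^{A}$.
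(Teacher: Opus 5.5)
Your proposal is correct and matches the paper, which obtains the corollary directly from Proposition \ref{coinvariant-equ}: the fixed-point algebra and the coinvariants are defined by two groups of conditions that this proposition shows to be equivalent. As you note, the only link between the action and the coaction used there is the identification $a\rhd r=(\mathrm{id}\otimes\langle\cdot,a\rangle)\Gamma(r)$ on $R$, applied to $rx\in R$; no compatibility of the two extensions to $M(R)$ is needed. Your reading of the notation as $R^{\text{co}\,\widehat{A}}=R^{A}$, with coinvariants for the induced $\widehat{A}$-coaction and fixed points for the original $A$-action, is the one consistent with the setting of that proposition.
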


\begin{definition}
Let $A$ be a weak algebraic quantum group and $(R, \Gamma, P)$ an regular $A$-comodule algebra. $\Gamma$ is called a Galois coaction if the map
    $$ \nu: R\o_{R^{\textbf{co} A}} R\rightarrow (R\o A)P, r\o r'\mapsto (r\o 1)\Gamma(r')$$
is bijective.
\end{definition}
Since $\Gamma$ is regular, we have $(r\o 1)\Gamma(r')=(r\o 1)\Gamma(r')P\in (R\o A)P$. For any $x\in R^{\textbf{co} A}$, we have 
 $$\nu (rx\o r')=(rx\o 1)\Gamma (r')=(r\o 1)(x\o 1)P\Gamma(r')=(r\o 1)\Gamma(xr')=\nu (r\o xr').$$
In the third equality we use (e) of Proposition \ref{coinvariant-equ}. So $\nu$ is well defined.

\begin{proposition}
Let $\Gamma$ be a Galois coaction of $A$ on $R$. If $R$ has unit $1$, then $\Delta(A)\in A\o A$.
\end{proposition}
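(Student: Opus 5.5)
The plan is to show that, when $R$ is unital and $\nu$ is surjective, everything involved lives in honest tensor products. The right legs of the elements produced by $\nu$ then span $A$, and coassociativity transports the statement ``lies in $R\o A\o A$'' to $\Delta$.

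First I use unitality. Since $1\in R$, the multiplier $P=\Gamma(1)$ of Proposition \ref{extension-gamma} is defined. Because $\Gamma$ is regular, $W_2$ is regular, so $\Gamma(r)=(1\o 1)\Gamma(r)\in R\o A$ for every $r\in R$. In particular $P=\Gamma(1)\in R\o A$. By the discussion after the proposition on $P(R\o 1)\in R\o\varepsilon_t(A)$ (the unital case), in fact $P\in R\o \varepsilon_t(A)$. The extended maps $\Gamma\o id$ and $id\o\Delta$ therefore act on genuine tensors. Corollary \ref{general-coass} gives, for $r,r'\in R$,
$$(id\o\Delta)\big((r\o 1)\Gamma(r')\big)=(r\o 1\o 1)(\Gamma\o id)\Gamma(r')\in R\o A\o A,$$
since $\Gamma(r')\in R\o A$ and $\Gamma(R)\subseteq R\o A$. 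Applying $\omega\o id\o id$ with $\omega\in R'$, I conclude that $\Delta(b)\in A\o A$ for every $b$ in the right leg
$$V=\operatorname{span}\{(\omega\o id)((r\o 1)\Gamma(r'))\mid \omega\in R',\ r,r'\in R\}.$$

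Next I bring in the Galois hypothesis. Surjectivity of $\nu$ says that the span of the elements $(r\o 1)\Gamma(r')$ is all of $(R\o A)P$. Hence $V$ equals the right leg of $(R\o A)P$, which contains all elements
$$(\omega\o id)((r\o a)P)=a\,(\omega(r\,\cdot)\o id)(P),\qquad \omega\in R',\ r\in R,\ a\in A.$$
Since $P\in R\o\varepsilon_t(A)$, the elements $y=(\omega(r\,\cdot)\o id)(P)$ lie in $\varepsilon_t(A)$, and $V\supseteq A\,Y$, where $Y$ is the span of these $y$.

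The main obstacle is to show that $A\,Y=A$, i.e.\ that the right leg of $P$ is large enough to act as a unit on $A$ from the right. I would first try the counit. The counit property holds because $\Gamma$ is injective (it is regular with Assumption I; note $\nu(1\o r)=\Gamma(r)$), so $(id\o\varepsilon)(\Gamma(r)(r'\o 1))=rr'$. I would combine this with Remark \ref{important}(iii): the right leg of $P$ is spanned by $(\omega(r'\cdot r'')\o id)(P)$, and fills $\varepsilon_t(A)$ once $\Gamma$ is right full. Right fullness should in turn come from surjectivity of $\nu$ together with $(\omega\o id)((1\o a)P)=ay$.

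Failing that, I would use the local-unit structure of a WMHA, namely $A=A\varepsilon_t'(A)$, i.e.\ $\sum a_{(1)}\varepsilon(a_{(2)})$-type decompositions, to write each $a\in A$ as a sum of elements $a'y$ with $y$ in $Y$. With $A\,Y=A$ established, $V=A$, and the first step gives $\Delta(b)\in A\o A$ for all $b\in A$, which is the claim.
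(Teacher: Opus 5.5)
Your first step is correct. With $1\in R$ you have $\Gamma(R)\subseteq R\o A$, and Corollary \ref{general-coass} then shows that every $b$ in the right leg $V$ of $\Gamma$ satisfies $\Delta(b)\in A\o A$. The gap is the step you yourself flag as the main obstacle. Surjectivity of $\nu$ only gives $V=A\,Y$, where $Y$ is the finite-dimensional right leg of $P$, and neither of your remedies closes this. Appealing to Remark \ref{important}(iii) presupposes right fullness, which here is just the statement $A\,Y=A$, so that route is circular. And $A=A\varepsilon_t(A)$ says nothing about the particular subspace $Y\subseteq\varepsilon_t(A)$ determined by $P$.

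In fact $A\,Y=A$ can fail, and with it the conclusion. Take the groupoid $G=\{e\}\sqcup\mathbb{Z}$, the disjoint union of the trivial group and $\mathbb{Z}$. Let $A=\mathrm{C}(G)\cong\mathbb{C}\oplus\mathrm{C}(\mathbb{Z})$, $R=\mathbb{C}$, and $\Gamma(1)=P=1\o\delta_e$. Then:
$P(R\o A)=\Gamma(R)(1\o A)=(1\o A)\Gamma(R)=(R\o A)P=\mathbb{C}(1\o\delta_e)$;
$\Delta(\delta_e)=\delta_e\o\delta_e$ gives coassociativity and $(\Gamma\o id)(P)=1\o\delta_e\o\delta_e=(1\o E)(P\o 1)$;
all $W_i$ are regular, $R^{\mathrm{co}A}=\mathbb{C}$, and $\nu$ maps $R\o_{R^{\mathrm{co}A}}R=\mathbb{C}$ bijectively onto $(R\o A)P$.
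Yet $\Delta(\delta_n)=\sum_{m\in\mathbb{Z}}\delta_m\o\delta_{n-m}\notin A\o A$ for $n\in\mathbb{Z}$. Here $Y=\mathbb{C}\delta_e$ and $A\,Y=\mathbb{C}\delta_e\neq A$.

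The paper's proof sidesteps your obstacle only by asserting that $1\o a$ lies in the range of $\nu$. That requires $(1\o a)P=1\o a$, which fails in this example. If it held for all $a$, applying some $\omega\in R'$ with $\omega(1)=1$ would produce an identity element of $A$ inside $A$, and the claim would be trivial.

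So your argument cannot be completed under the stated hypotheses, and the statement itself is false as written. What your first step does prove is the conclusion under the extra hypothesis that $\Gamma$ is right full.
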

\begin{proof}
Since $R$ has unit, we have $\Gamma(r)=(1\o 1)\Gamma(r)\in R\o A$ for all $r\in R$. For any $a\in A$, by the bijectivity of $\nu$, there exists an element $\sum u_{(i)}\o v_{(i)}\in R\o_{R^{\textbf{co} A}} R$ such that 
    $$\nu \big(\sum_i u_{(i)}\o v_{(i)}\big)=\sum_i (u_{(i)}\o 1)\Gamma(v_{(i)})=1\o a.$$
Applying $id\o \Delta$, we get 
\begin{align*}
    1\o \Delta (a) &= \sum_i (u_{(i)}\o 1\o 1)(id\o \Delta)\Gamma(v_{(i)})\\
                   &= \sum_i (u_{(i)}\o 1\o 1)(\Gamma\o id)\Gamma(v_{(i)})\in R\o A\o A.
\end{align*}
So $\Delta(a)\in A\o A$ for any $a\in A$.
\end{proof}

Now a natural question arises: given a WMHA $(A, \Delta)$, if $\Delta(A)\subseteq A\o A$, do we have $1\in A$? It is true if $A$ is a multiplier Hopf algebra because we have $\varepsilon(a)1=\sum a_{(1)}S(a_{(2)})\in A$, then $A$ is a Hopf algebra. If $A$ is a multiplier bialgebra in the sense of \cite[Theorem 2.11]{BGL}, then we get $1\in A$ and now $A$ is the usual bialgebra. If $A$ is a WMHA, then we can obtain $\varepsilon_s(A)\in A$ and $\varepsilon_t(A)\in A$. Even in the regular case, we are not able to show that $1\in A$. Note that for a non-regular WMHA with a unit, we do not know whether it is a weak Hopf algebra or not. This is open.

\subsection{Morita context} 

Let $A$ be a weak algebraic quantum group. Assume that there is a faithful left integral $\varphi$ on $A$. We recall the following results from \cite[Theorem 2.8]{van2020weak} and \cite[Proposition 2.9]{van2020weak}.

\begin{lemma}\label{faithfulintegral}
\begin{enumerate}[(1)]
    \item Assume that $\varphi_1$ is a left integral, then there is an element $y\in A_s$ so that $\varphi_1(a)=\varphi(ay)$ for all $a\in A$.
    \item Assume that $\psi$ is a right integral, then there is an element $\delta\in M(A)$ such that $\psi(a)=\varphi(a\delta)$ for all $a\in A$. If $\psi$ is faithful, then $\delta$ is invertible.
\end{enumerate}
\end{lemma}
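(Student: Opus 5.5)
The plan is to follow the strategy of \cite{van1998algebraic} for algebraic quantum groups, adapted to the weak setting as in \cite{van2020weak}. We first represent an arbitrary integral as $\varphi$ multiplied by a multiplier, and then use invariance to locate that multiplier.

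For (1), the first step is to show that $\varphi_1$ lies in $\{\varphi(\cdot\, c)\mid c\in M(A)\}$. Because $\varphi$ is faithful, the map $c\mapsto\varphi(\cdot\, c)$ from $A$ into $\widehat{A}$ is injective. The plan is to show that every element of $\widehat{A}$ has the form $\varphi(\cdot\, c)$ with $c\in A$. I would use the integral identities \eqref{integral-left} and \eqref{varphi1varphi2}: they rewrite products $\varphi_1(\cdot\, a)\varphi(\cdot\, b)$ as $\varphi(\cdot\, p)$, so that $\widehat{A}\,\varphi(\cdot\, A)\subseteq\varphi(\cdot\, A)$. Local units in $\widehat{A}$ then give $\widehat{A}=\varphi(\cdot\, A)$.

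Given this, $\varphi_1(\cdot\, a)=\varphi(\cdot\, c_a)$ for a unique $c_a\in A$. One checks that $a\mapsto c_a$ satisfies $c_{aa'}=c_a a'$, so it is a left multiplier; a symmetric argument with $\varphi_1(a\,\cdot)$ makes it a two-sided multiplier $y$, and then $\varphi_1=\varphi(\cdot\, y)$.

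The second step is to show $y\in A_s$, i.e.\ $\Delta(y)=E(1\o y)$. Since $\varphi_1=\varphi(\cdot\, y)$ is left invariant, I would apply \eqref{integral-left} to both $\varphi$ and $\varphi_1$. This gives
$$(id\o\varphi)\big(\Delta(a)(1\o by)\big)=(id\o\varphi)\big(\Delta(a)\Delta(y)(1\o b)\big)$$
up to the antipode bookkeeping, and faithfulness of $\varphi$ together with fullness of $\Delta$ lets one cancel and obtain $\Delta(y)(1\o b)=E(1\o yb)$. This cancellation is the main obstacle. The difficulty is that $E\neq 1\o 1$, so one only obtains equality after multiplying by $E$. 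I would handle it by writing $\Delta(a)=\Delta(a)E$ and using the characterisation of $A_s$ as $\{y\in M(A)\mid \Delta(y)=E(1\o y)\}$, which absorbs exactly this factor.

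For (2), I would reduce to (1) by composing with the antipode. Regularity gives that $\psi\circ S^{-1}$ is a left integral (using that $S$ is an anti-coalgebra map), so $\psi\circ S^{-1}=\varphi(\cdot\, y)$ with $y\in A_s$. Separately, $\varphi\circ S$ is a right integral, and the same multiplier argument as in (1), now with right invariance \eqref{integral-right}, gives $\varphi\circ S=\varphi(\cdot\,\delta_0)$ for some $\delta_0\in M(A)$. Combining the two identities and moving $S(y)$ across by \eqref{Ey} yields $\psi=\varphi(\cdot\,\delta)$ for a multiplier $\delta$.

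If $\psi$ is faithful, the multiplier argument can be run with the roles of $\varphi$ and $\psi$ exchanged. This produces $\delta'$ with $\varphi=\psi(\cdot\,\delta')$, and faithfulness on both sides forces $\delta\delta'=\delta'\delta=1$, so $\delta$ is invertible.
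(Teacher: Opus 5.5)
The paper does not prove this lemma. It quotes it from Theorem~2.8 and Proposition~2.9 of \cite{van2020weak}, so your proposal has to stand on its own, and it has a genuine gap in the first step, on which everything else rests. Grant that $\varphi(\cdot\,A)$ is a left ideal of $\widehat{A}$; you only justify this for left factors of the form $\varphi_1(\cdot\,a)$ via \eqref{varphi1varphi2}, but it is true in general. Even so, ``local units in $\widehat{A}$'' do not make a left ideal equal to $\widehat{A}$. To conclude $\omega=\omega e\in\varphi(\cdot\,A)$ you need the local unit $e$ itself to lie in $\varphi(\cdot\,A)$, equivalently $\widehat{A}\,\varphi(\cdot\,A)=\widehat{A}$, and nothing in your argument gives this.

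Notice that faithfulness of $\varphi$ is never used in this step, only for injectivity. Your argument would therefore prove the same for every left integral, which is false. Take $A=\mathrm{C}G$ with $G$ having at least two units, fix a unit $e$, and set $\varphi_e(\lambda_p)=1$ if $p=e$ and $0$ otherwise. Then $(id\o\varphi_e)\Delta(\lambda_p)\in\{0,\lambda_e\}\subseteq A_t$, so $\varphi_e$ is a non-faithful left integral, and \eqref{varphi1varphi2} applies to it verbatim. Moreover $\widehat{A}\cong\mathrm{C}(G)$, with pointwise product, has local units. Yet, identifying $\omega$ with $p\mapsto\omega(\lambda_p)$, we get $\varphi_e(\cdot\,\lambda_q)=\delta_{q^{-1}}$ if $\s(q)=e$ and $0$ otherwise. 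So $\varphi_e(\cdot\,A)$ is the proper ideal spanned by the $\delta_r$ with $\t(r)=e$.

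For faithful $\varphi$ the identity $\widehat{A}=\varphi(\cdot\,A)$ is true, but it is essentially the uniqueness theorem itself. Indeed, the paper, right after Proposition \ref{morita context}, derives $\widehat{A}=\{\varphi(a\,\cdot)\}$ \emph{from} this lemma. What is missing is exactly an argument that uses faithfulness in an essential way to show that $\varphi_1(\cdot\,a)$ and $\psi(\cdot\,a)$ lie in $\varphi(\cdot\,A)$. Your part (2) invokes the same step, for $\varphi\circ S$ and, in the invertibility argument, for a faithful right integral $\psi$ with the roles of $\varphi$ and $\psi$ exchanged. It therefore inherits the gap. The final computation $\delta\delta'=\delta'\delta=1$ from faithfulness is fine once $\delta,\delta'$ exist.

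There are also two smaller problems.
\begin{itemize}
\item From $\varphi_1(\cdot\,a)=\varphi(\cdot\,c_a)$ you get $c_{a'a}=a'c_a$, by replacing the argument $x$ with $xa'$. You do not get $c_{aa'}=c_aa'$. This only makes $a\mapsto c_a$ a one-sided multiplication map $a\mapsto ay$. Upgrading it to an element of $M(A)$ needs a further argument, and it is unclear how $\varphi_1(a\,\cdot)$ supplies the left action $b\mapsto yb$ without a modular automorphism for $\varphi$.
\item In (2), you obtain $\psi(x)=\varphi\bigl(S^{-1}(y)\,x\,\delta_0\bigr)$. Identity \eqref{Ey} is an identity for $E$, and it does not by itself move $S^{-1}(y)$ from the left to the right inside $\varphi$.
\end{itemize}
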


\begin{lemma}
Assume that $A$ is a weak algebraic quantum group and $R$  an idempotent algebra. Let $(R,\Gamma,P)$ be a right $A$-comodule algebra. 
\begin{enumerate}[(1)]
    \item $R$ is a $(R\# \widehat{A}, R^{\textbf{co} A})$-bimodule with the following module structures:
    \begin{align*}
        & \text{left:\quad } (r\# \varphi(\cdot a))\rhd r'=r(\varphi(\cdot a)\rhd r'),\\
        & \text{right: \quad}  r\lhd x=rx,
    \end{align*}
    for all $r,r'\in R, x\in R^{\textbf{co} A}, \varphi(\cdot a)\in \widehat{A}$.
    \item  $R$ is a $(R^{\textbf{co} A}, R\# \widehat{A})$-bimodule with the following module structures:
    \begin{align*}
        & \text{right: \quad} r\lhd (r'\#\varphi(\cdot a))= S^{-1}(\varphi(\cdot a))\rhd (rr'),\\
        & \text{right: \quad} x\rhd r=xr,
    \end{align*}
    for all $r,r'\in R, x\in R^{\text{co} A}, \varphi(\cdot a)\in \widehat{A}$.
\end{enumerate}
\end{lemma}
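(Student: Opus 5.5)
The plan is to reduce everything to the left $\widehat{A}$-module algebra structure on $R$ obtained from the coaction. The pairing $\langle A,\widehat{A}\rangle$ and Proposition \ref{main-1} give that structure. The left action is $\varphi(\cdot a)\rhd r'=\sum r'_{(0)}\langle r'_{(1)},\varphi(\cdot a)\rangle$. Here $r'_{(1)}$ is covered through the pairing, exactly as in equation \eqref{eq1}. Left fullness, which holds in the situation where the action comes from Theorem \ref{action-coaction}, makes this a unital module algebra.

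\textbf{Part (1), left $R\#\widehat{A}$-action.} I will check that $(r\#\omega)\rhd r'=r(\omega\rhd r')$ is a module action. For a second element $s\#\omega'$, I will compute
$$(r\#\omega)\rhd\big((s\#\omega')\rhd r'\big)=r\big(\omega\rhd (s(\omega'\rhd r'))\big)=\sum r(\omega_{(1)}\rhd s)(\omega_{(2)}\omega'\rhd r'),$$
using the module algebra identity \eqref{modalg}. This coincides with $\big((r\#\omega)(s\#\omega')\big)\rhd r'$, by the definition of the smash product multiplication. I also have to check that the action is well defined on $R\#\widehat{A}=E\rhd(R\o\widehat{A})$. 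For that I take $y\in \widehat{A}_s$ and apply \eqref{yru} in the form $r(y\rhd r'')=(S^{-1}(y)\rhd r)r''$, which turns the $E$-twist into an identity.

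\textbf{Part (1), right $R^{\text{co}A}$-action and compatibility.} The right action $r\lhd x=rx$ is multiplication in $M(R)$. The product $rx$ lands in $R$ because $R$ is an ideal of $M(R)$. Compatibility of the two actions means $r(\omega\rhd(r'x))=r(\omega\rhd r')x$. This is exactly condition (i) of Proposition \ref{coinvariant-equ} for coinvariants. Concretely, (iv) gives $\Gamma(r'x)=\Gamma(r')(x\o 1)$, and then I pair the second leg with $\omega$.

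\textbf{Part (2).} First I make sense of $S^{-1}(\varphi(\cdot a))$, using the antipode of $\widehat{A}$, which is bijective by regularity. The formula then reads as a right action $r\lhd\omega=S^{-1}(\omega)\rhd r$. This is a right action because $S^{-1}$ is anti-multiplicative.

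For the smash product I must verify
$$\big(r\lhd(s\#\omega)\big)\lhd(s'\#\omega')=r\lhd\big((s\#\omega)(s'\#\omega')\big).$$
Expanding, I need $S^{-1}(\omega')\rhd\big((S^{-1}(\omega)\rhd(rs))s'\big)$ to equal $S^{-1}(\omega_{(2)}\omega')\rhd\big(rs(\omega_{(1)}\rhd s')\big)$. I plan to derive this from the module algebra property together with the identity
$$\sum S^{-1}(\omega_{(2)})\rhd\big(u(\omega_{(1)}\rhd v)\big)=(S^{-1}(\omega)\rhd u)v.$$
This identity is the analogue of formula \eqref{left-r} read backwards, and it uses $\sum S^{-1}(\omega_{(3)})\omega_{(2)}\o\omega_{(1)}$-type cancellations, giving $\varepsilon'_s$-factors. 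Those factors are absorbed by \eqref{yru}.

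The left action $x\rhd r=xr$ for coinvariants is compatible with this right action by condition (a) of Proposition \ref{coinvariant-equ}, namely $\omega\rhd(xr)=x(\omega\rhd r)$.

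Finally, I use idempotency of $R$ to show that both module structures are nondegenerate or unital where needed. I also use it to cancel test elements when identifying multipliers.

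\textbf{Main obstacle.} I expect the main obstacle to be the associativity of the right action in (2). There the antipode, the $\varepsilon'_s$ terms and the canonical idempotent $E$ of $\widehat{A}$ all interact. My first attempt will be to reduce it, via the $\Gamma$-picture, to an identity in $\widehat{A}$ tested against points $x\in A$, as in the proofs of Proposition \ref{2-1}, using \eqref{integral-left}. If that fails, I will fall back on \eqref{left-r} directly.
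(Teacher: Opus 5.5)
Your plan is essentially the paper's proof. Both get the left $\widehat{A}$-module algebra structure from the coaction, relying on left fullness for unitality, and both treat the left $R\#\widehat{A}$-action as standard (the paper cites \cite{zhou2017duality}; you verify it directly). Both also reduce associativity of the right action in (2) to the identity $\sum S^{-1}(\hat a_{(2)})\rhd(r(\hat a_{(1)}\rhd r'))=(S^{-1}(\hat a)\rhd r)r'$, use idempotency of $R$ for unitality, and leave the coinvariant compatibilities via (i) and (a) of Proposition~\ref{coinvariant-equ} as routine.

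Two small corrections to your sketch. The factors arising in that identity are $\varepsilon'_t(b)=\sum S^{-1}(b_{(2)})b_{(1)}$, which lie in the \emph{target} algebra, not $\varepsilon'_s$-factors. Likewise, the form $r(y\rhd u)=(S^{-1}(y)\rhd r)u$ of \eqref{yru} holds for $y$ in the target algebra, so that $S^{-1}(y)$ is source-type; it fails for general $y\in\widehat{A}_s$.
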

\begin{proof}
By Theorem \ref{action-coaction}, $R$ is a left $\hat{A}$-module algebra with action $\rhd$. Then we can form the smash product $R\# \widehat{A}$. It was proved in \cite{zhou2017duality}  that $R$ is a unital left $R\# \widehat{A}$-module. Next we show that $R$ is a unital right $R\# \widehat{A}$-module. We have
   \begin{align*}
       (r\lhd (r'\# \varphi(\cdot a)))\lhd (r''\# \omega(\cdot b))
       &= (S^{-1}(\varphi(\cdot a))\rhd (rr'))\lhd (r''\# \omega(\cdot b))\\
       &=S^{-1}(\omega(\cdot b))\rhd ((S^{-1}(\varphi(\cdot a))\rhd (rr'))r'')
   \end{align*}
and   
    \begin{align*}
       r\lhd ((r'\# \varphi(\cdot a))(r''\# \omega(\cdot b)))
       &= \sum r\lhd (r'(\varphi(\cdot a)_{(1)}\rhd r'')\# \varphi(\cdot a)_{(2)}\omega(\cdot b))\\
       &=\sum S^{-1}(\omega(\cdot b))\rhd (S^{-1}(\varphi(\cdot a)_{(2)})\rhd (rr'(\varphi(\cdot a)_{(1)}\rhd r'')))\\
       &=S^{-1}(\omega(\cdot b))\rhd ((S^{-1}(\varphi(\cdot a))\rhd (rr'))r'').
   \end{align*}
In the last equality we use 
   $$\sum S^{-1}(\hat{a}_{(2)})\rhd (r(\hat{a}_{(1)}\rhd r'))=\sum (S^{-1}(\hat{a}_{(2)})\rhd r)(\varepsilon'_t(\hat{a}_{(1)})\rhd r') \stackrel{\eqref{yru}}{=}(S^{-1}(\hat{a})\rhd r)r'$$
for any $\hat{a}\in \hat{A}$. Since $R$ is idempotent, the right $R\# \widehat{A}$-module is unital. The remaining steps are routine.
\end{proof}

\begin{lemma}
Assume that there exits a two-sided integral  $\varphi$ on $A$. Define a map $[\cdot,\cdot]: R\o_{R^{\text{co} A}} R \rightarrow R\# \hat{A}$ by 
    $$[r, r']=\sum rr'_{(0)}\# \varphi(r'_{(1)}\cdot)$$
for all $r,r'\in R$. Then $[\cdot , \cdot]$ is a $(R\# \widehat{A},R\# \widehat{A})$-bimodule map.
\end{lemma}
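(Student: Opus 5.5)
We first check that $[\cdot,\cdot]$ is well defined and lands in $R\#\widehat{A}$, and then verify the two module-map identities separately. Throughout, the left $R\#\widehat{A}$-action is the one of part (1) of the preceding lemma, the right action is the one of part (2), and the right leg of $\Gamma(r')$ is covered by the integral through the pairing, as in Remark \ref{cover-1}.

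\textbf{Well-definedness.} Since $\varphi$ is two-sided, $\varphi(r'_{(1)}\cdot)$ makes sense as an element of $\widehat{A}$: we rewrite $\sum r r'_{(0)}\o r'_{(1)}$ using the regularity of $W_2$, so $\sum rr'_{(0)}\# \varphi(r'_{(1)}\cdot)$ is a finite sum in $R\o \widehat{A}$. That it lies in $R\#\widehat{A}=E\rhd(R\o\widehat{A})$ should follow from $P(r\o 1)=(id\o\varepsilon_t)\Gamma(r)$ (Proposition \ref{second}(4a)), using $\Gamma(r')=P\Gamma(r')$. Balancing over $R^{\text{co}A}$ uses (d) of Proposition \ref{coinvariant-equ}: $\Gamma(xr')=(x\o 1)\Gamma(r')$ gives $[r,xr']=[rx,r']$.

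\textbf{Left linearity.} We must show
$$(s\#\omega)[r,r']=[(s\#\omega)\rhd r,\,r'] .$$
We expand the smash product $(s\#\omega)(rr'_{(0)}\#\varphi(r'_{(1)}\cdot))=\sum s(\omega_{(1)}\rhd rr'_{(0)})\#\omega_{(2)}\varphi(r'_{(1)}\cdot)$. By Proposition \ref{main-1} the action is module-algebraic, and $\omega_{(1)}\rhd r'_{(0)}$ is computed through $\Gamma$, namely $\omega\rhd t=(id\o\langle\cdot,\omega\rangle)\Gamma(t)$. Coassociativity then moves everything into the right leg. The key identity needed is a left-invariance trick: for $\omega\in\widehat{A}$,
$$\sum \langle a_{(1)},\omega\rangle\,\varphi(a_{(2)}\cdot)=\omega\,\varphi(a\cdot)$$
in $\widehat{A}$, up to the $\varepsilon_t$ correction coming from $E$. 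This collapses the expression to $s(\omega\rhd r)r'_{(0)}\#\varphi(r'_{(1)}\cdot)$.

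\textbf{Right linearity.} We must show $[r,r']\,(s\#\omega)=[r,\,r'\lhd(s\#\omega)]$, where $r'\lhd(s\#\omega)=S^{-1}(\omega)\rhd(r's)$. Here we use the right invariance of $\varphi$, in the form of equation \eqref{integral-right}, to absorb $S^{-1}(\omega)$ acting on the first leg into a multiplication in $\widehat{A}$. We also use the homomorphism property $\Gamma(r's)=\Gamma(r')\Gamma(s)$ to produce the factor $s$. The two-sidedness of $\varphi$ is essential precisely here. Both computations are best checked by pairing the $\widehat{A}$-leg with an arbitrary $x\in A$ and comparing scalar-valued expressions, as done in Section 4.

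\textbf{Main obstacle.} I expect the right-module identity to be the hardest step. Its difficulty lies in handling $S^{-1}$ on $\widehat{A}$ together with the weak corrections by $E$ ($\varepsilon_t$, $\varepsilon'_s$). I would first try to reduce these corrections using (\ref{xE}), (\ref{Ey}) and formula (\ref{yru}).
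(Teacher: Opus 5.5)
Your overall route is the paper's. You balance via (d) of Proposition~\ref{coinvariant-equ} after using the regularity of $W_2$, get the left-module identity from left invariance of $\varphi$, and get the right-module identity from right invariance together with $\Gamma(r's)=\Gamma(r')\Gamma(s)$. Everything is checked by evaluating the $\widehat{A}$-leg at points $c\in A$, and your right-module sketch matches the paper's. Your worry about landing in $E\rhd(R\o\widehat{A})$ is moot, since $x\#\omega$ already denotes the projected element.

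There is a gap in the one formula you make explicit for the left-module step. The product of $\widehat{A}$ is $(\omega\omega')(c)=\sum\omega(c_{(1)})\omega'(c_{(2)})$. With it, equation \eqref{integral-left} gives exactly, with no correction term,
\[
\omega\,\varphi(a\,\cdot)=\sum \langle S(a_{(1)}),\omega\rangle\,\varphi(a_{(2)}\,\cdot).
\]
Your version without $S$ is false: for a group algebra $\mathrm{C}G$ and $a=\lambda_g$, it pairs $\omega$ with $\lambda_g$ instead of $\lambda_{g^{-1}}$.

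The antipode is exactly what makes the computation close. After your expansion you have
\[
\sum s(\omega_{(1)}\rhd r)\,r'_{(0)}\langle r'_{(1)},\omega_{(2)}\rangle\#\omega_{(3)}\varphi(r'_{(2)}\cdot).
\]
Your identity turns this into $\sum s\,r_{(0)}r'_{(0)}\langle r_{(1)}r'_{(1)}r'_{(2)},\omega\rangle\#\varphi(r'_{(3)}\cdot)$, which does not collapse. The correct identity instead produces
\[
\langle r_{(1)}r'_{(1)}S(r'_{(2)}),\omega\rangle=\langle r_{(1)}\varepsilon_t(r'_{(1)}),\omega\rangle .
\]
In the paper's evaluation form this is its third equality, namely $\sum y_{(1)}c_{(1)}\varphi(y_{(2)}c_{(2)})=\sum\varepsilon_t(y_{(1)})\varphi(y_{(2)}c)$.

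You then still have to remove the factor $\varepsilon_t(r'_{(1)})$. This is a separate step, not a vague ``$E$-correction''. The paper moves the $\varepsilon_t$ to $\widehat{A}$ using $\langle\varepsilon_t(b),\hat a\rangle=\langle b,\varepsilon_t(\hat a)\rangle$ and $\langle a,1\rangle=\varepsilon(a)$, and then absorbs the resulting factor $\varepsilon(r'_{(1)}r''_{(1)})$. More directly, coassociativity together with $\Gamma(r)P=\Gamma(r)$ and $P(r'\o 1)=(id\o\varepsilon_t)\Gamma(r')$ gives
\[
\sum r_{(0)}r'_{(0)}\o r_{(1)}\varepsilon_t(r'_{(1)})\o r'_{(2)}=\sum r_{(0)}r'_{(0)}\o r_{(1)}\o r'_{(1)}.
\]
With these two repairs, your left-module argument becomes the paper's.
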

\begin{proof}
    (1) First, we have to show that $[\cdot, \cdot]$ is well defined. Since $W_2$ is regular,  we have $\sum rr'_{(0)}\# \varphi(r'_{(1)}\cdot)\in R\o \hat{A}$.  For any $r,r'\in R, x\in R^{\text{co} A}$,
    $$[rx, r']=\sum rxr'_{(0)}\# \varphi(r'_{(1)}\cdot )=\sum r(xr')_{(0)}\# \varphi ((xr')_{(1)}\cdot)=[r, xr'].$$
    In the second equality we use (d) of Proposition \ref{coinvariant-equ}. 

(2) Next we view $R\#\hat{A}$ as a subset of $\text{Hom}(A, R)$ and we prove that $[(r\o \hat{a})\rhd r', r'']$ and $(r\o \hat{a})[r',r'']$ take the same value in the points $c$ of $A$. Then we have to show 
     $$r\sum (\hat{a}\rhd r')r''_{(0)}\varphi(r''_{(1)}c)=\sum r(\hat{a}_{(1)}\rhd (r'r''_{(0)}))\hat{a}(c_{(1)})\varphi(r''_{(1)}c_{(2)}).$$
We have
    \begin{align*}
        RHS &=\sum r(\hat{a}_{(1)}\rhd r')(\hat{a}_{(2)}\rhd r''_{(0)})\hat{a}_{(3)}(c_{(1)})\varphi(r''_{(1)}c_{(2)})\\
         &=\sum r(\hat{a}_{(1)}\rhd r')r''_{(0)}\hat{a}_{(2)}(r''_{(1)}c_{(1)})\varphi(r''_{(2)}c_{(2)})\\
         &=\sum r r'_{(0)}r''_{(0)}\left\langle r'_{(1)}, \hat{a}_{(1)} \right\rangle 
         \left\langle \varepsilon_t(r''_{(1)}), \hat{a}_{(2)} \right\rangle \varphi(r''_{(2)}c)\\
         &= \sum rr'_{(0)}r''_{(0)}\varepsilon(r'_{(1)}r''_{(1)}) \left\langle r'_{(2)}, \hat{a}  \right\rangle \varphi(r''_{(2)}c)\\
         &= \sum  rr'_{0}r''_{(0)} \left\langle r'_{(1)}, \hat{a}  \right\rangle \varphi(r''_{(1)}c)\\
         &= LHS.
    \end{align*}
In the third equality, we use the left invariance of $\varphi$. In the fourth equality we use $\left\langle a, 1 \right\rangle=\varepsilon(a)$ and $\left\langle \varepsilon_t(b), \hat{a} \right\rangle=\left\langle b,\varepsilon_t(\hat{a}) \right\rangle$. Similarly, we have to show that $[r,r'\lhd (r''\# \hat{a})]=[r, r'](r''\# \hat{a})$. It is equivalent to show that
    $$\sum r(S^{-1}(\hat{a})\rhd (r'r''))_{(0)}\# \varphi ((S^{-1}(\hat{a})\rhd (r'r''))_{(1)}\cdot)=\sum rr'_{(0)}(\varphi(r'_{(1)}\cdot)_{(1)}\rhd r'')\# \varphi(r'_{(1)}\cdot)_{(2)}\hat{a}.$$
Take any point $c\in A$, the left side becomes
        $$ \sum r(r'r'')_{(0)}\varphi((r'r'')_{(1)}c)\left\langle (r'r'')_{(2)}, S^{-1}(\hat{a}) \right\rangle,$$
the right side becomes 
        \begin{align*}
        & \sum rr'_{(0)}(\varphi(r'_{(1)}\cdot)_{(1)}\rhd r'') \left\langle c_{(1)}, \varphi(r'_{(1)}\cdot)_{(2)} \right\rangle  \left\langle c_{(2)}, \hat{a} \right\rangle \\   
         &= \sum rr'_{(0)}r''_{(0)} \left\langle r''_{(1)}, \varphi(r'_{(1)}\cdot)_{(1)}\right\rangle \left\langle c_{(1)}, \varphi(r'_{(1)}\cdot)_{(2)} \right\rangle  \left\langle c_{(2)}, \hat{a} \right\rangle\\
         &=\sum rr'_{(0)}r''_{(0)}\varphi(r'_{(1)}r''_{(1)}c_{(1)})\left\langle c_{(2)}, \hat{a} \right\rangle.
        \end{align*}
By the right invariance of $\varphi$, we get the equality.
\end{proof}

\begin{lemma}
Let $\varphi$ be a two-sided integral of $A$. Define a map  $(\cdot, \cdot) : R\o_{R\# \widehat{A}} R\rightarrow R^{\text{co}A}$ by 
    $$ (r, r')=(id\o \varphi)\Gamma(rr')$$
for all $r,r'\in R$. Then $(\cdot, \cdot)$ is a $(R^{\text{co}A}, R^{\text{co}A})$-bimodule map. 
\end{lemma}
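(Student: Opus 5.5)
The plan has four steps: show that $(id\o\varphi)\Gamma(rr')$ is a well-defined element of $M(R)$, show that it lies in $R^{\text{co}A}$, check that $(\cdot,\cdot)$ is balanced over $R\#\widehat{A}$, and finally check that it is a bimodule map.

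\emph{Well-definedness in $M(R)$.} Since $\Gamma$ is regular, $(t\o 1)\Gamma(rr')$ and $\Gamma(rr')(t\o 1)$ lie in $R\o A$ for all $t\in R$. Applying $id\o\varphi$ therefore gives well-defined left and right multipliers, and they are compatible. So $(r,r')$ is a genuine element of $M(R)$.

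\emph{Coinvariance.} I will show that $x=(id\o\varphi)\Gamma(u)$, $u\in R$, satisfies condition (iv) of Proposition \ref{coinvariant-equ}, namely $\Gamma(tx)=\Gamma(t)(x\o 1)$, or equivalently (i) for the dual action, $\hat a\rhd(tx)=(\hat a\rhd t)x$.
\begin{itemize}
\item Using the Sweedler form of Corollary \ref{general-coass}, $(\Gamma\o id)\Gamma(u)=(id\o\Delta)\Gamma(u)$, we get $\Gamma(x)=\sum u_{(0)}\o u_{(1)}\varphi(u_{(2)})$.
\item Left invariance of $\varphi$ gives $(id\o\varphi)\Delta(a)\in A_t$, and in the weak setting more precisely $\sum a_{(1)}\varphi(a_{(2)})$ behaves like an element of the target algebra.
\item Since $\varphi$ is two-sided, I expect the cleaner route to be via the right leg: $\sum a_{(1)}\varphi(a_{(2)})=\varepsilon_t$-type expression, combined with Proposition \ref{second}(4a), $P(t\o 1)=(id\o\varepsilon_t)\Gamma(t)$. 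Together these should give $\Gamma(x)=(1\o 1)\cdot P(x\o 1)$, i.e.\ condition (v).
\end{itemize}
Concretely, I would multiply on the left by $t\o 1$, use $(t\o 1)\Gamma(x)=\sum tu_{(0)}\o (id\o\varphi)(\Delta(u_{(1)}))$, and rewrite $(id\o\varphi)\Delta(a)$ via the identity \eqref{integral-left}/\eqref{Ey} as $\varepsilon_t$ applied inside. Then $(t\o1)P(x\o 1)=\sum tu_{(0)}\o \varepsilon_t(u_{(1)})\varphi(u_{(2)})$ should match, using $\varepsilon_t(a_{(1)})\varphi(a_{(2)})$ against $a_{(1)}\varphi(a_{(2)})$. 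This step is the main obstacle. The difficulty is that in the weak case $(id\o\varphi)\Delta(a)$ is only in $A_t$ rather than a scalar multiple of $1$, and one must check that $y\in A_t$ satisfies $\Delta(y)=E(y\o1)$, which yields exactly the factor $P$ through $\mathcal H=(P\o1)(1\o E)$ (Proposition \ref{1EP1}).

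\emph{Balancedness over $R\#\widehat{A}$.} I need $(r\lhd(r''\#\hat a),r')=(r,(r''\#\hat a)\rhd r')$, which amounts to
$$(id\o\varphi)\Gamma\big((S^{-1}(\hat a)\rhd(rr''))r'\big)=(id\o\varphi)\Gamma\big(rr''(\hat a\rhd r')\big).$$
I will write both sides as pairings: $\hat a\rhd t=\sum t_{(0)}\langle t_{(1)},\hat a\rangle$, and expand with the homomorphism property of $\Gamma$. This reduces the claim to the identity
$$\sum \varphi(b\,c_{(1)})\langle c_{(2)},\hat a\rangle=\sum \varphi(b_{(1)}c)\langle b_{(2)},S^{-1}(\hat a)\rangle,$$
which is the standard consequence of left/right invariance of a two-sided integral (the same manipulation as \eqref{integral-left}, \eqref{integral-right}).

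\emph{Bimodule property.} For $x\in R^{\text{co}A}$, (d) and (iv) of Proposition \ref{coinvariant-equ} give $\Gamma(xrr')=(x\o1)\Gamma(rr')$ and $\Gamma(rr'x)=\Gamma(rr')(x\o1)$. Hence $(xr,r')=x(r,r')$ and $(r,r'x)=(r,r')x$ immediately. Finally, compatibility with the tensor product over $R\#\widehat{A}$ follows from the balancedness above.
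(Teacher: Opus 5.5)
Your plan follows the paper's proof step for step. Coinvariance of $x=(id\o\varphi)\Gamma(u)$ uses coassociativity, $(id\o\varphi)\Delta(a)\in A_t$ and (4a) of Proposition \ref{second}. Balancedness reduces to exactly the identity you wrote; note that this is \emph{right} invariance, namely \eqref{integral-right} rewritten as $\sum\varphi(bc_{(1)})c_{(2)}=\sum\varphi(b_{(1)}c)S^{-1}(b_{(2)})$. The bimodule property comes from (d) and (iv) of Proposition \ref{coinvariant-equ}.

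The one thing you leave open is the coinvariance step you call the main obstacle, and it needs nothing further. Put $y_a=(id\o\varphi)\Delta(a)\in A_t$. Then $\Delta(y_a)=E(y_a\o 1)$ holds by the definition of $A_t$, so there is nothing to check. Apply $\varepsilon\o id$ and use the standard formula $\varepsilon_t(c)=(\varepsilon\o id)(E(c\o 1))$: this gives $\varepsilon_t(y_a)=(\varepsilon\o id)\Delta(y_a)=y_a$, i.e. $\sum\varepsilon_t(a_{(1)})\varphi(a_{(2)})=\sum a_{(1)}\varphi(a_{(2)})$. Hence
\[
\Gamma(x)=\sum u_{(0)}\o u_{(1)}\varphi(u_{(2)})=\sum u_{(0)}\o \varepsilon_t(u_{(1)})\varphi(u_{(2)})=(id\o\varepsilon_t)\Gamma(x)=P(x\o 1),
\]
where the last equality is (4a), applied to the $u_{(0)}\in R$ as you already did. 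This is precisely the paper's one-line argument.

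The detour through $\mathcal H=(P\o 1)(1\o E)$ would not do this job. Inserting $\mathcal H=(id\o\Delta)(P)$ in front of $(id\o\Delta)\Gamma(u)$ and applying $id\o id\o\varphi$ only gives $\Gamma(x)=P\Gamma(x)$. That holds for every $m\in M(R)$, since $\Gamma(1)=P$, and it says nothing about the factor $x\o 1$. The needed information sits in the second leg ($y_a\in A_t$, so $\varepsilon_t(y_a)=y_a$), and (4a) is what converts it into $P(x\o 1)$.
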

\begin{proof}
Denote $(id \o \varphi)\Gamma(r)=x$, then $x$ is a multiplier in the natural way. By $(id\o \varphi)\Delta(a)\in A_t$ and $P(r\o 1)=(id\o \varepsilon_t)\Gamma(r)$, we have
    $$\Gamma(x)=\sum r_{(0)}\o \varepsilon_t(r_{(1)})\varphi(r_{(2)})=P(x\o 1),$$
so $x$ is a coinvariant. We need to show that $(r\lhd (r''\# \hat{a}), r')=(r, (r''\# \hat{a})\rhd r')$. It is equivalent to show that 
    $$\sum tr_{(0)}r'_{(0)}\varphi(r_{(1)}r'_{(1)})\left\langle S^{-1}(r_{(2)}), \hat{a} \right\rangle=\sum tr_{(0)}r'_{(0)}\varphi(r_{(1)}r'_{(1)})\left\langle r'_{(2)}, \hat{a} \right\rangle$$
for any $t\in R$. This is true because of the right invariance of $\varphi$. So the map $(\cdot, \cdot)$ is well defined. Next, for any $x\in R^{\text{co} A}$ we have
    $$(xr, r')=(id\o \varphi)\Gamma(xrr') =(x\o 1)(id\o \varphi)\Gamma(rr')=x(r, r').$$
In the second equality we use $\Gamma(xr)=(x\o 1)\Gamma(r)$. Similarly, by $\Gamma(rx)=\Gamma(r)(x\o 1)$ we can obtain $(r,r')x=(r, r'x)$. So $(\cdot, \cdot)$ is a bimodule map.
\end{proof}

Finally, we generalize \cite[Theorem 4.3]{van1999galois} to the case of  weak algebraic quantum groups with a two-sided faithful integral $\varphi$.

\begin{proposition}\label{morita context}
Let $(A, \Delta)$ be a weak algebraic quantum group with a two-sided faithful integral $\varphi$ and $(R,\Gamma, P)$ an $A$-comodule algebra. Then 
    $$\{R\#\hat{A}, R^{\text{co} A}, R, R, [\cdot,\cdot],(\cdot,\cdot) \}$$
is a Morita context.
\end{proposition}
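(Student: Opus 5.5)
The plan is to use the two preceding lemmas, which already supply the bimodule structures and show that $[\cdot,\cdot]$ and $(\cdot,\cdot)$ are well-defined bimodule maps. A Morita context then needs only the two associativity identities linking the brackets with the module actions. Writing $M=R$ for the $(R\#\hat A, R^{\text{co}A})$-bimodule and $N=R$ for the $(R^{\text{co}A}, R\#\hat A)$-bimodule, these are
$$[r,r']\rhd r''=r\,(r',r'') \quad\text{and}\quad r\lhd [r',r'']=(r,r')\,r''$$
for all $r,r',r''\in R$. Here the left side of the first identity is the left action of $R\#\hat A$ on $M$, and the right side is the right action of $R^{\text{co}A}$ on $M$. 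In the second identity the left side is the right action of $R\#\hat A$ on $N$, and the right side is the left action of $R^{\text{co}A}$ on $N$. Since $R$ is nondegenerate, I will verify each identity after multiplying by an arbitrary $t\in R$, so that every Sweedler leg is covered, using regularity of $W_2$ and $W_4$.

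For the first identity, $[r,r']=\sum rr'_{(0)}\#\varphi(r'_{(1)}\cdot)$ acts on $r''$ by
$$\sum rr'_{(0)}\big(\varphi(r'_{(1)}\cdot)\rhd r''\big)=\sum rr'_{(0)}r''_{(0)}\varphi(r'_{(1)}r''_{(1)}),$$
because the action of $\hat A$ comes from the coaction via \eqref{co-action}: $\omega\rhd r''=\sum r''_{(0)}\omega(r''_{(1)})$. Since $\Gamma$ is a homomorphism, the right-hand side is $r\,(id\o\varphi)\Gamma(r'r'')=r(r',r'')$. This direction is essentially immediate; the only care needed is the covering argument when pairing $\varphi(r'_{(1)}\cdot)$ against $r''_{(1)}$, which I will handle through the description of $\hat A$ as the span of $\varphi(\cdot a)$, $\varphi(a\cdot)$ together with the formulas \eqref{gamma-left}, \eqref{gamma-right}.

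The second identity is the main obstacle. The right action on $N$ is $r\lhd(r'\#\hat a)=S^{-1}(\hat a)\rhd(rr')$, so
$$r\lhd[r',r'']=\sum S^{-1}\big(\varphi(r''_{(1)}\cdot)\big)\rhd(rr'r''_{(0)}).$$
I will rewrite $S^{-1}(\varphi(b\,\cdot))=\varphi(S^{-1}(\cdot)\,b)\cdot$ in a form where the antipode of $\hat A$ is the transpose of the antipode of $A$, so the expression becomes $\sum (rr'r''_{(0)})_{(0)}\,\varphi\big(S^{-1}((rr'r''_{(0)})_{(1)})\,r''_{(1)}\big)$. Expanding with the homomorphism property and coassociativity (Corollary \ref{general-coass}), the factor $S^{-1}(r''_{(1)})r''_{(2)}$ produces $\varepsilon'$-type source/target elements. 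I then want to collapse $\sum (rr')_{(0)}\varphi(S^{-1}((rr')_{(1)})\cdots)$ using two facts: $\varphi$ is both left and right invariant, so $\varphi\circ S^{-1}$ is again a two-sided integral (possibly up to the modular element of Lemma \ref{faithfulintegral}, which should be trivial for a two-sided faithful integral); and $P(r\o1)=(id\o\varepsilon_t)\Gamma(r)$ from Proposition \ref{second}(4a) absorbs the target-algebra leg into $r''$. The goal is to obtain $\sum (rr')_{(0)}\varphi((rr')_{(1)})\,r''=(r,r')r''$.

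If the direct computation gets stuck on the antipode, my fallback is to pair both sides with points $c\in A$ viewed in $\mathrm{Hom}(A,R)$, as in the proof of the preceding lemma, and to use the invariance formulas \eqref{integral-left} and \eqref{integral-right}. Finally I will note that faithfulness of $\varphi$ is not needed for the Morita identities themselves. It enters only later, in the surjectivity and Galois statements.
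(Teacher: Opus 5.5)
Your plan is the paper's proof. The preceding lemmas supply the bimodules and the balanced bimodule maps, and $[r,r']\rhd r''=r(r',r'')$ follows directly from $\Gamma$ being a homomorphism. For $r\lhd[r',r'']=(r,r')r''$ you expand, extract a target-algebra leg of $r''$, absorb it using $P(r''\otimes 1)=(id\otimes\varepsilon_t)\Gamma(r'')$ together with $\Gamma(rr')P=\Gamma(rr')$, and finish with $\varphi\circ S=\varphi$. That last step needs exactly this equality, as the paper uses it; knowing only that $\varphi\circ S^{-1}$ is again a two-sided integral is not enough.

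One slip needs fixing. The antipode of $\hat A$ is the transpose of $S$, so $\hat S^{-1}(\varphi(b\,\cdot))=\varphi(b\,S^{-1}(\cdot))$, not $\varphi(S^{-1}(\cdot)\,b)$. With your order, the legs $S^{-1}(r''_{(1)})$ and $r''_{(2)}$ sit at opposite ends inside $\varphi$, and in any case $\sum S^{-1}(a_{(1)})a_{(2)}$ is not a source/target element in general. With the correct order you get $\varphi\bigl(r''_{(2)}S^{-1}(r_{(1)}r'_{(1)}r''_{(1)})\bigr)=\varphi\bigl(S^{-1}(r_{(1)}r'_{(1)}\varepsilon_t(r''_{(1)}))\bigr)$, which is exactly the paper's second line.
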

\begin{proof}
We only need to show that $[r, r']\rhd r''=r(r',r'')$ and $(r,r')r''=r\lhd [r',r'']$. And we have
     $$[r,r']\rhd r''=\sum (rr'_{(0)}\# \varphi(r'_{(1)}\cdot))\rhd r''=\sum rr'_{(0)} \varphi(r'_{(1)}\cdot)\rhd r''=\sum r(r'r'')_{(0)}\varphi((r'r'')_{(1)})=r(r',r'').$$
Next, for any $t\in R$,
     \begin{align*}
         t(r\lhd [r',r'']) &=\sum tr\lhd (r'r''_{(0)}\# \varphi(r''_{(1)}\cdot))\\
         &= \sum tr_{(0)}r'_{(0)}r''_{(0)}\varphi(S^{-1}(r_{(1)}r'_{(1)}\varepsilon_t(r''_{(1)})))\\
         &= \sum tr_{(0)}r'_{(0)}r''\varphi(S^{-1}(r_{(1)}r'_{(1)}))\\
         &= \sum tr_{(0)}r'_{(0)}r''\varphi(r_{(1)}r'_{(1)})\\
         &= t(r,r')r''.
     \end{align*}
In the third equality we use $P(r\o 1)=(id\o \varepsilon_t)\Gamma(r)$. In the fourth equality we use $\varphi\circ S=\varphi$. Cancel $t$, then we obtain the equation. 
\end{proof}

Since $\varphi$ is a two-sided faithful integral, by Lemma \ref{faithfulintegral} we have $\hat{A}=\{\varphi(a\cdot)\mid \forall a\in A\}$. Then the map $\alpha: a\mapsto \varphi(a \cdot)$ from $A$ to $\widehat{A}$ is bijective. 

\begin{corollary}\label{galois}
    Let $(A, \Delta)$ be a weak algebraic quantum group with a two-sided faithful integral $\varphi$ and $(R,\Gamma, P)$ an $A$-comodule algebra. Then the following are equivalent:
    \begin{enumerate}[(1)]
        \item $\Gamma$ is a Galois coaction.
        \item $\nu$ is surjective.
        \item $[\cdot, \cdot]$ is surjective.
    \end{enumerate}
\end{corollary}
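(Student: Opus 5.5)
(1)$\Rightarrow$(2) holds by definition, since a Galois coaction has $\nu$ bijective. The two remaining implications are (2)$\Leftrightarrow$(3) and (3)$\Rightarrow$(1), and I would obtain both from the Morita context of Proposition \ref{morita context} together with the bijection $\alpha: a\mapsto \varphi(a\cdot)$ from $A$ onto $\widehat{A}$.

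For (2)$\Leftrightarrow$(3) I will write $[\cdot,\cdot]$ as a composite
$$[\cdot,\cdot]=\Theta\circ\nu,\qquad \Theta: (R\o A)P\to R\#\widehat{A},\quad \Theta(r\o a)=r\# \varphi(a\cdot)=(id\o\alpha)(r\o a)\ \text{(restricted)}.$$
This works because $[r,r']=\sum rr'_{(0)}\#\varphi(r'_{(1)}\cdot)=(id\o\alpha)((r\o 1)\Gamma(r'))$. The key check is that $id\o\alpha$ maps $(R\o A)P$ bijectively onto $R\#\widehat{A}=E\rhd(R\o\widehat{A})$. Bijectivity of $\alpha$ already gives injectivity on all of $R\o A$. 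For the range, I transport the idempotent $P$ through $\alpha$. Using $(r\o 1)P=(id\o\varepsilon'_t)\Gamma(r)$ (item (4b) of Proposition \ref{second}) and the relation $\varphi(\varepsilon'_t(b)a\,\cdot)=$ (action of $E$ on the $\widehat{A}$ leg), I expect $(id\o\alpha)$ to intertwine right multiplication by $P$ with the projection $r\o\hat{a}\mapsto E\rhd(r\o\hat{a})$ defining $R\#\widehat{A}$. Once this holds, $\Theta$ is a linear isomorphism, and $\nu$ is surjective iff $[\cdot,\cdot]$ is.

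For (3)$\Rightarrow$(1) I would use the standard Morita-context argument. Suppose $[\cdot,\cdot]$ is surjective. Then it hits an element acting as a local unit, and the usual argument shows $[\cdot,\cdot]$ is injective. Concretely, take $\sum_i r_i\o r'_i$ in the kernel and choose $\sum_j[s_j,s'_j]$ acting as the identity on the finitely many $r_i$; this uses local units in $R\#\widehat{A}$, which follow from those of $R$ and $\widehat{A}$. Then
$$\sum_i r_i\o r'_i=\sum_{i,j}[s_j,s'_j]\rhd r_i\o r'_i=\sum_{i,j} s_j(s'_j,r_i)\o r'_i=\sum_{i,j} s_j\o (s'_j,r_i)r'_i=\sum_{i,j} s_j\o s'_j\lhd[r_i,r'_i]=0.$$
Here I use the Morita identities from Proposition \ref{morita context} and move the coinvariant $(s'_j,r_i)$ across $\o_{R^{\text{co}A}}$. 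Hence $[\cdot,\cdot]$ is bijective, and since $\Theta$ is an isomorphism, $\nu=\Theta^{-1}\circ[\cdot,\cdot]$ is bijective.

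I expect the main obstacle to be the identification $(id\o\alpha)((R\o A)P)=R\#\widehat{A}$. It requires matching the two idempotents, $P$ on the $A$ side and $E$ acting on the $\widehat{A}$ side. Two ingredients are needed: the formula for $(r\o 1)P$ in terms of $\varepsilon'_t$, and the way $\varepsilon'_t(A)$ (equivalently, the legs of $E$) translates under $a\mapsto\varphi(a\cdot)$ using two-sided invariance of $\varphi$. A second, smaller point is the existence of local units in $R\#\widehat{A}$ acting on $R$. If that fails, I would instead use unitality of the left module, $(R\#\widehat{A})\rhd R=R$ with $R$ idempotent, combined with surjectivity, to write each $r_i$ as $\sum_j [s_j,s'_j]\rhd t_j$ and run the same computation.
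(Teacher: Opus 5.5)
Your (1)$\Rightarrow$(2) and (2)$\Leftrightarrow$(3) agree with the paper, which simply writes $[\cdot,\cdot]=(id\otimes\alpha)\circ\nu$ without checking the codomain. The identification you flag does hold, and it needs no invariance of $\varphi$. First, $(r\otimes a)P=\sum r_{(0)}\otimes a\,\varepsilon'_t(r_{(1)})$; note the factor sits to the right of $a$, not the left as you wrote. Second, $\sum\varepsilon(bc_{(1)})c_{(2)}=\varepsilon'_t(b)c$. Together these show that $(id\otimes\alpha)((r\otimes a)P)$ and $\widehat{E}\rhd(r\otimes\varphi(a\,\cdot))$, where $\widehat{E}$ is the canonical idempotent of $\widehat{A}$, both take the value $\sum r_{(0)}\varphi(a\,\varepsilon'_t(r_{(1)})c)$ at every $c\in A$.

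The gap is in (3)$\Rightarrow$(1), at the step where you choose $\sum_j[s_j,s'_j]$ acting as the identity on $r_1,\dots,r_n$. You justify this by local units of $R$. But $R$ is only assumed nondegenerate (and idempotent in the bimodule lemma), and neither $R$ nor $R\#\widehat{A}$ is assumed or shown to have local units.

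Your fallback does not repair this. Write $r_i=\sum_j[s_{ij},s'_{ij}]\rhd t_{ij}$ and run the same computation; you get $\sum_i r_i\otimes r'_i=\sum_{i,j}s_{ij}\otimes s'_{ij}\lhd[t_{ij},r'_i]$. Because $s_{ij},s'_{ij}$ depend on $i$, you can never pull out $\sum_i[r_i,r'_i]$, so the hypothesis that this sum vanishes is never used. A \emph{common} local identity is exactly what the standard Morita argument needs.

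Such an identity is available if you add the hypothesis that $R$ has left local units. Take $u\in R$ with $ur_i=r_i$, and $\hat a\in\widehat{A}$ with $\hat a\rhd r_i=r_i$ (from unitality of the $\widehat{A}$-action plus local units of $\widehat{A}$). Then $u\#\hat a$ acts on $R$ by $r\mapsto u(\hat a\rhd r)$ and fixes every $r_i$, and surjectivity writes it as $\sum_j[s_j,s'_j]$. Under the stated hypotheses alone, the step is unjustified.

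Do not try to bypass this by following the paper's route. The paper claims $[\cdot,\cdot]$ is injective outright: from $[r,r']=0$ it gets $(r\otimes 1)\Gamma(r')=0$ by faithfulness of $\varphi$, then invokes injectivity of $\Gamma$ to conclude $r\otimes r'=0$. Besides treating only simple tensors, that last inference is false. Take $G=\mathbb{Z}/2=\{e,g\}$ and $R=\mathbb{C}[x]/(x^2)$ graded with $x$ in degree $g$. Then $R^{\mathrm{co}A}=\mathbb{C}$, and $\nu(x\otimes x)=x^2\otimes\lambda_g=0$ while $x\otimes x\neq 0$.

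So injectivity really has to be extracted from surjectivity, as in your Morita-context argument. What your proposal is missing is a justified source for the common local identity.
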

\begin{proof}
    Note that we have $[\cdot,\cdot]=(id \o \alpha)\circ \nu$. Then we only need to show that the map $[\cdot, \cdot]$ is injective. Assume that $[r,r']=0$, then $\sum rr'_{(0)}\varphi(r'_{(1)}a)=0$ for all $a\in A$. Apply any $\omega\in A'$, then $\sum \varphi(\omega(rr'_{(0)})r'_{(1)}a)=0$ for all $a\in A$. Since $\varphi$ is faithful, we get $(\omega\o id)((r\o 1)\Gamma(r'))=0$ for any $\omega$. So $(r\o 1)\Gamma(r')=0$. Recall that $\Gamma$ is injective, we get $r\o r'=0$.
\end{proof}

As mentioned in \cite{van2024duality}, many examples have a faithful integral, and there are known examples have a faithful set of integrals not a single faithful one. So it is expected that results above (Proposition \ref{morita context} and Corollary \ref{galois}), proven in the case where a single \textit{two-sided} faithful integrals exists, will also have a similar results in the general situation. This calls for a deeper investigation into the integral theory of WMHAs, which is a subject for subsequent research.

\section{Groupoid actions, gradings and Morita context}\label{groupoid-example}		

\subsection{Groupoid actions}
	
Let $G$ be any groupoid, $\mathrm{C}G$ be the standard examples of weak multiplier Hopf algebras (weak algebraic quantum groups), and the dual of $kG$ is denoted by $\mathrm{C}(G)$. Assume that $A$ is a nondegenerate algebra. Let $A$ be a left $\mathrm{C}G$-module algebra. First we generalize \cite[Proposition 4.1]{caenepeel2007galois} to our setting. 
	
\begin{proposition}
$\{\lambda_e\rhd 1\mid e\in G^0 \}$ is a set of central orthogonal idempotents in $M(A)$.
\end{proposition}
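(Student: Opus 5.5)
We identify the structure of $\mathrm{C}G$ first. It has basis $\{\lambda_p\mid p\in G\}$ with product $\lambda_p\lambda_q=\lambda_{pq}$ if $\s(p)=\t(q)$ and $0$ otherwise, and coproduct $\Delta(\lambda_p)=\lambda_p\o\lambda_p$. The canonical idempotent is $E=\sum_{e\in G^0}\lambda_e\o\lambda_e$, with $\varepsilon(\lambda_p)=1$ and $S(\lambda_p)=\lambda_{p^{-1}}$. Hence $\varepsilon_t(\lambda_p)=\lambda_p\lambda_{p^{-1}}=\lambda_{\t(p)}$ and $\varepsilon_s(\lambda_p)=\lambda_{\s(p)}$. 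In particular, for $e\in G^0$ the element $\lambda_e$ lies in $\varepsilon_t(A)\cap\varepsilon_s(A)$ and is fixed by $S$. By the extension of the action to $M(A)$ (formulas \eqref{right-r}, \eqref{left-r}), the element $z_e:=\lambda_e\rhd 1$ is a well-defined multiplier of $A$. Since $\lambda_e\in\varepsilon_t(\mathrm{C}G)\subseteq A_t$, formula \eqref{xru} gives
$$z_e r=(\lambda_e\rhd 1)r=\lambda_e\rhd r$$
for all $r\in A$. The main identity to use is therefore that left multiplication by $z_e$ is the action of $\lambda_e$.

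Idempotence and orthogonality then follow at once. We have $z_ez_fr=\lambda_e\rhd(\lambda_f\rhd r)=(\lambda_e\lambda_f)\rhd r=\delta_{e,f}\lambda_e\rhd r=\delta_{e,f}z_er$. Cancelling $r$ by nondegeneracy gives $z_ez_f=\delta_{e,f}z_e$.

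Centrality requires showing $rz_e=z_er$, i.e.\ $r(\lambda_e\rhd 1)=\lambda_e\rhd r$. We first compute $r(\lambda_e\rhd 1)$ via \eqref{left-r}:
$$r(\lambda_e\rhd 1)=\lambda_e\rhd\big(S^{-1}(\lambda_e)\rhd r\big)=\lambda_e\rhd(\lambda_e\rhd r)=\lambda_e\rhd r.$$
As a cross-check, one can also use \eqref{yru} with $y=\lambda_e\in A_s$: for $u\in A$ it gives $(\lambda_e\rhd r)u=r(\lambda_e\rhd u)=r z_e u$, and cancelling $u$ again yields $z_er=\lambda_e\rhd r=rz_e$.

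We expect the only delicate point to be justifying that these identities hold at the level of multipliers, i.e.\ that \eqref{xru} and \eqref{yru} may be applied with $1_{M(A)}$ inserted. We would handle this by always multiplying by an arbitrary $u\in A$ and cancelling by nondegeneracy of $A$, as in the computations above. If needed, we would also note that the family $\{z_e\}$ is locally finite, since $A$ is a unital module and each $r$ is covered by finitely many $\lambda_e$.
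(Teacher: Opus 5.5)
Your proposal is correct and essentially follows the paper's proof. Both rest on the identity $(\lambda_e\rhd 1)a=\lambda_e\rhd a=a(\lambda_e\rhd 1)$, obtained from \eqref{right-r} and \eqref{left-r}, and both get orthogonality from $\lambda_{e_1}\lambda_{e_2}=0$.

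There are two minor differences. You get idempotence from $\lambda_e\lambda_e=\lambda_e$ via that identity, whereas the paper uses \eqref{modalg} with $\Delta(\lambda_e)=\lambda_e\o\lambda_e$. You also route the left-multiplication identity through \eqref{xru}. The direct computation $(\lambda_e\rhd 1)a=\lambda_e\rhd(S(\lambda_e)\rhd a)=\lambda_e\rhd a$ from \eqref{right-r} settles the multiplier-level point you flagged.
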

\begin{proof}
Recall that $\Delta(\lambda_e)=\lambda_e\o \lambda_e$,  then by equation \eqref{modalg}, we have $(\lambda_e \rhd 1)(\lambda_e\rhd 1)=(\lambda_e\rhd 1)$, so $\lambda_e\rhd 1$ is idempotent. By equation $\eqref{right-r}$, we have
$$
(\lambda_e\rhd 1)a=\lambda_e\rhd (1(S(\lambda_e)\rhd a))=\lambda_e\rhd a.
$$
By equation $\eqref{left-r}$, then we get $(\lambda_e\rhd 1)a=\lambda_e\rhd a=a(\lambda_e\rhd 1)$. 
If $e_1\neq e_2\in G^0$, then 
\begin{align*}
(\lambda_{e_1}\rhd 1)(\lambda_{e_2}\rhd 1)a&=(\lambda_{e_1}\rhd 1)(\lambda_{e_2}\rhd a)\\
&=\lambda_{e_1}\rhd (\lambda_{e_2}\rhd a)\\
&=(\lambda_{e_1}\lambda_{e_2})\rhd a\\
&=0.
\end{align*}
So these elements are orthogonal. Finally, for any $a\in A, p\in G$, we have
$$
(\lambda_p \rhd 1_A)a =(\varepsilon_t(\lambda_p)\rhd 1)a \stackrel{\eqref{xru}}{=} \varepsilon_t(\lambda_p)\rhd a= \lambda_{t(p)}\rhd a.
$$
and 
$$
a(\lambda_p \rhd 1)=a(\varepsilon_t(\lambda_p)\rhd 1)\stackrel{\eqref{yru}}{=} S^{-1}(\lambda_{t(p)})\rhd a=\lambda_{t(p)}\rhd a.
$$
\end{proof}

If $G$ is finite, the identity in $\mathrm{C}G$ is $\sum \lambda_e$ where the sum is taken over all the units. If $G$ is infinite, the same formula gives the identity in the multiplier algebra $M(\mathrm{C}G)$. So we have
$$\big(\sum_{e\in G^0}\lambda_e\big)\rhd 1_{M(A)}=1_{M(\mathrm{C}G)}\rhd 1_{M(A)}=1_{M(A)}.$$
Now the identity in $M(A)$ can be denoted by  $\sum_{e\in G^0} (\lambda_e\rhd 1_{M(A)})$. 
		  
For any $p\in G$, we define the set
$$A_p=\{\lambda_p\rhd a \mid a\in A\}.$$

\begin{proposition}
    As sets, we have $A_p=A_{\t(p)}$ and $A_{p^{-1}}=A_{\s(p)}$.
\end{proposition}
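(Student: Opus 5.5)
The plan is to compare the sets $A_p=\{\lambda_p\rhd a\}$ and $A_{\t(p)}=\{\lambda_{\t(p)}\rhd a\}$ directly, using only the groupoid algebra relations in $\mathrm{C}G$ and the fact that $A$ is a unital module. In $\mathrm{C}G$ we have $\lambda_p\lambda_q=\lambda_{pq}$ when $\s(p)=\t(q)$ and $0$ otherwise. In particular $\lambda_{\t(p)}\lambda_p=\lambda_p$ and $\lambda_p\lambda_{p^{-1}}=\lambda_{\t(p)}$.

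For the inclusion $A_p\subseteq A_{\t(p)}$, I write
$$\lambda_p\rhd a=(\lambda_{\t(p)}\lambda_p)\rhd a=\lambda_{\t(p)}\rhd(\lambda_p\rhd a)\in A_{\t(p)}.$$
For the reverse inclusion, I take $\lambda_{\t(p)}\rhd a$ and use $\lambda_{\t(p)}=\lambda_p\lambda_{p^{-1}}$ to get
$$\lambda_{\t(p)}\rhd a=\lambda_p\rhd(\lambda_{p^{-1}}\rhd a)\in A_p.$$
Together these give $A_p=A_{\t(p)}$ as sets.

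For the second statement, I apply the first one to $p^{-1}$. Since $\t(p^{-1})=p^{-1}p=\s(p)$, this gives $A_{p^{-1}}=A_{\t(p^{-1})}=A_{\s(p)}$.

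I do not expect a real obstacle here; the argument is just these two inclusions. The only point to check is that the module axiom $(ab)\rhd r=a\rhd(b\rhd r)$ is applied to elements of $\mathrm{C}G$ itself and not to multipliers. That holds, because all the $\lambda$'s involved lie in $\mathrm{C}G$, so no extension to $M(A)$ is needed. An alternative route would use $\lambda_p\rhd 1=\lambda_{\t(p)}\rhd 1$ from the previous proposition together with \eqref{xru}. That only shows $(\lambda_p\rhd 1)a=\lambda_{\t(p)}\rhd a$, which identifies $A_{\t(p)}$ with $(\lambda_{\t(p)}\rhd 1)A$ but does not by itself give $A_p=A_{\t(p)}$. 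So I will keep the direct algebraic argument.
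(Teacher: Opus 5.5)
Your proof is correct and is essentially the paper's argument: the paper's hint ``$p=pp^{-1}p$ and $\t(p)=pp^{-1}$'' unpacks to your two inclusions $\lambda_p\rhd a=\lambda_{\t(p)}\rhd(\lambda_p\rhd a)$ and $\lambda_{\t(p)}\rhd a=\lambda_p\rhd(\lambda_{p^{-1}}\rhd a)$. Getting the second claim by applying the first to $p^{-1}$, via $\t(p^{-1})=\s(p)$, is equivalent to the paper's use of $p^{-1}=p^{-1}pp^{-1}$ and $\s(p)=p^{-1}p$.
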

\begin{proof}
    By $p=pp^{-1}p$ and $\t(p)=pp^{-1}$, we get the first equation. Similarly, by $p^{-1}=p^{-1}pp^{-1}$ and $\s(p)=p^{-1}p$, we can prove the second one.
\end{proof}

\begin{proposition}
$A_p$ is a nondegenerate subalgebra of $A$. 
\end{proposition}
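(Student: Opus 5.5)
By the preceding proposition we may replace $p$ by its target: $A_p=A_{\t(p)}$. So it suffices to take a unit $e=\t(p)\in G^0$ and show that $A_e=\lambda_e\rhd A$ is a nondegenerate subalgebra. The approach is to identify $A_e$ with the corner $cAc$ of $A$, where $c=\lambda_e\rhd 1$ is a central idempotent in $M(A)$ by the earlier proposition.

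\textbf{Step 1: $A_e=cA$.} The proof of the earlier proposition gives
\[
(\lambda_e\rhd 1)a=\lambda_e\rhd a=a(\lambda_e\rhd 1)
\]
for every $a\in A$. Hence $A_e=\{ca\mid a\in A\}=cA$, and since $c$ is central this also equals $Ac=cAc$.

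\textbf{Step 2: $A_e$ is a subalgebra.} Linearity is clear. For closure under multiplication, take $ca,cb\in A_e$. Using centrality and $c^2=c$,
\[
(ca)(cb)=c^2ab=c(ab)\in A_e .
\]
As a cross-check inside the module-algebra formalism: $\Delta(\lambda_e)=\lambda_e\o\lambda_e$, so \eqref{modalg} gives $(\lambda_e\rhd a)(\lambda_e\rhd b)=\lambda_e\rhd(ab)$.

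\textbf{Step 3: nondegeneracy.} Let $x=ca\in A_e$ with $x\,(cb)=0$ for all $b\in A$. Since $cx=x$, this reads $xb=0$ for all $b$, so $x=0$ by nondegeneracy of $A$. The argument for $(cb)\,x=0$ for all $b$ is symmetric.

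The only point needing care is Step 1, namely that $c$ acts as a genuine central idempotent multiplier with $ca=\lambda_e\rhd a$. This uses the extension of the action to $M(A)$ via \eqref{right-r} and \eqref{left-r}, together with $S(\lambda_e)=\lambda_e$, and it was already established in the earlier proposition, so I expect no real obstacle. If nondegeneracy were meant with respect to the ambient product rather than internally, the same computation covers it.
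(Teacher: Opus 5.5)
Your proof is correct and follows essentially the same route as the paper. Both arguments use $A_p=A_{\t(p)}$ and the central idempotent $\lambda_{\t(p)}\rhd 1$ from the earlier proposition, absorb it into the product, and conclude by nondegeneracy of $A$; you additionally verify closure under multiplication and both one-sided nondegeneracy conditions, whereas the paper calls closure obvious and checks only one side.
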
	
\begin{proof}
Obviously, the multiplication is closed. Assume that $(\lambda_{\t(p)}\rhd a)(\lambda_p\rhd x)=0$ for all $a\in A$. Then we have $$a(\lambda_{\t(p)}\rhd 1)(\lambda_p\rhd x)=a(\lambda_{\t(p)}\lambda_p\rhd x)=a(\lambda_p\rhd x)=0$$
for all $a$. By the nondegeneracy of $A$, we get $\lambda_p\rhd x=0$.
\end{proof}

$\lambda_p\rhd 1_{M(A)}=\lambda_{\t(p)}\rhd 1_{M(A)}$ is the identity in multiplier algebra $M(A_p)$. 
Next, consider the following two maps
$$\alpha_p: A_{\s(p)}\rightarrow A_{\t(p)}, \lambda_{\s(p)}\rhd a\mapsto \lambda_p\rhd(\lambda_{\s(p)}\rhd a)=\lambda_p\rhd a$$
and
$$
\beta_p: A_{\s(p)}\rightarrow A_{\t(p)}, \lambda_{\s(p)}\rhd a\mapsto \lambda_{\t(p)}\rhd a.
$$
 It is easy to show that $\alpha_p$ and $\beta_p$ are isomorphisms, and then they give us two actions of $G$. Define 
 $$\theta_{\t(p)}: A_{\t(p)}\rightarrow A_{\t(p)}, \lambda_p\rhd a\mapsto \lambda_{\t(p)}\rhd a.$$
Then we get $\theta_{\t(p)}\circ \alpha_p=\beta_p\circ id_{\s(p)}$, $id_{\s(p)}$ is the identity map from $A_{\s(p)}$ to $A_{\s(p)}$. This means that $(G, A_e, \alpha_p)$ and $(G, A_e, \beta_p)$ are equivalent in the sense of \cite[Definition 2.5]{yamanouchi1993duality}.

Since $A$ is a unital $\mathrm{C}G$-module, we have $\bigcup_{e\in G^0}A_e=A$. Let $0\neq b=\lambda_{\s(p)}\rhd a\in A_{\s(q)}$, if $\alpha_q(b)\in A_{\s(p)}$, then we have $p\rhd (q\rhd b)=\lambda_p\rhd (\lambda_q\rhd a)=(\lambda_p\lambda_q)\rhd a$. This implies that $pq$ is defined, otherwise $b=0$. Then the action is true. So far, we get the following result. 

\begin{proposition}
    Let $A$ be a $\mathrm{C}G$-module algebra, then we have a true action of $G$ on $A$ with $(G, A_e, \alpha_p)$.
\end{proposition}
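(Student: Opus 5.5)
To prove the proposition I would check, in turn, that the data $(G,\{A_e\}_{e\in G^0},\{\alpha_p\}_{p\in G})$ form an action in the sense of Definition \ref{groupoidaction-2}, that $G$ acts \emph{on} $A$, and that the action satisfies Ehresmann's condition. Throughout I work in $\mathrm{C}G$ with the rules $\lambda_p\lambda_q=\lambda_{pq}$ when $(p,q)\in G^2$ and $\lambda_p\lambda_q=0$ otherwise, and $\lambda_{\t(p)}\lambda_p=\lambda_p=\lambda_p\lambda_{\s(p)}$. I also use that $\lambda_e\rhd 1_{M(A)}$ is a central idempotent acting as $a\mapsto\lambda_e\rhd a$.

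\textbf{Step 1: $A_e$ is a subalgebra and $\alpha_p$ lands in $A_{\t(p)}$.} The proposition preceding the statement gives $A_e=(\lambda_e\rhd 1)A$, which is a (nondegenerate) subalgebra. Since $\lambda_p\rhd a=\lambda_{\t(p)}\rhd(\lambda_p\rhd a)$, the element $\alpha_p(\lambda_{\s(p)}\rhd a)=\lambda_p\rhd a$ lies in $A_{\t(p)}$.

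\textbf{Step 2: $\alpha_p$ is a well-defined homomorphism.} Well-definedness follows from $\lambda_p\rhd(\lambda_{\s(p)}\rhd a)=(\lambda_p\lambda_{\s(p)})\rhd a=\lambda_p\rhd a$, which depends only on $\lambda_{\s(p)}\rhd a$. For multiplicativity I use \eqref{modalg} together with $\Delta(\lambda_p)=\lambda_p\o\lambda_p$:
$$\lambda_p\rhd(bc)=(\lambda_p\rhd b)(\lambda_p\rhd c).$$

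\textbf{Step 3: $\alpha_p$ is bijective.} The inverse is $\alpha_{p^{-1}}$, because
$$\alpha_{p^{-1}}\alpha_p(\lambda_{\s(p)}\rhd a)=\lambda_{p^{-1}p}\rhd a=\lambda_{\s(p)}\rhd a,$$
and symmetrically on the other side.

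\textbf{Step 4: composition law.} For $(p,q)\in G^2$ we have $\s(p)=\t(q)$, so $\alpha_p\alpha_q$ is defined, and on $A_{\s(q)}$ one gets $\alpha_p\alpha_q=\alpha_{pq}$ directly from $\lambda_p\lambda_q=\lambda_{pq}$.

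\textbf{Step 5: $G$ acts on $A$.} This is the covering property $A=\bigcup_e A_e$. I derive it from unitality, $A=\mathrm{C}G\rhd A$, together with $\lambda_p\rhd a\in A_{\t(p)}$. This step is delicate: a finite sum $\sum_i\lambda_{p_i}\rhd a_i$ lies only in $\sum_e A_e$, not in a single $A_e$. I would read ``acts on'' in the spanning sense, $A=\sum_e A_e$. This is precisely the content of the identity $1_{M(A)}=\sum_e\lambda_e\rhd 1$ in the strict topology, since $A_e=(\lambda_e\rhd 1)A$.

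\textbf{Step 6: trueness (Ehresmann's condition).} Take $0\neq b\in A_{\s(q)}$ and suppose $\alpha_q(b)\in A_{\s(p)}$. Then $\alpha_q(b)\in A_{\t(q)}\cap A_{\s(p)}$. If $\s(p)\neq\t(q)$, the central idempotents $\lambda_{\s(p)}\rhd 1$ and $\lambda_{\t(q)}\rhd 1$ are orthogonal, which forces $\alpha_q(b)=0$ and hence $b=0$ by the injectivity from Step 3. Thus $\s(p)=\t(q)$, i.e. $pq$ is defined. Equivalently, $\lambda_p\rhd(\lambda_q\rhd a)=(\lambda_p\lambda_q)\rhd a$ would otherwise vanish.

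I expect the main obstacle to be the union/covering statement in Step 5. For that step I would rely on the decomposition of $1_{M(A)}$ established just before the statement. The remaining steps are routine manipulations in $\mathrm{C}G$.
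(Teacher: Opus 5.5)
Your proposal is correct and follows essentially the same route as the paper. Like the paper, you define $A_e$ and $\alpha_p$ through $\lambda_p\rhd\cdot$ and use $\Delta(\lambda_p)=\lambda_p\otimes\lambda_p$ together with the multiplication rules of $\mathrm{C}G$ to get the isomorphism and composition properties. You obtain trueness from the orthogonality of the central idempotents $\lambda_e\rhd 1$, which is equivalent to the paper's argument via $(\lambda_p\lambda_q)\rhd a=0$.

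Your caution in Step~5 is justified. Unitality only yields $A=\sum_e A_e$, and this sum is in fact direct because the $\lambda_e\rhd 1$ are orthogonal. The paper's assertion $\bigcup_{e\in G^0}A_e=A$ is therefore only true in this spanning sense, which is how you read it.
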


Conversely, we have following result.

\begin{proposition}
Any true action of $G$ on $A$ makes $A$ into a $\mathrm{C}G$-module algebra. 
\end{proposition}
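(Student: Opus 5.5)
Given a true action $(G,\{A_e\}_{e\in G^0},\{\alpha_p\}_{p\in G})$ of $G$ on $A$ in the sense of Definition \ref{groupoidaction-2}, with $A=\bigcup_e A_e$, I will define the module structure on the basis $\{\lambda_p\}$ of $\mathrm{C}G$ by
$$\lambda_p\rhd a=\alpha_p(a_{\s(p)}),$$
where $a_{\s(p)}$ is the component of $a$ in $A_{\s(p)}$. To make this meaningful I will first show that $A$ is the direct sum $\bigoplus_{e\in G^0}A_e$ of ideals, i.e. $A_eA_f=0$ for $e\neq f$. Trueness is used here: for $a\in A_e$ and $b\in A_f$ with $e\ne f$, the identities $\alpha_e=\mathrm{id}$ on $A_e$ and $\alpha_f=\mathrm{id}$ on $A_f$ together with the Ehresmann condition would force $ef$ to be defined whenever some nonzero element lies in both domains. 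I will also use that the $A_e$ are nondegenerate and act as ideals, so that $A_eA_f=0$. I expect this decomposition to be the main obstacle, because Definition \ref{groupoidaction-2} only asserts a union. My first attempt will be to read ``true'' as saying that the family $\{A_e\}$ consists of mutually orthogonal ideals whose union spans $A$. If that is not literally derivable, I will state it as the standing interpretation inherited from the preceding converse proposition, where $A_e=(\lambda_e\rhd 1)A$ with central orthogonal idempotents $\lambda_e\rhd 1$.

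With the decomposition in place, the module axiom $\lambda_p\rhd(\lambda_q\rhd a)=(\lambda_p\lambda_q)\rhd a$ follows by cases. If $(p,q)\in G^2$, then $\lambda_q\rhd a\in A_{\t(q)}=A_{\s(p)}$, and $\alpha_p\alpha_q=\alpha_{pq}$ gives $\lambda_{pq}\rhd a$. Otherwise $\s(p)\neq\t(q)$, so the $A_{\s(p)}$-component of $\lambda_q\rhd a$ is $0$, which matches $\lambda_p\lambda_q=0$. Unitality comes from $\lambda_e\rhd a=a_e$ for units $e$, using $\alpha_e=\mathrm{id}$ because $\alpha_e\alpha_e=\alpha_e$ is bijective. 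Hence $A=\sum_e\lambda_e\rhd A$, which is $\mathrm{C}G\rhd A=A$.

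For the module algebra condition \eqref{modalg}, I use $\Delta(\lambda_p)=\lambda_p\otimes\lambda_p$ to reduce it to
$$\lambda_p\rhd(ab)=(\lambda_p\rhd a)(\lambda_p\rhd b).$$
Since the $A_e$ are orthogonal ideals, $(ab)_{\s(p)}=a_{\s(p)}b_{\s(p)}$. The identity then holds because $\alpha_p$ is an algebra isomorphism. This finishes the proof; the verification is routine once the decomposition step is settled.
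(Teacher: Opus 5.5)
\textbf{The gap.} The one real gap is the decomposition step you flagged, and it cannot be closed from Definition~\ref{groupoidaction-2} plus trueness.

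The Ehresmann condition together with $\alpha_e=\mathrm{id}_{A_e}$ gives only $A_e\cap A_f=0$ for $e\neq f$. Indeed, a nonzero $x\in A_e\cap A_f$ has $\alpha_e(x)=x\in A_{\s(f)}$, which forces $(f,e)\in G^2$, i.e.\ $f=e$. To upgrade this to $A_eA_f=0$ you need the $A_e$ to be two-sided ideals. Then $A_eA_f\subseteq A_e\cap A_f=0$, and nondegeneracy of $A$ together with $A=\sum_e A_e$ makes the sum direct. But the definition only provides subalgebras, and nothing in your sketch supplies the ideal property.

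Without that input the statement is false. Take $G=\{e,f\}$ consisting of two units, $A=\mathbb{C}^2$ with pointwise product, $A_e=\mathbb{C}(1,0)$ and $A_f=\mathbb{C}(1,1)$, with $\alpha_e,\alpha_f$ the identities.
\begin{itemize}
\item The action is true, and $A=A_e\oplus A_f$ (reading $\bigcup$ as a sum, as you do and as the paper's converse does).
\item Any module structure with $\lambda_p\rhd a=\alpha_p(a)$ on $A_{\s(p)}$ must make $\lambda_e\rhd\cdot$ the projection onto $A_e$ along $A_f$, since $\lambda_e\rhd b=(\lambda_e\lambda_f)\rhd b=0$ for $b\in A_f$.
\item That projection is not multiplicative: for $a=(0,1)$ one gets $\lambda_e\rhd a^2=(-1,0)$, whereas $(\lambda_e\rhd a)^2=(1,0)$.
\end{itemize}

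So your fallback is necessary, not optional. ``$G$ acts on $A$'' must be read as: the $A_e$ are ideals with $\sum_e A_e=A$. This is exactly what the converse proposition produces, where $A_e=(\lambda_e\rhd 1)A$ is cut out by central orthogonal idempotents. Under that reading, trueness yields directness as above. Your case analysis for the module axiom, for unitality, and for $(ab)_{\s(p)}=a_{\s(p)}b_{\s(p)}$ is then correct and complete.

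\textbf{Comparison with the paper.} The paper runs the same case analysis, with two differences.
\begin{itemize}
\item It defines the action elementwise: $\lambda_p\rhd a=\alpha_p(a)$ if $a\in A_{\s(p)}$, and $0$ otherwise.
\item In the module axiom it uses trueness (if $pq$ is undefined then $\alpha_q(a)\notin A_{\s(p)}$), where you use directness. Once directness is assumed outright, trueness is automatic.
\end{itemize}

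The paper never addresses the decomposition. Its formula is linear only if read on homogeneous elements and extended, which already presupposes $A=\bigoplus_e A_e$. For the module-algebra identity it asserts that $ab\in A_{\s(p)}$ forces $a,b\in A_{\s(p)}$. This fails even for orthogonal ideals: $a=a_e+a_f$ and $b=b_e$ give $ab=a_eb_e\in A_e$ with $a\notin A_e$. Consequently, with the elementwise definition the identity breaks whenever $a_eb_e\neq 0$.

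Your componentwise definition $\lambda_p\rhd a=\alpha_p(a_{\s(p)})$ is precisely the linear extension of the paper's formula. Orthogonality is exactly what makes the multiplicativity step valid. So your route, with the ideal hypothesis stated up front, is the correct form of the paper's argument.
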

\begin{proof}
Let $(G, A_e, \alpha_p)$ be a true action of $G$ on $A$. Consider now the action of $\mathrm{C}G$ over $A$ given by
$$\lambda_p\rhd a=\begin{cases}
    \alpha_p(a), & \text{if}\quad  a\in A_{\s(p)},\\
    0, & \text{otherwise}.
\end{cases}$$

Take any $p, q\in G$. If $a\notin A_{\s(q)}$, then $\lambda_p\rhd (\lambda_q\rhd a)=0$. And we have $(\lambda_p\lambda_q)\rhd a=0$ if $pq$ is not defined. If $pq$ is defined, then $\lambda_{pq}\rhd a=0$ because $a\notin A_{\s(pq)}$. 

If $a\in A_{\s(q)}$, then $\lambda_p\rhd (\lambda_q\rhd a)=\lambda_p\rhd \alpha_q(a)$. If $pq$ is defined, then $\alpha_q(a)\in A_{\t(q)}=A_{\s(p)}$. So $\lambda_p\rhd \alpha_q(a)=\alpha_p(\alpha_q(a))=\alpha_{pq}(a)=\lambda_{pq}\rhd a=(\lambda_p\lambda_q)\rhd a.$ If $pq$ is not defined, then $\alpha_q(a)\notin A_{\s(p)}$(otherwise, by the trueness of the action, we obtain that $pq$ is defined). So $\lambda_p\rhd \alpha_q(a)=0=(\lambda_p\lambda_q)\rhd a$.  

Note that the action $\rhd$ is unital because the groupoid action $(G, A_e, \alpha_p)$ is on $A$, not in $A$. So $A$ is a unital $\mathrm{C}G$-module. Finally, we need to show that $\lambda_p\rhd (ab)=(\lambda_p\rhd a)(\lambda_p\rhd b)$ for any $p\in G, a,b\in A$.

If $ab\notin A_{\s(p)}$, then we have $a\notin A_{\s(p)}$ or $b\notin A_{\s(p)}$. So $\lambda_p\rhd (ab)=0=(\lambda_p\rhd a)(\lambda_p\rhd b)$.

If $ab\in A_{\s(p)}$, then $\alpha_p(ab)$ is meaningful. By the definition of groupoid action, we must have $a,b\in A_{\s(p)}$ and $\alpha_p(ab)=\alpha_p(a)\alpha_p(b)$. Then $\lambda_p\rhd (ab)=\alpha_p(ab)=\alpha_p(a)\alpha_p(b)=(\lambda_p\rhd a)(\lambda_p\rhd b)$. 

Thus, $A$ is a $\mathrm{C}G$-module algebra with the action $\rhd$.
\end{proof}

Combining the above two properties, we obtain the following theorem. The finite group (groupoid) version of the following result is well known, we refer to \cite[Proposition 1.2]{cohen1984group},  \cite[Proposition 2.2]{paques2014duality} and \cite{caenepeel2007galois}.

\begin{theorem}
Let $A$ be a nondegenerate algebra and $G$ a groupoid. Any true action of $G$ on $A$ makes $A$ into a $\mathrm{C}G$-module algebra. Conversely, if $A$ is a $\mathrm{C}G$-module algebra, this induces a true action of $G$ on $A$.
\end{theorem}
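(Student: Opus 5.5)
The theorem combines the two preceding propositions, so the proof is mostly an assembly of what was just shown. I would still check that the two constructions are mutually inverse, so that the correspondence is genuinely one-to-one.

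\emph{Direction 1: true action gives a $\mathrm{C}G$-module algebra.} Start from a true action $(G,\{A_e\},\{\alpha_p\})$ of $G$ on $A$. Define $\lambda_p\rhd a=\alpha_p(a)$ if $a\in A_{\s(p)}$ and $\lambda_p\rhd a=0$ otherwise, exactly as in the last proposition. Associativity $\lambda_p\rhd(\lambda_q\rhd a)=(\lambda_p\lambda_q)\rhd a$ is handled by the case split on whether $a\in A_{\s(q)}$ and whether $pq$ is defined. Trueness is used precisely to kill the case where $\alpha_q(a)\in A_{\s(p)}$ but $pq$ is undefined.
\begin{itemize}
\item Unitality follows from $A=\bigcup_e A_e$, since $\lambda_e\rhd a=a$ for $a\in A_e$.
\item Condition \eqref{modalg} reduces to $\lambda_p\rhd(ab)=(\lambda_p\rhd a)(\lambda_p\rhd b)$, because $\Delta(\lambda_p)=\lambda_p\o\lambda_p$. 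This holds since the $A_e$ are subalgebras and $\alpha_p$ is multiplicative.
\end{itemize}

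\emph{Direction 2: $\mathrm{C}G$-module algebra gives a true action.} Start from a $\mathrm{C}G$-module algebra $A$. Set $A_e=\lambda_e\rhd A$, which is a nondegenerate subalgebra. The elements $\lambda_e\rhd 1$ are central orthogonal idempotents in $M(A)$ summing to $1$. Then set $\alpha_p(\lambda_{\s(p)}\rhd a)=\lambda_p\rhd a$.
\begin{itemize}
\item $\alpha_p$ maps $A_{\s(p)}$ onto $A_{\t(p)}$, using $A_p=A_{\t(p)}$.
\item $\alpha_p$ is multiplicative by \eqref{modalg}.
\item $\alpha_p$ is invertible with inverse $\alpha_{p^{-1}}$, because $\lambda_{p^{-1}}\lambda_p=\lambda_{\s(p)}$ acts as the identity on $A_{\s(p)}$.
\item $\alpha_{pq}=\alpha_p\alpha_q$ follows from $\lambda_p\lambda_q=\lambda_{pq}$.
\item $\bigcup_e A_e=A$ by unitality, since any $a$ equals $\sum_e(\lambda_e\rhd 1)a$ with finitely many nonzero terms.
\item Trueness: if $b\in A_{\s(q)}$ is nonzero and $\alpha_q(b)\in A_{\s(p)}$, then $\lambda_p\lambda_q\rhd a=\lambda_p\rhd\alpha_q(b)=\alpha_q(b)\neq 0$ when $p$ is replaced appropriately by the unit $\s(p)$. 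This forces $\s(p)=\t(q)$, i.e. $pq$ is defined.
\end{itemize}

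\emph{Mutual inverseness.} Starting from $\alpha$, the induced subalgebras are $\lambda_e\rhd A=\alpha_e(A_e)=A_e$, and the induced maps agree with the original $\alpha_p$ on $A_{\s(p)}$. Conversely, starting from $\rhd$, the reconstructed action gives back $\lambda_p\rhd a=\lambda_p\rhd(\lambda_{\s(p)}\rhd a)$, because $\lambda_p=\lambda_p\lambda_{\s(p)}$. Every $a$ decomposes along the orthogonal idempotents $\lambda_e\rhd 1$, and the components with $e\ne\s(p)$ are annihilated by $\lambda_p$.

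\emph{Expected obstacle.} The main subtlety is the infinite-unit-space situation. One must make sure the decomposition $a=\sum_e(\lambda_e\rhd 1)a$ is a finite sum, so that statements like ``$ab\in A_{\s(p)}$ forces $a,b\in A_{\s(p)}$'' are legitimate. This should come from unitality of the module ($a=\sum_i x_i\rhd a_i$ with finitely many $x_i\in\mathrm{C}G$), which I would invoke explicitly.
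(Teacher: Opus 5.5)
Your route is the paper's: the theorem is the two preceding propositions combined, and your mutual-inverseness check goes beyond what is claimed. Direction 2 is essentially fine. Direction 1, however, has a genuine gap, and the paper's own proof has the same one.

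First, the rule ``$\lambda_p\rhd a=\alpha_p(a)$ if $a\in A_{\s(p)}$, $0$ otherwise'' is not linear once $A_{\s(p)}$ is a nonzero proper subspace. For $0\neq a\in A_{\s(p)}$ and $b\notin A_{\s(p)}$ you get $a+b\notin A_{\s(p)}$, so $\lambda_p\rhd(a+b)=0\neq\lambda_p\rhd a+\lambda_p\rhd b$. The rule only makes sense for $A=\bigoplus_e A_e$, with $\lambda_p\rhd a:=\alpha_p(a_{\s(p)})$.

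Second, even then your step ``\eqref{modalg} holds since the $A_e$ are subalgebras and $\alpha_p$ is multiplicative'' does not follow. What you need is $(ab)_{\s(p)}=a_{\s(p)}b_{\s(p)}$, i.e.\ $A_eA_f=0$ for $e\neq f$. Your proposed remedy is circular in this direction: there is no module yet, hence no idempotents $\lambda_e\rhd 1$ and no unitality to invoke. Moreover, ``$ab\in A_{\s(p)}$ forces $a,b\in A_{\s(p)}$'' is neither true in general nor what is needed.

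Here is a concrete failure. Let $A$ be the upper triangular $2\times 2$ matrices (unital, so nondegenerate) and $G=G^0=\{e,f\}$. Take $A_e=\mathbb{C}E_{11}$, $A_f=\mathrm{span}\{E_{12},E_{22}\}$, and $\alpha_e,\alpha_f$ the identities. This is a true action with $A=A_e\oplus A_f$. Yet $\lambda_f\rhd(E_{11}E_{12})=E_{12}$, while $(\lambda_f\rhd E_{11})(\lambda_f\rhd E_{12})=0$. In fact $A$ has no nontrivial central idempotents, so no $\mathrm{C}G$-module algebra structure yields these $A_e$ at all.

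So the statement itself needs a stronger notion of ``$G$ acts on $A$'': $A=\bigoplus_e A_e$ with $A_eA_f=0$ for $e\neq f$. Equivalently, $A_e=1_eA$ for central orthogonal idempotents $1_e\in M(A)$ with $a=\sum_e 1_ea$ a finite sum for each $a$. This is exactly the structure your Direction 2 produces. Note that there unitality gives $A=\bigoplus_e A_e$, not $A=\bigcup_e A_e$ as you and the paper write.

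Under that hypothesis your Direction 1 goes through:
\begin{itemize}
\item Multiplicativity: $\lambda_p\rhd(ab)=\alpha_p(a_{\s(p)}b_{\s(p)})=\alpha_p(a_{\s(p)})\alpha_p(b_{\s(p)})$.
\item Module property: $\alpha_q(a_{\s(q)})\in A_{\t(q)}$ has zero $A_{\s(p)}$-component unless $\s(p)=\t(q)$. This uses only the direct sum, so trueness is not even needed.
\item Your mutual-inverseness argument is then correct.
\end{itemize}
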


Let $(G, \{A_e\}_{e\in G^0}, \{\alpha_p\}_{p\in G})$ be a true action of $G$ on $A$. By the action theory in \cite{zhou2017duality}, we can form the smash product $A\# \mathrm{C}G$ with the multiplication 
$$(a\# \lambda_p)(b\# \lambda_q)=\begin{cases}
    a\alpha_p(b)\o \lambda_{pq}, & \text{if}\,\, (p,q)\in G^2 \,\,\text{and}\,\, b\in A_{\s(p)},\\
    0, &\text{otherwise}.
\end{cases}$$
This is the skew groupoid ring and we will denote it by $A\ast G$.

\subsection{Groupoid gradings}

Let $G$ be a groupoid. A $G$-graded algebra $A$ is a nondegenerate algebra $A$ such that 
\begin{itemize}
    \item $A=\bigoplus_{p\in G} A_p$,
    \item $A_pA_q=\begin{cases}
        \subseteq A_{pq},   &\text{if} (p, q)\in G^2,\\
        0, & \text{otherwise}.
    \end{cases}$
\end{itemize}
For any $a\in A$, the unique decomposition is denoted by $\sum_{p\in I_a}a_p$, where $I_a$ is the finite index set.
The next proposition is an infinite groupoid version of \cite[Proposition 3.1]{caenepeel2007galois}.

\begin{proposition}
    Let $G$ be a groupoid. There is a one-to-one correspondence between $\mathrm{C}G$-comodule algebras and $G$-graded algebras.
\end{proposition}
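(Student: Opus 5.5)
Given a $G$-grading $A=\bigoplus_{p\in G}A_p$, I will define $\Gamma:A\to M(A\o \mathrm{C}G)$ by
$$\Gamma(a)=\sum_{p\in I_a} a_p\o \lambda_p .$$
Because each $a$ has finite support $I_a$, $\Gamma(a)$ lies in $A\o\mathrm{C}G\subseteq M(A\o\mathrm{C}G)$. Hence all four maps $W_i$ are regular and $\Gamma$ is regular.

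\textbf{Homomorphism.} Using $\lambda_p\lambda_q=\lambda_{pq}$ when $(p,q)\in G^2$ and $0$ otherwise, together with $A_pA_q\subseteq A_{pq}$ or $=0$, we get $\Gamma(ab)=\sum_{(p,q)\in G^2}a_pb_q\o\lambda_{pq}=\Gamma(a)\Gamma(b)$.

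\textbf{Coassociativity.} Since $\Delta(\lambda_p)=\lambda_p\o\lambda_p$, both sides of \eqref{coass1} equal $\sum_p a_p\o\lambda_p\o\lambda_p b$.

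\textbf{The idempotent.} I take
$$P=\sum_{e\in G^0} 1_e\o\lambda_e,$$
where $1_e$ is the multiplier of $A$ given by projection onto $\bigoplus_{\t(p)=e}A_p$ on the left and onto $\bigoplus_{\s(p)=e}A_p$ on the right. Check that it is a multiplier, idempotent and strictly convergent. For Assumption I, note that $(a_p\o 1)(1\o\lambda_p^{-1}\lambda_q)$-type products give $\Gamma(a_p)(1\o\lambda_{p^{-1}}\lambda_q)=a_p\o\lambda_{\t(p)}\lambda_q$. This shows $\Gamma(A)(1\o\mathrm{C}G)$ is spanned by $a_p\o\lambda_q$ with $\t(q)=\t(p)$, which is exactly $P(A\o\mathrm{C}G)$; the other side is symmetric. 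For the second axiom, I will rather verify (4a) of Proposition~\ref{second}: with $\varepsilon_t(\lambda_p)=\lambda_{\t(p)}$ we have $(id\o\varepsilon_t)\Gamma(a)=\sum a_p\o\lambda_{\t(p)}=P(a\o1)$. Since $\mathrm{C}G$ is regular, Proposition~\ref{second} gives $(\Gamma\o id)(P)=(1\o E)(P\o1)$, where $E=\sum_{(p,q)\in G^2}\ldots=\sum_{e}\lambda_e\o\lambda_e$.

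\textbf{Converse.} Given a comodule algebra $(A,\Gamma,P)$, I set
$$A_p=\{(id\o\delta_p)\Gamma(a)\},$$
where $\delta_p\in\mathrm{C}(G)$ is evaluation at $p$. This is meaningful via $\Gamma(a)(1\o\lambda_{\s(p)})\in A\o\mathrm{C}G$, since $\langle\lambda_q\lambda_{\s(p)},\delta_p\rangle$ picks out $q=p$. Equivalently, write $\Gamma(a)(1\o\lambda_e)=\sum_q a_{q}\o\lambda_q$.

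Coassociativity with $\Delta(\lambda_q)=\lambda_q\o\lambda_q$ gives $\Gamma(a_q)=a_q\o\lambda_q$ modulo the $P$ factor. So the $a_q$ are homogeneous, and the homomorphism property gives $A_pA_q\subseteq A_{pq}$ or $0$. To show $a=\sum_q a_q$ and that the sum is direct and finite, I need the counit property, i.e. injectivity of $\Gamma$, which I will obtain from the corollary on counit for regular coactions. I also need left fullness of $\Gamma$.

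The \textbf{main obstacle} is finiteness of the decomposition and recovering $a$ itself. I would try to use regularity of $W_2$: $(b\o1)\Gamma(a)\in A\o\mathrm{C}G$ is a finite sum, so applying $id\o\varepsilon$ and nondegeneracy shows $ba=\sum_q ba_q$ with finitely many nonzero terms. Then I would cancel $b$ and show that the relevant index set is independent of $b$ by using $P(a\o1)=(id\o\varepsilon_t)\Gamma(a)$.

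Finally, the two constructions are mutually inverse: the grading yields $\Gamma(a)=\sum a_p\o\lambda_p$, and evaluating at $\delta_p$ recovers $a_p$.
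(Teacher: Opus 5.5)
\textbf{Forward direction.} This matches the paper. You use the same $\Gamma(a)=\sum_{p\in I_a}a_p\otimes\lambda_p$ and the same $P=\sum_e 1_e\otimes\lambda_e$. You also get the second axiom the same way, from $P(a\otimes 1)=(id\otimes\varepsilon_t)\Gamma(a)$ via Proposition~\ref{second}.

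\textbf{The gap is in the converse.} You leave the decisive step open as a ``main obstacle'': finiteness of the decomposition, and recovering $a=\sum_q a_q$. The tools you propose for it are not available.
\begin{enumerate}[(i)]
\item A comodule algebra is only assumed semi-regular, so $W_2$ need not be regular.
\item The counit property is not an axiom.
\item The corollary you cite runs the other way: it \emph{assumes} injectivity (and regularity) and deduces the counit property.
\end{enumerate}
Injectivity cannot be extracted from the axioms at all: $\Gamma=0$, $P=0$ satisfies every one of them. So you are right that left fullness is indispensable; the paper's proof also uses it tacitly, through Proposition~\ref{main-1}. But your plan never says where left fullness enters, and that is exactly the missing idea.

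\textbf{How the paper closes the gap.} It passes to the dual.
\begin{enumerate}[(i)]
\item Your $A_p$ is $\delta_p\rhd A$ for the action $\delta_p\rhd a=(id\otimes\langle\cdot,\delta_p\rangle)\Gamma(a)$ of \eqref{co-action}. By Proposition~\ref{main-1}, this makes $A$ a $\mathrm{C}(G)$-module algebra.
\item Left fullness makes this action unital. If $\omega\in A'$ vanishes on $\mathrm{C}(G)\rhd A$, nondegeneracy of the pairing gives $(\omega\otimes id)(\Gamma(r)(1\otimes c))=0$ for all $r,c$. Hence $\omega$ vanishes on the left leg, which is $A$.
\item Unitality says each $a$ is a finite sum $\sum_i\delta_{p_i}\rhd a_i$. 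Since $\delta_p\delta_q=0$ for $p\neq q$ and $\delta_p^2=\delta_p$, this gives at once $a=\sum_{p\in F}\delta_p\rhd a$ with $F=\{p_i\}$ finite. The same orthogonality gives directness of $\bigoplus_pA_p$.
\item The grading follows from the module algebra identity together with $\Delta(\delta_g)(\delta_p\otimes\delta_q)=\delta_p\otimes\delta_q$ if $pq=g$ (and $0$ otherwise). For $x\in A_p$ and $y\in A_q$ this gives $\delta_g\rhd(xy)=xy$ if $pq=g$, and $0$ otherwise. Hence $xy\in A_{pq}$ when $pq$ is defined, and $xy=\sum_g\delta_g\rhd(xy)=0$ when $pq$ is undefined.
\end{enumerate}
This route needs neither regularity of $W_2$ nor the counit property, and it never assumes injectivity. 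It also replaces your vague ``$\Gamma(a_q)=a_q\otimes\lambda_q$ modulo $P$'' step.
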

\begin{proof}
    Let $A$ be a $G$-graded algebra. Define $\Gamma(a)=\sum_{p\in I_a}a_p\o \lambda_p\in A\o kG$. Note that $\Gamma(a_p)=a_p\o \lambda_p$ and $\Delta(\lambda_p)=\lambda_p\o \lambda_p$, then  we get
    $$(id\o \Delta)\Gamma(a)=\sum_{p\in I_a} a_p\o \lambda_p\o \lambda_p=(\Gamma\o id)\Gamma(a).$$
    For any $a, b\in A$, we have 
    $$ab=(\sum_{p\in I_a}a_p)(\sum_{q\in I_b}b_q)=\sum_{\substack{ p\in I_a, q\in I_b\\ (p,q)\in G^2}}a_pb_q$$
    with $a_pb_q\in A_{pq}$. So 
    $$\Gamma(a)\Gamma(b)=(\sum_{p\in I_a}a_p\o \lambda_p)(\sum_{q\in I_b}b_q\o \lambda_q)=\sum_{\substack{ p\in I_a, q\in I_b\\ (p,q)\in G^2}}a_pb_q\o \lambda_{pq}=\Gamma(ab).$$
    Let $P$ be the element in $M(A\o \mathrm{C}G)$ defined as $\sum_{e\in G^0} 1_e\o \lambda_e$ where $1_e$ is the identity in the multiplier algebra $M(A_e)$. Then we have 
    $$P(a\o \lambda_q)=\sum_{\substack{p\in I_a\\ \t(p)=\t(q)}}a_p\o \lambda_{q} \text{\quad and \quad} (a\o \lambda_q)P=\sum_{\substack{p\in I_a\\ \s(p)=\s(q)}}a_p\o \lambda_{q}$$
    for all $a\in A,p\in G$. We also have
    $$((id\o \varepsilon_t)\Gamma(a))(1\o \lambda_q)=\sum_{p\in I_a} a_p\o \lambda_{\t(p)}\lambda_q=\sum_{\substack{p\in I_a\\ \t(p)=\t(q)}} a_p\o \lambda_q.$$
    This means that $P(a\o 1)=(id\o \varepsilon_t)\Gamma(a)$. So $P$ is the canonical idempotent. Thus, $A$ is a $\mathrm{C}G$-comodule algebra. 

    Conversely, let $(A,\Gamma, P)$ be a $\mathrm{C}G$-comodule algebra, then $A$ can be viewed as a $\mathrm{C}(G)$-module algebra, and denote the action of $\mathrm{C}(G)$ on $A$ by $\rhd$. For any $p\in G$, let $A_p=\{\delta_p\rhd a \mid a\in A\}$ where $\delta_p$ is the function that is 1 in $p$ and 0 everywhere else. For any $p\neq q$, if $\delta_p\rhd a=\delta_q\rhd b$, then $\delta_p^2\rhd a=(\delta_p\delta_q)\rhd b=0$, so $\delta_p\rhd a=0$. This means that $A_p\cap A_q=0$ for any $p\neq q$. Since the module is unital, we have $a=\sum_{p\in I_a} \delta_p\rhd a_p$. So $A=\bigoplus_{p\in G} A_p$.
    Recall that we have 
    $$\Delta(\delta_g)(\delta_j\o \delta_k)=
    \begin{cases}
        \delta_j\o \delta_k, & when \quad jk=g,\\
        0, & otherwise.
    \end{cases}$$
    So we have 
$$\Delta(\delta_g)=\sum_{\s(h)=\s(g)} \delta_{gh^{-1}}\o \delta_h\in  M(A\o A).$$
    Then for any $x=\delta_p\rhd u\in A_p, y=\delta_q\rhd v\in A_q$, if $pq$ is defined, then
    $$\delta_{pq}\rhd (xy)=\mu(\Delta(\delta_{pq})(\delta_p\o \delta_q)\rhd (u\o v))=\mu((\delta_p\o \delta_q)\rhd (u\o v))=xy,$$
    it means that $xy\in A_{pq}$. If $pq$ is not defined, then $\delta_g\rhd (xy)=0$ for any $g$, so $xy=0$. So $A$ is graded.
\end{proof}

\subsection{Groupoid Morita context}

We will use the WMHA $\mathrm{C}(G)$ to illustrate the results on Morita context obtained in Section \ref{moritacontext}. The dual algebra of $\mathrm{C}G$ is denoted by $\widehat{\mathrm{C}G}$. Assume that $A$ is graded by $G$, then $A$ is a $\mathrm{C}(G)$-module algebra with the action $\delta_g\rhd a=a_g$, we may construct the smash product $A\# \mathrm{C}(G)$. The product is given by 

$$(a\# \delta_p)(b\# \delta_q)=\begin{cases}
    a(b_{pq^{-1}})\o \delta_q, \quad & if\quad \s(p)=\s(q);\\
    0, \quad & otherwise.
\end{cases}$$ 

Consider the formal sum $\sum x_e$ where $x_e$ is  any element of $A_e$ and $e\in G^0$. When the sum is infinite,  we denote it by $x=\cdots+x_{\t(p)}+\cdots+x_{\s(p)}+\cdots$. Then $x$ is a multiplier in $M(A)$ such that 
$$ax=\sum_{p\in I_a}a_px_{\s(p)},\quad  xa=\sum_{p\in I_a} x_{\t(p)}a_p.$$

\begin{proposition}
    The fixed point algebra $A^{\mathrm{C}(G)}=\{\sum x_e\mid x_e\in A_e\}$.
\end{proposition}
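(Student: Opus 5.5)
We prove the two inclusions directly from the characterization of the fixed point algebra, using condition (i) of Proposition \ref{coinvariant-equ}: an element $x\in M(A)$ lies in $A^{\mathrm{C}(G)}$ iff $\delta_g\rhd(ax)=(\delta_g\rhd a)x$ for all $a\in A$ and all $g\in G$. Since $\mathrm{C}(G)$ is spanned by the $\delta_g$, it suffices to test with these. Recall that the action is $\delta_g\rhd a=a_g$, the $g$-component of $a$. Also recall the grading rule: $A_pA_q\subseteq A_{pq}$ when $\s(p)=\t(q)$, and $A_pA_q=0$ otherwise.

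\emph{Inclusion $\supseteq$.} Let $x=\sum_{e\in G^0}x_e$ with $x_e\in A_e$, viewed as a multiplier via $ax=\sum_{p\in I_a}a_px_{\s(p)}$. Each term $a_px_{\s(p)}$ lies in $A_{p\s(p)}=A_p$, because $(p,\s(p))\in G^2$. All products $a_px_e$ with $e\neq\s(p)$ vanish by the grading. Hence $(ax)_g=a_gx_{\s(g)}$. On the other side, $(\delta_g\rhd a)x=a_gx=a_gx_{\s(g)}$. So condition (i) holds. One should also check that $x$ really is a multiplier, i.e.\ $(ax)b=a(xb)$. This reduces to $a_px_{\s(p)}b_q=a_px_{\t(q)}b_q$, and both sides vanish unless $\s(p)=\t(q)$.

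\emph{Inclusion $\subseteq$.} Let $x\in M(A)$ satisfy (i). First define candidate components. For $e\in G^0$, the element $1_e=\delta_e\rhd 1_{M(A)}$ is a central-type idempotent projection onto degree-$e$ components; more precisely, $1_e$ acts on $A$ by $a\mapsto\sum_{\t(p)=e}a_p$ on the left and $a\mapsto\sum_{\s(p)=e}a_p$ on the right. We then set $x_e:=1_ex1_e$, which is a multiplier of $A_e$.

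The main step is to show two things: first, that $x$ is ``block diagonal'', i.e.\ $1_ex1_{e'}=0$ for $e\neq e'$; second, that $x_e$ has degree $e$, meaning that for $a\in A_p$ we get $ax\in A_p$ and $ax=ax_{\s(p)}$. Both follow from (i). Take $a\in A_p$. Then $(\delta_g\rhd a)x=0$ for $g\neq p$, so every component $(ax)_g$ vanishes for $g\neq p$, and therefore $ax\in A_p$. Using the grading, $a=a1_{\s(p)}$, so $ax=a1_{\s(p)}x$. Next, $a1_{\s(p)}x1_{e'}$ has degree in the set $\{p h: \t(h)=\s(p),\ \s(h)=e'\}$. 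Since $ax$ has degree $p$, only $h=\s(p)$ survives, which gives $ax=ax_{\s(p)}$. Symmetrically, condition (a) gives $xa=x_{\t(p)}a$.

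By the nondegeneracy of $A_e$ (Proposition on $A_p$, transported to the grading), each $x_e$ is then identified with an element of $M(A_e)$ of degree $e$. This identification is where we expect the main subtlety. The statement writes ``$x_e\in A_e$'' while elements may only be multipliers of $A_e$, so we will interpret $x_e$ in $M(A_e)$ (equivalently, as the degree-$e$ part), consistent with the formal-sum convention introduced just before the proposition. The genuinely delicate point is justifying that a multiplier commuting with the degree projections is determined by these pieces. This is done by the computation above, which shows that $x$ and $\sum_e x_e$ agree on every homogeneous $a$, and hence on all of $A$.
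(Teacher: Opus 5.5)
Your $\supseteq$ direction is the paper's first half, with the multiplier check added. For $\subseteq$ you go further than the paper. The paper only checks that a single homogeneous $c_p\in A_p$ with $p\notin G^0$ violates (i), so it never deals with multipliers or with non-homogeneous elements. You instead show that a fixed $x\in M(A)$ satisfies $ax\in A_p$ for $a\in A_p$. Hence $ax=(ax)1_{\s(p)}=a\,1_{\s(p)}x1_{\s(p)}$, and symmetrically $xa=1_{\t(p)}x1_{\t(p)}a$ via (a). So $x=\sum_e 1_ex1_e$ strictly, with degree-preserving blocks. That part is correct; the detour through the set $\{ph\}$ is unnecessary, since $ax\in A_p$ already gives $ax=(ax)1_{\s(p)}$.

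The gap is the last step, where $x_e=1_ex1_e$ is identified with an element of $M(A_e)$, or with an element of $A_e$ as the statement asserts.

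\emph{What is actually proved.} You have shown that $x$ is determined by its blocks \emph{as multipliers of $A$}. Nothing shows that a block is determined by its restriction to $A_e$. Conversely, an element of $M(A_e)$ need not act on the other components $A_p$ with $\s(p)=e$. Your $\supseteq$ computation would need that action once $A_e$ is replaced by $M(A_e)$.

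\emph{The cited result does not transfer.} The nondegeneracy result you invoke is proved for $\mathrm{C}G$-module algebras, where $A_e=(\lambda_e\rhd 1)A$ is cut out by a central idempotent. For a grading, $1_e$ is not central and $A_e$ is just one homogeneous component.

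\emph{The step actually fails.} Take $A=t\mathbb{C}[t]$ graded by $\mathbb{Z}$ via $A_n=\mathbb{C}t^n$ (or the idempotent algebra spanned by $t^q$, $q\in\mathbb{Q}_{>0}$, graded by $\mathbb{Q}$). This is a nondegenerate graded algebra with $A_0=0=M(A_0)$, yet $1\in M(A)$ satisfies (i).

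\emph{What would close it.} You need an extra hypothesis, e.g.\ $A_{\t(p)}A_p=A_p=A_pA_{\s(p)}$ for all $p$. Under it, a degree-$e$ block is determined by, and recoverable from, its restriction to $A_e$. To obtain literally $x_e\in A_e$, each $A_e$ must moreover be unital. This is because $1_{M(A)}$ is always a fixed point, and its $e$-block would be a unit of $A_e$.

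The paper's proof avoids this obstruction only because it tests elements of $A$, whereas $A^{\mathrm{C}(G)}$ is defined inside $M(A)$.
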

\begin{proof}
    Take any $b_e\in A_e$, we have 
    $$ab_e=\sum_{q\in I_a} a_qb_e=\sum_{\substack{q\in I_a\\\s(q)=e}}a_qb_e,$$
    then $\delta_p\rhd (ab_e)=a_pb_e=(\delta_p\rhd a)b_e$. So $b_e$ is a fixed point. Next we show that any fixed point is of this form. Take any $c_p\in A_p, p\notin G^0$. We have $\delta_g\rhd (ac_p)\in A_g$ and $(\delta_g\rhd a)c_p=a_gc_p\in A_{gp}$ for all $a\in A$. Since $p$ is not unit, $\delta_g\rhd (ac_p)\neq (\delta_g\rhd a)c_p.$
\end{proof}

Take any $a,b\in A, t\in A^{\mathrm{C}(G)}$, $A$ is a $(A\# \mathrm{C}(G), A^{\mathrm{C}(G)})$-bimodule with the module structures: 
$$(a\# \delta_g)\rhd b=ab_g,\quad b\lhd t=bt,$$
 and $A$ is a $(A^{\mathrm{C}(G)}, A\# \mathrm{C}(G))$-bimodule with the module structures:
$$b\lhd (a\# \delta_g)=(ab)_{g^{-1}}, \quad t\rhd b=tb.$$
We denote the two bimodules by $V$ and $W$, respectively. By \cite[Proposition 4.8]{van2024duality}, there is a faithful two-sided integral $\varphi$ on $\mathrm{C}G$ which is given by 
$$\varphi(\lambda_p)=\begin{cases}
    1, p\in G^0,\\
    0, p\notin G^0.
\end{cases}$$ 
And we have $\varphi\circ S=\varphi$. The isomorphism between $\widehat{\mathrm{C}G}$ and $\mathrm{C}(G)$ is given by $\varphi(\lambda_p\cdot)\mapsto \delta_{p^{-1}}$. As $A$ is graded by $G$, $A$ can also be viewed as a $\mathrm{C}G$-comodule algebra with the coaction $\Gamma(a)=\sum_{p\in I_a} a_p\o \lambda_p$. Now the two maps $[\cdot, \cdot]$ and  $(\cdot, \cdot)$ are given by $$[a, b]=\sum_{p\in I_a} ab_p\# \delta_{p^{-1}}, \quad (a, b)=\sum_{\substack{p\in I_{ab}\\ p\in G^0}}(ab)_p.$$

Now we obtain the following result, which generalizes Cohen and Montgomery's result (see \cite[Proposition 2.4]{cohen1984group}) on finite groups to arbitrary groupoids.
\begin{example}
    Let $A$ be graded by any groupoid $G$. Then $\{A\# \mathrm{C}(G), V,W,A^{\mathrm{C}(G)},[\cdot,\cdot],(\cdot,\cdot)\}$ forms a Morita context.
\end{example}

Finally we use the pairing $\left\langle \mathrm{C}(G), \mathrm{C}G\right\rangle$ to illustrate the Theorem \ref{bidual}.

\begin{example}\label{groupoiddual}
    Assume that $R$ is graded by groupoid $G$. With the notation as above, we have
    $$(R\# \mathrm{C}(G))\# \mathrm{C}G\cong R\o (\mathrm{C}(G)\# \mathrm{C}G).$$
    Take any $r_p\in R_p, \delta_g,\delta_q\in \mathrm{C}(G), \lambda_h\in \mathrm{C}G, r'\in R$. We have
    \begin{align*}
        (r_p\o \delta_g\# \lambda_h)\rhd_3 (r'\# \delta_q)
        &= r_pr'\bar{\o}((\delta_g\# \lambda_h)\rhd \delta_q)\\
        &=r_pr'\bar{\o}\delta_g(\lambda_h\rhd \delta_q)\\
        &=\begin{cases}
            r_pr'\bar{\o}\delta_g, \quad&\text{if}\quad  gh=q;\\
            0, &\text{otherwise.}
        \end{cases}
    \end{align*}
    On the other hand, we have 
    $$\Phi_1(r\# \delta_g)=\sum_{p\in I_r,\s(p)=\t(g)}r_p\o \delta_{pg}.$$
    When $\s(p)=\t(q)=\t(g)$, we have
    \begin{align*}
        ((r_p\# \delta_g)\# \lambda_h)\rhd_2 (r'\# \delta_q)
        &= r_pr'\bar{\o} \delta_{pg}(\lambda_h\rhd \delta_{pq})
        =\begin{cases}
            r_pr'\bar{\o} \delta_{pg}, \quad&\text{if}\quad  gh=q;\\
            0, &\text{otherwise.}
        \end{cases} 
    \end{align*}
    Now it is easy to obtain 
    $$((r_p\# \delta_g)\# \lambda_h)\rhd_2 (r'\# \delta_q)=r_p\o \delta_{pg}\# \lambda_{g^{-1}p^{-1}gh}\rhd_3 (r'\# \delta_q)$$
    and
    $$(r_p\o \delta_g\# \lambda_h)\rhd_3 (r'\# \delta_q)=(r_p\# \delta_{p^{-1}g})\# \lambda_{g^{-1}pgh}\rhd_2 (r'\# \delta_q).$$
    Then the isomorphism is given explicitly by the above two equations.
\end{example}

\section*{Acknowledgment}
We thank Alfons Van Daele for his constant support and guidance.

\normalem
\bibliographystyle{abbrv} 
\bibliography{reference}

% \bibitem{LL}G. Liu, F. Li, On strongly groupoid graded rings and the corresponding clifford theorem.
\end{document}